\documentclass[11pt,letterpaper,leqno]{amsart}
\pagestyle{plain}
\usepackage{amssymb, amsmath, amsthm, color}
\usepackage{hyperref}

\title{Asymptotic spectral flow for Dirac operators of disjoint Dehn twists}
\author{Chung-Jun Tsai}
\address{Mathematics Division, National Center for Theoretical Sciences (Taipei Office), National Taiwan University, Taipei 10617, Taiwan}
\email{cjtsai@ntu.edu.tw}

\theoremstyle{plain}
\newtheorem{thm}{Theorem}[section]
  \theoremstyle{plain}
  \newtheorem{lem}[thm]{Lemma}
  \newtheorem*{lem*}{Lemma}
  \theoremstyle{plain}
  \newtheorem{prop}[thm]{Proposition}
  \theoremstyle{definition}
  \newtheorem*{ack*}{Acknowledgements}
  \theoremstyle{definition}
  \newtheorem*{rem*}{Remark}
  \theoremstyle{definition}
  \newtheorem*{ques*}{Question}
  \theoremstyle{definition}
  \newtheorem{defn}[thm]{Definition}
  \theoremstyle{plain}
  \newtheorem{cor}[thm]{Corollary} 
  \theoremstyle{plain}
  \newtheorem*{main_thm}{Main Theorem}

\numberwithin{equation}{section}


\newcommand{\dd}{\mathrm{d}}
\newcommand{\pl}{\partial}
\newcommand{\oh}{\frac{1}{2}}

\DeclareMathOperator{\cl}{cl}

\begin{document}

\date{\today}
\maketitle

\begin{abstract}
Let $Y$ be a compact, oriented $3$-manifold with a contact form $a$.  For any Dirac operator $\mathcal{D}$, we study the asymptotic behavior of the spectral flow between $\mathcal{D}$ and $\mathcal{D}+{\rm cl}(-\frac{ir}{2}a)$ as $r\to\infty$.  If $a$ is the Thurston--Winkelnkemper contact form whose monodromy is the product of Dehn twists along disjoint circles, we prove that the next order term of the spectral flow function is $\mathcal{O}(r)$.
\end{abstract}

\section{Introduction}
\subsection{Asymptotic spectral flow}
For a pair of purely imaginary-valued $1$-forms $A_0$ and $A_1$,  choose a path of $1$-forms $A(s)$ connecting $A_0$ to $A_1$.  For a Dirac operator $\mathcal{D}$, consider the family of Dirac operators $\{\mathcal{D}_{A(s)} = \mathcal{D} + {\rm cl}(\frac{A(s)}{2})\}_{s\in[0,1]}$, which is $\mathcal{D}$ perturbed by the Clifford action of $A(s)$.  The eigenvalues of each $\mathcal{D}_{A(s)}$ are unbounded from above and below, and vary continuously along the path.  The \emph{spectral flow} is the algebraic count of the zero crossings: a zero crossing contributes to the count with $+1$ if the eigenvalue crosses zero from a negative to a positive value as $s$ increases, and count with $-1$ if the eigenvalue crosses zero from a positive to a negative value as $s$ increases.  If the path is suitably generic, only these two cases arise.  This count is the spectral flow function.  Moreover, Atiyah, Patodi and Singer (\cite[p.95]{ref_APS3}) observed that the spectral flow function is equal to a certain index on $[0,1]\times Y$.  They also proved that this index (\cite[(4.3)]{ref_APS}) is path independent (\cite[p.89]{ref_APS3}).  Therefore, the spectral flow function depends only on the ordered pair $(\mathcal{D}_{A_0},\mathcal{D}_{A_1})$.

If we have a real-valued $1$-form $a$, we can consider the spectral flow with $A_0 = 0$ and $A_1 = -\frac{ir}{2}a$.  The spectral flow can be thought as a function of $r$, which we denote by ${\rm sf}_a(\mathcal{D},r)$.  In \cite[section 5]{ref_Taubes_SW_Weinstein} and \cite{ref_Taubes_sf}, Taubes studied the asymptotic behavior of the spectral flow function as $r\to\infty$.  He proved:

\begin{thm}[\cite{ref_Taubes_SW_Weinstein}] \label{thm_Taubes_estimate}
There exists constants $\delta\in(0,1/2)$ and $c$ with the following significance.  Suppose that $Y$ is a compact, oriented $3$-manifold equipped with a $\text{Spin}^{\mathbb{C}}$-structure, and $\mathcal{D}$ is the corresponding $\text{Spin}^{\mathbb{C}}$-Dirac operator.  Then, for any real-valued $1$-form $a$ with $||a||_{\mathcal{C}^3}\leq 1$, the spectral flow function satisfies
\begin{align}
\big|{\rm sf}_a(\mathcal{D},r)-\frac{r^2}{32\pi^2}\int_Y a\wedge\dd a\big| \leq c\,r^{\frac{3}{2}+\delta} \label{eqn_Taubes_estimate}
\end{align}
for all $r\geq c$.
\end{thm}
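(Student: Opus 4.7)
The plan is to apply the Atiyah--Patodi--Singer (APS) index theorem on the cylinder $W = Y\times[0,r]$ to express the spectral flow as a local curvature integral plus eta-invariant corrections. Consider the family $\mathcal{D}_s = \mathcal{D}+s\cdot{\rm cl}(-\frac{i}{2}a)$ for $s\in[0,r]$, and form the Dirac-type operator $\mathbb{D} = \pl_s + \mathcal{D}_s$ on $W$. With APS boundary conditions, $\operatorname{ind}(\mathbb{D})$ equals $\pm{\rm sf}_a(\mathcal{D},r)$ up to $\oh(\eta(\mathcal{D}_r)+h(\mathcal{D}_r)-\eta(\mathcal{D}_0)-h(\mathcal{D}_0))$, where $h(\cdot)$ denotes kernel dimension, and the APS theorem equates $\operatorname{ind}(\mathbb{D})$ with $\int_W\hat{A}(TW)\wedge{\rm ch}(E)$.

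Next, I would evaluate the local index density. The $U(1)$-part of the twisting connection is $-\frac{i}{2}sa$, so its curvature on $W$ is $F = -\frac{i}{2}(\dd s\wedge a + s\,\dd a)$. Because $\dim Y = 3$ kills $(\dd a)\wedge(\dd a)$, the wedge $F\wedge F$ is proportional to $s\,\dd s\wedge a\wedge\dd a$, and a short calculation shows that the ${\rm ch}_2$ term $\frac{-1}{8\pi^2}F\wedge F$ integrates over $W$ to exactly $\pm\frac{r^2}{32\pi^2}\int_Y a\wedge\dd a$, matching the claimed leading term. The $\hat{A}$-piece and all $r$-independent curvature contributions yield at most $O(r)$ after integration over $[0,r]$, comfortably within the stated error.

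The crux is then to bound $|\eta(\mathcal{D}_r)|+h(\mathcal{D}_r)\leq c\,r^{3/2+\delta}$. From the Weitzenböck formula
\begin{equation*}
\mathcal{D}_r^2 = \nabla^*\nabla + \tfrac{r^2}{4}|a|^2 + r\cdot{\rm cl}\!\left(-\tfrac{i}{4}\dd a\right) + (\text{zeroth order}),
\end{equation*}
any eigenfunction with $|\lambda|\ll r$ must concentrate where $|a|^2 = O(r^{-1})$. A parametrix or Weyl-law analysis on this shrinking tubular region bounds the number of eigenvalues in any unit window by $O(r^{3/2})$. Inserting this count into the (regularized) heat-kernel representation $\eta(\mathcal{D}_r) = \pi^{-1/2}\int_0^\infty t^{-1/2}{\rm Tr}(\mathcal{D}_r e^{-t\mathcal{D}_r^2})\,\dd t$, together with the APS variation formula relating $\frac{d}{dr}\eta(\mathcal{D}_r)$ to a pointwise spectral trace, should yield the desired bound after summing over dyadic scales in $r$. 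This eta-invariant estimate is the main obstacle: since $\{a=0\}$ need not be a submanifold, no uniform local model for small eigenfunctions is available, and one must combine microlocal estimates with careful bookkeeping in both $r$ and location in $Y$. The exponent $\tfrac{3}{2}$ matches the $3$-dimensional Weyl count, and the extra $\delta$ reflects slack in localization and in summing pointwise estimates.
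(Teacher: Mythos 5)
The paper does not reprove this theorem --- it is quoted from \cite{ref_Taubes_SW_Weinstein} and \cite{ref_Taubes_sf} --- so I'm comparing your proposal against Taubes' actual argument, which I know in outline.

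Your reduction of the spectral flow estimate to an estimate on $\eta_r + h_r$ via the APS index formula is exactly right, and your evaluation of the local index density that produces the leading term $\frac{r^2}{32\pi^2}\int_Y a\wedge\dd a$ is correct and standard. That part is sound.

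The fatal problem is in the Weitzenb\"ock step. You have written
\[
\mathcal{D}_r^2 = \nabla^*\nabla + \tfrac{r^2}{4}|a|^2 + r\cdot{\rm cl}\bigl(-\tfrac{i}{4}\dd a\bigr) + \text{(zeroth order)},
\]
treating the $\tfrac{r^2}{4}|a|^2$ term as a confining potential that forces small-$\lambda$ eigensections to localize where $|a|^2 = O(r^{-1})$. This is not the correct Weitzenb\"ock formula. The correct form (as the paper itself records in the proof of Proposition~\ref{prop_beta_estimate_00}) is
\[
\mathcal{D}_r^2 = \nabla_r^*\nabla_r + \tfrac{\kappa}{4} + {\rm cl}\bigl(\tfrac{F_{A_0}}{2}\bigr) - \tfrac{ir}{2}\,{\rm cl}(\dd a),
\]
where the covariant Laplacian $\nabla_r^*\nabla_r$ uses the $r$-dependent connection. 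If one insists on expanding $\nabla_r^*\nabla_r$ in terms of the $r$-independent $\nabla^*\nabla$, one does generate the $\tfrac{r^2}{4}|a|^2$ term, but one also generates the first-order cross term $-\tfrac{ir}{2}(\nabla^*\circ a + a^\sharp\lrcorner\nabla)$, which is $O(r)$ acting on derivatives and therefore capable of cancelling the would-be potential barrier. There is no confining potential picture. Concretely, for a contact form $a$ with an adapted metric one has $|a|\equiv 1$, so your confinement argument would predict that the smallest eigenvalue of $\mathcal{D}_r^2$ grows like $r^2$, i.e.\ \emph{no} small eigenvalues at all --- yet the very theorem you are trying to prove says the spectral flow is asymptotically $\sim r^2$, so there must be on the order of $r^2$ zero-crossings, hence a huge number of small eigenvalues. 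The localization heuristic is inconsistent with the statement being proved.

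Once the concentration claim falls, the subsequent ``Weyl count on a shrinking tubular region'' has nothing to attach to. Taubes' actual proof of the upper bound on eigenvalue counting does not localize near a zero set of $a$; it proceeds through heat kernel parametrix estimates for $\mathcal{D}_r$ at times $t\sim r^{-1}$, a Chern--Simons type bookkeeping, and an analysis of how eigensections with small eigenvalues oscillate transversally to the Reeb/kernel directions rather than where they sit spatially (small eigensections of $\mathcal{D}_r$ are spread out over all of $Y$ and look locally like Landau-level bound states of a magnetic Schr\"odinger operator in the contact plane). The lower bound is obtained by constructing families of approximate zero modes, as this paper also does in its more special setting. So the missing idea is precisely the mechanism producing and controlling small eigenvalues: it is a semiclassical/Landau-level phenomenon coming from the curvature term $-\tfrac{ir}{2}\,{\rm cl}(\dd a)$, not a potential-well phenomenon coming from $|a|^2$.
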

This theorem specifies the leading order term of the spectral flow function, and gives a bound on the next order term.

As mentioned previously, Atiyah, Patodi and Singer (\cite[p.95]{ref_APS3}) showed that the spectral flow gives the index of an associated Dirac operator.  To give the basic idea, we briefly explain the finite dimensional case.  Suppose that $H_-$ and $H_+$ are two $m\times m$, non-degenerate Hermitian matrices.  Connect them by a path of Hermitian matrices: $H(s)$ with $H(s) = H_-$ for $s\leq-1$ and $H(s) = H_+$ for $s\geq 1$.  The zero crossings of the eigenvalues of $H(s)$ is the spectral flow for $H_-$ and $H_+$.  It is easy to see that the spectral flow only depends on $H_-$ and $H_+$, not on the path $H(s)$.  With $H(s)$, define the operator $\mathfrak{D}: \mathcal{C}^{\infty}(\mathbb{R},\mathbb{C}^m)\to\mathcal{C}^{\infty}(\mathbb{R},\mathbb{C}^m)$ by
\begin{align*}
\mathfrak{D} &= \pl_s + H(s)
\end{align*}
where $s$ is the coordinate on $\mathbb{R}$.  This operator $\mathfrak{D}$ is a Fredholm operator.  Its index is given by the spectral flow between $H_-$ and $H_+$.

\subsection{Next order term on a contact $3$-manifold}
Theorem \ref{thm_Taubes_estimate} is one of the main ingredients in the proof of the Weinstein conjecture (\cite{ref_Taubes_SW_Weinstein}).  It is used to obtain the energy bound.  However, theorem \ref{thm_Taubes_estimate} is established for \emph{any} $1$-form.  When $a$ is a \emph{contact form}, much evidence (see below) suggests that the Dirac operator $\mathcal{D}_{-ira}$ is related to the Reeb vector field, and its spectral flow function ${\rm sf}_a(\mathcal{D},r)$ behaves better.  In this paper, we consider the size of the next order term when $a$ is a \emph{contact form}.

\begin{ques*} \label{eqn_question}
For a contact form $a$ with an adapted Riemannian metric, is the next order term of the spectral flow function ${\rm sf}_a(\mathcal{D},r)$ of order $\mathcal{O}(r)$ as $r\to\infty$?
\end{ques*}

  The construction of Dirac operators requires a Riemannian metric.  We always choose an adapted metric to make it easier to compare the spectral flow function.  On a contact $3$-manifold, an \emph{adapted Riemannian metric} is a metric such that $|a|=1$ and $\dd a = 2*\!a$, where $*$ is the Hodge star operator.  The existence of such a metric is proved by \cite{ref_CH}.  In this paper, we give an affirmative answer to the question in following situation:

\begin{main_thm}[Theorem \ref{thm_sf_main_00}]
Suppose that the monodromy of an open book decomposition is the product of Dehn twists along disjoint circles, and $a$ is the associated Thurston--Winkelnkemper contact form.  Then, with a certain adapted metric, the next order term of the spectral flow function of the canonical Dirac operator $\mathcal{D}$ is of order $\mathcal{O}(r)$.  Namely, there exists a constant $c$ such that
\begin{align*} \big|{\rm sf}_a(\mathcal{D},r)-\frac{r^2}{32\pi^2}\int_Y a\wedge\dd a\big|\leq c\,r ~. \end{align*}
for all $r\geq1$.
\end{main_thm}

Here is why such a bound on the next order term is expected.  As $r\to\infty$, the zero modes of the Dirac operators $\mathcal{D}_{-ira}$ have the following properties:
\begin{enumerate}
\item their derivative along the direction of the Reeb vector field is close to the multiplication by $ir/2$;
\item on the contact hyperplane, they almost satisfy the Cauchy--Riemann equation.
\end{enumerate}
The precise statements appear in proposition \ref{prop_beta_estimate_00}.  In this regard, the Dirac equation is very similar to the almost holomorphic equation in \cite{ref_Donaldson} and \cite{ref_IMP}.  This being the case, the general case looks locally like the circle bundle case.

Suppose that $Y$ is a $U(1)$-bundle over a Riemann surface $\Sigma$ with negative Euler number, and $a$ is a connection $1$-form whose curvature form is nowhere zero.  It follows that $-ia$ is a contact form on $Y$.  The adapted metric is chosen to be invariant under the $U(1)$-action.  Its spectral flow function can be computed by the Riemann--Roch formula, and the next order term is of order $\mathcal{O}(r)$.  A careful study of the Dirac operator on such $3$-manifolds can be found in \cite{ref_Nicolaescu}.

The above explanation also suggests that the zero locus of the zero modes $\psi$ is related to the Reeb vector field.  Since the derivative of $\psi$ along the Reeb vector field is about $ir\psi/2$, the derivative of $|\psi|^2$ along the Reeb vector field is small.  Since $\psi$ almost solves the Cauchy--Riemann equation on the contact hyperplane, the number of zeros of $\psi$ on the contact hyperplane should be dominated by $r$.  Therefore, the question of the next order term of the spectral flow can be viewed as the first step to understand the zero modes of $D_{-ira}$.\\

This paper is organized as follows:

Section \ref{sec_prel} provides the background for this paper.  In section \ref{subsec_sf_contact_00}, we set up the conventions of Dirac operators on contact $3$-manifold and spectral flow functions.  Section \ref{subsec_open_book_00} is a review on open book decompositions and the constructions of Thurston--Winkelnkemper contact forms.

Section \ref{sec_estimate_00} contains some basic estimates on the zero modes of $\mathcal{D}_{-ira}$.  Proposition \ref{prop_beta_estimate_00} is the cornerstone of all the estimates in this paper.

In section \ref{sec_set_up_Dirac}, we writes down the Dirac equation on different regions of the open book decomposition.  For the tubular neighborhood of the binding and the Dehn-twist region, we construct a contact form on $S^1\times S^2$.  This contact form captures the geometry of these two regions, and it is easier to study the Dirac equation on $S^1\times S^2$.  On the region with trivial monodromy, almost zero modes of the Dirac operator are constructed by the Riemann--Roch theorem.

In section \ref{sec_S2S1}, we study the Dirac equation on the model manifold $S^1\times S^2$.  There are two main ingredients.  First, we construct an approximation for the zero eigensections of the Dirac operator in this model case.  Second, we cut the interval $[0,r]$ into subintervals such that there are sparse zero crossings near the endpoints of each subinterval.

In section \ref{sec_lower_bound}, we combine above results to obtain the lower bound of the spectral flow function.

Section \ref{sec_dirac_page_00} is a further study of the Dirac equation on the region where the monodromy is trivial.  We describe the boundary behavior of the solutions carefully.

In the last section, we prove the upper bound of the spectral flow function.  Unlike the eigensections of a fixed Dirac operator, zero eigensections of a family of Dirac operators are not necessarily orthogonal to one another.  This section is devoted to overcoming this difficulty.

\begin{ack*}
This paper forms part of the author's Ph.D.\ thesis.  The author would like to thank his thesis advisor Cliff Taubes for his guidance in this project.  He would also like to thank Po-Ning Chen and Valentino Tosatti for helpful discussions, and to thank the anonymous referee for the many useful comments, suggestions and corrections..  Part of this work was done while the author visited MSRI during the academic year 2009--2010.  He is grateful to MSRI and to the organizers of the ``Symplectic and Contact Geometry and Topology" program for their hospitality.
\end{ack*}

\section{Preliminary}\label{sec_prel}
In this section, we set up the background on the spectral flow function and the open book decomposition.

\subsection{Spectral flow of a contact form}\label{subsec_sf_contact_00}
We now review the Dirac operator and its spectral flow.  We focus on the \emph{canonical} Dirac operator perturbed by the contact form.  We will not do the general $\text{\rm Spin}^{\mathbb{C}}$-structure constructions.  See \cite[ch.2 \& ch.3]{ref_Morgan} for a complete treatment for general $\text{\rm Spin}^{\mathbb{C}}$-structures.   Below we mostly follow \cite{ref_Taubes_SW_Weinstein}.

\subsubsection{Canonical $\text{\rm Spin}^{\mathbb{C}}$-structure}\label{subsubsec_sf_contact_00}
Let $Y$ be a closed oriented connected $3$-manifold with a contact form $a$.  Fix an adapted metric on $Y$.  Denote the Reeb vector field by $e_3$.  It has unit length measured by any adapted metric.

Consider the contact hyperplane field $\ker(a)\subset TY$.  With the orientation given by $\dd a$, it is a Hermitian line bundle over $Y$.  More precisely, for any $v\in\ker(a)$, let $J(v)$ be the metric dual of $v\lrcorner\dd a$.  The complex line bundle is spanned by local sections $v-iJ(v)$ with $v\in\ker(a)$.  The Hermitian metric is induced from the adapted metric.  We denote this Hermitian line bundle by $K^{-1}$, and its inverse bundle $K$ is called the \emph{canonical line bundle}.  The bundle $\underline{\mathbb{C}}\oplus K^{-1}$ has a Clifford action
\begin{align*} {\rm cl}: TY \longrightarrow {\rm End}(\underline{\mathbb{C}}\oplus K^{-1})  \end{align*}
defined as follows.  The bundle $\underline{\mathbb{C}}$ is the pull-back of $\mathbb{C}$ over a point by the map $Y\to\text{point}$.  Let ${\bf 1}_{\underline{\mathbb{C}}}$ be the pull-back of $1$ by the map $Y\to\text{point}$.  For any oriented orthonormal frame on the contact hyperplane $\{e_1,e_2\}$, $\frac{1}{\sqrt{2}}(e_1-ie_2)$ is a unit-normed, trivializing section of $K^{-1}$.  With this trivialization, the Clifford action is given by the Pauli matrices
\begin{align*}
\cl(e_1) &= \left[\begin{array}{cc} 0&-1\\1&0 \end{array}\right] ~,
&\cl(e_2) &= \left[\begin{array}{cc} 0&i\\i&0 \end{array}\right] ~,
&\cl(e_3) &= \left[\begin{array}{cc} i&0\\0&-i \end{array}\right] ~.
\end{align*}
The bundle $\underline{\mathbb{C}}\oplus K^{-1}$ together with this Clifford action is called the \emph{canonical $\text{\rm Spin}^{\mathbb{C}}$-structure}.

\subsubsection{Dirac operator}
A \emph{$\text{\rm Spin}^{\mathbb{C}}$-connection} on the canonical $\text{\rm Spin}^{\mathbb{C}}$-structure is a Hermitian connection $\nabla_A$ on $\underline{\mathbb{C}}\oplus K^{-1}$, which is compatible with the Clifford action in the following sense: 
\begin{align*} \nabla_A(\cl(v)\psi) &= \cl(\nabla v)\psi + \cl(v)\nabla_A\psi  \end{align*}
for any vector field $v$ and any section $\psi$ of $\underline{\mathbb{C}}\oplus K^{-1}$.  Here $\nabla v$ is the covariant derivative of $v$ with respect to the Levi-Civita connection on $TY$.

Given a $\text{Spin}^{\mathbb{C}}$-connection, define the \emph{Dirac operator} to be the composition
\begin{align*} \mathcal{C}^{\infty}(\underline{\mathbb{C}}\oplus K^{-1}) \stackrel{\nabla_A}{\longrightarrow} \mathcal{C}^{\infty}(T^*Y\otimes(\underline{\mathbb{C}}\oplus K^{-1})) \stackrel{\cl}{\longrightarrow} \mathcal{C}^{\infty}(\underline{\mathbb{C}}\oplus K^{-1}) \end{align*}
The Clifford action is extended to the cotangent bundle by the metric dual.

According to \cite[lemma 10.1]{ref_Hutchings}, there exists a unique $\text{\rm Spin}^{\mathbb{C}}$-connection such that the section $({\bf 1}_{\underline{\mathbb{C}}},0)$ is annihilated by the associated Dirac operator.  This connection is called the \emph{canonical connection}.  We denote it by $\nabla_0$, and denote its associated Dirac operator by $D_0$.  Perturb the canonical connection by $-ira/2$ with $r\geq0$
\begin{align}\label{eqn_can_conn_00}  \nabla_r = \nabla_0 - \frac{ir}{2}a ~,  \end{align}
and consider the corresponding Dirac operator
\begin{align}\label{eqn_can_Dirac_00}  D_r = D_0 + {\rm cl}(\frac{-ir}{2}a) ~.  \end{align}
Note that ${\rm cl}(a)$ acts as $i$ on the $\underline{\mathbb{C}}$-component and as $-i$ on the $K^{-1}$-component.

\subsubsection{Local expression of the canonical connection}
We now write down the canonical connection in terms of the trivialization explained in \ref{subsubsec_sf_contact_00}.  The complete treatment of the local computation of a $\text{\rm Spin}^{\mathbb{C}}$-connection and its Dirac operator can be found in \cite[section 3.2 \& 3.3]{ref_Morgan}.

Let $\{e_1,e_2,e_3\}$ be a local orthonormal frame as explained in \ref{subsubsec_sf_contact_00}, and let $\{\theta_j^k\}_{j\neq k}$ be the Levi-Civita connection $1$-form.  Namely, $\nabla e_j = \sum_{k\neq j}\theta_j^k\otimes e_k$ where $\nabla$ is the Levi-Civita connection.  Using the trivialization $\{{\bf 1}_{\underline{\mathbb{C}}},\frac{1}{\sqrt{2}}(e_1-ie_2)\}$ of $\underline{\mathbb{C}}\oplus K^{-1}$, we identify the local sections with $\mathbb{C}^2$-valued functions.  We claim that the canonical connection $\nabla_0\psi$ is
\begin{align}\begin{split} \label{eqn_can_conn_01}
\dd \psi - \frac{1}{2}\big(\theta_1^2\cl(e_3) + \theta_3^1\cl(e_2) + \theta_2^3\cl(e_1)\big)(\psi) + \frac{i}{2}( - 2a + \theta_1^2)\psi
\end{split}\end{align}
where $\dd \psi$ is the usual exterior derivative of a $\mathbb{C}^2$-function.  According to \cite[(3.3)]{ref_Morgan}, (\ref{eqn_can_conn_01}) defines a $\text{\rm Spin}^{\mathbb{C}}$-connection.

To prove that the above expression is the canonical connection, it remains to check that $({\bf 1}_{\underline{\mathbb{C}}},0)$ is annihilated by the associated Dirac operator.  Let $\{\omega^1,\omega^2,\omega^3\}$ be the dual coframe of $\{e_j\}$.  Since the metric is \emph{adapted}, $\omega^3$ is the contact form $a$.  By comparing $\dd a = 2\omega^1\wedge\omega^2$ with the structure equation $\dd\omega^3 = -\sum_{j\neq3}\theta_j^3\wedge\omega^j$, we conclude that $\theta_1^3(e_3) = 0 =\theta_2^3(e_3)$ and $\theta_1^3(e_2) - \theta_2^3(e_1) = 2$.  By calculating $\dd^2 a = 0 = 2\dd(\omega^1\wedge\omega^2)$ in terms of the structure eqaution, we conclude that $\theta_1^3(e_1) + \theta_2^3(e_2) = 0$.  With these relations, the Dirac operator of (\ref{eqn_can_conn_01}) is
\begin{align}\label{eqn_can_Dirac_01}
\sum_{i=1}^3\cl(e_i)e_i(\psi) +
\left[\begin{array}{cc}
0 & -i\theta_1^2(e_1) - \theta_1^2(e_2) \\
0 & -2 + \theta_1^2(e_3)
\end{array}\right]\psi ~.
\end{align}
It is clear that $({\bf 1}_{\underline{\mathbb{C}}},0)$ is annihilated by the operator.  Thus, the local expression of the canonical connection $\nabla_0$ is given by (\ref{eqn_can_conn_01}), and the local expression of its associated Dirac operator $D_0$ is given by (\ref{eqn_can_Dirac_01}).

\subsubsection{Spectral flow function}\label{subsec_sf_intro}
We now define the spectral flow function for the family of Dirac operators $\{D_s\}_{s\in[0,r]}$.  The construction is borrowed from \cite[subsection 5.1]{ref_Taubes_SW_Weinstein}.

The spectral flow for the family $\{D_s\}_{s\in[0,r]}$ is defined with the help of a certain stratified, real-analytic set in $\mathbb{R}\times[0,r]$.  This set is denoted by $\mathcal{E}$.  Its stratification is given by
\begin{align*} \mathcal{E}=\mathcal{E}_1\supset\mathcal{E}_2\supset\cdots, \end{align*}
where $\mathcal{E}_l$ of the set of pairs $(\lambda,s)$ such that $\lambda$ is an eigenvalue of $D_s$ with multiplicity $l$ or greater.  Each $\mathcal{E}_l$ is a closed set.  Moreover, as can be proved using the results in \cite[chapter 7]{ref_Kato}, each $\mathcal{E}_{l^*}=\mathcal{E}_l - \mathcal{E}_{l+1}$ is an open and real analytic submanifold of $\mathbb{R}\times[0,r]$.  The collection $\{\mathcal{E}_{l^*}\}$ are called the \emph{smooth strata} of $\mathcal{E}$.  When the $1$-dimensional smooth strata are oriented by the pull-back from $\mathbb{R}\times[0,r]$ of the $1$-form $\dd s$.  The zero dimensional strata can be consistently oriented so that the formal, weighted sum $\mathcal{E}_* = \sum_{l\in\mathbb{N}}\mathcal{E}_{l^*}$ defines a locally \emph{closed} cycle in $\mathbb{R}\times[0,r]$.  It also follows from the results in \cite[chapter 7]{ref_Kato} that
\begin{align*} \sum_{l\in\mathbb{N}}\int_{\mathcal{E}_{l^*}}\dd h = 0 \end{align*}
for any smooth function $h$ on $\mathbb{R}\times(0,r)$ with compact support.

Sard's theorem finds a dense, open set $\mathbb{U}\subset\mathbb{R}$ with the property that the two maps from a point $\star$ to $\mathbb{R}\times[0,r]$ that send $\star$ to $(\lambda,0)$ and to $(\lambda,r)$, respectively, are both transverse to the smooth strata of $\mathcal{E}$ for all $\lambda\in\mathbb{U}$.  With this understood, the spectral flow for $\{D_s\}_{s\in[0,r]}$ is defined as follows.  Fix $\lambda_0\in\mathbb{U}$ and $\lambda_0<0$.  By Sard's theorem, there exist smooth, oriented paths $\sigma\subset\mathbb{R}\times[0,r]$ that start at $(\lambda_0,0)$, end at $(\lambda_0,r)$, and are transverse to the smooth strata of $\mathcal{E}$.  Such a path has the following well-defined intersection number with $\mathcal{E}$:
\begin{align*}
{\rm sf}_a(r,\lambda_0) = \sum_{l\in\mathbb{N}}\sum_{p\in\sigma\cap\mathcal{E}_{l^*}}\iota(p) l ~, \end{align*}
where $\iota(p)\in\{-1,1\}$ is the sign of intersection.  To describe the sign of intersection, suppose that $\sigma$ is the graph of a smooth function from $[0,r]$ to $\mathbb{R}$.  The pull-back of $\dd\lambda$ from $\mathcal{E}$ to $\mathbb{R}\times[0,r]$ at a point $(\lambda,u)$ can be written as $\lambda'\dd u$ with
\begin{align}\label{eqn_dlambda}
\lambda' = \int_Y \langle\psi,{\rm cl}(\frac{-i}{2}a)\psi\rangle ~,
\end{align}
where $\psi$ is a unit-normed eigensection of $D_u$ with eigenvalue $\lambda$.  The sign of $\lambda'$ at an intersection point with the image of a graph $\sigma$ is the factor $\iota(p)$.

If $\lambda_0$ is sufficiently close to $0$, ${\rm sf}_a(r,\lambda_0)$  is independent of $\lambda_0$.  The \emph{spectral flow function} for $\{D_u\}_{u\in[0,r]}$ is defined to be
\begin{align*} {\rm sf}_a(r) = \lim_{\lambda_0\to 0^-}{\rm sf}_a(r,\lambda_0) ~. \end{align*}

\subsection{Open book decomposition}\label{subsec_open_book_00}
We briefly review open book decompositions of disjoint Dehn twists.  For a complete discussion of open book decompositions, see \cite[chapter 9]{ref_OS} or \cite{ref_Etnyre} and the references therein.  The notations introduced in this section will be used throughout the paper.

\subsubsection{Open book decomposition}
An \emph{(abstract) open book decomposition} consists of a Riemann surface $\bar{\Sigma}$ with boundaries $\{C_j\}_{j\in J}$, and a self-diffeomorphism $\tau$ which is the identity near the boundary.  The map $\tau$ is called the \emph{monodromy} of the open book.  The $3$-manifold $Y$ is obtained by the following construction.  First, form the mapping torus
$$ \bar{\Sigma} \times_\tau{S}^1 = \frac{\bar{\Sigma}\times[0,2\pi]}{(p,2\pi)\sim(\tau(p),0)} ~. $$  Its boundary is the disjoint union of tori, $\coprod_{j}C_j \times{S}^1 $.  Next, attach solid tori $\coprod_{j}{S}^1\times{D}^2$ to $\bar{\Sigma} \times_\tau{S}^1$, where the longitude is identified with the $C_j$-factor, and the meridian is identified with the ${S}^1$-factor.  Note that there is an ${S}^1$-family of $\bar{\Sigma}$ in $Y$, and they are referred as the \emph{pages}.  The cores of the attached solid tori are called the \emph{bindings}.

In the next section, the handle attaching will be described explicitly in terms of local coordinates.

\subsubsection{Contact form}
Given an open book decomposition, Thurston and Winkelnkemper \cite{ref_Thurston_W} construct a contact form $a$ on it, which has the following significance:
\begin{itemize}
\item on the mapping torus $\bar{\Sigma}\times_\tau{S}^1$, the Reeb vector field is transverse to the pages, and ${\rm d}a$ restricted on the pages is an area form;
\item on the attaching solid tori, $a$ is of a certain standard form, and the bindings are Reeb orbits.
\end{itemize}

By the celebrated work of Giroux \cite{ref_Giroux}, this construction produces every contact structure up to isotopy.

We now describe the contact form.  We first explain the case when the monodromy is the identity map, then do the case when the monodromy is the product of Dehn twists along disjoint circles.  In what follows, $\epsilon$ is a constant smaller than $1/100$.  The precise value of $\epsilon$ will be chosen in the construction of Dehn twists.

\subsubsection*{With trivial monodromy}
If the monodromy is the identity map, the mapping torus is $\bar{\Sigma}\times S^1$.  Near each boundary circle $C_j$, let $\rho e^{it}$ be the coordinate on its collar neighborhood where $\rho\in[1,1+20\epsilon)$ and $C_j = \{\rho=1\}$.  Choose a $1$-form $\mu_{\bar{\Sigma}}$ on $\bar{\Sigma}$ satisfying
\begin{itemize}
\item $\dd\mu_{\bar{\Sigma}}$ is an area form on $\bar{\Sigma}$;\smallskip
\item $2\mu_{\bar{\Sigma}} = (2-\rho)\dd t$ on the collar neighborhood of $C_j$.
\end{itemize}
Such $\mu_{\bar{\Sigma}}$ always exist (\cite[p.141]{ref_OS}).  Let $e^{i\phi}$ be the coordinate on the $S^1$-component of $\bar{\Sigma}\times S^1$.  The contact form on this region is taken to be
\begin{align} \label{eqn_contact_identity_00} a &= V\dd\phi+2\mu_{\bar{\Sigma}} \end{align}
where $V$ is a constant greater than $1$.  Later, $V$ will be adjusted to be a larger constant.  We remark that the total area of $\dd\mu_{\bar{\Sigma}}$ is given by
\begin{align*}
\int_{\bar{\Sigma}}\dd\mu_{\bar{\Sigma}} = \sum_{j}\int_{C_j}\mu_{\bar{\Sigma}} = \pi\cdot\#\{\text{boundary components}\} ~.
\end{align*}

\subsubsection*{Attaching handles}
Let $(e^{it},\rho e^{i\phi})$ be the coordinate on the attaching solid torus $S^1\times D^2$ where $\rho e^{i\phi}$ is the polar coordinate and $\rho\leq1$.  The handle attaching is done by identifying the coordinates with the above region.  Choose two smooth functions $f$ and $g$ which only depend on $\rho$ such that
\begin{itemize}
\item when $\rho\in[1-5\epsilon,1]$, the function $f$ is $V$, and $g$ is $2-\rho$;\smallskip
\item when $\rho\in[0,10\epsilon]$, the function $f$ is $\rho^2$, and $g$ is $2-\rho^2$;\smallskip
\item $f'(\rho)\geq 0$, and $g'(\rho)<0$ except at $\rho=0$.
\end{itemize}
It is not hard to see the existence of $f$ and $g$.  The contact form on the attaching solid torus is taken to be
\begin{align} \label{eqn_contact_binding_00} a &= f(\rho)\dd\phi + g(\rho)\dd t ~. \end{align}
When $\rho<10\epsilon$, it is equal to $x\dd y - y\dd x + (2-x^2-y^2)\dd t$ in terms of the rectangular coordinate $x+iy = \rho e^{i\phi}$.  Hence, the $1$-form $a$ is smooth on the solid torus $S^1\times D^2$.

\subsubsection*{Disjoint Dehn twists}
A {Dehn twist} along a simple closed curve $\Gamma$ is a certain type of monodromy.  Roughly speaking, it is obtained by cutting a collar neighborhood of $\Gamma$, twisting $2\pi$ to the right and re-gluing.  The precise description in terms of local coordinates will be given shortly.

If the monodromy is the product of Dehn twists along disjoint circles $\{\Gamma_l\}$, the contact form only needs to be modified on the tubular neighborhood of $\{\Gamma_l\}$.

Note that $\dd\mu_{\bar{\Sigma}}$ is a symplectic form and $\Gamma_l$ is a Lagrangian submanifold.  By the Weinstein tubular neighborhood theorem, there exists a coordinate $(\rho, e^{it})$ on a tubular neighborhood of $\Gamma_l$ such that $\dd\mu_{\bar{\Sigma}} = \dd t\wedge\dd\rho$ with $\Gamma_l=\{\rho=0\}$.  Choose $\epsilon$ small enough so that $\rho\in(-35\epsilon,35\epsilon)$.  By adding the differential of a smooth function, we may assume that $2\mu_{\bar{\Sigma}}$ is $2(v_l-\rho)\dd t$ on the tubular neighborhood of $\Gamma_l$.  The period constant $2\pi v_l = \int_{\Gamma_l}\mu_{\bar{\Sigma}}$ is determined by the original choice of $\mu_{\bar{\Sigma}}$.

To perform the Dehn twist along $\Gamma_l$, choose a smooth function $\sigma_l(\rho)$ satisfying
\begin{align*}
\sigma_l(\rho) = \begin{cases}
0 &\text{when }\rho\leq -10\epsilon \\
\pm N_l &\text{when }\rho\geq10\epsilon
\end{cases} \end{align*}
where the plus/minus sign corresponds to the right-handed/left-handed (positive/negative) Dehn twist, and $N_l\in\mathbb{N}$ corresponds to the power of the Dehn twist.  With $\sigma_l$ chosen, the ($\pm N_l$)-Dehn twist along $\Gamma_l$ is given by $(\rho,e^{it})\mapsto(\rho,e^{i(t+2\pi\sigma_l(\rho))})$.  The mapping torus of the tubular neighborhood of $\Gamma_l$ is
\begin{align*}
\frac{(-35\epsilon,35\epsilon)\times S^1\times[0,2\pi]}{(\rho,e^{it},2\pi)\sim({\rho},e^{i(t + 2\pi\sigma_l({\rho}))},0)} ~.
\end{align*}
Let $\phi$ be the coordinate on the interval $[0,2\pi]$.  If we take $V$ to be a large constant such that
\begin{align} \label{eqn_contact_Dehn_00}
V &\geq 1+ 2\big|(v_l-\rho)^2\sigma'_l(\rho)\big| + 2\big|(v_l-\rho)\sigma_l(\rho)\big| \end{align}
for $|\rho|<35\epsilon$, then the $1$-form
\begin{align} \label{eqn_contact_Dehn_01}
a &= V\dd\phi + 2(v_l-\rho)\dd t + 2\phi(v_l-\rho)\sigma'_l(\rho)\dd\rho
\end{align}
is a contact form which coincides with (\ref{eqn_contact_identity_00}) when $|\rho|>10\epsilon$.  More precisely, (\ref{eqn_contact_Dehn_01}) is a contact form on $(-35\epsilon,35\epsilon)\times S^1\times[0,2\pi]$, and is invariant under the identification map $(\rho,e^{it},\phi)\mapsto(\rho,e^{i(t+2\pi\sigma_l(\rho))},\phi-2\pi)$.

\begin{defn}
Let $\Sigma$ be the surface obtained by cutting out the tubular neighborhood of $\Gamma_l$, $\{|\rho|<20\epsilon\}$, from $\bar{\Sigma}$.  The monodromy is the identity on $\Sigma$.  The restriction of $\mu_{\bar{\Sigma}}$ on $\Sigma$ is denoted by $\mu_\Sigma$.
\end{defn}

\subsection{Spectral flow estimate}  With the above construction, here is the precise statement of our main result:

\begin{thm} \label{thm_sf_main_00}
Suppose that the monodromy of the open book decomposition is the product of Dehn twists along some disjoint circles.  For the contact form $a$ given by (\ref{eqn_contact_identity_00}), (\ref{eqn_contact_binding_00}) and (\ref{eqn_contact_Dehn_01}), there exist an adapted Riemannian metric and a constant $c$ such that
\begin{align*}
\Big| {\rm sf}_a(r) - \frac{r^2}{32\pi^2}\int_Y a\wedge\dd a \Big| \leq c\,r
\end{align*}
for all $r\geq 1$.
\end{thm}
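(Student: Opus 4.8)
The plan is to prove the estimate by establishing a matching lower bound and upper bound for $\mathrm{sf}_a(r)$, each within $O(r)$ of the leading term $\frac{r^2}{32\pi^2}\int_Y a\wedge\mathrm da$. The starting point is Taubes' Theorem 1.1, which already gives the leading term and an error of $O(r^{3/2+\delta})$; the task is to improve the error to $O(r)$ using the special geometry of the Thurston--Winkelnkemper form. The overall strategy is a localization argument: as $r\to\infty$, zero modes of $D_s$ concentrate (their Reeb derivative is $\approx s/2$ and they nearly satisfy a Cauchy--Riemann equation on the contact planes, cf.\ the discussion after the Main Theorem), so the problem decomposes according to the pieces of the open book --- the pages $\Sigma\times S^1$ where the monodromy is trivial, the Dehn-twist regions near each $\Gamma_l$, and the binding neighborhoods. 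On each Dehn-twist region and each binding neighborhood one compactifies to the model $S^2\times S^1$ (subsection \ref{subsec_asso_contact_00}), where the Dirac equation can be analyzed almost explicitly.

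The key steps, in order: \textbf{(1)} Establish the basic concentration/decay estimates of Section \ref{sec_estimate_00}, with Proposition \ref{prop_beta_estimate_00} as the cornerstone, controlling the $K^{-1}$-component of zero modes and their derivatives in terms of $r$. \textbf{(2)} Write the Dirac equation region by region (Section \ref{sec_set_up_Dirac}); on the page region $\Sigma\times S^1$ construct almost-zero modes via Proposition \ref{prop_page_sln_00}, exploiting the $S^1$-symmetry there (Section \ref{sec_dirac_page_00}: Lemma \ref{lem_Drn_rn_large}, Proposition \ref{prop_Drn_rn_small}, Lemma \ref{lem_decomp_sigma_00} show only one Fourier frequency carries a large component and describe its boundary behavior). \textbf{(3)} In the model $S^2\times S^1$ (Section \ref{sec_S2S1}) build a second-order expansion of the zero modes (subsection \ref{subsec_2nd_order_S2S1}) and, crucially, prove Lemma \ref{lem_divide_interval_00}: the interval $[0,r]$ can be partitioned into subintervals whose endpoints $r_k$ are chosen so that zero crossings are sparse near each $r_k$. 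This lets one count spectral flow subinterval by subinterval with controlled overlap error. \textbf{(4)} Assemble the lower bound $\mathrm{sf}_a(r)\geq \frac{r^2}{32\pi^2}\int_Y a\wedge\mathrm da - cr$ in Section \ref{sec_lower_bound} by exhibiting enough sign-definite zero crossings (via the formula $\lambda' = \int_Y\langle\psi,\mathrm{cl}(-\tfrac i2 a)\psi\rangle$, whose sign is governed by the dominant component of $\psi$). \textbf{(5)} Prove the matching upper bound (last section) by bounding the number of zero modes that can appear over each subinterval; the subtlety, flagged explicitly in the excerpt, is that zero modes of \emph{different} operators $D_s$ in the family need not be orthogonal, so one needs a quantitative near-orthogonality estimate (again powered by Proposition \ref{prop_beta_estimate_00} and the concentration estimates) to convert an $L^2$-dimension count into an honest bound on the crossing count. \textbf{(6)} Combine the two bounds to conclude.

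The main obstacle, I expect, is step (5): the near-orthogonality of zero modes across the family. For a single self-adjoint operator eigensections with distinct eigenvalues are automatically orthogonal, but here one must compare zero modes of $D_{s}$ and $D_{s'}$ for $s\neq s'$ in the same subinterval, and the naive bound on $\langle\psi_s,\psi_{s'}\rangle$ is not small enough. Resolving this requires using the sharp decay of the $K^{-1}$-component together with the fact that the dominant $\underline{\mathbb C}$-component of a zero mode is, to leading order, an $r$-independent model section times a rapidly oscillating phase, so that the inner product of two such sections for nearby parameters is either $\approx 1$ (if they ``match'') or genuinely small. Making this dichotomy quantitative and uniform, and feeding it into a counting argument that survives summation over the $\sim r$ subintervals without losing a power of $r$, is the heart of the matter. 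A secondary difficulty is ensuring the model-case analysis on $S^2\times S^1$ glues to the page region with errors that are genuinely $O(r)$ and not merely $o(r^{3/2+\delta})$; this is where the second-order expansion of subsection \ref{subsec_2nd_order_S2S1} and the careful choice of the partition points in Lemma \ref{lem_divide_interval_00} are essential.
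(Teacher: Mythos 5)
Your proposal accurately reproduces the paper's strategy in its essential structure: the decomposition by open-book pieces, the compactification of the binding and Dehn-twist regions to $S^2\times S^1$, the concentration estimates of Proposition \ref{prop_beta_estimate_00} as the driving engine, the second-order expansion and the partition lemma \ref{lem_divide_interval_00}, the lower bound via exhibiting almost-eigensections, and the upper bound via near-orthogonality of projected zero modes across the family (which the paper converts into a dimension count via the Welch bound, Lemma \ref{lem_Welch_dim_00}). This is essentially the paper's own proof, modulo the framing remark that Taubes' theorem is a ``starting point'' --- the paper proves the $O(r)$ bound directly rather than improving Taubes' error term.
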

The proof is done by the gluing construction.  For simplicity, we will assume that there is only one Dehn twist and one binding, and we will suppress the subscript $l$.  If there is more than one Dehn twist or binding, the argument is essentially the same.

\begin{rem*}
Throughout this paper, the constants $c$ only depend on the contact form and the adapted Riemannian metric, and do \emph{not} depend on $r$.  Within each proof, the subscript of the constants $c_{(*)}$ is only for indicating that they might change (usually increase) after each step.  The constants $c_{(*)}$ in two different proofs have nothing to do with each other.
\end{rem*}

\section{Some estimates}\label{sec_estimate_00}
The purpose of this section is to derive some basic estimates on the zero eigensections of $D_r$.  It is crucial that the perturbation term of $D_r$ is the contact form.  The estimates in this section do not involve the open book decomposition.

We write a section $\psi$ of the canonical $\text{\rm Spin}^{\mathbb{C}}$ bundle $\underline{\mathbb{C}}\oplus K^{-1}$ as $(\alpha,\beta)$.  We will refer to $\alpha$ and $\beta$ as the first and the second component of $\psi$, respectively.  Remember that under the Clifford action, $a$ acts as $i$ on the first component and acts as $-i$ on another component.

\begin{prop}\label{prop_beta_estimate_00}
For any $\delta_1 \geq 0$, there is a constant $c$ determined by the contact form and the adapted metric such that the following holds.
\begin{enumerate}
\item Suppose that $\psi=(\alpha,\beta)$ is a eigensection of $D_r$ for some $r\geq c$, and the magnitude of the corresponding eigenvalue is less then or equal to $\delta_1$.  Then
\begin{align*}
\int_Y |\beta|^2 + r^{-1}\int_Y|\nabla_r\beta|^2 \leq c\,r^{-1} \int_Y |\alpha|^2 ~.
\end{align*}
Hence, $\int_Y |D_r\beta|^2 \leq c \int_Y |\alpha|^2$.
\item Furthermore, suppose that there is a $\mathcal{C}^1$-family of eigenvalues $\lambda(s)$ of $D_{r+s}$ near some $r\geq c$, and $|\lambda(0)|\leq\delta_1$.  Then
\begin{align*}
|\lambda'(0) - \oh| &\leq \frac{c}{r}.
\end{align*}
Therefore, there exist only positive zero crossings for the spectral flow for $r\geq c$.
\end{enumerate}
\end{prop}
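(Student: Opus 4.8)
The plan is to derive everything from the Weitzenböck/Bochner formula for $D_r$ on a contact $3$-manifold with an adapted metric. Writing $\psi = (\alpha,\beta)$ for the splitting of the canonical $\text{Spin}^{\mathbb C}$ bundle into the $\pm i$ eigenspaces of $\mathrm{cl}(a)$, the key algebraic feature is that $D_r$ "shifts" the two components: the operator decomposes as a first-order operator whose diagonal part involves $\tfrac{r}{2}\mathrm{cl}(a)$, acting as $+\tfrac{ir}{2}$ on $\alpha$ and $-\tfrac{ir}{2}$ on $\beta$, and this explicit constant comes from $\mathrm{cl}(-\tfrac{ir}{2}a)$ together with the curvature term $\tfrac14 a\wedge\mathrm d a = \tfrac12 *a$ that appears when one squares $D_0$. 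First I would write $D_r^*D_r\psi = \lambda^2\psi$, take the $L^2$ inner product with $\psi$, and organize the resulting identity so that the $\beta$-component is controlled. The heuristic is that $D_r$ acting on the $\beta$-summand looks like $-\tfrac{ir}{2} + (\text{bounded derivatives})$, so $\big|(D_r\psi)_\beta\big|^2 \gtrsim (\tfrac{r}{2})^2|\beta|^2 - c(|\nabla\beta|\,|\beta| + |\alpha|^2 + \ldots)$; integrating, absorbing the cross terms, and using $|\lambda|\le\delta_1$ should yield $\int_Y|\beta|^2 \le cr^{-2}\int_Y(|\alpha|^2 + r|\beta|^2 + \ldots)$, and a second application (or differentiating the Bochner identity) controls $\int_Y|\nabla_r\beta|^2$. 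After rearranging one obtains $\int_Y|\beta|^2 + r^{-1}\int_Y|\nabla_r\beta|^2 \le cr^{-1}\int_Y|\alpha|^2$, and then $\int_Y|D_r\beta|^2 \le c\int_Y|\alpha|^2$ follows because $D_r\beta$ is, up to the bounded-coefficient first-order terms, $-\tfrac{ir}{2}\beta$ plus $\nabla_r\beta$, whose $L^2$ norms are both $\le c\int_Y|\alpha|^2$ by what was just proved.

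For part (ii) I would use the standard first-variation formula for a $\mathcal C^1$-family of eigenpairs. Differentiating $D_{r+s}\psi(s) = \lambda(s)\psi(s)$ at $s=0$, pairing with $\psi(0)$, and using that $\psi(0)$ is an eigensection gives
\begin{align*}
\lambda'(0) = \big\langle \tfrac{\mathrm d}{\mathrm ds}D_{r+s}\big|_{s=0}\,\psi(0),\,\psi(0)\big\rangle = \big\langle \mathrm{cl}(\tfrac{-i}{2}a)\psi(0),\,\psi(0)\big\rangle,
\end{align*}
for a unit-norm $\psi(0)$; this is exactly the quantity $\lambda'$ appearing in the description of the spectral flow sign in subsection \ref{subsec_sf_contact_00}. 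Since $\mathrm{cl}(-\tfrac{i}{2}a)$ acts as $+\tfrac12$ on the $\alpha$-summand and $-\tfrac12$ on the $\beta$-summand, this equals $\tfrac12\int_Y(|\alpha|^2 - |\beta|^2) = \tfrac12 - \int_Y|\beta|^2$. Now invoke part (i): $\int_Y|\beta|^2 \le cr^{-1}\int_Y|\alpha|^2 \le cr^{-1}$, so $|\lambda'(0) - \tfrac12| \le cr^{-1}$. In particular, for $r$ larger than some $c$ this is positive, so every zero crossing has $\lambda' > 0$, i.e. is a positive crossing, which is the final assertion.

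The main obstacle is part (i): getting the clean $r^{-1}$ (not just $r^{-1/2}$ or $o(1)$) decay of $\int_Y|\beta|^2$ and the matching control on $\int_Y|\nabla_r\beta|^2$. This requires bookkeeping the Bochner formula carefully enough that the leading $\tfrac{r^2}{4}|\beta|^2$ term is isolated with the correct constant, that the cross terms coupling $\alpha$ and $\beta$ and the derivative terms $|\nabla\beta|\,|\beta|$ are absorbed using Cauchy–Schwarz with weights chosen so nothing of order $r^{-1}\int|\alpha|^2$ is lost, and that the contributions of the bundle curvature and the connection on $K^{-1}$ (which are $r$-independent) are genuinely lower order. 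One must also be slightly careful that the adapted-metric hypotheses $|a|=1$, $\mathrm d a = 2*a$ are used to pin down the relevant Clifford-algebra identities, e.g. $\mathrm{cl}(*a) = \mathrm{cl}(a)$-type relations in dimension three. Once the Bochner identity is written in the right form the rest is routine; the delicacy is entirely in not being wasteful in the estimates, and in the self-bootstrapping step that upgrades a first crude bound on $\int_Y|\beta|^2$ to the sharp one. Part (ii) is then essentially a one-line consequence, modulo the standard justification that the family is differentiable (which is supplied by the Kato-theory input already cited for the stratification of $\mathcal E$).
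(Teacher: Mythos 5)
Part (ii) of your proposal is exactly right and matches the paper: differentiating the eigenvalue equation, pairing with the unit eigensection, and using $\langle{\rm cl}(-\tfrac{i}{2}a)\psi,\psi\rangle = \tfrac12(|\alpha|^2-|\beta|^2)$ gives $\lambda'(0)=\tfrac12-\int_Y|\beta|^2$, which part (i) then controls.

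Part (i) has a genuine gap. You propose pairing the Weitzenb\"ock identity with $\psi$, and you separately sketch a direct-squaring heuristic $|(D_r\psi)_\beta|^2\gtrsim\tfrac{r^2}{4}|\beta|^2-\cdots$; neither route yields the sharp $r^{-1}$ decay. The $r$-dependent curvature term in the Weitzenb\"ock formula for $D_r^2$ is \emph{linear} in $r$, namely $-ir\,{\rm cl}(*a)$ (from $F_{A_r}=F_{A_0}-ir\,\dd a$ together with the adapted-metric identity $\dd a=2*a$), and it acts as the scalar $-r$ on the $\alpha$-summand and $+r$ on the $\beta$-summand. If you pair with all of $\psi$, the right-hand side therefore contains $-r\int_Y|\alpha|^2 + r\int_Y|\beta|^2$, and the $\alpha$-piece has the wrong sign: after rearranging you only obtain $\int_Y|\beta|^2\lesssim\int_Y|\alpha|^2$, a full factor of $r$ worse than claimed. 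The direct-squaring heuristic has a different problem: expanding $\int_Y|(D_r\psi)_\beta|^2$ produces a cross term of schematic form $r\int_Y\langle\beta,\bar\partial_r\alpha\rangle$, which after integration by parts requires an a priori bound on $\int_Y|\nabla_r\beta|^2$ --- precisely what you are trying to establish, so the argument bootstraps circularly. The paper's resolution is to pair the Weitzenb\"ock identity with the section $(0,\beta)$ alone. Then the left side is $\lambda^2\int_Y|\beta|^2\leq\delta_1^2\int_Y|\beta|^2$, the curvature term contributes $+r\int_Y|\beta|^2$ with no competing $\alpha$-piece, and $\int_Y\langle\nabla_r^*\nabla_r\psi,(0,\beta)\rangle$ equals $\int_Y|\nabla_r\beta|^2$ plus cross terms $\langle N(\nabla_r\alpha)+N'(\alpha),\beta\rangle$, where $N,N'$ are bounded, $r$-independent zeroth-order operators built from $\nabla a$; a single integration by parts and Cauchy--Schwarz absorbs these into $\tfrac12\int_Y|\nabla_r\beta|^2+c\int_Y(|\alpha|^2+|\beta|^2)$. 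The sharp bound and the gradient estimate then fall out in one pass, with no $r^2$ cancellation and no bootstrap.
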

\begin{proof}
The Weitzenb\"ock formula (\cite[proposition 5.1.5]{ref_Morgan}) reads
\begin{align*} D_r^2\psi = \nabla_r^*\nabla_r\psi + \frac{\kappa}{4}\psi + {{\rm cl}}(\frac{F_{A_0}}{2})\psi - ir{{\rm cl}}(*a)\psi \end{align*}
where $\kappa$ is the scalar curvature and $F_{A_0}$ is the curvature of the canonical connection.
 Take the inner product with $\beta$, and integrate the equation over $Y$.  We have
\begin{align*}
\delta_1^2 \int_Y |\beta|^2 &\geq  \int_Y \Big( |\nabla_r\beta|^2+ \langle N(\nabla_r\alpha) + N'(\alpha),\beta\rangle \\
&\qquad\qquad + \frac{\kappa}{4}|\beta|^2+\langle{\rm cl}(\frac{F_{A_0}}{2})\psi,\psi\rangle+r|\beta|^2\Big)
\end{align*}
where $N(\nabla_r\alpha) = i\cl({\rm tr}\langle\nabla a,\nabla_r\alpha\rangle)$ and $N'(\alpha) = \frac{i}{2}\cl(\nabla^*\nabla a)\alpha$.  The term $N(\nabla_r\alpha) + N'(\alpha)$ has the same $K^{-1}$-component as $\nabla_r^*\nabla_r\alpha$.  Notice that $N$ and $N'$ are operators independent of $r$.  After integration by parts, we conclude that
\begin{align*}
\delta_1^2 \int_Y |\beta|^2&\geq \int_Y \big((r-c_1)|\beta|^2 + \frac{1}{2}|\nabla_r \beta|^2 - c_2 |\alpha|^2\big) ~.
\end{align*}
Property (i) of the proposition follows from this inequality.\medskip

To prove property (ii), let $\psi=(\alpha,\beta)$ be a unit-normed eigensection of $D_r$ with eigenvalue $\lambda(0)$.  According to (\ref{eqn_dlambda}),
\begin{align*}
\lambda'(0) &= \oh \int_Y (|\alpha(0)|^2 - |\beta(0)|^2) = \oh - \int_Y |\beta(0)|^2 ~.
\end{align*}
This equation together with property (i) proves property (ii).
\end{proof}

Property (ii) of proposition \ref{prop_beta_estimate_00} says that the rate of change of the spectrum near a zero crossing gets closer to $1/2$ as $r\to\infty$.  It has the following consequence.
\begin{cor}\label{cor_sf_exist_00}
For any $\delta_1>0$, there exist a constant $c>0$ determined by the contact form and the adapted metric such that the following holds.  Suppose that $\lambda$ is an eigenvalue of $D_r$ with $r\geq c$ and $4|\lambda|\leq\delta_1$.  Then, $(r,\lambda)$ belongs to a trajectory of eigenvalues which contributes to the spectral flow with $+1$ somewhere in the interval
\begin{align*} [r-2\lambda-\frac{c}{r}, r-2\lambda+\frac{c}{r}] ~. \end{align*}
\end{cor}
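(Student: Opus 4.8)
The plan is to bootstrap the quantitative statement in part (ii) of Proposition \ref{prop_beta_estimate_00} into a statement about a \emph{finite} interval of $r$-values, rather than an infinitesimal one. Fix $\delta_1 > 0$ and let $c_0$ be the constant from Proposition \ref{prop_beta_estimate_00} applied with the threshold $2\delta_1$ in place of $\delta_1$; I will eventually take $c$ to be a suitable multiple of $c_0$. Suppose $\psi$ is an eigensection of $D_r$ with eigenvalue $\lambda$, where $r \geq c$ and $4|\lambda| \leq \delta_1$. The idea is to follow this eigenvalue as $r$ varies: by the analyticity of the family $\{D_s\}$ (the stratification $\mathcal{E}$ discussed in subsection \ref{subsec_sf_contact_00}, via \cite[chapter 7]{ref_Kato}), there is a real-analytic branch $s \mapsto (\lambda(s), \psi(s))$ with $\lambda(0) = \lambda$, $\psi(0) = \psi$, $D_{r+s}\psi(s) = \lambda(s)\psi(s)$, defined at least on a maximal interval around $0$.

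First I would establish an \emph{a priori} confinement: as long as $|\lambda(s)| \leq 2\delta_1$ and $r + s \geq c_0$, part (ii) of Proposition \ref{prop_beta_estimate_00} gives $|\lambda'(s) - \tfrac{1}{2}| \leq c_0/(r+s) \leq c_0/(r - 2\delta_1)$, so the branch moves essentially at unit speed $1/2$. In particular $\lambda(s)$ is strictly increasing in $s$ (for $r$ large enough that $c_0/(r - 2\delta_1) < 1/2$), and it will reach the value $0$ exactly once. Integrating the inequality $|\lambda'(s) - \tfrac12| \leq c_0/(r - 2\delta_1)$ from $0$ to the crossing time $s_0$ (defined by $\lambda(s_0) = 0$), I get $|s_0 - (-2\lambda)| = |s_0 + 2\lambda| \leq 2 c_0 |s_0| /(r - 2\delta_1)$, and since $|s_0|$ is comparable to $|\lambda| \leq \delta_1/4$, this yields $|s_0 + 2\lambda| \leq c'/r$ for a constant $c'$ depending only on $c_0$ and $\delta_1$. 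Thus the crossing occurs at $r$-value $r + s_0 \in [r - 2\lambda - c'/r,\ r - 2\lambda + c'/r]$; relabeling $c = \max(c_0, c')$ and enlarging it so that the initial largeness hypotheses hold gives the stated interval. The sign of the crossing is $+1$ because $\lambda'(s_0) > 0$ (again by part (ii), $\lambda'(s_0) \geq \tfrac12 - c_0/(r-2\delta_1) > 0$), and by the description of $\iota(p)$ in subsection \ref{subsec_sf_contact_00} the crossing contributes $+1$ to the spectral flow.

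The one point requiring care is that the branch $(\lambda(s),\psi(s))$ might, a priori, leave the region $|\lambda(s)| \leq 2\delta_1$ or hit a higher multiplicity stratum before reaching zero. For the first concern: since $|\lambda'| \leq 1$ on the region of validity and $|\lambda(0)| \leq \delta_1/4$, the branch stays inside $|\lambda| \leq \delta_1/2$ until $|s| \geq \delta_1/4$; but the crossing time satisfies $|s_0| \leq 2|\lambda| + c'/r \leq \delta_1/2 + c'/r < \delta_1/4 + \delta_1/4$ once $r$ is large, so $s_0$ is reached while the branch is still comfortably within the region where Proposition \ref{prop_beta_estimate_00} applies — no escape occurs. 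For the second concern, crossing into $\mathcal{E}_{l^*}$ with $l \geq 2$: this does not actually obstruct the argument, since part (ii) of Proposition \ref{prop_beta_estimate_00} applies to \emph{any} $\mathcal{C}^1$-family of eigensections with small eigenvalue regardless of multiplicity, so each of the (at most finitely many) analytic branches through a multiple eigenvalue still has derivative close to $1/2$; one simply runs the above argument on whichever branch is being tracked, and the total contribution to the spectral flow from this cluster near $r - 2\lambda$ is positive. This is the step I expect to be the main (though minor) obstacle: carefully matching the bookkeeping of multiple crossings within the $c/r$-window against the definition of the spectral flow via the cycle $\mathcal{E}_*$, so that "contributes $+1$ somewhere in the interval" is justified cleanly. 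Everything else is a direct integration of the estimate already in hand.
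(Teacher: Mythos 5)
Your argument is correct and is exactly the argument the paper leaves implicit: the paper states only ``Property~(ii) of proposition~\ref{prop_beta_estimate_00} says that the rate of change of the spectral flow gets closer to $1/2$ as $r\to\infty$. It implies the following corollary,'' and your write-up is the natural integration of that derivative bound along the analytic eigenvalue branch, together with the sign computation from the definition of $\iota(p)$ in subsection~\ref{subsec_sf_contact_00}. One small arithmetic slip: you write $\delta_1/2 + c'/r < \delta_1/4 + \delta_1/4$, which is false as stated; what you actually need, and what is true, is that $|s_0|\leq \delta_1/2 + c'/r$ stays below the escape time for the region $|\lambda(s)|\leq 2\delta_1$, which is about $2\delta_1 - \delta_1/4 = 7\delta_1/4$. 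This does not affect the validity of the conclusion.
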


\begin{proof}
According to \ref{subsec_sf_intro}, we may choose a trajectory $(r+s,\mu(s))$ for $s\in(-3\lambda,3\lambda)$ such that $\mu(0)=\lambda$ and $\mu(s)$ is continuous and piecewise differentiable.  Such a trajectory is not unique, but any choice will suffice.  By (\ref{eqn_dlambda}), $|\mu'(s)|\leq\oh$ as long as $\mu(s)$ is differentiable at $s$.  It follows that $|\mu(s)|\leq\frac{5}{2}|\lambda|<\delta_1$ for any $s\in(-3\lambda,3\lambda)$.  Therefore, property (ii) of proposition \ref{prop_beta_estimate_00} implies that $|\mu'(s)-\oh|\leq\frac{c}{r}$ provided $\mu$ is differentiable at $s$.  The corollary follows from this estimate.
\end{proof}

Two zero eigensections at different $r$ might not be orthogonal to each other.  However, property (i) of proposition \ref{prop_beta_estimate_00} implies that they are close to being orthogonal.  More precisely, we have:
\begin{prop} \label{prop_small_ip_00}
There exists a constant $c>0$ which determined by the contact form and the adapted metric such  that the following holds.  Suppose that $\psi_1$ and $\psi_2$ are zero modes of $D_{r_1}$ and $D_{r_2}$ with $r_1\geq c$, $r_2\geq c$ and $r_1\neq r_2$ and they are of unit $L^2$-norm.  Then
\begin{align*}
\big|\int_Y \langle \psi_1,\psi_2\rangle\big| \leq c (r_1 r_2)^{-\oh} ~.
\end{align*}

\end{prop}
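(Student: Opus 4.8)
The plan is to exploit that $D_{r_1}$ and $D_{r_2}$ differ only by the zeroth-order operator ${\rm cl}\big(\tfrac{-i(r_1-r_2)}{2}a\big)$, combined with the $L^2$-bound on the second component supplied by Proposition \ref{prop_beta_estimate_00}. Write $\psi_1=(\alpha_1,\beta_1)$ and $\psi_2=(\alpha_2,\beta_2)$. Since $D_{r_1}$ is self-adjoint and $\psi_1$ is a zero mode of $D_{r_1}$, we have $\int_Y\langle\psi_1,D_{r_1}\psi_2\rangle=\int_Y\langle D_{r_1}\psi_1,\psi_2\rangle=0$; and since $D_{r_1}-D_{r_2}={\rm cl}\big(\tfrac{-i(r_1-r_2)}{2}a\big)$ while $D_{r_2}\psi_2=0$,
\begin{align*}
0=\int_Y\langle\psi_1,D_{r_1}\psi_2\rangle=\frac{-i(r_1-r_2)}{2}\int_Y\langle\psi_1,{\rm cl}(a)\psi_2\rangle .
\end{align*}
When $r_1\neq r_2$ this forces $\int_Y\langle\psi_1,{\rm cl}(a)\psi_2\rangle=0$. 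Because ${\rm cl}(a)$ acts as $i$ on the $\underline{\mathbb{C}}$-summand and as $-i$ on the $K^{-1}$-summand, this identity says precisely $\int_Y\langle\alpha_1,\alpha_2\rangle=\int_Y\langle\beta_1,\beta_2\rangle$, hence $\int_Y\langle\psi_1,\psi_2\rangle=2\int_Y\langle\beta_1,\beta_2\rangle$.

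Next I would bound this using Cauchy--Schwarz together with Proposition \ref{prop_beta_estimate_00}(i), applied with $\delta_1=0$ to the zero modes $\psi_1$ and $\psi_2$. Since $\int_Y|\alpha_i|^2\leq\int_Y|\psi_i|^2=1$, that proposition gives $\int_Y|\beta_i|^2\leq c\,r_i^{-1}$ once $r_i\geq c$, so
\begin{align*}
\Big|\int_Y\langle\psi_1,\psi_2\rangle\Big|=2\Big|\int_Y\langle\beta_1,\beta_2\rangle\Big|\leq 2\Big(\int_Y|\beta_1|^2\Big)^{\oh}\Big(\int_Y|\beta_2|^2\Big)^{\oh}\leq c\,(r_1r_2)^{-\oh}
\end{align*}
after renaming the constant (which still depends only on the contact form and the adapted metric, being inherited from Proposition \ref{prop_beta_estimate_00}). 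For $r_1=r_2\geq c$ the step that used $r_1\neq r_2$ is unavailable, but the hypothesis $\int_Y\langle\psi_1,\psi_2\rangle=0$ makes the asserted inequality immediate; moreover it gives $\int_Y\langle\alpha_1,\alpha_2\rangle=-\int_Y\langle\beta_1,\beta_2\rangle$, so the same Cauchy--Schwarz estimate bounds $|\int_Y\langle\alpha_1,\alpha_2\rangle|=|\int_Y\langle\beta_1,\beta_2\rangle|$ by $c\,r_1^{-1}=c\,(r_1r_2)^{-\oh}$, which is the control on the overlap of the dominant first components that is the genuine content in that case.

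There is no serious obstacle here. The only points needing care are the sign and conjugation bookkeeping in the ${\rm cl}(a)$-eigenspace splitting, and the observation that the identity $\int_Y\langle\alpha_1,\alpha_2\rangle=\int_Y\langle\beta_1,\beta_2\rangle$ uses $r_1\neq r_2$ in an essential way — which is exactly why the $r_1=r_2$ statement must carry the additional orthogonality hypothesis.
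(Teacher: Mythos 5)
Your argument is correct and is essentially the paper's own proof: both rest on the self-adjointness identity $0=\int_Y\langle(D_{r_1}-D_{r_2})\psi_1,\psi_2\rangle$, which forces $\int_Y\langle\alpha_1,\alpha_2\rangle=\int_Y\langle\beta_1,\beta_2\rangle$ when $r_1\neq r_2$, followed by Cauchy--Schwarz and the $\beta$-estimate of Proposition \ref{prop_beta_estimate_00}(i); your Clifford-algebra phrasing of the difference $D_{r_1}-D_{r_2}$ is just a more explicit rendering of the same step. Your reading of the $r_1=r_2$ case, where the literal inequality is vacuous but the real content is the bound on $|\int_Y\langle\alpha_1,\alpha_2\rangle|$, also matches what the paper actually uses.
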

\begin{proof}
We write $\psi_1=(\alpha_1,\beta_1)$ and $\psi_2=(\alpha_2,\beta_2)$, and compute
\begin{align*}
0 &= \int_Y \big(\langle D_{r_1}\psi_1,\psi_2\rangle - \langle \psi_1,D_{r_2}\psi_2\rangle \big) = \int_Y \langle (D_{r_1}-D_{r_2})\psi_1,\psi_2 \rangle \\
&= (r_1-r_2) \int_Y \big(\langle\alpha_1,\alpha_2\rangle - \langle\beta_1,\beta_2\rangle \big) ~.
\end{align*}
It follows from $r_1\neq r_2$ that $\int_Y \langle\alpha_1,\alpha_2\rangle = \int_Y \langle\beta_1,\beta_2\rangle$.  Hence, $\int_Y\langle\psi_1,\psi_2\rangle = 2\int_Y\langle\beta_1,\beta_2\rangle$.  By property (i) of proposition \ref{prop_beta_estimate_00} and the Cauchy--Schwarz inequality, this completes the proof of the proposition.
\end{proof}

\section{The Dirac equations}\label{sec_set_up_Dirac}
In this section, we will write down the Dirac equations on different regions of the $3$-manifold of an open book decomposition.  The adapted metric for theorem \ref{thm_sf_main_00} will also be specified.

\subsection{The tubular neighborhood of the binding}\label{subsec_binding_00}
The tubular neighborhood of the binding has coordinate $(e^{it},\rho e^{i\phi})\in S^1\times D^2$.  The contact form is given by (\ref{eqn_contact_binding_00}),
\begin{align*}
a &= f\dd\phi + g\dd t ~.
\end{align*}
The contact form with the following two coframes
\begin{align*}
\omega^1 &= \cos\phi\dd\rho - \sin\phi\big(\frac{f'}{2}\dd\phi + \frac{g'}{2}\dd t\big) ~, \\
\omega^2 &= \sin\phi\dd\rho + \cos\phi\big(\frac{f'}{2}\dd\phi + \frac{g'}{2}\dd t\big)
\end{align*}
specifies the Riemannian metric, $a^2+(\omega^1)^2+(\omega^2)^2$.  It is easy to see that the coframe is smooth except at $\rho=0$.  For $\rho<10\epsilon$, $\omega^1 = \dd x + y\dd t$ and $\omega^2 = \dd y - x\dd t$ in terms of the rectangular coordinate $x+iy = \rho e^{i\phi}$.  Thus, they form a smooth orthonormal coframe on $S^1\times D^2$.  As explained in \ref{subsubsec_sf_contact_00}, their dual frame induces a trivialization of $\underline{\mathbb{C}}\oplus K^{-1}$.  On the tubular neighborhood of the binding, this trivialization identifies the sections of $\underline{\mathbb{C}}\oplus K^{-1}$ with $\mathbb{C}^2$-valued functions.  By (\ref{eqn_can_Dirac_00}) and (\ref{eqn_can_Dirac_01}), the Dirac operator $D_r$ on $\psi=(\alpha,\beta)$ is
\begin{align}\label{eqn_Dirac_neck_01}
D_r\psi = \left\{\begin{aligned}
&\frac{r}{2}\alpha + \frac{i}{2\Delta}(-g'\pl_\phi\alpha + f'\pl_t\alpha) \\
&\qquad + e^{-i\phi}\big( -\pl_\rho\beta + \frac{i}{\Delta}(g\pl_\phi\beta - f\pl_t\beta) + \frac{g-\Delta'}{\Delta}\beta \big) ~, \\
&e^{i\phi}\big( \pl_\rho\alpha + \frac{i}{\Delta}(g\pl_\phi\alpha - f\pl_t\alpha) \big) \\
&\qquad - (\frac{r}{2} + 1 - \frac{g'}{2\Delta} + \frac{f''g'-f'g''}{8\Delta})\beta - \frac{i}{2\Delta}(-g'\pl_\phi\beta + f'\pl_t\beta)
\end{aligned}\right.\end{align}
where $\Delta = \oh(f'g - fg')$.  The volume form is $\Delta\dd\rho\wedge\dd\phi\wedge\dd t$.  Note that the Dirac operator is invariant under the two $S^1$-actions in $e^{i\phi}$ and $e^{it}$.

At $\rho = 0$, the coordinate has a singularity, and we shall use the rectangular coordinate $z = \rho e^{i\phi}$.  Remember that $f=\rho^2$ and $g=2-\rho^2$ when $0\leq\rho\leq10\epsilon$.  The Dirac operator is
\begin{align}\label{eqn_Dirac_binding_00}
D_r\psi = \left\{\begin{aligned}
&\frac{r}{2}\alpha + \frac{i}{2}(\pl_\phi\alpha + \pl_t\alpha) + (-2\pl_z\beta - \frac{i}{2}\bar{z}(\pl_\phi\beta + \pl_t\beta) - \frac{\bar{z}}{2}\beta) ~, \\
&(2\pl_{\bar{z}}\alpha - \frac{i}{2}z(\pl_\phi\alpha + \pl_t\alpha)) - \frac{r+3}{2}\beta - \frac{i}{2}(\pl_\phi\beta + \pl_t\beta) ~.
\end{aligned}\right. \end{align}
The volume form is $2\rho\dd\rho\wedge\dd\phi\wedge\dd t = i\dd z\wedge\dd\bar{z}\wedge\dd t$.  We can regard (\ref{eqn_Dirac_binding_00}) as an operator on $\mathbb{C}\times S^1$.

\subsection{The Dehn-twist region} \label{subsec_Dehn_00}
For the Dehn-twist region, we work on $(-35\epsilon,35\epsilon)\times S^1\times[0,2\pi]$, and consider the operators and functions which are invariant under the identification map.  Before making the identification, we have coordinate $(\rho,e^{it},\phi)\in (-35\epsilon,35\epsilon)\times S^1\times [0,2\pi]$.  With this understood, the contact form is given by (\ref{eqn_contact_Dehn_01}),
\begin{align*}
a &= V\dd\phi + 2(v-\rho)\dd t + 2\phi(v-\rho)\sigma'\dd\rho ~.
\end{align*}
The following two coframes
\begin{align*}
\omega^1 &= \cos\phi\dd\rho - \sin\phi(-\dd t - \phi \sigma'\dd\rho - (v-\rho)\sigma'\dd\phi) ~, \\
\omega^2 &= \sin\phi\dd\rho + \cos\phi(-\dd t - \phi \sigma'\dd\rho - (v-\rho)\sigma'\dd\phi)
\end{align*}
together with the contact form specify the Riemannian metric, $a^2 + (\omega^1)^2 + (\omega^2)^2$.  Note that $\omega^1$ and $\omega^2$ are both smooth and invariant under the identification map.  As explained in \ref{subsubsec_sf_contact_00}, their dual frame gives a trivialization of $\underline{\mathbb{C}}\oplus K^{-1}$.  On the Dehn-twsit region, we use this trivialization to identify the sections of $\underline{\mathbb{C}}\oplus K^{-1}$ with $\mathbb{C}^2$-valued functions on $(-35\epsilon,35\epsilon)\times S^1\times [0,2\pi]$ satisfying
$$  \psi(\rho,e^{i(t+2\pi\sigma(\rho))},0) = \psi(\rho,e^{it},2\pi) ~.  $$
By (\ref{eqn_can_Dirac_00}) and (\ref{eqn_can_Dirac_01}), the Dirac operator $D_r$ on $\psi=(\alpha,\beta)$ is
\begin{align*} D_r\psi =
\left\{\begin{aligned}
&\frac{r}{2}\alpha + \frac{i}{\tilde{\Delta}}\big(\pl_\phi\alpha - (v-\rho)\sigma'\pl_t\alpha\big) + e^{-i\phi}\frac{2(v-\rho) - \tilde{\Delta}'}{\tilde{\Delta}}\beta \\
&\qquad + e^{-i\phi}\big( -(\pl_{\rho}\beta - \phi \sigma'\pl_t\beta) + \frac{iV}{\tilde{\Delta}}(-\pl_t\beta + \frac{2(v-\rho)}{V}\pl_\phi\beta) \big) ~, \\
&e^{i\phi}\big( (\pl_{\rho}\alpha - \phi \sigma'\pl_t\alpha) + \frac{iV}{\tilde{\Delta}}(-\pl_t\alpha + \frac{2(v-\rho)}{V}\pl_\phi\alpha) \big) \\
&\qquad - (\frac{r}{2} + 1 + \frac{1}{\tilde{\Delta}} + \frac{((v-\rho)\sigma)''}{2\tilde{\Delta}})\beta - \frac{i}{\tilde{\Delta}}(\pl_\phi\beta - (v-\rho)\sigma'\pl_t\beta)
\end{aligned}\right.\end{align*}
where $\tilde{\Delta} = V - 2(v-\rho)^2\sigma'$.  The volume form is $\tilde{\Delta}\dd\phi\wedge\dd t\wedge\dd\rho$.  Consider the \emph{untwisting} of $\alpha$ and $\beta$:
\begin{align}\label{def_untwisting}
\tilde{\alpha}(\rho, t, \phi) &= \alpha(\rho,t-\phi{\sigma(\rho)}, \phi) ~, & \tilde{\beta}(\rho, t, \phi) &= \beta(\rho, t-\phi{\sigma(\rho)}, \phi) ~.
\end{align}
After the untwisting, the Dirac operator on $\tilde{\psi}=(\tilde{\alpha},\tilde{\beta})$ is
\begin{align}\label{eqn_Dirac_Dehn_00}
\tilde{D}_r\tilde{\psi} = \left\{\begin{aligned}
&\frac{r}{2}\tilde{\alpha} + \frac{i}{\tilde{\Delta}}\big(\pl_\phi\tilde{\alpha} - ((v-\rho)\sigma)'\pl_t\tilde{\alpha}\big)  + e^{-i\phi}\frac{2(v-\rho) - \tilde{\Delta}'}{\tilde{\Delta}}\tilde{\beta} \\
&\qquad + e^{-i\phi}\big( -\pl_{\rho}\tilde{\beta} + \frac{i}{\tilde{\Delta}}((-V + 2(v-\rho)\sigma)\pl_t\tilde{\beta} + 2(v-\rho)\pl_\phi\tilde{\beta}) \big) ~, \\
&e^{i\phi}\big( \pl_{\rho}\tilde{\alpha} + \frac{i}{\tilde{\Delta}}((-V + 2(v-\rho)\sigma)\pl_t\tilde{\alpha} + 2(v-\rho)\pl_\phi\tilde{\alpha}) \big) \\
&\qquad - (\frac{r}{2} + 1 + \frac{1}{\tilde{\Delta}} + \frac{((v-\rho)\sigma)''}{2\tilde{\Delta}})\tilde{\beta} - \frac{i}{\tilde{\Delta}}(\pl_\phi\tilde{\beta} - ((v-\rho)\sigma)'\pl_t\tilde{\beta}) ~.
\end{aligned}\right.\end{align}

The untwisting operator (\ref{def_untwisting}) and the Dirac equation (\ref{eqn_Dirac_Dehn_00}) have the following features.
\begin{enumerate}
\item The untwisting operator (\ref{def_untwisting}) is only defined locally.  In general, it cannot extend to the whole $3$-manifold.\smallskip
\item Note that $\tilde{\alpha}(\rho,t,2\pi) = \alpha(\rho,t-2\pi \sigma(\rho),2\pi) = \alpha(\rho,t,0) = \tilde{\alpha}(\rho,t,0)$.  Hence, after the untwisting $\tilde{\alpha}$ and $\tilde{\beta}$ are $2\pi$-periodic in both $t$ and $\phi$.  With this understood, the Dirac operator (\ref{eqn_Dirac_Dehn_00}) is invariant under these two $S^1$-actions.  Namely, for any $(e^{it_0},e^{i\phi_0})\in S^1\times S^1$,
\begin{align*}  \tilde{D}_r\big(\tilde{\psi}(\rho,t+t_0,\phi+\phi_0)\big) = (\tilde{D}_r\tilde{\psi})(\rho,t+t_0,\phi+\phi_0) ~.  \end{align*}
\item The Dirac operator (\ref{eqn_Dirac_Dehn_00}) is of the same form as that on the tubular neighborhood of the binding (\ref{eqn_Dirac_neck_01}).  More precisely, it corresponds to $\tilde{f} = V - 2(v-\rho)\sigma$ and $\tilde{g} = 2(v-\rho)$ in (\ref{eqn_Dirac_neck_01}), and $\tilde{\Delta}$ is equal to $\oh(\tilde{f}'\tilde{g}-\tilde{f}\tilde{g}')$.
\end{enumerate}

\subsection{Associated contact form on $S^1\times S^2$}\label{subsec_asso_contact_00}
In this section, we construct $S^1\times S^2$'s by compatifying the tubular neighborhood of the binding and the Dehn-twist region.  We also construct contact forms on these $S^1\times S^2$, associated to (\ref{eqn_contact_binding_00}) and (\ref{eqn_contact_Dehn_01}).  It is convenient to regard the Dirac operators in section \ref{subsec_binding_00} and \ref{subsec_Dehn_00} as being defined on $S^1\times S^2$.

\subsubsection{On the tubular neighborhood of the binding.}  The tubular neighborhood of the binding is a solid torus, $S^1\times D^2$.  Topologically, we construct the $S^1\times S^2$ by attaching another solid torus, $S^1\times D^2$.  The attaching map on $\pl(S^1\times D^2) = S^1\times S^1$ is the identity map.

We now describe the $S^1\times S^2$ in terms of the coordinate.  Let $e^{it}$ be the coordinate for the $S^1$-factor.  Let $(\rho,e^{i\phi})\in[0,2]\times S^1$ be the (re-parametrized) spherical coordinate for the $S^2$-factor.  To be more precise, choose a positive smooth function $\nu(\rho)$ in $\rho\in[0,2]$ such that $\nu(\rho)=1$ when $\rho\leq10\epsilon$ or $\rho\geq2-10\epsilon$, and $\int_0^2\nu(\rho)\dd\rho=\pi/2$.  The parametrization of the unit sphere is given by $\big( \sin(\int_0^\rho\nu(s)\dd s)\cos\phi,\sin(\int_0^\rho\nu(s)\dd s)\sin\phi, \cos(\int_0^\rho\nu(s)\dd s) \big)$.

To construct the contact form, choose two smooth functions $f(\rho)$ and $g(\rho)$ in $\rho\in[0,2]$ such that:
\begin{itemize}
\item when $\rho\in[0,1]$, the functions $f(\rho)$ and $g(\rho)$ coincide with the functions constructed in section \ref{subsec_open_book_00};
\item when $\rho\in(1,1+10\epsilon]$, $f(\rho) = V$ and $g(\rho) = 2-\rho$;
\item when $\rho\in[2-10\epsilon,2]$, $f(\rho) = (2-\rho)^2$ and $g(\rho) = -2 + (2-\rho)^2$;
\item the functions $f$ and $f'g-fg'$ are positive when $\rho\in(0,2)$.
\end{itemize}
It is not hard to see that there always exist such $f$ and $g$.  The contact form is taken to be
\begin{align} \label{eqn_S2S1_contact_00}
a &= f(\rho)\dd\phi + g(\rho)\dd t ~.
\end{align}
The above conditions on $f$ and $g$ guarantee that $a$ is a smooth contact form.  The volume form $\frac{1}{2}a\wedge\dd a$ is equal to $\Delta \dd\rho\wedge\dd\phi\wedge\dd t$ where
\begin{align}\label{eqn_S2S1_Delta_00}
\Delta = \oh(f'g-fg') ~.
\end{align}

\subsubsection{On the Dehn-twist region.}\label{subsec_asso_contact_01}  After the untwisting (\ref{def_untwisting}), the Dehn-twist region is the solid torus $(e^{it},\rho,e^{i\phi})\in S^1\times[-35\epsilon,35\epsilon]\times S^1$.  Topologically, $S^1\times S^2$ is obtained by attaching two solid torus, $S^1\times D^2$.  The original solid tori has two boundary components, $S^1\times\{\pm35\epsilon\}\times S^1$.  The attaching map is the identity map on $S^1\times S^1$ .

In this case, we take a similar coordinate on the $S^1\times S^2$.  Let $e^{it}$ be the coordinate for the $S^1$-factor.  Let $(\rho,e^{i\phi})\in[-2,2]\times S^1$ be the (re-parametrized) spherical coordinate for the $S^2$-factor.

To construct the contact form, choose two smooth functions $\tilde{f}(\rho)$ and $\tilde{g}(\rho)$ in $\rho\in[0,2]$ satisfying the following properties:
\begin{itemize}
\item when $\rho\in(-35\epsilon,35\epsilon)$, $\tilde{f}(\rho) = V - 2(v-\rho)\sigma(\rho)$ and $\tilde{g}(\rho) = 2(v-\rho)$;
\item when $\rho\in[2-10\epsilon,2]$, $\tilde{f}(\rho) = (2-\rho)^2$ and $\tilde{g}(\rho) = -2|v|-2+(2-\rho)^2$;
\item when $\rho\in[-2,-2+10\epsilon]$, $\tilde{f}(\rho) = (\rho+2)^2$ and $\tilde{g}(\rho) = 2|v|+2-(\rho+2)^2$;
\item the functions $\tilde{f}$ and $\tilde{f}'\tilde{g}-\tilde{f}\tilde{g}'$ are positive when $\rho\in(-2,2)$.
\end{itemize}
With these two functions, the contact form is taken to be $a = \tilde{f}(\rho)\dd\phi+\tilde{g}(\rho)\dd t$.  The volume form $\frac{1}{2}a\wedge\dd a$ is $\tilde{\Delta}\dd\rho\wedge\dd\phi\wedge\dd t$ where $\tilde{\Delta} = \frac{1}{2}(\tilde{f}'\tilde{g}-\tilde{f}\tilde{g}')$.

\subsubsection{The canonical $\text{\rm Spin}^{\mathbb{C}}$-structure of the associated contact form}

\begin{defn}
For each boundary component and Dehn-twist region of the page, the above construction gives a contact form (\ref{eqn_S2S1_contact_00}) on $S^1 \times S^2$.  These contact forms will be referred as the \emph{associated contact forms}.
\end{defn}

We now choose an adapted metric and fix a trivialization of $\underline{\mathbb{C}}\oplus K^{-1}$ of the associated contact form on the $S^1\times S^2$.  We focus on the associated contact form of the tubular neighborhood of the binding.  For the associated contact form of the Dehn-twist region, the construction is essentially the same.  The only difference is that $f$ and $g$ are replaced by $\tilde{f}$ and $\tilde{g}$.

Consider the following two coframes
\begin{align*}
\omega^1 &= \cos\phi\dd\rho - \sin\phi(\frac{f'}{2}\dd\phi + \frac{g'}{2}\dd t) ~, &\omega^2 &= \sin\phi\dd\rho + \cos\phi(\frac{f'}{2}\dd\phi + \frac{g'}{2}\dd t) ~.
\end{align*}
As explained in section \ref{subsec_binding_00}, they are smooth on the whole space, $S^1\times S^2$.  The adapted metric is taken to be $a^2+(\omega^1)^2+(\omega^2)^2$.  Let $\{e_1,e_2,e_3\}$ be the dual frame of $\{\omega^1,\omega^2,a\}$.  According to section \ref{subsubsec_sf_contact_00}, $\{{\bf1}_{\underline{\mathbb{C}}},\frac{1}{\sqrt{2}}(e_1-ie_2)\}$ induces a \emph{global} trivialization of the canonical $\text{\rm Spin}^{\mathbb{C}}$-bundle $\underline{\mathbb{C}}\oplus K^{-1}$.  When working with the Dirac operator on the associated $S^1\times S^2$, we will always use this trivialization to identify the sections of $\underline{\mathbb{C}}\oplus K^{-1}$ with $\mathbb{C}^2$-valued functions on $S^1\times S^2$.

With such a choice of the metric and the trivialization, the local expression of the Dirac operator is given by (\ref{eqn_Dirac_neck_01}) and (\ref{eqn_Dirac_Dehn_00}), respectively.  We will study the Dirac equations of the associated contact forms carefully in section \ref{sec_S2S1}.

\subsection{The part with trivial monodromy}\label{sec_page_id_00}
On the part of the page where the monodromy is the identity map, the contact form is
\begin{align*}
a = V\dd\phi + 2\mu_\Sigma ~.
\end{align*}
Choose a Riemannian metric $\dd s^2_\Sigma$ on $\Sigma$ such that
\begin{itemize}
\item the area form is $\dd\mu_\Sigma$;\smallskip
\item near the tubular neighborhood of the binding, $\dd s^2_\Sigma = \dd\rho^2 + \frac{1}{4}\dd t^2$ in terms of the coordinates in section \ref{subsec_binding_00};\smallskip
\item near the Dehn-twist region, $\dd s^2_\Sigma = \dd\rho^2 + \dd t^2$ in terms of the coordinates in section \ref{subsec_Dehn_00}.
\end{itemize}
The Riemannian metric on $\Sigma\times S^1$ is taken to be $a^2 + \dd s^2_\Sigma$.  Near the binding and the Dehn-twist region, it is not hard to check that this metric agrees with the metric defined in \ref{subsec_binding_00} and \ref{subsec_Dehn_00}.

Any locally defined, oriented orthonormal frame on $\Sigma$, $u_1$ and $u_2$, gives rise to the following frame for the contact hyperplane on the $3$-manifold:
\begin{align*}\begin{split}
e_1 &= \cos\phi u_1 - \sin\phi u_2 - \frac{2}{V}\mu_\Sigma(\cos\phi u_1 - \sin\phi u_2)\pl_\phi ~, \\
e_2 &= \sin\phi u_1 + \cos\phi u_2 - \frac{2}{V}\mu_\Sigma(\sin\phi u_1 + \cos\phi u_2)\pl_\phi ~.
\end{split}\end{align*}
As discussed in \ref{subsubsec_sf_contact_00}, the dual frame induces a trivialization of $\underline{\mathbb{C}}\oplus K^{-1}$.  By examining the transition function of $K^{-1}$ in terms of this trivialization, we find that $K^{-1} = \pi^* K_{\Sigma}^{-1}$, where $K_{\Sigma}^{-1}$ is the anti-canonical bundle of $\Sigma$ determined by the metric and $\dd\mu_\Sigma$.  More precisely, let $\theta^1$ and $\theta^2$ be the dual coframe of $u_1$ and $u_2$ on $\Sigma$.  Then $e_1-ie_2$ is identified as $\theta^1-i\theta^2$.  By (\ref{eqn_can_Dirac_00}) and (\ref{eqn_can_Dirac_01}), the Dirac operator $D_r$ on $\psi=(\alpha,\beta)$ is
\begin{align*}
D_r\psi = \left\{\begin{aligned}
&\frac{r}{2}\alpha + \frac{i}{V}\pl_\phi\alpha + e^{-i\phi}\big( -u_1(\beta) + iu_2(\beta) - \frac{2}{V}\mu_\Sigma(-u_1+iu_2)\pl_\phi\beta \\
&\qquad\qquad\qquad\qquad\qquad\qquad + \frac{2i}{V}\mu_\Sigma(-u_1+iu_2)\beta + i\theta_1^2(-u_1+iu_2)\beta \big) ~, \\
&e^{i\phi}\big( u_1(\alpha) + iu_2(\alpha) -\frac{2}{V}\mu_\Sigma(u_1+iu_2)\pl_\phi\alpha \big) - (\frac{r}{2} + 1 + \frac{1}{V})\beta - \frac{i}{V}\pl_\phi\beta
\end{aligned}\right.\end{align*}
where $\theta_1^2$ is the Levi-Civita connection for the metric $\dd s^2_\Sigma$ on $\Sigma$; namely, $\nabla e_1 = \theta_1^2\otimes e_2$.

Consider the separation of variables:
\begin{align}\label{eqn_Fourier_page_00} \alpha &= \alpha_{n} e^{in\phi}(2\pi V)^{-\oh}, &\beta &= \beta_n e^{i(n+1)\phi}(2\pi V)^{-\oh}. \end{align}
Before separation of variables, $\alpha$ is a function on $S^1\times\Sigma$, and $\beta$ is a section of $K^{-1}$ over $S^1\times\Sigma$.  After the separation of variable, $\alpha_n$ is a function on $\Sigma$, and $\beta_n$ is a section of $K_{\Sigma}^{-1}$ over $\Sigma$.  The Dirac operator on the frequency $n$ components is
\begin{align}\label{eqn_Dirac_page_00}\left\{\begin{aligned}
&(\frac{r}{2} - \frac{n}{V})\alpha_n + \bar{\pl}^*_n \beta_n ~, \\
&\bar{\pl}_n \alpha_n - (\frac{r}{2} + 1 - \frac{n}{V})\beta_n 
\end{aligned}\right.\end{align}
where $\bar{\pl}_n$ and $\bar{\pl}^*_n$ are the Cauchy--Riemann operators on $\underline{\mathbb{C}}\oplus K_\Sigma^{-1}$ with the connection perturbed by $-\frac{2in}{V}\mu_\Sigma$.  Subject to suitable boundary conditions, their index is given by \cite[(4.3)]{ref_APS}.  The boundary conditions will be explained later.  By the computation in \cite[p.148-149]{ref_BGV} and \cite[(4.5)]{ref_APS}, the characteristic class term can be expressed in terms of the curvature of the connection and the Euler characteristic of $\Sigma$.  The formula reads
\begin{align}\label{eqn_APS_00}
\dim\ker\bar{\pl}_n - \dim\ker\bar{\pl}^*_n = \frac{n}{V\pi}\int\!\!\!\int_\Sigma \dd\mu_\Sigma + \oh\chi(\Sigma) + \oh(\eta_n+h_n)(\pl\Sigma)
\end{align}
where $\chi(\Sigma)$ is the Euler characteristic of $\Sigma$, and $\eta_n(\pl\Sigma)$ and $h_n(\pl\Sigma)$ are the correction terms from the boundary.  Since the boundary of $\Sigma$ is a disjoint union of \emph{circles}, the correction terms $\eta_n(\pl\Sigma)$ and $h_n(\pl\Sigma)$ are uniformly bounded for all $n$.

In \cite{ref_APS}, the connection is required to depend only on $\partial\Sigma$ in a small neighborhood of $\pl\Sigma$.  With the notation in subsections \ref{subsec_binding_00} and \ref{subsec_Dehn_00}, the connection is required to be independent of $\rho$ near $\pl\Sigma$.  Our connection $\mu_\Sigma$ does not satisfy this property.  However, $\mu_\Sigma$ is affine in $\rho$, and $\pl\Sigma$ is a disjoint union of $S^1$'s.  With a slight modification, the index formula (\ref{eqn_APS_00}) still holds in our setting.  We will explain the modification at the end of this section. 

In \cite{ref_APS}, there are adjoint boundary conditions for $\bar{\pl}_n$ and $\bar{\pl}_n^*$.  
On the collar neighborhood of the boundary,
\begin{align*}
\bar{\pl}_n = \pl_\rho + {\pl \hskip -2.2mm \slash_n} \qquad\text{ and }\qquad 
\bar{\pl}_n^* = -\pl_\rho + {\pl \hskip -2.2mm \slash_n} ~.
\end{align*}
where $\rho$ is the coordinate transverse to the boundary.  The operator ${\pl \hskip -2.2mm \slash_n}$ is the restriction of $\bar{\pl}_n$ and $\bar{\pl}_n^*$ on the boundary, and it is a Dirac operator on the boundary.  The Atiyah--Patodi--Singer (APS for short) boundary condition says that the restriction of $\alpha_n$ on the boundary only has components in the negative eigenspaces of ${\pl \hskip -2.2mm \slash_n}$, and $\beta_n$ only has non-negative ones.  We now give an explicit description of the boundary conditions.

\subsubsection*{Adjacent to the tubular neighborhood of the binding.}  We follow the notations in section \ref{subsec_binding_00}, and take $u_1 = \pl_\rho$, $u_2 = 2\pl_t$.  The surface $\Sigma$ is described by $\rho\geq 1$.  The operators $\bar{\pl}_n$ and $\bar{\pl}^*_n$ are
\begin{align}\begin{split}\label{eqn_Dirac_bdry_neck_00}
\bar{\pl}_n \alpha_n &= \pl_\rho\alpha_n - 2i\pl_t \alpha_n - \frac{2n(2-\rho)}{V}\alpha_n ~, \\
\bar{\pl}^*_n \beta_n &= -\pl_\rho\beta_n - 2i\pl_t \beta_n - \frac{2n(2-\rho)}{V}\beta_n ~,
\end{split}\end{align}
and ${\pl \hskip -2.2mm \slash_n}$ is $-2i\pl_t - \frac{2n}{V}$.  Let $\alpha_n = \alpha_{n,m}(\rho)e^{imt}$ and $\beta_n = \beta_{n,m}(\rho)e^{imt}$.  The APS boundary condition is
\begin{align}\label{eqn_APS_bdry_00}\left\{\begin{aligned}
\alpha_{n,m}(1) = 0 &\qquad\text{ when } m\geq\frac{n}{V} ~,\\
 \beta_{n,m}(1)=0 &\qquad\text{ when } m<\frac{n}{V} ~.
\end{aligned}\right.\end{align}

\subsubsection*{Adjacent to the Dehn-twist region.}  We follow the notations in section \ref{subsec_Dehn_00}, and take $u_1 = \pl_\rho$, $u_2 = \pl_t$.  The region $\Sigma$ is the union of where $\rho\geq 20\epsilon$ and $\rho\leq-20\epsilon$.  The operators $\bar{\pl}_n$ and $\bar{\pl}^*_n$ are
\begin{align}\begin{split}\label{eqn_Dirac_bdry_Dehn_00}
\bar{\pl}_n \alpha_n &= \pl_\rho\alpha_n - i\pl_t \alpha_n - \frac{2n(v-\rho)}{V}\alpha_n ~, \\
\bar{\pl}^*_n \beta_n &= -\pl_\rho\beta_n - i\pl_t \beta_n - \frac{2n(v-\rho)}{V}\beta_n ~.
\end{split}\end{align}
At $\rho = 20\epsilon$, ${\pl \hskip -2.2mm \slash_n}$ is $-i\pl_t - \frac{2n(v-20\epsilon)}{V}$.  At $\rho = -20\epsilon$, since $\pl_\rho$ does not point inward, ${\pl \hskip -2.2mm \slash_n}$ is $i\pl_t + \frac{2n(v+20\epsilon)}{V}$.  Let $\alpha_n = \alpha_{n,m}(\rho)e^{imt}$ and $\beta_n = \beta_{n,m}(\rho)e^{imt}$.  The APS boundary condition at $\rho = 20\epsilon$ is
\begin{align}\label{eqn_APS_bdry_01}\left\{\begin{aligned}
\alpha_{n,m}(20\epsilon) = 0 &\qquad\text{ when } m\geq2n\frac{v -20\epsilon}{V} ~, \\
\beta_{n,m}(20\epsilon)=0 &\qquad\text{ when } m<2n\frac{v-20\epsilon}{V} ~.
\end{aligned}\right.\end{align}
The condition at $\rho = -20\epsilon$ is
\begin{align}\label{eqn_APS_bdry_02}\left\{\begin{aligned}
\alpha_{n,m}(-20\epsilon) = 0 &\qquad\text{ when } m\leq2n\frac{v +20\epsilon}{V} ~, \\
\beta_{n,m}(-20\epsilon)=0 &\qquad\text{ when } m>2n\frac{v+20\epsilon}{V} ~.
\end{aligned}\right.\end{align}\\

In order to use the index formula (\ref{eqn_APS_00}) to compute $\dim\ker\bar{\pl}_n$, we need to know that $\dim\ker\bar{\pl}^*_n = 0$.
\begin{lem}\label{lem_beta_vanish_00}
There exists a constant $c$ such that the following holds.  For all $n\geq c$, if $\beta_n$ satisfies the APS boundary condition,
\begin{align*}
\int_\Sigma|\beta_n|^2 \leq c n^{-1} \int_\Sigma|\bar{\pl}_n^*\beta_n|^2 ~.
\end{align*}
In particular, $\bar{\pl}^*_n$ only has trivial solution for any $n\geq c$.
\end{lem}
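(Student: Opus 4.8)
The plan is to establish the estimate by integration by parts, producing a positive lower-order term of size $\sim n$ that dominates all the curvature-type error terms, while showing that the Atiyah--Patodi--Singer boundary condition makes the boundary contributions have a favorable sign. First I would write $\bar{\pl}_n$ and $\bar{\pl}_n^*$ in the explicit form given by \eqref{eqn_Dirac_bdry_neck_00} and \eqref{eqn_Dirac_bdry_Dehn_00} near $\pl\Sigma$ and in the general form (Cauchy--Riemann operator with connection perturbed by $-\tfrac{2in}{V}\mu_\Sigma$) in the interior. The key point is that $\bar{\pl}_n\bar{\pl}_n^*$ is a Laplace-type operator whose zeroth-order term is, up to bounded curvature contributions, the pairing of $-\tfrac{2in}{V}\dd\mu_\Sigma$ with the $K_\Sigma^{-1}$-factor; since $\dd\mu_\Sigma$ is a fixed area form and the relevant coefficient is positive on the $\beta_n$-summand, this term is bounded below by $(c^{-1}n - c_1)$ pointwise for a fixed constant $c_1$ depending only on the metric, the contact form, and $V$. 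This is the analogue on $\Sigma$ of the Weitzenb\"ock computation in the proof of Proposition \ref{prop_beta_estimate_00}.

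The main steps, in order, are: (1) integrate $|\bar{\pl}_n^*\beta_n|^2$ over $\Sigma$ and integrate by parts to obtain $\int_\Sigma|\nabla_n\beta_n|^2 + \int_\Sigma \langle(\text{zeroth order})\beta_n,\beta_n\rangle$ plus a boundary term $\int_{\pl\Sigma}(\cdots)$; (2) identify the zeroth-order term and bound it below by $(c^{-1}n - c_1)|\beta_n|^2$ as above; (3) analyze the boundary term. For step (3), using the Fourier decomposition $\beta_n = \sum_m \beta_{n,m}(\rho)e^{imt}$ along the boundary circles, the boundary integrand on each mode is, up to a positive factor, the value at $\pl\Sigma$ of $\langle {\pl \hskip -2.2mm \slash_n}\beta_{n,m},\beta_{n,m}\rangle$, which equals (eigenvalue of ${\pl \hskip -2.2mm \slash_n}$) times $|\beta_{n,m}(\pm1)|^2$; the APS condition \eqref{eqn_APS_bdry_00}, \eqref{eqn_APS_bdry_01}, \eqref{eqn_APS_bdry_02} forces $\beta_{n,m}$ to vanish precisely on the modes where this eigenvalue is negative, so the surviving boundary contribution has the correct (non-negative) sign and can be discarded. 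Combining (1)--(3) gives $\int_\Sigma|\bar{\pl}_n^*\beta_n|^2 \geq (c^{-1}n - c_1)\int_\Sigma|\beta_n|^2$, which rearranges to the claimed inequality once $n$ is large enough that $c^{-1}n - c_1 \geq \tfrac{1}{2}c^{-1}n$; taking $\bar{\pl}_n^*\beta_n = 0$ then forces $\beta_n = 0$.

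I expect the main obstacle to be step (3): verifying that the boundary term really does have the right sign under the APS condition, and in particular handling the $\rho=-1$ boundary of the Dehn-twist region where $\pl_\rho$ points \emph{outward} rather than inward, so that the roles of positive and negative eigenspaces of ${\pl \hskip -2.2mm \slash_n}$ are reversed (this is exactly why \eqref{eqn_APS_bdry_02} has the inequalities flipped relative to \eqref{eqn_APS_bdry_01}). One must be careful that the sign from the outward normal and the sign convention in the APS boundary condition combine correctly; once this bookkeeping is done the estimate is routine. A secondary, minor point is that $\mu_\Sigma$ is only affine (not constant) in $\rho$ near $\pl\Sigma$, so the operator ${\pl \hskip -2.2mm \slash_n}$ on the boundary is the one read off from \eqref{eqn_Dirac_bdry_neck_00}--\eqref{eqn_Dirac_bdry_Dehn_00}, and one should confirm the integration by parts produces exactly this operator on the boundary --- this is the same slight modification of the APS setup already invoked for the index formula \eqref{eqn_APS_00}.
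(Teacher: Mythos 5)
Your proposal matches the paper's proof: the paper also integrates by parts to obtain a Weitzenb\"ock-type identity $\int_\Sigma |\bar{\pl}^*_n\beta_n|^2 = \int_\Sigma |\nabla_n \beta_n|^2 + \int_\Sigma(\tfrac{2n}{V} + \tfrac{\kappa_\Sigma}{4}) |\beta_n|^2 + \int_{\partial\Sigma}\langle {\pl \hskip -2.2mm \slash_n}\beta_n, \beta_n \rangle$, observes that the APS boundary condition forces the boundary term to be non-negative, and concludes for $n \geq V\max|\kappa_\Sigma|$. The bookkeeping worries you flag in step (3) are real but resolve exactly as you anticipate, and the paper simply asserts the sign without spelling out the $\rho=-1$ reversal.
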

\begin{proof}
The integration by parts formula gives
\begin{align*}
\int_\Sigma |\bar{\pl}^*_n\beta_n|^2 &= \int_\Sigma |\nabla_n \beta_n|^2 + \int_\Sigma(\frac{2n}{V} + \frac{\kappa_\Sigma}{4}) |\beta_n|^2 + \int_{\partial\Sigma}\langle {\pl \hskip -2.2mm \slash_n}\beta_n, \beta_n \rangle 
\end{align*}
where $\kappa_\Sigma$ is the scalar curvature.  The APS boundary condition for $\beta_n$ implies that $\int_{\pl\Sigma}\langle {\pl \hskip -2.2mm \slash_n}\beta_n,\beta_n\rangle$ is non-negative.  Hence, if $n \geq V\max{|\kappa_\Sigma|}$, we obtain the inequality of the lemma.
\end{proof}

For any integer $n\geq c$ in lemma \ref{lem_beta_vanish_00}, the dimension of $\ker\bar{\pl}_n$ is given by the right hand side of (\ref{eqn_APS_00}).  Solutions of $\bar{\pl}_n$ automatically solve the Dirac equation (\ref{eqn_Dirac_page_00}) with $r$ given by
\begin{align} \label{defn_gamma_n} \gamma_n = \frac{2n}{V} ~. \end{align}
However, they only solve the Dirac equation on $\Sigma\times S^1$.  In order to get smooth sections on the $3$-manifold $Y$, we need to do some modifications.

\begin{defn}\label{defn_Sigma_ext_curt}
For any $\delta\in(-15\epsilon,5\epsilon)$, let $\Sigma_\delta$ be the extension/curtailment of $\Sigma$ defined by
\begin{itemize}
\item $\{\rho\geq 1-\delta\}$ for the part adjacent to the tubular neighborhood of the binding, in terms of the coordinate in section \ref{subsec_binding_00}.\smallskip
\item $\{\rho\leq -20\epsilon+\delta\text{ or }\rho\geq20\epsilon-\delta\}$ for the part adjacent to the Dehn-twist region, in terms of the coordinate in section \ref{subsec_Dehn_00}.
\end{itemize}
Positive $\delta$ corresponds to the extension, and negative $\delta$ corresponds to the curtailment.  When $\delta=0$, $\Sigma_0=\Sigma$.
\end{defn}

\begin{defn}
Let $\chi_\Sigma$ be the cut-off function which is equal to $1$ on $\Sigma_{\epsilon}\times S^1$ and equal to $0$ on $Y\backslash(\Sigma_{2\epsilon}\times S^1)$, and only depends on $\rho$ over $(\Sigma_{2\epsilon}\backslash\Sigma_{\epsilon})\times S^1$ in terms of the coordinate in sections \ref{subsec_binding_00} and \ref{subsec_Dehn_00},
\end{defn}

Suppose that $\alpha_n$ solves $\bar{\pl}_n$.  On the part adjacent to the tubular neighborhhood of the binding, $\alpha_n$ is equal to
\begin{align}\label{eqn_sln_extension_00}
\sum_{m<\frac{n}{V}} \mathfrak{c}_{n,m} \exp\big( -\frac{n}{V}(\rho-2+\frac{Vm}{n})^2\big) e^{imt}
\end{align}
where $\mathfrak{c}_{n,m}$ are constants.  The expression also solves $\bar{\pl}_n$ on the region where $\rho\geq1-2\epsilon$, and it also obeys the corresponding APS boundary condition.  On the part adjacent to the Dehn-twist region, the situation is similar.  Therefore, any solution of $\bar{\pl}_n$ on $\Sigma$ can be extended uniquely to a solution on $\Sigma_{2\epsilon}$, and the extension obeys the corresponding APS boundary condition on $\Sigma_{2\epsilon}$.  When $m<\frac{n}{V}$, the function $(\rho-2+\frac{Vm}{n})^2$ is monotone decreasing for $\rho\in(1-2\epsilon, 1]$.  It implies that
\begin{align*}  \int_{\Sigma_{2\epsilon}\backslash\Sigma}|\alpha_n|^2 \leq \int_{\Sigma\backslash\Sigma_{-2\epsilon}}|\alpha_n|^2 \leq \int_\Sigma |\alpha_n|^2 ~, \end{align*}
and thus the extension is still square integrable.

Consider the following construction of the almost eigensections: for each solution of $\bar{\pl}_n$, extend it to $\Sigma_{2\epsilon}$, and multiply it by the cut-off function $\chi_\Sigma$.  This process is linear, and it ends up with a vector space of the same dimension as $\ker{\bar{\pl}_n}$.  Choose an orthonormal basis with  respect to the $L^2$-inner product on $\Sigma_{2\epsilon}$.  Denote the basis by $\{\xi_{n,l}\}$, where $l$ runs from $1$ to the number on the right hand side of (\ref{eqn_APS_00}).  They are smooth functions on $Y$.  Their properties are summarized in the following proposition.

\begin{prop}\label{prop_page_sln_00}
There exists a constant $c$ which has the following significance:  For any integer $n\geq c$, let $\psi_{n,l}$ be the section of $\underline{\mathbb{C}}\oplus K^{-1}$ over $\Sigma\times S^1$ whose first component is $$\xi_{n,l}e^{in\phi}(2\pi V)^{-\oh}$$ and second component is zero.  Here, $\xi_{n,l}$ is given by the above construction.  Then,
\begin{align*}
\int_Y |D_r \psi_{n,l} - \frac{r-\gamma_n}{2}\psi_{n,l}|^2\leq c\exp(-\frac{n}{c})
\end{align*}
for any $r>0$, and $\gamma_n$ is defined by (\ref{defn_gamma_n}).  Moreover,
\begin{align*}
&\int_Y\langle\psi_{n,l},\psi_{n,l'}\rangle = \int_Y \langle D_r\psi_{n,l}, \psi_{n,l'} \rangle = 0 ~, \\
&\big| \int_Y \langle D_r\psi_{n,l}, D_r\psi_{n,l'} \rangle \big| \leq c\exp(-\frac{n}{c})
\end{align*}
for any $l\neq l'$.
\end{prop}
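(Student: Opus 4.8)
The plan is to treat the three assertions separately, each reducing to a direct computation using the explicit Dirac operators of subsections \ref{subsec_binding_00}, \ref{subsec_Dehn_00} and \ref{sec_page_id_00} together with the fact that $\xi_{n,l}$ is, away from the transition region $(\Sigma_{2\epsilon}\setminus\Sigma_\epsilon)\times S^1$, either an honest solution of $\bar\pl_n$ or identically zero. First I would record that on $\Sigma_\epsilon\times S^1$ the section $\psi_{n,l}$ has vanishing second component and $\alpha_n = \xi_{n,l}e^{in\phi}(2\pi V)^{-1/2}$ with $\bar\pl_n\xi_{n,l}=0$; plugging this into the frequency-$n$ Dirac operator (\ref{eqn_Dirac_page_00}) gives exactly $D_r\psi_{n,l}=\tfrac{r-\gamma_n}{2}\psi_{n,l}$ there, with $\gamma_n=2n/V$ as in (\ref{defn_gamma_n}). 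Likewise on $Y\setminus(\Sigma_{2\epsilon}\times S^1)$ the section is zero, so $D_r\psi_{n,l}-\tfrac{r-\gamma_n}{2}\psi_{n,l}$ is supported in $(\Sigma_{2\epsilon}\setminus\Sigma_\epsilon)\times S^1$, where it equals $[\mathrm{cl}(\dd\chi_\Sigma)]\cdot(\text{extension of }\xi_{n,l})$ up to terms proportional to $\chi_\Sigma$ applied to the \emph{extended} solution (which still solves $\bar\pl_n$ there, hence contributes nothing beyond the cutoff derivative). So the error is controlled by $\int_{(\Sigma_{2\epsilon}\setminus\Sigma_\epsilon)\times S^1}|\xi_{n,l}^{\mathrm{ext}}|^2$.

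The key point is then the pointwise decay of the extended solution on the annular transition region. From the explicit form (\ref{eqn_sln_extension_00}), on the binding side $\xi_{n,l}^{\mathrm{ext}}$ is a combination of Gaussians $\exp(-\tfrac{n}{V}(\rho-2+\tfrac{Vm}{n})^2)e^{imt}$ over $m<n/V$; restricted to $\rho\in[1-2\epsilon,1-\epsilon]$ the exponent is $\le -\tfrac{n}{V}(1-2\epsilon+\tfrac{Vm}{n}-2)^2 \le -c'n$ uniformly in the relevant range of $m$ (the worst case $m$ closest to $n/V$ still leaves the argument bounded away from $0$ by roughly $\epsilon$). A near-identical estimate holds on the Dehn-twist side using the analogous explicit formula coming from (\ref{eqn_Dirac_bdry_Dehn_00}). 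Summing the Gaussian tails over $m$ costs only a polynomial factor in $n$, which is absorbed by shrinking the exponential rate; this yields $\int_{(\Sigma_{2\epsilon}\setminus\Sigma_\epsilon)\times S^1}|\xi_{n,l}^{\mathrm{ext}}|^2 \le c\exp(-n/c)$ after normalizing $\xi_{n,l}$ to unit $L^2$-norm on $\Sigma_{2\epsilon}$ (the normalization is uniformly harmless since the $L^2$-mass of these Gaussians on $\Sigma$ versus $\Sigma_{2\epsilon}$ differs only by an exponentially small amount, so the normalizing constants are bounded). Combined with the bounded coefficients of $\dd\chi_\Sigma$ and of the zeroth-order terms in the Dirac operator, this gives the first displayed inequality for every $r>0$, with the constant independent of $r$ because the only $r$-dependence, $\tfrac{r-\gamma_n}{2}\psi_{n,l}$, has been subtracted off exactly.

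For the orthogonality relations: $\int_Y\langle\psi_{n,l},\psi_{n,l'}\rangle = \int_{\Sigma_{2\epsilon}\times S^1}\langle\xi_{n,l},\xi_{n,l'}\rangle$ (the $e^{in\phi}$ factors cancel after integrating over $\phi$, and the normalization measure matches the $L^2$-inner product used to orthonormalize $\{\xi_{n,l}\}$), so this is $0$ for $l\ne l'$ by construction. Next, $\int_Y\langle D_r\psi_{n,l},\psi_{n,l'}\rangle = \int_Y\langle\psi_{n,l},D_r\psi_{n,l'}\rangle$ by self-adjointness; writing $D_r\psi_{n,l'} = \tfrac{r-\gamma_n}{2}\psi_{n,l'} + E_{n,l'}$ with $E_{n,l'}$ the exponentially small error from part one, the main term gives $\tfrac{r-\gamma_n}{2}\int_Y\langle\psi_{n,l},\psi_{n,l'}\rangle = 0$ and the remainder is $\int_Y\langle\psi_{n,l},E_{n,l'}\rangle$, but $E_{n,l'}$ is supported on $(\Sigma_{2\epsilon}\setminus\Sigma_\epsilon)\times S^1$ where $\psi_{n,l}$ is itself the exponentially small extended tail, so Cauchy--Schwarz gives $|\int_Y\langle\psi_{n,l},E_{n,l'}\rangle|\le c\exp(-n/c)$ — and in fact on the interface the phases of the $m$-modes of the two factors combine so that this term vanishes identically once one integrates in $t$, but the exponential bound suffices; hence $\int_Y\langle D_r\psi_{n,l},\psi_{n,l'}\rangle = 0$ for $l\ne l'$ (here using that the problematic term is exactly the interface contribution, which one checks directly vanishes). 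Finally $\int_Y\langle D_r\psi_{n,l},D_r\psi_{n,l'}\rangle = \tfrac{(r-\gamma_n)^2}{4}\int_Y\langle\psi_{n,l},\psi_{n,l'}\rangle + \tfrac{r-\gamma_n}{2}(\int\langle\psi_{n,l},E_{n,l'}\rangle + \int\langle E_{n,l},\psi_{n,l'}\rangle) + \int\langle E_{n,l},E_{n,l'}\rangle$; the first term is $0$, and each of the others is bounded by $c\exp(-n/c)$ by Cauchy--Schwarz and part one, \emph{provided} we can control the factor $r-\gamma_n$ — but $E_{n,l}$ genuinely contains the subtracted $\tfrac{r-\gamma_n}{2}\psi_{n,l}$-type term only through the honest Dirac operator applied to the cutoff, which is bounded independently of $r$, so in fact $D_r\psi_{n,l} = (\text{$r$-independent error of size }\exp(-n/c))$ on the interface and the $r$-dependence never reappears. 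The main obstacle is precisely this last bookkeeping: showing the cross terms stay exponentially small \emph{uniformly in $r$}, which forces one to note that $D_r\psi_{n,l}$, restricted to the interface, is $\tfrac{r-\gamma_n}{2}\chi_\Sigma\psi_{n,l}^{\mathrm{ext}}$ plus an $r$-independent piece, and that the $r$-dependent piece, being proportional to the already-exponentially-small $\chi_\Sigma\psi_{n,l}^{\mathrm{ext}}$ restricted to $\rho\ge 1-\epsilon$ where $\chi_\Sigma\equiv 1$... wait, no: there $\psi_{n,l}$ is an honest eigensection, so the error is supported strictly on $\{\epsilon\le$ distance $\le 2\epsilon\}$, where both the cutoff derivative and the residual $\chi_\Sigma\in(0,1)$ multiply only the exponentially small tail — giving the claim with $r$-dependence absorbed into whether one keeps or discards $\tfrac{r-\gamma_n}{2}$, handled as in part one by subtracting it. This completes the plan.
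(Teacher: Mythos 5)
Your plan for the first displayed estimate is essentially correct and matches the paper's argument: away from the transition annulus, $D_r\psi_{n,l}=\tfrac{r-\gamma_n}{2}\psi_{n,l}$ because $\xi_{n,l}$ solves $\bar{\pl}_n$ and the second component is zero; the only error comes from the cutoff derivative, and its size is controlled by the Gaussian decay of the extended solutions (\ref{eqn_sln_extension_00}) on $\Sigma_{2\epsilon}\setminus\Sigma_\epsilon$.

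Where your argument genuinely goes wrong is in the orthogonality statements, which you treat as an exercise in estimating cross terms when in fact they should be exact algebraic identities, and for which you give an incorrect reason. You assert that $\int_Y\langle\psi_{n,l},E_{n,l'}\rangle$ ``vanishes identically once one integrates in $t$'' because ``the phases of the $m$-modes combine,'' but both $\psi_{n,l}$ and $E_{n,l'}$ carry the full collection of $t$-frequencies $m<n/V$; there is no $t$-phase cancellation between them. (Also, the exponential bound you invoke first is logically insufficient: the proposition asserts exact equality $\int_Y\langle D_r\psi_{n,l},\psi_{n,l'}\rangle=0$, not an exponentially small bound.) The correct, and in fact simpler, reason is the $\underline{\mathbb{C}}\oplus K^{-1}$ component structure: writing $\xi_{n,l}=\chi_\Sigma\alpha_{n,l}$ with $\bar{\pl}_n\alpha_{n,l}=0$ and plugging into (\ref{eqn_Dirac_page_00}), the \emph{first} component of $D_r\psi_{n,l}$ is exactly $\tfrac{r-\gamma_n}{2}\chi_\Sigma\alpha_{n,l}e^{in\phi}(2\pi V)^{-1/2}$ and the \emph{second} component is exactly $\chi'_\Sigma\alpha_{n,l}e^{i(n+1)\phi}(2\pi V)^{-1/2}$. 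Since $\psi_{n,l'}$ has zero second component, $\int_Y\langle D_r\psi_{n,l},\psi_{n,l'}\rangle$ is a pure first-component pairing and equals $\tfrac{r-\gamma_n}{2}$ times $\int_{\Sigma_{2\epsilon}}\langle\chi_\Sigma\alpha_{n,l},\chi_\Sigma\alpha_{n,l'}\rangle$, which is zero by the orthonormality of $\{\chi_\Sigma\alpha_{n,l}\}$; no estimate is needed. Likewise $\int_Y\langle D_r\psi_{n,l},D_r\psi_{n,l'}\rangle$ splits cleanly into a first-component term (proportional to the same vanishing inner product) plus a second-component term $\int\langle\chi'_\Sigma\alpha_{n,l},\chi'_\Sigma\alpha_{n,l'}\rangle$, and only the latter needs the exponential bound. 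This observation also dissolves your concern about the $(r-\gamma_n)$ prefactor: the cross terms you worry about, $\langle\psi_{n,l},E_{n,l'}\rangle$ and $\langle E_{n,l},\psi_{n,l'}\rangle$, are identically zero pointwise because they pair a first-component section with a second-component section. The ``wait, no'' detour in your last paragraph is the symptom of missing this structural fact.
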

\begin{proof}
For each section $\psi_{n,l}$, there exists a function $\alpha_{n,l}$ which solves $\bar{\pl}_n$ on $\Sigma_{2\epsilon}$ and is extended from $\Sigma$, such that $\xi_{n,l} = \chi_{\Sigma}\alpha_{n,l}$.  From the expression (\ref{eqn_sln_extension_00}), there exists a constant $c_1$ such that
\begin{align}\label{eqn_bdry_ext_small_00}
\int_{\Sigma_{2\epsilon}\backslash\Sigma_{\epsilon}}|\alpha_{n,l}|^2 \leq c_1\exp(-\frac{n}{c_1})\int_{\Sigma_{\epsilon}}|\alpha_{n,l}|^2 ~.
\end{align}
By (\ref{eqn_Dirac_page_00}), the first component of $D_r\psi_{n,l}$ is
\begin{align*}
\frac{r-\gamma_n}{2}\chi_\Sigma\alpha_{n,l}e^{in\phi}(2\pi V)^{-\oh} = \frac{r-\gamma_n}{2}\xi_{n,l}e^{in\phi}(2\pi V)^{-\oh} ~.
\end{align*}
The second component of $D_r\psi_{n,l}$ is equal to
\begin{align*}
\chi'_\Sigma\alpha_{n,l}e^{i(n+1)\phi}(2\pi V)^{-\oh}
\end{align*}
which is supported on $\Sigma_{2\epsilon}\backslash\Sigma_{\epsilon}$.  Since $\{\chi_\Sigma\alpha_{n,l}\} =\{\xi_{n,l}\}$ forms an orthonormal set in $L^2(\Sigma_{2\epsilon})$, the above expression of $D_r\psi_{n,l}$ with (\ref{eqn_bdry_ext_small_00}) proves the proposition.
\end{proof}

\subsubsection{A remark on the APS index theorem.}
We now explain why the index formula (\ref{eqn_APS_00}) still holds in our setting.  For simplicity, we only emphasize it for the boundary component adjacent to the tubular neighborhood of the binding.  To start, choose a smooth function $h(\rho)$ in $\rho\in[1,1+20\epsilon)$ such that
\begin{itemize}
\item $h(\rho) = 1$ for $\rho\in[1,1\epsilon]$;
\item $h(\rho) = 2-\rho$ for $\rho\in[1+2\epsilon,1+20\epsilon)$;
\item $h(\rho)$ is non-increasing in $\rho$.
\end{itemize}
Let $\mu_h$ be the $1$-form which is equal $h(\rho)\dd t$ near the boundary of $\Sigma$, and is equal to $\mu_\Sigma$ away from the boundary of $\Sigma$.  Stokes theorem implies that $\int\!\int_{\Sigma}\dd\mu_\Sigma = \int\!\int_\Sigma\dd\mu_h$.

We take the same metric $\dd s^2_\Sigma$ on $\Sigma$.  Let $\bar{\pl}_{n,h}$ and $\bar{\pl}_{n,h}^*$ be the Cauchy--Riemann operators on $\underline{\mathbb{C}}\oplus K_{\Sigma}^{-1}$ with the connection perturbed by $\frac{-2in}{V}\mu_h$.  Since $h(\rho)$ is equal to $1$ on the tubular neighborhood of $\pl\Sigma$, it meets the requirement of \cite[(4.3)]{ref_APS}.  Thus, the index is given by the right hand side of (\ref{eqn_APS_00}).

We claim that $\dim\ker\bar{\pl}_n = \dim\ker\bar{\pl}_{n,h}$.  The operators $\bar{\pl}_n$ and $\bar{\pl}_{n,h}$ share the same boundary condition, as described by (\ref{eqn_APS_bdry_00}).  For any solution $\alpha_n$ of $\bar{\pl}_n$, it can be expressed as (\ref{eqn_sln_extension_00}) on $\Sigma\backslash\Sigma_{-20\epsilon}$.  Let $\alpha_{n,h}$ be equal to $\alpha_n$ on $\Sigma_{-20\epsilon}$, and equal to
\begin{align*}
\sum_{m<\frac{n}{V}} \mathfrak{c}_{n,m} \exp\big( -\frac{2n}{V}\int_{1+2\epsilon}^\rho(h(s)-\frac{Vm}{n})\dd s\big) \exp\big(-\frac{n}{V}(2\epsilon-1+\frac{Vm}{n})^2\big) e^{imt}
\end{align*}
on $\Sigma\backslash\Sigma_{-20\epsilon}$.  It is not hard to see that $\alpha_{n,h}$ is smooth and solves $\bar{\pl}_{n,h}$.  The construction of $\alpha_{n,h}$ from $\alpha_n$ gives a linear map from $\ker\bar{\pl}_n$ to $\ker\bar{\pl}_{n,h}$, which we denote by $\Pi_h$.  On the other hand, the inverse map $\Pi_h^{-1}$ is given by solving the ordinary differential equation on $\Sigma\backslash\Sigma_{-20\epsilon}$.  Therefore, $\ker\bar{\pl}_n$ and $\ker\bar{\pl}_{n,h}$ are isomorphic to each other.  A similar construction implies that $\ker\bar{\pl}^*_n$ and $\ker\bar{\pl}^*_{n,h}$ are isomorphic to each other.

\section{The model case: $S^1\times S^2$}\label{sec_S2S1}
In this section, we study the Dirac equations of the associated contact forms (\ref{eqn_S2S1_contact_00}) on $S^1\times S^2$.  The $S^1\times S^2$ associated to the tubular neighborhood of the binding will be denoted by $\check{Y}$.  The Dirac operators will be denoted by $\check{D}_r$, and the sections will be denoted by $\check{\psi}$.  The $S^1\times S^2$ associated to the Dehn-twist region will be denoted by $\tilde{Y}$.  The Dirac operators will be denoted by $\tilde{D}_r$, and the sections will be denoted by $\tilde{\psi}$.  Their spectral flow function will be denoted by $\check{\rm sf}_a(r)$ and $\tilde{\rm sf}_a(r)$, respectively.  We will focus on the associated contact form of the tubular neighborhood of the binding.  For the associated contact form of the Dehn-twist region, the argument is completely parallel, and the details will be omitted.

Recall that we fix a {global} trivialization of $\underline{\mathbb{C}}\oplus K^{-1}$ to identify its sections with $\mathbb{C}^2$-valued functions.  The corresponding Dirac operator is given by (\ref{eqn_Dirac_neck_01}), and it is invariant under the two \emph{global} $S^1$-actions in $e^{i\phi}$ and $e^{it}$.  Hence, the eigenspaces of the Dirac operator split according to the frequencies with respect to these two $S^1$-actions.  The splitting allows us to study the spectral flow function directly.  Let $\mathcal{S}_{k,m}$ be the following space of sections:
\begin{align*}
\big\{ \psi=(\alpha,\beta) ~\big|~ \pl_\phi\psi = ik\psi+i(0,\beta),\pl_t\psi = im\psi \big\}.
\end{align*}

The following notions will be used throughout the paper.
\begin{defn}\label{def_gamma_nm_00}
For the associated contact form (\ref{eqn_S2S1_contact_00}) of the tubular neighborhood of the binding, the function $g/f$ is monotone decreasing in $\rho$.  For each positive integer $k$ and integer $m$, there is a unique $\check{\rho}_{k,m}\in(0,2)$ such that $k g(\check{\rho}_{k,m}) = m f(\check{\rho}_{k,m})$.  Let $\check{\gamma}_{k,m}$ be
\begin{align*} 
\frac{mf'(\check{\rho}_{k,m})-kg'(\check{\rho}_{k,m})}{\Delta(\check{\rho}_{k,m})} = \frac{2k}{f(\check{\rho}_{k,m})} = \frac{2m}{g(\check{\rho}_{k,m})} \end{align*}
where $\Delta$ is defined by (\ref{eqn_S2S1_Delta_00}).  The last equality only makes sense at where $g(\check{\rho}_{k,m})\neq0$.  If $k=0$ and $m>0$, let $\check{\rho}_{k,m}=0$ and $\check{\gamma}_{k,m}-m$.  If $k=0$ and $m<0$, let $\check{\rho}_{k,m}=2$ and $\check{\gamma}_{k,m}=-m$.
\end{defn}

For the associated contact form of the Dehn-twist region, $\tilde{\gamma}_{k,m}$ is defined in the same way:  replace $f$, $g$ and $\Delta$ by $\tilde{f}$, $\tilde{g}$ and $\tilde{\Delta}$, and $\tilde{\rho}_{k,m}$ lies in the interval $[-2,2]$.  For $k=0$ and $m\neq0$, $\tilde{\gamma}_{k,m} = \frac{{\rm sign}(m)m}{|v|+1}$.

We will have various cut-off functions for different purpose.  They will be denoted by $\chi$ with some sub/superscript.  If there is no sub/superscript, it is the following one:
\begin{defn}
Let $\chi(x)$ be the cut-off function on $\mathbb{R}$ with $\chi(x)=1$ when $|x|\leq\oh$ and $\chi(x)=0$ when $|x|\geq1$.
\end{defn}

\subsection{Uniqueness of zero crossing}
In order to prove the upper bound in theorem \ref{thm_sf_main_00} for the associated contact forms, we need to know that the zero crossing of $\check{D}_r$ on each $\mathcal{S}_{k,m}$ is unique.

\begin{prop} \label{prop_S2S1_unique_00}
For the associated contact form (\ref{eqn_S2S1_contact_00}) on $\check{Y}$, there exists a constant $c>0$ such that the following holds.
\begin{enumerate}
\item For each $k$ and $m$, the Dirac operator $\check{D}_r$ on $\mathcal{S}_{k,m}$ has at most one zero crossing for $r\geq c$.
\item If $k$ is negative, or both $k$ and $m$ are zero, there is no zero crossing for $r\geq c$.
\item If the Dirac operator does have a zero crossing on $\mathcal{S}_{k,m}$ at some $r\geq c$, then
$$ r\in[\check{\gamma}_{k,m}-c, \check{\gamma}_{k,m}+c] ~. $$
\end{enumerate}
\end{prop}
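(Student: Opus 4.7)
The plan is to use the global $T^2$-symmetry of the contact form (\ref{eqn_S2S1_contact_00}) to decompose the spectral problem by Fourier modes, reducing on each sector $\mathcal{S}_{k,m}$ to a self-adjoint first-order ODE in $\rho \in [0,2]$ with boundary conditions at the poles dictated by smoothness. The three assertions will follow from (a) a Weitzenb\"ock-type potential lower bound that localizes the near-zero spectrum, combined with (b) the monotonicity of zero crossings in Proposition \ref{prop_beta_estimate_00}(ii).

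Writing $\alpha = a(\rho)e^{ik\phi+imt}$ and $\beta = b(\rho)e^{i(k+1)\phi+imt}$, substitution into (\ref{eqn_Dirac_neck_01}) yields an ODE whose $a$-component mass coefficient equals $\oh\bigl(r - \check{\gamma}_{k,m}(\rho)\bigr)$ where
\begin{align*}
\check{\gamma}_{k,m}(\rho) \;:=\; \frac{mf'(\rho) - kg'(\rho)}{\Delta(\rho)},
\end{align*}
and the $b$-component carries the analogous coefficient with opposite sign plus an $O(1)$ correction. Since $\check{\gamma}_{k,m}(\check{\rho}_{k,m}) = \check{\gamma}_{k,m}$ by construction, both coefficients vanish simultaneously at $\rho = \check{\rho}_{k,m}$ precisely when $r = \check{\gamma}_{k,m}$.

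Integrating by parts in $\rho$ and using Proposition \ref{prop_beta_estimate_00}(i) to absorb the $b$-component produces the Schr\"odinger-type lower bound
\begin{align*}
\|\check{D}_r\psi\|_{L^2}^2 \;\geq\; \int_0^2 \tfrac{1}{4}\bigl(r - \check{\gamma}_{k,m}(\rho)\bigr)^2\,|a|^2\,\Delta\,\dd\rho \;-\; c_1\|\psi\|_{L^2}^2,
\end{align*}
where the $1/\Delta$-singularities at $\rho \in \{0,2\}$ are absorbed via a Hardy-type inequality exploiting the $\rho^{|k|}$ and $(2-\rho)^{|k+1|}$ vanishing of $(a,b)$ forced by smoothness at the poles. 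This estimate directly yields (iii): a Taylor expansion gives $|\check{\gamma}_{k,m}(\rho) - \check{\gamma}_{k,m}| \gtrsim r\,|\rho - \check{\rho}_{k,m}|$ near $\check{\rho}_{k,m}$ (because $\pl_\rho\check{\gamma}_{k,m}|_{\check{\rho}_{k,m}}$ is of order $r$, verified from $kg = mf$ and $\check{\gamma}_{k,m} = 2k/f(\check{\rho}_{k,m})$), which forces $|r - \check{\gamma}_{k,m}| \leq c$ whenever the integrand can accommodate a near-zero eigenvalue. Part (ii) follows from (iii): when $k < 0$, one has $\check{\gamma}_{k,m} = 2k/f(\check{\rho}_{k,m}) < 0$, placing the only possible zero crossing outside $r \geq c > 0$; when $(k,m) = (0,0)$, the mass coefficient is identically $r/2$, giving $\|\check{D}_r\psi\|^2 \gtrsim r^2\|\psi\|^2$ and no zero modes.

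For the uniqueness claim (i), the Schr\"odinger potential above becomes a harmonic oscillator of frequency $\sim r^{1/2}$ centered at $\check{\rho}_{k,m}$ when $r$ lies in the window $[\check{\gamma}_{k,m} - c,\; \check{\gamma}_{k,m} + c]$. A min-max argument against trial sections supported away from $\check{\rho}_{k,m}$ then shows that at most one eigenvalue of $\check{D}_r|_{\mathcal{S}_{k,m}}$ can lie within $O(1)$ of zero in this window, while the remaining spectrum sits at distance $\gtrsim r^{1/2}$. Proposition \ref{prop_beta_estimate_00}(ii) forces this single eigenvalue curve to cross zero with positive derivative $\approx \oh$, hence at most once. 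The main obstacle is the Hardy-type inequality at $\rho \in \{0,2\}$: absorbing the $1/\Delta$ blow-up via the boundary vanishing while maintaining constants uniform in $(k,m)$ and simultaneously producing the sharp $r^{1/2}$-harmonic oscillator strength near $\check{\rho}_{k,m}$. The argument for the associated contact form on $\tilde Y$ is parallel, with $\tilde\gamma_{k,m}$ replacing $\check{\gamma}_{k,m}$.
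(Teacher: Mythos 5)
Your overall skeleton (Fourier reduction, Weitzenb\"ock lower bounds, localization near $\check{\rho}_{k,m}$, harmonic oscillator asymptotics) is aligned with the paper, but there are several concrete errors and a genuinely different uniqueness strategy that needs more care.

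First, the central analytic claim is wrong: you assert $\pl_\rho\check{\gamma}_{k,m}(\rho)\big|_{\check{\rho}_{k,m}}$ is of order $r$, so that $|\check{\gamma}_{k,m}(\rho)-\check{\gamma}_{k,m}|\gtrsim r|\rho-\check{\rho}_{k,m}|$. In fact this derivative is \emph{exactly zero}: writing $\check{\gamma}_{k,m}(\rho)=\frac{mf'-kg'}{\Delta}$ and substituting $kg=mf$ at $\check{\rho}_{k,m}$ gives
\begin{align*}
\frac{(mf''-kg'')\Delta-(mf'-kg')\Delta'}{\Delta^2}\Big|_{\check{\rho}_{k,m}}
=\frac{\tfrac{2m}{g}\Delta'\Delta-\tfrac{2m}{g}\Delta\Delta'}{\Delta^2}=0 ,
\end{align*}
which the paper uses explicitly (``the derivative of $\frac{mf'-kg'}{\Delta}$ at $\check{\rho}_{k,m}$ is zero''). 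The coefficient $\mathfrak{r}_1$ in the second-order expansion of this function is what is $O(r)$. Consequently the mass coefficient $\tfrac12(r-\check{\gamma}_{k,m}(\rho))$ is \emph{flat} to first order at $\check{\rho}_{k,m}$, and it is not the source of the harmonic-oscillator potential. The actual oscillator comes from the radial operator $\mathcal{D}_2=\pl_\rho-\tfrac{kg-mf}{\Delta}$, whose zeroth-order coefficient vanishes linearly: $\tfrac{kg-mf}{\Delta}\approx-\check{\gamma}_{k,m}\,(\rho-\check{\rho}_{k,m})$, giving $\mathcal{L}_{k,m}=\pl_x+\check{\gamma}_{k,m}x$. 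Your Taylor-expansion step for part (iii) and the identification of the oscillator are therefore incorrect as stated.

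Second, the displayed ``Schr\"odinger-type lower bound'' contains only the potential term $\int\tfrac14(r-\check{\gamma}_{k,m}(\rho))^2|a|^2\Delta\,\dd\rho$ and no kinetic term. A lower bound by a multiplication operator alone has no spectral gap (its spectrum is the essential range of the potential), so the subsequent min-max/spectral-gap step would be vacuous. To make this route work you must retain $\|\pl_\rho a\|^2$ (or, cleaner, keep the full $\mathcal{D}_1,\mathcal{D}_2$ pair as the paper does) and get the gap from the genuine oscillator $-\pl_x^2+\check{\gamma}_{k,m}^2x^2$. The paper's route to localization is instead through the estimate $\int(kg-mf)^2|\check{\alpha}|^2\leq c\,r\int|\check{\alpha}|^2$ together with $|kg-mf|\gtrsim r|\rho-\check{\rho}_{k,m}|$ near $\check{\rho}_{k,m}$; this is what pins $\check{\alpha}$ to a $r^{-1/2}$-neighborhood of $\check{\rho}_{k,m}$ and, combined with the second-order flatness of $\check{\gamma}_{k,m}(\rho)$, yields $|r-\check{\gamma}_{k,m}|\leq c$.

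Third, your uniqueness argument is genuinely different from the paper's, and needs more work to close. You propose: show a spectral gap of order $r^{1/2}$, then use Proposition \ref{prop_beta_estimate_00}(ii) to argue the one near-zero eigenvalue curve has slope $\approx 1/2$ and crosses once. That slope estimate only applies while $|\lambda|\leq\delta_1$, so you'd have to argue that two putative crossings happening in the $c$-window $[\check{\gamma}_{k,m}-c,\check{\gamma}_{k,m}+c]$ keep the first curve inside $(-\delta_1,\delta_1)$ until the second one crosses, with $\delta_1$ large enough to cover the window; this is not impossible but is nowhere supplied. The paper bypasses curve tracking entirely: from the zeroth-order approximation, the $\alpha$-component of \emph{any} zero mode is close to the same Gaussian (equation (\ref{sln_Dirac_neck_00}) or (\ref{sln_Dirac_binding_00})), so two zero modes at $r_1\neq r_2$ have $|\langle\check{\alpha}_1,\check{\alpha}_2\rangle|^2\geq(1-c r^{-1/6})\|\check{\alpha}_1\|^2\|\check{\alpha}_2\|^2$, contradicting Proposition \ref{prop_small_ip_00}. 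That orthogonality contradiction is the uniqueness mechanism; you have omitted it.

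Finally, two smaller gaps: the paper's Definition \ref{def_gamma_nm_00} only defines $\check{\rho}_{k,m},\check{\gamma}_{k,m}$ for $k\geq 0$; your treatment of $k<0$ by extending the definition and invoking (iii) is not what the paper does (it handles $k<0$ in Case~2 by showing the linearized $2$-dimensional operator $2\pl_z+\tfrac{\check{\gamma}_{k,m}}{2}\bar z$ has trivial kernel on the frequency-$k$ subspace), and you would need to verify that your extended (iii) actually holds for negative $k$. Also, near $\rho=2$ the analysis switches to the coordinate $w=(2-\rho)e^{i\phi}$ and the relevant frequency is not $k$ in the naive sense, so the claimed $(2-\rho)^{|k+1|}$ vanishing needs to be re-examined; the paper splits into $|m|<(\tfrac{1}{32\epsilon^2}-1)k$ (one-dimensional oscillator in $\rho$) and the opposite regime (two-dimensional oscillator in $z$ or $w$), a case division your sketch doesn't acknowledge.
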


\begin{proof}  Suppose there is a $\check{\psi} = (\check{\alpha},\check{\beta})\in\mathcal{S}_{k,m}$ such that $\check{D}_r\psi=0$ and $\int_{\check{Y}}|\check{\psi}|^2=1$.  We will first prove that $\check{\psi}$ is small except near $\check{\rho}_{k,m}$, then prove that $\check{\psi}$ is similar to the unique solution of the linearized equation at $\check{\rho}_{k,m}$.

By proposition \ref{prop_beta_estimate_00} and (\ref{eqn_Dirac_neck_01}), there exists a constant $c_1$ such that
\begin{align}
\int_{\check{Y}} \big| (r + \frac{kg'-mf'}{\Delta})\check{\alpha} \big|^2  &=\int|{\rm pr}_{1}(\check{D}_r\check{\beta})|^2  \leq c_1\int_{\check{Y}} |\check{\alpha}|^2 ~, \label{eqn_S2S1_estimate_0a}\\
\int_{\check{Y}} \big| e^{i\phi}(\pl_\rho\check{\alpha} - \frac{kg-mf}{\Delta}\check{\alpha}) \big|^2 &=\int|{\rm pr}_{2}(\check{D}_r\check{\beta})|^2\leq c_1\int_{\check{Y}} |\check{\alpha}|^2 \label{eqn_S2S1_estimate_0b}
\end{align}
provided $r\geq c_1$.  Here, ${\rm pr}_1$ and ${\rm pr}_2$ stand for the projection onto the $\underline{\mathbb{C}}$ and $K^{-1}$-component, respectively.  We denote $(r + \frac{kg'-mf'}{\Delta})\check{\alpha}$ by $\mathcal{D}_1\check{\alpha}$, and $e^{i\phi}(\pl_\rho\check{\alpha} - \frac{kg-mf}{\Delta}\check{\alpha})$ by $\mathcal{D}_2\check{\alpha}$.

\subsubsection*{Rough relations between $k$, $m$ and $r$}  Consider the integral of the real part of $\langle\mathcal{D}_1\check{\alpha}, f\check{\alpha}\rangle + \langle\mathcal{D}_2\check{\alpha}, e^{i\phi}f'\check{\alpha}\rangle$.  With (\ref{eqn_S2S1_estimate_0a}), (\ref{eqn_S2S1_estimate_0b}), we have
\begin{align*}
-c'_1\int_{\check{Y}}|\check{\alpha}|^2 \leq \int_{\check{Y}} \big((rf-2k)|\check{\alpha}|^2 + \frac{1}{2}f'\pl_\rho|\check{\alpha}|^2\big) \leq c'_1\int_{\check{Y}} |\check{\alpha}|^2 ~.
\end{align*}
Note that $f'\pl_\rho$ is a globally defined vector field on $\check{Y}$.  With integration by parts, there exist a constant $c_2$ such that
\begin{align} \begin{split} \label{eqn_S2S1_estimate_01}
-c_2\int_{\check{Y}}|\check{\alpha}|^2 \leq \int_{\check{Y}} (rf-2k)|\check{\alpha}|^2 \leq c_2\int_{\check{Y}} |\check{\alpha}|^2 ~, \\
-c_2\int_{\check{Y}}|\check{\alpha}|^2 \leq \int_{\check{Y}} (rg-2m)|\check{\alpha}|^2 \leq c_2\int_{\check{Y}} |\check{\alpha}|^2
\end{split} \end{align}
provided $r\geq c_1$.  The second line is obtained by the same argument on $\langle\mathcal{D}_1\check{\alpha}, g\check{\alpha}\rangle + \langle\mathcal{D}_2\check{\alpha}, e^{i\phi}g'\check{\alpha}\rangle$.  Let $c_3 = c_2+\max \{f,|g|\}$.  It follows from (\ref{eqn_S2S1_estimate_01}) that
\begin{align}\label{eqn_S2S1_estimate_02} 2k&\leq r c_3 ~, &2|m|&\leq r c_3 \end{align}
provided $r\geq c_1$.  On the other hand, it is straightforward to bound $r$ in terms of $k$ and $m$.  By (\ref{eqn_S2S1_estimate_0a}), there exists a constant $c_4$ such that
\begin{align} \label{eqn_S2S1_estimate_06}
r\leq c_4(|k|+|m|)
\end{align}
provided $r\geq c_4$.  It follows that $k$ and $m$ cannot both be zero.

\subsubsection*{Some estimates on $\alpha$}  Consider the integral of $|f\mathcal{D}_1\check{\alpha} + e^{-i\phi}f'\mathcal{D}_2\check{\alpha}|^2$.  By (\ref{eqn_S2S1_estimate_0a}) and (\ref{eqn_S2S1_estimate_0b}), we have
\begin{align*}
\int_{\check{Y}} (rf-2k)^2|\check{\alpha}|^2 + |f'\pl_\rho\check{\alpha}|^2 + (rf-2k)f'\pl_\rho|\check{\alpha}|^2 \leq c''_1\int_{\check{Y}}|\check{\alpha}|^2 ~.
\end{align*}
Throw away the second term, and perform integration by parts on the third term.  With (\ref{eqn_S2S1_estimate_02}), there exists a constant $c_5$ such that
\begin{align}
\int_{\check{Y}} (rf-2k)^2|\check{\alpha}|^2 &\leq rc_5\int_{\check{Y}}|\check{\alpha}|^2 \label{eqn_S2S1_estimate_03}
\end{align}
provided $r\geq c_1$.  The same argument on $|g\mathcal{D}_1\check{\alpha} + e^{-i\phi}g'\mathcal{D}_2\check{\alpha}|^2$ and $|e^{-i\phi}\Delta\mathcal{D}_2\check{\alpha}|^2$ implies that
\begin{align}
\int_{\check{Y}} (rg-2m)^2|\check{\alpha}|^2 &\leq rc_5\int_{\check{Y}}|\check{\alpha}|^2 ~,\label{eqn_S2S1_estimate_04} \\
\int_{\check{Y}} (kg-mf)^2|\check{\alpha}|^2 &\leq rc_5\int_{\check{Y}}|\check{\alpha}|^2 \label{eqn_S2S1_estimate_05}
\end{align}
provided $r\geq c_1$.

We separate the discussion into two cases according to whether
\begin{align*} |m|&<(\frac{1}{32\epsilon^2}-1)k &\text{or}& &|m|&\geq(\frac{1}{32\epsilon^2}-1)k~.\end{align*}

\subsubsection*{Case 1}  When $|m|<(\frac{1}{32\epsilon^2}-1)k$, $k$ can only be positive.  We are going to use (\ref{eqn_S2S1_estimate_05}) to obtain a refined estimate on $\alpha$.  Note that $\check{\rho}_{n,m}$ (given by definition \ref{def_gamma_nm_00}) lies within $(8\epsilon,2-8\epsilon)$.  The function $|kg-mf|$ can only be small near $\check{\rho}_{n,m}$.  More precisely, there is a constant $c_6>0$ such that
\begin{align} \label{eqn_S2S1_estimate_07}
|kg-mf|&\geq
\begin{cases}
\frac{1}{c_6} r &\text{when } |\rho-\check{\rho}_{k,m}|\geq\epsilon ~,\medskip\\
\frac{1}{c_6} r |\rho-\rho_{k,m}| &\text{when } |\rho-\check{\rho}_{k,m}|<\epsilon ~.
\end{cases}
\end{align}
provided $r\geq c_6$.  Here is the proof of (\ref{eqn_S2S1_estimate_07}):  The condition $|m|<(\frac{1}{32\epsilon^2}-1)k$ and (\ref{eqn_S2S1_estimate_06}) implies that $k$ is greater than some multiple of $r$.  When $\rho\leq7\epsilon$ or $\rho\geq2-7\epsilon$, it is straightforward to verify (\ref{eqn_S2S1_estimate_07}).  When $\rho\in(7\epsilon, 2-7\epsilon)$, (\ref{eqn_S2S1_estimate_07}) follows from Taylor's theorem on $\frac{1}{f}(kg-mf)$ and the monotonicity of $\frac{1}{f}(kg-mf)$.

With (\ref{eqn_S2S1_estimate_05}) and (\ref{eqn_S2S1_estimate_07}), there exist a constant $c_7$ such that
\begin{align} \label{eqn_S2S1_estimate_08}
\int_{\check{Y}}|\check{\alpha}|^2 \leq c_7\int_{|\rho-\check{\rho}_{k,m}|\leq c_7 r^{-\oh}} |\check{\alpha}|^2
\end{align}
provided $r\geq c_7$.

\subsubsection*{Refined estimate on $r$}  By (\ref{eqn_S2S1_estimate_0a}) and (\ref{eqn_S2S1_estimate_08}),
\begin{align*}
c_1 \int_{\check{Y}}|\check{\alpha}|^2 &\geq \int_{|\rho-\check{\rho}_{k,m}|\leq c_7 r^{-\oh}}\big|(r-\frac{mf'-kg'}{\Delta})\check{\alpha}\big|^2 \\
&\geq \int_{|\rho-\check{\rho}_{k,m}|\leq c_7 r^{-\oh}}\Big( \frac{(r-\check{\gamma}_{k,m})^2}{2}|\check{\alpha}|^2 - \big| (\check{\gamma}_{k,m}-\frac{mf'-kg'}{\Delta})\check{\alpha} \big|^2 \Big) \\
&\geq \int_{\check{Y}} \frac{(r-\check{\gamma}_{k,m})^2}{2 c_7}|\check{\alpha}|^2 - c_8 \int_{\check{Y}} |\check{\alpha}|^2 ~.
\end{align*}
For the last inequality, note that the derivative of $\frac{mf'-kg'}{\Delta}$ at $\check{\rho}_{k,m}$ is zero.  Taylor's theorem on $\check{\gamma}_{k,m} - \frac{mf'-kg'}{\Delta}$ implies that
\begin{align*}
\big|\check{\gamma}_{k,m}-\frac{mf'-kg'}{\Delta})\big|^2 &\leq c_{8}'r^2|\rho-\check{\rho}_{k,m}|^4 \leq c_8
\end{align*}
for any $\rho$ with $|\rho-\check{\rho}_{k,m}|\leq c_7 r^{-\oh}$.

Thus, there exits a constant $c_9$ such that
\begin{align} \label{eqn_S2S1_estimate_09}
|r-\check{\gamma}_{k,m}|\leq c_9
\end{align}
provided $r\geq c_9$.  It follows that any zero crossing on $\mathcal{S}_{k,m}$ must happen somewhere very close to $\check{\gamma}_{k,m}$.  It also implies that $\check{\gamma}_{k,m}$ and $r$ are of the same order.

\subsubsection*{Zeroth order approximation of $\check{\alpha}$}  Consider the linearized operator of $e^{-i\phi}\mathcal{D}_2$ at $\check{\rho}_{k,m}$:
\begin{align*} \mathcal{L}_{k,m} = \pl_x + \check{\gamma}_{k,m}x \end{align*}
where $x$ is $\rho-\check{\rho}_{k,m}$.  If we regard $\mathcal{L}_{k,m}$ as an operator on $\mathbb{R}$, the theory of the $1$-dimensional harmonic oscillator applies.  See \cite[chapter 9]{ref_Roe} for the properties of the harmonic oscillator.  If $\check{\gamma}_{k,m}>0$, $\mathcal{L}_{k,m}$ has the following properties.  Its kernel is $1$-dimensional, and is spanned by
\begin{align}\label{sln_Dirac_neck_00} \check{\xi}_{k,m} &= (\frac{\check{\gamma}_{k,m}}{\pi})^{\frac{1}{4}}\exp(-\frac{\check{\gamma}_{k,m}}{2} x^2) ~. \end{align}
It has a right inverse operator $G_{k,m}: \mathcal{C}^\infty_{\text{cpt}}(\mathbb{R})\to\mathcal{C}^\infty(\mathbb{R})$ which satisfies
\begin{align*}
\int_{\mathbb{R}} \langle G_{k,m} \eta,\check{\xi}_{k,m}\rangle &= 0 &\text{and}& &\int_{\mathbb{R}} |G_{k,m} \eta|^2 &\leq \frac{1}{\check{\gamma}_{k,m}}\int_{\mathbb{R}} |\eta|^2
\end{align*}
for any $\eta\in\mathcal{C}^\infty_{\text{cpt}}(\mathbb{R})$.  More precisely, the operator $-\pl^2_x + \check{\gamma}^2_{k,m}x^2 + \check{\gamma}_{k,m}$ has positive spectrum, and induce an spectral decomposition.  According to \cite[(9.3)]{ref_Roe}, $G_{k,m}$ is given by
\begin{align*}
\big(-\pl_x + \check{\gamma}_{k,m}x\big) \circ \big(-\pl^2_x + \check{\gamma}^2_{k,m}x^2 + \check{\gamma}_{k,m}\big)^{-1} ~.
\end{align*}

Consider the cut-off function $\chi(r^{\frac{1}{3}}x)$.  By (\ref{eqn_S2S1_estimate_05}) and (\ref{eqn_S2S1_estimate_07}),
\begin{align}
\int_{\check{Y}} \Big| \big(1-\chi(r^{\frac{1}{3}}x)\big)\check{\alpha} \Big|^2 &\leq c_{10} r^{-\frac{1}{3}} \int_{\check{Y}} |\check{\alpha}|^2 ~. \label{eqn_S2S1_estimate_10}
\end{align}
We compute $\mathcal{L}_{k,m}(\chi(r^{\frac{1}{3}}x)\check{\alpha})$:
\begin{align*}
\int_{\check{Y}} \Big| \mathcal{L}_{k,m}\big(\chi(r^{\frac{1}{3}}x)\check{\alpha}\big) \Big|^2 &\leq 2\int \big|\pl_x(\chi(r^{\frac{1}{3}}x))\,\check{\alpha}|^2 + \big(\chi(r^{\frac{1}{2}}x)\big)^2 \big|\mathcal{L}_{k,m}(\check{\alpha})\big|^2  \\
&\leq 2\int \big|\pl_x(\chi(r^{\frac{1}{3}}x))\,\check{\alpha}|^2 + 4\int \big(\chi(r^{\frac{1}{2}}x)\big)^2|\mathcal{D}_2\check{\alpha}|^2 \\
&\quad + 4\int \big(\chi(r^{\frac{1}{2}}x)\big)^2 \big|(\mathcal{L}_{k,m}-e^{-i\phi}\mathcal{D}_2)(\check{\alpha})\big|^2 ~.
\end{align*}
The second term is controlled by (\ref{eqn_S2S1_estimate_0b}).  The operator $\mathcal{L}_{k,m}-e^{-i\phi}\mathcal{D}_2$ does not involve taking derivatives, and $|\mathcal{L}_{k,m}-e^{-i\phi}\mathcal{D}_2|\leq c'_{10} rx^2$ on the support of $\chi(r^{\frac{1}{2}}x)$ by the Taylor series expansion.  With (\ref{eqn_S2S1_estimate_0b}) and (\ref{eqn_S2S1_estimate_09}), we have
\begin{align}
\label{eqn_S2S1_estimate_11} &\int_{\check{Y}} \Big| \mathcal{L}_{k,m}\big(\chi(r^{\frac{1}{3}}x)\check{\alpha}\big) \Big|^2 \\
\notag\leq\; & c''_{10}\Big( r^{\frac{2}{3}}\int_{\oh r^{-\frac{1}{3}}\leq|x|\leq r^{-\frac{1}{3}}}|\check{\alpha}|^2 + \int_{\check{Y}}|\check{\alpha}|^2 + r^\frac{4}{3} \int_{|x|\leq r^{-\frac{1}{3}}} x^2 |\check{\alpha}|^2 \Big) \\
\notag\leq\; & c'''_{10}r^{\frac{1}{3}}\int_{\check{Y}}|\check{\alpha}|^2
\end{align}
provided $r\geq c_{10}$.  Let
$$\chi(r^{\frac{1}{3}}x)\check{\alpha} = \check{\alpha}_{k,m}(x)e^{i(k\phi+mt)}\Delta^{-\oh}(2\pi)^{-1} ~. $$
If we regard $\check{\alpha}_{k,m}(x)$ as being defined on $\mathbb{R}$ and apply $G_{k,m}$ on $\mathcal{L}_{k,m}\big(\check{\alpha}_{k,m}(x)\big)$, we conclude that
\begin{align}
\check{\alpha}_{k,m} &= \mathfrak{c}_{k,m}\check{\xi}_{k,m} + \check{\xi}_{k,m}^\perp \label{eqn_S2S1_estimate_12}\\
& \text{with } \int_{\mathbb{R}}|\check{\xi}_{k,m}^{\perp}|^2\leq c_{11}r^{-\frac{2}{3}}\int_{\check{Y}}|\check{\alpha}|^2 ~, \notag \\
& \text{and } |\mathfrak{c}_{k,m}|^2 = |\check{\alpha}_{k,m} - \check{\xi}^\perp_{k,m}|^2\geq (1-c_{11}r^{-\frac{1}{3}})\int_{\check{Y}}|\check{\alpha}|^2 \notag
\end{align}
for some constant $c_{11}$.

Now, (\ref{eqn_S2S1_estimate_12}), (\ref{eqn_S2S1_estimate_10}) and (\ref{eqn_S2S1_estimate_09}) imply that there exists a constant $c_{12}$ with the following significance.  Suppose that $\check{D}_r$ on $\mathcal{S}_{k,m}$ has two zero modes $\check{\psi}_1$ and $\check{\psi}_2$ at $r_1\geq c_{12}$ and $r_2\geq c_{12}$, respectively.  Then,
\begin{align*}
\big| \int_{\check{Y}}\langle\check{\alpha}_1,\check{\alpha}_2\rangle \big|^2 \geq (1-c_{12}r^{-\frac{1}{3}}) \int_{\check{Y}}|\check{\alpha}_1|^2 \int_{\check{Y}}|\check{\alpha}_2|^2 ~.
\end{align*}
This contradicts proposition \ref{prop_small_ip_00}, and the uniqueness in case 1 follows.\\

\subsubsection*{Case 2 with $m>0$}  When $|m|\geq(\frac{1}{32\epsilon^2}-1)k$, let us further assume that $m>0$.  The case when $m<0$ will be discussed later.  The first task is to show that $\check{\alpha}$ is small on the region where $\rho\geq9\epsilon$.  To start, (\ref{eqn_S2S1_estimate_04}) implies that
\begin{align*}
\int_{\rho\geq2-9\epsilon}|\check{\alpha}|^2\leq c_5 r^{-1}\int_{\check{Y}}|\check{\alpha}|^2 ~.
\end{align*}

If $k\leq0$, (\ref{eqn_S2S1_estimate_03}) implies that
\begin{align*}
\int_{9\epsilon\leq\rho\leq2-9\epsilon}|\check{\alpha}|^2\leq c_{13} r^{-1}\int_{\check{Y}}|\check{\alpha}|^2 ~.
\end{align*}
for some constant $c_{13}$.

If $k\geq0$, it is easy to see that there exists a constant $c_{14}>0$ such that
\begin{align*} kg-mf &\leq -c_{14}m &\text{when}\quad &9\epsilon\leq\rho\leq2-9\epsilon ~. \end{align*}
By (\ref{eqn_S2S1_estimate_05}) and (\ref{eqn_S2S1_estimate_06}), there exists a constant $c_{15}$ such that
\begin{align*}
\int_{9\epsilon\leq\rho\leq2-9\epsilon}|\check{\alpha}|^2\leq c_{15} r^{-1}\int_{\check{Y}}|\check{\alpha}|^2
\end{align*}
provide $r\geq c_{15}$.

The above estimates finds a constant $c_{16}$ such that
\begin{align} \label{eqn_S2S1_estimate_13}
\int_{\rho\geq9\epsilon}|\check{\alpha}|^2\leq c_{16} r^{-1}\int_{\check{Y}}|\check{\alpha}|^2
\end{align}
provided $r\geq c_{16}$.

\subsubsection*{Refined estimate on $r$}  When $\rho\leq 10\epsilon$, the function $\frac{1}{\Delta}(mf'-kg')$ is identically equal to $\check{\gamma}_{k,m} = k+m$.  By (\ref{eqn_S2S1_estimate_0a}) and (\ref{eqn_S2S1_estimate_13}),
\begin{align*}
\int_{\rho\leq9\epsilon} (r-\check{\gamma}_{k,m})^2|\check{\alpha}|^2 \leq c_1\int_{\check{Y}} |\check{\alpha}|^2 \leq c_1(1+\frac{c_{16}r^{-1}}{1-c_{16}r^{-1}})\int_{\rho\leq9\epsilon}|\check{\alpha}|^2
\end{align*}
Therefore, there exists a constant $c_{17}$ such that
\begin{align}\label{eqn_S2S1_estimate_14}
|r-\check{\gamma}_{k,m}|\leq c_{17}
\end{align}
provided $r\geq c_{17}$.

\subsubsection*{Zeroth order approximation of $\check{\alpha}$}  When $\rho\leq 10\epsilon$, the operator $\mathcal{D}_2$ is
\begin{align*} \mathcal{L}_{k,m} &= 2\pl_z + \frac{\check{\gamma}_{k,m}}{2} \bar{z}  \end{align*}
where $z = \rho e^{i\phi}$.  If we regard $\mathcal{L}_{k,m}$ as an operator on $\mathbb{C}$, the theory of $2$-dimensional harmonic oscillator applies.  If $\check{\gamma}_{k,m}>0$, $\mathcal{L}_{k,m}$ has the following properties.  $\mathcal{C}^\infty(\mathbb{C})$ splits according to the frequency with respect to the $S^1$-action by $-i\pl_\phi = z\pl_z - \bar{z}\pl_{\bar{z}}$.  For any $l\in\mathbb{Z}$, $\mathcal{L}_{k,m}$ maps the frequency $l$ subspace to the frequency $l+1$ subspace, and we only care about $\mathcal{L}_{k,m}$ on the frequency $k$ subspace.

When $k< 0$, the kernel of $\mathcal{L}_{k,m}$ is trivial.  It has a right inverse operator $G_{k,m}$ which maps the frequency $k+1$ subspace of $\mathcal{C}^\infty_{\text{cpt}}(\mathbb{C})$ to the frequency $k$ subspace of $\mathcal{C}^\infty(\mathbb{C})$.  $G_{k,m}$ satisfies
\begin{align*}
\int_{\mathbb{C}} |{G}_{k,m}\eta|^2 &\leq \frac{1}{\check{\gamma}_{k,m}}\int_\mathbb{C} |\eta|^2
\end{align*}
for any $\eta\in \mathcal{C}^\infty_{\text{cpt}}(\mathbb{C})$ with frequency $k+1$.
 
When $k\geq 0$, ${\mathcal{L}}_{k,m}$ has a $1$-dimensional kernel spanned by
\begin{align}\label{sln_Dirac_binding_00}
\check{\xi}_{k,m} &= (\frac{1}{k})^\oh(\frac{\check{\gamma}_{k,m}}{2})^\frac{k+1}{2}\frac{z^{k}}{\sqrt{\pi}}\exp(-\frac{\check{\gamma}_{k,m}}{4}|z|^2) ~.
\end{align}
It has a right inverse operator ${G}_{k,m}$ satisfying
\begin{align*}
\int_{\mathbb{C}} |{G}_{k,m}\eta|^2 &\leq \frac{1}{\check{\gamma}_{k,m}}\int_\mathbb{C} |\eta|^2 &\text{, and }& &\int_{\mathbb{C}} \langle{G}_{k,m} \eta,\check{\xi}_{k,m}\rangle &= 0
\end{align*}
for any $\eta\in\mathcal{C}^\infty_{\text{cpt}}(\mathbb{C})$ with frequency $k+1$.

Let $\chi_B$ be the cut-off function depending only on $\rho = |z|$, with $\chi_B(\rho)=1$ when $\rho\leq9\epsilon$ and $\chi_B(\rho)=0$ when $\rho\geq 10\epsilon$.  By (\ref{eqn_S2S1_estimate_13}) and (\ref{eqn_S2S1_estimate_0b}), there exists a constant $c_{18}$ such that
\begin{align}\label{eqn_S2S1_estimate_15}
\int_{\check{Y}} |{\mathcal{L}}_{k,m}(\chi_B\check{\alpha})|^2 \leq c_{18}\int_{\check{Y}}|\check{\alpha}|^2
\end{align}
provided $r\geq c_{18}$.

If $k<0$, we apply $G_{k,m}$ on (\ref{eqn_S2S1_estimate_15}) to find a constant $c_{19}$ so that
\begin{align*} \int_{\check{Y}} |\chi_B\check{\alpha}|^2 = \int_{\mathbb{C}\times S^1} |\chi_B\check{\alpha}|^2 \leq c_{19}r^{-1}\int_{\check{Y}}|\check{\alpha}|^2 ~. \end{align*}
This contradicts (\ref{eqn_S2S1_estimate_13}).  Thus, $k$ can only be nonnegative.  If $k\geq 0$, we apply $G_{k,m}$ on (\ref{eqn_S2S1_estimate_15}) to obtain a similar zeroth order approximation as that in case 1.  By the same token, we end with a contradiction to proposition \ref{prop_small_ip_00}.\\

\subsubsection*{Case 2 with $m<0$}
Similar estimates imply that $\check{\alpha}$ peaks on the region where $\rho\geq2-9\epsilon$.  When $\rho\geq 2-10\epsilon$, let $w = (2-\rho)e^{i\phi}$.  The Dirac operator is
\begin{align*}\left\{\begin{aligned}
\frac{r-2}{2}\check{\alpha} + \frac{i}{2}(\pl_\phi\check{\alpha} - \pl_t\check{\alpha}) + \big( 2\pl_w\check{\beta} + \frac{i}{2}\bar{w}(\pl_\phi\check{\beta} - \pl_t\check{\beta}) + \frac{\bar{w}}{2}\check{\beta} \big) ~, \\
\big( -2\pl_{\bar{w}}\check{\alpha} + \frac{i}{2}w(\pl_\phi\check{\alpha} - \pl_t\check{\alpha}) \big) - \frac{r+1}{2}\check{\beta} - \frac{i}{2}(\pl_\phi\check{\beta} - \pl_t\check{\beta}) ~.
\end{aligned}\right.\end{align*}
With the same argument, there can be at most one zero crossing happening near $\check{\gamma}_{k,m} = k-m$.  This completes the proof of proposition \ref{prop_S2S1_unique_00}.
\end{proof}

\subsection{Second order approximation of eigensections}\label{subsec_2nd_order_S2S1}
We need a further understanding of eigensections in this model.  In particular, we need to know where the zero crossing happens up to an error of $\mathcal{O}(r^{-1})$.  It will be achieved by the second order approximation of eigensections with small eigenvalues.

There are two ingredients.  The first ingredient is that the true Dirac operator $\check{D}_r$ can have at most one small eigenvalue on ${\mathcal{S}}_{k,m}$.  The second ingredient is an iteration scheme to construct an approximation by the linearized operator.  The following two lemmata constitute the first ingredient.

\begin{lem}\label{lem_beta_estimate_00}
There exists a constant $c>1$ which has the following significance.  Suppose that $\check{\psi}$ be a eigensection of $\check{D}_r$ for some $r\geq c$, and the magnitude of the corresponding eigenvalue is less than $\sqrt{\frac{r}{2}}$.  Then
\begin{align*}
\int_{\check{Y}}|\check{\beta}|^2 + r^{-1}\int_{\check{Y}}|\nabla_r\check{\beta}|^2 \leq cr^{-1} \int_{\check{Y}} |\check{\alpha}|^2 ~.
\end{align*}
\end{lem}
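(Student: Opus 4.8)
The plan is to rerun the Weitzenb\"ock argument from the proof of Proposition~\ref{prop_beta_estimate_00}(i), this time on $\check Y$. The hypothesis on the eigenvalue enters that argument only through a single term $\lambda^2\int_{\check Y}|\check\beta|^2$, while the curvature term of the perturbed Dirac operator supplies a contribution $r\int_{\check Y}|\check\beta|^2$; there is thus a whole factor of $r/2$ of room, which is exactly what is needed to accommodate an eigenvalue as large as $\sqrt{r/2}$ rather than a fixed $\delta_1$.

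Recall from the remark opening Section~\ref{sec_estimate_00} that the estimates of that section use nothing beyond the metric being adapted, and that the associated contact form of subsection~\ref{subsec_asso_contact_00} is a genuine contact form on $\check Y=S^2\times S^1$ with an adapted metric. So the argument proving Proposition~\ref{prop_beta_estimate_00} applies to $\check D_r$, and on tracking the constants one sees that it in fact produces a constant $c_1$, depending only on the contact form and the adapted metric on $\check Y$ (through $\kappa$, $F_{A_0}$ and the covariant derivative of $a$), such that every eigensection $\check\psi=(\check\alpha,\check\beta)$ of $\check D_r$ with eigenvalue $\lambda$ satisfies
\[
\lambda^2\int_{\check Y}|\check\beta|^2 \;\geq\; \int_{\check Y}(r-c_1)|\check\beta|^2 + \oh|\nabla_r\check\beta|^2 - c_1\int_{\check Y}|\check\alpha|^2 .
\]
Concretely this comes from pairing the Weitzenb\"ock formula with $(0,\check\beta)$, integrating over $\check Y$, integrating by parts in the $\nabla_r^*\nabla_r$ term to produce the lower-order operators $N(\nabla_r\check\alpha)+N'(\check\alpha)$ built from $\nabla a$, and absorbing the cross terms by Cauchy--Schwarz, using that the ${\rm cl}(*a)$ term contributes $+\,r\int_{\check Y}|\check\beta|^2$ exactly as in the proof of Proposition~\ref{prop_beta_estimate_00}.

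With $|\lambda|<\sqrt{r/2}$ the displayed inequality becomes $\int_{\check Y}(\tfrac r2-c_1)|\check\beta|^2+\oh|\nabla_r\check\beta|^2\leq c_1\int_{\check Y}|\check\alpha|^2$. Once $r\geq 4c_1$ the coefficient $\tfrac r2-c_1$ is at least $\tfrac r4$, so $\int_{\check Y}|\check\beta|^2\leq\tfrac{4c_1}{r}\int_{\check Y}|\check\alpha|^2$ and $r^{-1}\int_{\check Y}|\nabla_r\check\beta|^2\leq\tfrac{2c_1}{r}\int_{\check Y}|\check\alpha|^2$, and adding these yields the lemma with $c=\max(6c_1,2)$. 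I do not expect any real obstacle here: the content is the bookkeeping point that the error constants are $r$-independent and that $r/2$ beats them for large $r$. The one thing genuinely worth checking is precisely that $r$-independence of every error term, which holds because the metric, the canonical connection, and $\nabla a$ on $\check Y$ are all fixed.
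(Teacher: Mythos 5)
Your proof is correct and is precisely what the paper intends: the paper's proof of this lemma is the one-liner ``The same as proof of proposition \ref{prop_beta_estimate_00},'' and your argument is exactly that Weitzenb\"ock computation rerun on $\check Y$, with the observation that the $r|\check\beta|^2$ contribution from ${\rm cl}(*a)$ leaves an extra $r/2$ of room to absorb an eigenvalue as large as $\sqrt{r/2}$.
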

\begin{proof}
The same as proof of proposition \ref{prop_beta_estimate_00}.
\end{proof}

\begin{lem}\label{lem_S2S1_hot_00}
There exists a constant $c>10$ such that the following holds.  For any $r\geq c$, the Dirac operator $\check{D}_r$ on ${\mathcal{S}}_{k,m}$ has at most one eigenvalue $\lambda$ whose magnitude is less than $\sqrt{\frac{r}{2}}$.  If $k$ is negative or $k=m=0$, there is no such eigenvalue.  Moreover, if there does exist such eigenvalue, then
\begin{align*} |\lambda-(\frac{r}{2}-\frac{\check{\gamma}_{k,m}}{2})|\leq c ~. \end{align*}
\end{lem}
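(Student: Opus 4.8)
The plan is to follow the proof of Proposition~\ref{prop_S2S1_unique_00} almost verbatim, with two substitutions: replace ``zero mode of $\check{D}_r$'' throughout by ``eigensection $\check{\psi}=(\check{\alpha},\check{\beta})\in\mathcal{S}_{k,m}$ of $\check{D}_r$ with eigenvalue $\lambda$, $|\lambda|<\sqrt{\frac{r}{2}}$'', and invoke Lemma~\ref{lem_beta_estimate_00} in place of Proposition~\ref{prop_beta_estimate_00}(i). The bound $|\lambda|<\sqrt{\frac{r}{2}}$ is precisely the hypothesis of Lemma~\ref{lem_beta_estimate_00}, so one still has $\int_{\check{Y}}|\check{\beta}|^2+r^{-1}\int_{\check{Y}}|\nabla_r\check{\beta}|^2\leq cr^{-1}\int_{\check{Y}}|\check{\alpha}|^2$, and all the consequences of $\check{\beta}$ being small remain in force.

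The one genuine change is that $\lambda$ now appears in the first-component equation. With $\mathcal{D}_1\check{\alpha}$ and $\mathcal{D}_2\check{\alpha}$ as in Proposition~\ref{prop_S2S1_unique_00}, the first component of $(\check{D}_r-\lambda)\check{\psi}=0$ reads $\oh\mathcal{D}_1\check{\alpha}=\lambda\check{\alpha}-e^{-i\phi}(\text{first order in }\check{\beta})$, so Lemma~\ref{lem_beta_estimate_00} together with $|\lambda|^2<\frac{r}{2}$ gives $\int_{\check{Y}}|\mathcal{D}_1\check{\alpha}|^2\leq cr\int_{\check{Y}}|\check{\alpha}|^2$ --- a factor $r$ weaker than (\ref{eqn_S2S1_estimate_0a}) --- while the second-component equation still yields $\int_{\check{Y}}|\mathcal{D}_2\check{\alpha}|^2\leq c\int_{\check{Y}}|\check{\alpha}|^2$, since $\|\lambda\check{\beta}\|\leq\sqrt{\frac{r}{2}}\,\|\check{\beta}\|\leq c\|\check{\alpha}\|$. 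Feeding this in, estimates (\ref{eqn_S2S1_estimate_01})--(\ref{eqn_S2S1_estimate_05}) all survive (the harmless $\sqrt{r}$ loss in (\ref{eqn_S2S1_estimate_01}) is absorbed since $\sqrt{r}\ll r$), (\ref{eqn_S2S1_estimate_06}) becomes $r\leq c(|k|+|m|)+c\sqrt{r}$, which still forces $r\leq c(|k|+|m|)$ and excludes $k=m=0$, and the exclusion of $k<0$ in Case~1 (automatic) and in Case~2 (apply $G_{k,m}$ to (\ref{eqn_S2S1_estimate_15}), using $\check{\gamma}_{k,m}\geq\oh r$) is unchanged. The only weakening is in the refined estimate on $r$: with $\int_{\check{Y}}|\mathcal{D}_1\check{\alpha}|^2\leq cr\int_{\check{Y}}|\check{\alpha}|^2$ replacing (\ref{eqn_S2S1_estimate_0a}), the Case~1 and Case~2 computations now give $(r-\check{\gamma}_{k,m})^2\leq cr$, i.e.\ $|r-\check{\gamma}_{k,m}|\leq c\sqrt{r}$, as claimed.

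For uniqueness, the zeroth-order approximation of Proposition~\ref{prop_S2S1_unique_00} still produces a constant $\mathfrak{c}_{k,m}$ with $\check{\alpha}=\mathfrak{c}_{k,m}\check{\xi}_{k,m}+\check{\xi}_{k,m}^\perp$, $|\mathfrak{c}_{k,m}|^2\geq(1-cr^{-1/6})\int_{\check{Y}}|\check{\alpha}|^2$ and $\int|\check{\xi}_{k,m}^\perp|^2\leq cr^{-2/3}\int_{\check{Y}}|\check{\alpha}|^2$, where $\check{\xi}_{k,m}$ is the harmonic-oscillator ground state (\ref{sln_Dirac_neck_00}) in Case~1 and (\ref{sln_Dirac_binding_00}) in Case~2; the crucial point is that $\check{\xi}_{k,m}$ depends only on $(k,m)$, not on $r$ or $\lambda$. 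Now suppose $\check{\psi}_1,\check{\psi}_2\in\mathcal{S}_{k,m}$ are eigensections of the \emph{same} $\check{D}_r$ with eigenvalues of magnitude $<\sqrt{\frac{r}{2}}$; after Gram--Schmidt within a common eigenspace (eigensections for distinct eigenvalues are automatically $L^2$-orthogonal) we may take $\check{\psi}_1,\check{\psi}_2$ orthonormal. Then $\int_{\check{Y}}\langle\check{\alpha}_1,\check{\alpha}_2\rangle=-\int_{\check{Y}}\langle\check{\beta}_1,\check{\beta}_2\rangle$, so Lemma~\ref{lem_beta_estimate_00} gives $|\int_{\check{Y}}\langle\check{\alpha}_1,\check{\alpha}_2\rangle|\leq cr^{-1}$ --- the evident eigensection analog of Proposition~\ref{prop_small_ip_00}, proved the same way. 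But both $\check{\alpha}_i$ lie within $cr^{-1/6}$ in $L^2$ of unit multiples of the common function $\check{\xi}_{k,m}$, so $|\int_{\check{Y}}\langle\check{\alpha}_1,\check{\alpha}_2\rangle|\geq 1-cr^{-1/6}$, a contradiction once $r\geq c$. Hence the span of the small-eigenvalue eigensections in $\mathcal{S}_{k,m}$ is at most one-dimensional.

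The whole argument is essentially bookkeeping on top of Proposition~\ref{prop_S2S1_unique_00}; the only step needing care is checking that the single threshold $\sqrt{\frac{r}{2}}$ is simultaneously admissible for Lemma~\ref{lem_beta_estimate_00} and small enough that the eigenvalue costs only a factor $r$ in (\ref{eqn_S2S1_estimate_0a}), so the location estimate degrades to just $\mathcal{O}(\sqrt{r})$. Alternatively, since Lemma~\ref{lem_beta_estimate_00} keeps $\int_{\check{Y}}|\check{\beta}|^2<\oh$, the eigenvalue family through $\check{\psi}$ has positive $s$-derivative and can be flowed down to a genuine zero mode of $\check{D}_{r-2\lambda}$ with $r-2\lambda\geq\oh r\geq c$; two small eigenvalues at one $r$ would then produce two distinct zero crossings on $\mathcal{S}_{k,m}$, contradicting Proposition~\ref{prop_S2S1_unique_00}(i), while the location bound is inherited from Proposition~\ref{prop_S2S1_unique_00}(iii).
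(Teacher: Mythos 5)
Your proof is correct and follows essentially the same route as the paper's: invoke Lemma~\ref{lem_beta_estimate_00} in place of Proposition~\ref{prop_beta_estimate_00}(i), observe that only the estimate on $\mathcal{D}_1\check{\alpha}$ degrades (by a factor of $r$) while the $\mathcal{D}_2\check{\alpha}$ estimate survives intact, propagate this to weaken the location bound from $\mathcal{O}(1)$ to $\mathcal{O}(\sqrt{r})$, and conclude uniqueness by pairing the zeroth-order harmonic-oscillator approximation of $\check{\alpha}$ against the orthogonality of eigensections via Lemma~\ref{lem_beta_estimate_00}. Your closing ``flow down to a zero crossing'' alternative is a reasonable aside but is not needed and would require additional care to make rigorous; the main argument is what the paper does.
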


\begin{proof}
The proof is parallel to the proof of proposition \ref{prop_S2S1_unique_00}.  We explain it briefly.

Suppose that $\psi\in\mathcal{S}_{k,m}$ is an eigensection whose eigenvalue has magnitude less than $\sqrt{\frac{r}{2}}$.  By lemma \ref{lem_beta_estimate_00}, (\ref{eqn_S2S1_estimate_0a}) and (\ref{eqn_S2S1_estimate_0b}) would be replaced by
\begin{align*}
\int_{\check{Y}} \big| \lambda - (\frac{r}{2} + \frac{kg'-mf'}{2\Delta})\check{\alpha} \big|^2 &\leq c_1\int_{\check{Y}} |\check{\alpha}|^2 ~,\\
\int_{\check{Y}} \big| e^{i\phi}(\pl_\rho\check{\alpha} - \frac{kg-mf}{\Delta}\check{\alpha}) \big|^2 &\leq c_1\int_{\check{Y}} |\check{\alpha}|^2 ~.
\end{align*}
Based on these two estimates, the bound in (\ref{eqn_S2S1_estimate_01}) becomes $c_2\sqrt{r}$.  The estimates (\ref{eqn_S2S1_estimate_03}), (\ref{eqn_S2S1_estimate_04}), (\ref{eqn_S2S1_estimate_05}) and (\ref{eqn_S2S1_estimate_07}) remain the same.  They imply that
\begin{itemize}
\item the refined estimate on $r$: $|\lambda - (\frac{r}{2}-\frac{\check{\gamma}_{k,m}}{2})|\leq c_3$ for some constant $c_3$;
\item the same zeroth order  approximation of $\check{\alpha}$.
\end{itemize}

If $\check{D}_r$ has two such eigensections, their orthogonality with lemma \ref{lem_beta_estimate_00} implies that the inner product between their first components is small.  It contradicts to the zeroth order approximation of their first components. 
\end{proof}\medskip

Suppose that $\check{D}_r$ on ${\mathcal{S}}_{k,m}$ has an eigenvalue $\lambda_0$ with $|\lambda_0|\leq1$.  Similar to (\ref{eqn_S2S1_estimate_09}) and (\ref{eqn_S2S1_estimate_14}), there exists a constant $c_4$ such that
\begin{align}\label{eqn_hot_rbound_00} |r-\check{\gamma}_{k,m}|\leq c_4 \end{align}
provided $r\geq c_4$.  Because of (\ref{eqn_hot_rbound_00}), $r$ and $\check{\gamma}_{k,m}$ are of the same order.  It follows from lemma \ref{lem_S2S1_hot_00} that if $\lambda$ is an eigenvalue of $\check{D}_r$ on $\mathcal{S}_{k,m}$ other than $\lambda_0$, then
\begin{align}\label{eqn_hot_operatornorm_00} |\lambda-\lambda_0|\geq\sqrt{\frac{r}{8}}. \end{align}
We are going to approximate the eigensection of $\lambda_0$ to the second order.  Again, we separate it into two cases according to whether
\begin{align*} |m|&<(\frac{1}{32\epsilon^2}-1)k &\text{or}& &|m|&\geq(\frac{1}{32\epsilon^2}-1)k ~. \end{align*}

\subsubsection*{Case 1}  When $|m|<(\frac{1}{32\epsilon^2}-1)k$, let
\begin{align}\begin{split}\label{eqn_Fourier_def_rec_00}
\check{\alpha} &= \check{\alpha}_{k,m}(\rho) e^{i(k\phi+m t)}\Delta^{-\oh}(2\pi)^{-1} ~,\\
\check{\beta} &= \check{\beta}_{k,m}(\rho) e^{i((k+1)\phi+mt)}\Delta^{-\oh}(2\pi)^{-1} ~.
\end{split}\end{align}
The Dirac operator on $\check{\alpha}_{k,m}$ and $\check{\beta}_{k,m}$ is
\begin{align}\label{eqn_Dirac_neck_00}\left\{\begin{aligned}
(\frac{r}{2} + \frac{kg'-mf'}{2\Delta})\check{\alpha}_{k,m} + (-\check{\beta}'_{k,m} - \frac{kg-mf}{\Delta}\check{\beta}_{k,m} - \frac{\Delta'}{2\Delta}\check{\beta}_{k,m}) ~, \\
(\check{\alpha}'_{k,m} - \frac{kg-mf}{\Delta}\check{\alpha}_{k,m} - \frac{\Delta'}{2\Delta}\check{\alpha}_{k,m}) - (\frac{r}{2} + 1 + \frac{kg'-mf'}{2\Delta} + \frac{f''g'-f'g''}{8\Delta})\check{\beta}_{k,m} ~.
\end{aligned}\right.\end{align}
For simplicity, change the variable by $x = \rho-\check{\rho}_{k,m}$.  The Taylor series expansion of the eigensection equation at $x=0$ is of the following form
\begin{align}\label{eqn_hot_approx_00}\left\{\begin{aligned}
\lambda \check{\alpha}_{k,m} &= (\frac{r}{2}-\frac{\check{\gamma}_{k,m}}{2}+\mathfrak{r}_1 x^2+\mathfrak{r}_3 x^3 + \mathfrak{R}^\alpha_1)\check{\alpha}_{k,m}\\
&\qquad + (-\frac{\dd}{\dd x} + \check{\gamma}_{k,m} x + \mathfrak{c}_1 + \mathfrak{r}_2 x^2 + \mathfrak{R}^\beta_1)\check{\beta}_{k,m} ~, \\
\lambda \check{\beta}_{k,m} &= (\frac{\dd}{\dd x} + \check{\gamma}_{k,m} x + \mathfrak{c}_1 + \mathfrak{r}_2 x^2 + \mathfrak{c}_2 x + \mathfrak{r}_4 x^3 + \mathfrak{R}^\alpha_2)\check{\alpha}_{k,m} \\
&\qquad - (\frac{r}{2}-\frac{\check{\gamma}_{k,m}}{2} + \mathfrak{c}_3 + \mathfrak{r}_1 x^2 + \mathfrak{R}^\beta_2)\check{\beta}_{k,m} ~.
\end{aligned}\right.\end{align}
By Taylor's theorem, there exists a constant $c_5>0$ such that the coefficients $|\mathfrak{c}_j|\leq c_5$ and $|\mathfrak{r}_j|\leq c_5 r$; the remainder terms $|\mathfrak{R}^\alpha_j|\leq c_5(x^2 + rx^4)$ and $|\mathfrak{R}^\beta_j|\leq c_5(|x| + r|x|^3)$ for $|x|\leq \epsilon$.  These coefficients and the remainder terms depend on $k$ and $m$.  Similar to \ref{eqn_S2S1_estimate_02}), the assumption on the existence of small eigenvalue implies that $k$ and $|m|$ are less than some multiple of $r$.

Equation (\ref{eqn_hot_approx_00}) can be used to construct higher order approximation of the eigensection by the following procedure.  Rewrite the equation as
\begin{align*}\left\{\begin{aligned}
(-\frac{\dd}{\dd x} + \check{\gamma}_{k,m}x)\check{\beta}_{k,m} &= \mathfrak{F}_1(\check{\alpha}_{k,m},\check{\beta}_{k,m}) + (\lambda - \frac{r}{2})\check{\alpha}_{k,m} ~,\\
(\frac{\dd}{\dd x} + \check{\gamma}_{k,m}x)\check{\alpha}_{k,m} &= \mathfrak{F}_2(\check{\alpha}_{k,m},\check{\beta}_{k,m}) + (\lambda+\frac{r}{2})\check{\beta}_{k,m} ~.
\end{aligned}\right.\end{align*}
Start with $\check{\alpha}_{k,m} = (\frac{\check{\gamma}_{k,m}}{\pi})^{\frac{1}{4}}\exp(-\frac{\check{\gamma}_{k,m}}{2}x^2)$, $\check{\beta}_{k,m}=0$, and $\lambda = 0$.  The next order term of $\check{\alpha}_{k,m}$ is determined by the second equation.  The next order term of $\lambda$ is determined by the condition that $\mathfrak{F}_1(\check{\alpha}_{k,m},\check{\beta}_{k,m}) + (\lambda-\frac{r}{2})\check{\alpha}_{k,m}$ is $L^2$-orthogonal to $\exp(-\frac{\check{\gamma}_{k,m}}{2}x^2)$.  The next order term of $\check{\beta}_{k,m}$ is determined by the first equation.

Following this procedure, we have the second order approximation:
\begin{align}\label{eqn_hot_approx_01}\left\{\begin{aligned}
\check{\alpha}_{k,m} &= \big( 1 + \mathfrak{a}_1(x) + \mathfrak{a}_2(x,r) \big) \chi(\epsilon x) \big(\frac{\check{\gamma}_{k,m}}{\pi}\big)^{\frac{1}{4}} \exp(-\frac{\check{\gamma}_{k,m}}{2}x^2) ~,\\
\check{\beta}_{k,m} &= \big( \mathfrak{b}_1(x) + \mathfrak{b}_2(x) \big) \chi(\epsilon x) \big(\frac{\check{\gamma}_{k,m}}{\pi}\big)^{\frac{1}{4}} \exp(-\frac{\check{\gamma}_{k,m}}{2}x^2) ~,\\
\lambda &= \frac{r}{2} - \frac{\check{\gamma}_{k,m}}{2} + \frac{\mathfrak{r}_1}{2\check{\gamma}_{k,m}}
\end{aligned}\right.\end{align}
where
\begin{align*}
\mathfrak{a}_1(x) &= - \mathfrak{c}_1 x - \frac{\mathfrak{r}_2}{3} x^3 ~,\\
\mathfrak{a}_2(x,r) &= \Big(\frac{\mathfrak{c}_1^2-\mathfrak{c}_2}{2} - \frac{\mathfrak{r}_1}{4\check{\gamma}_{k,m}}(r-\check{\gamma}_{k,m}+\frac{\mathfrak{r}_1}{2\check{\gamma}_{k,m}}+\mathfrak{c}_3)\Big) \big( x^2-\frac{1}{2\check{\gamma}_{k,m}} \big) \\
&\qquad + \big( \frac{\mathfrak{c}_1\mathfrak{r}_2}{3} - \frac{\mathfrak{r}_4}{4} - \frac{\mathfrak{r}_1^2}{8\check{\gamma}_{k,m}} \big) \big( x^4 - \frac{3}{4\check{\gamma}_{k,m}^2} \big) + \frac{\mathfrak{r}_2^2}{18} \big( x^6 - \frac{15}{8\check{\gamma}_{k,m}^3} \big) ~,\\
\mathfrak{b}_1(x) &= - \frac{\mathfrak{r}_1}{2\check{\gamma}_{k,m}}x ~,\\
\mathfrak{b}_2(x) &= \big(\frac{\mathfrak{c}_1\mathfrak{r}_1-\mathfrak{r}_3}{2\check{\gamma}_{k,m}} + \frac{\mathfrak{r}_1\mathfrak{r}_2}{4\check{\gamma}_{k,m}^2}\big) \big( x^2 + \frac{1}{2\check{\gamma}_{k,m}} \big) + \frac{\mathfrak{r}_1\mathfrak{r}_2}{6\check{\gamma}_{k,m}}x^4 ~.
\end{align*}
To estimate the error term, note that
\begin{align*} \int_{\mathbb{R}} \big| x^l\exp(-\frac{\check{\gamma}_{k,m}}{2}x^2) \big|^2 \leq ((l+3)!) (\check{\gamma}_{k,m})^{-(l+\oh)} \end{align*}
for any integer $l\geq 0$.  If we plug (\ref{eqn_hot_approx_01}) into (\ref{eqn_hot_approx_00}), the size of the error term can be computed directly.  Let $\check{\psi}_{n,m}$ be the section whose components are given by (\ref{eqn_Fourier_def_rec_00}) and (\ref{eqn_hot_approx_01}), then there exists a constant $c_6$ such that
\begin{align} \label{eqn_S2S1_almost_eigen_00}
\int_{\check{Y}} \big|\check{D}_r \check{\psi}_{k,m} - (\frac{r}{2} - \frac{\check{\gamma}_{k,m}}{2} + \frac{\mathfrak{r}_1}{2\check{\gamma}_{k,m}}) \check{\psi}_{k,m}\big|^2 \leq c_6 r^{-2}\int_{\check{Y}}|\check{\psi}_{k,m}|^2
\end{align}
provided $r\geq c_6$.  The properties of this second order approximation are summarized in the following proposition.

\begin{prop}\label{prop_small_eigensection_00}
There exists a constant $c$ which has the following significance.  For any $r\geq c$ and $|m|<(\frac{1}{32\epsilon^2}-1)k$, suppose that the Dirac operator $\check{D}_r$ on ${\mathcal{S}}_{k,m}$ has an eigenvalue $\lambda_0$ with $|\lambda_0|\leq1$.  Then the corresponding eigensection is
\begin{align*} \check{\psi}_{k,m}^{\text{\rm eig}} &= \check{\mathfrak{q}}_{k,m}\check{\psi}_{k,m} + \check{\psi}_{k,m}^{(3)} \end{align*}
with $\check{\psi}_{k,m}$ is given by (\ref{eqn_Fourier_def_rec_00}) and (\ref{eqn_hot_approx_01}), and
\begin{align*}
\int_{\check{Y}}|\check{\psi}_{k,m}^{\text{\rm eig}}|^2&=1 ~,
&\int_{\check{Y}}|\check{\psi}_{k,m}^{(3)}|^2&\leq c r^{-3} ~,
&\text{and}& &|\check{\mathfrak{q}}_{k,m}-1|&\leq cr^{-1} ~.
\end{align*}
Moreover,
\begin{align*} \big|\lambda_0 - (\frac{r}{2} - \frac{\check{\gamma}_{k,m}}{2} + \frac{\mathfrak{r}_1}{2\check{\gamma}_{k,m}})\big|\leq c r^{-1} \end{align*}
where $\mathfrak{r}_1$ is the coefficient of the second order term in the Taylor series expansion of $\frac{1}{2\Delta}(kg'-mf')$ at $\check{\rho}_{k,m}$.
\end{prop}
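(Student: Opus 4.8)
The plan is the standard argument that promotes a good approximate eigenpair to a good genuine one, with the extra accuracy coming from the $\sqrt r$-sized spectral gap of Lemma \ref{lem_S2S1_hot_00}. Write $\lambda_{\mathrm{ap}}=\frac r2-\frac{\check\gamma_{k,m}}2+\frac{\mathfrak r_1}{2\check\gamma_{k,m}}$; by (\ref{eqn_S2S1_almost_eigen_00}), $\check\psi_{k,m}$ is an approximate eigensection of $\check D_r$ at $\lambda_{\mathrm{ap}}$ with $L^2$-error $O(r^{-1})$. Before running the perturbation argument I would record two elementary facts. First, $|\lambda_{\mathrm{ap}}|$ is bounded by a constant independent of $r$: by (\ref{eqn_hot_rbound_00}) we have $|r-\check\gamma_{k,m}|\le c_4$, so $\check\gamma_{k,m}$ is comparable to $r$ for $r$ large, and since $|\mathfrak r_1|\le c_5 r$ (the Taylor bound stated after (\ref{eqn_hot_approx_00})) the term $\mathfrak r_1/(2\check\gamma_{k,m})$ is $O(1)$. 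Second, $\|\check\psi_{k,m}\|_{L^2(\check Y)}=1+O(r^{-1})$: the leading Gaussian in (\ref{eqn_hot_approx_01}) has unit $L^2$-norm, the cross terms involving the odd polynomial $\mathfrak a_1$ vanish by parity, the cross terms involving the even corrections $\mathfrak a_2,\mathfrak b_2$ vanish because $x^2-\frac1{2\check\gamma_{k,m}}$, $x^4-\frac3{4\check\gamma_{k,m}^2}$, $x^6-\frac{15}{8\check\gamma_{k,m}^3}$ are exactly $L^2$-orthogonal to $\exp(-\frac{\check\gamma_{k,m}}2 x^2)$ (they are its centered second, fourth, and sixth moments), the $\chi(\epsilon x)$-truncation contributes only exponentially small errors, and the remaining quadratic contributions are $O(r^{-1})$ by the moment bound $\int|x^l\exp(-\frac{\check\gamma_{k,m}}2 x^2)|^2\le(l+3)!\,\check\gamma_{k,m}^{-(l+1/2)}$ together with $|\mathfrak c_j|\le c_5$, $|\mathfrak r_j|\le c_5 r$.

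With these in hand, let $P_0$ be the spectral projection of $\check D_r|_{\mathcal S_{k,m}}$ onto the interval $(-\sqrt{r/2},\sqrt{r/2})$. By the hypothesis $|\lambda_0|\le1$ and Lemma \ref{lem_S2S1_hot_00}, $P_0$ is the nonzero projection onto $\ker(\check D_r-\lambda_0)$, and every eigenvalue $\lambda$ on the range of $I-P_0$ has $|\lambda|\ge\sqrt{r/2}$, hence $|\lambda-\lambda_{\mathrm{ap}}|\ge\sqrt{r/2}-|\lambda_{\mathrm{ap}}|\ge\oh\sqrt{r/2}$ for $r$ large. Set $\check\psi_{k,m}^{\mathrm{eig}}=P_0\check\psi_{k,m}/\|P_0\check\psi_{k,m}\|$ (a unit eigensection with eigenvalue $\lambda_0$), $\mu=\|P_0\check\psi_{k,m}\|$, $\check{\mathfrak q}_{k,m}=1/\mu$, and $\check\psi_{k,m}^{(3)}=-\mu^{-1}(I-P_0)\check\psi_{k,m}$, so that $\check\psi_{k,m}^{\mathrm{eig}}=\check{\mathfrak q}_{k,m}\check\psi_{k,m}+\check\psi_{k,m}^{(3)}$ and $\|\check\psi_{k,m}^{\mathrm{eig}}\|=1$ by construction. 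Since $P_0$ commutes with $\check D_r$, applying $I-P_0$ to $(\check D_r-\lambda_{\mathrm{ap}})\check\psi_{k,m}$ and inverting $\check D_r-\lambda_{\mathrm{ap}}$ on the range of $I-P_0$ gives, with (\ref{eqn_S2S1_almost_eigen_00}), $\|(I-P_0)\check\psi_{k,m}\|\le(\oh\sqrt{r/2})^{-1}\|(\check D_r-\lambda_{\mathrm{ap}})\check\psi_{k,m}\|\le c\,r^{-3/2}\|\check\psi_{k,m}\|$; hence $\mu^2=\|\check\psi_{k,m}\|^2-\|(I-P_0)\check\psi_{k,m}\|^2=\|\check\psi_{k,m}\|^2(1+O(r^{-3}))$, so $\mu=1+O(r^{-1})$ by the norm estimate, which yields $|\check{\mathfrak q}_{k,m}-1|\le cr^{-1}$ and $\|\check\psi_{k,m}^{(3)}\|^2=\mu^{-2}\|(I-P_0)\check\psi_{k,m}\|^2\le cr^{-3}$. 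Finally, applying $P_0$ instead gives $|\lambda_0-\lambda_{\mathrm{ap}}|\,\mu=\|P_0(\check D_r-\lambda_{\mathrm{ap}})\check\psi_{k,m}\|\le\|(\check D_r-\lambda_{\mathrm{ap}})\check\psi_{k,m}\|\le cr^{-1}\|\check\psi_{k,m}\|$, so $|\lambda_0-\lambda_{\mathrm{ap}}|\le cr^{-1}$, which is the last assertion since, by the Taylor expansion in (\ref{eqn_hot_approx_00}), $\mathfrak r_1$ is precisely the second-order Taylor coefficient of $\frac1{2\Delta}(kg'-mf')$ at $\check\rho_{k,m}$.

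The bulk of the substance lies upstream of this statement: the iterate (\ref{eqn_hot_approx_01}), the $r^{-2}$ bound (\ref{eqn_S2S1_almost_eigen_00}), and the uniqueness-and-gap Lemma \ref{lem_S2S1_hot_00}. Within the proof itself the delicate point is that (\ref{eqn_S2S1_almost_eigen_00}) supplies only $O(r^{-1})$ accuracy in $L^2$, whereas the conclusion demands $\|\check\psi_{k,m}^{(3)}\|^2=O(r^{-3})$; the missing factor $r^{-2}$ is recovered entirely from the $\sqrt r$-sized spectral gap of Lemma \ref{lem_S2S1_hot_00}, so one must ensure $\lambda_{\mathrm{ap}}$ stays $O(1)$-close to $\lambda_0$, which is exactly where (\ref{eqn_hot_rbound_00}) and $|\mathfrak r_1|\le c_5 r$ enter. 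The other minor subtlety is that a crude triangle-inequality estimate of $\|\check\psi_{k,m}\|$ only gives $1+O(r^{-1/2})$; obtaining the sharper $1+O(r^{-1})$ needed for the stated bound on $\check{\mathfrak q}_{k,m}$ requires the parity cancellation and the exact orthogonality of the polynomial corrections in (\ref{eqn_hot_approx_01}) to the Gaussian.
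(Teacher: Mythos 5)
Your proposal is correct and follows essentially the same route as the paper's proof: decompose $\check\psi_{k,m}$ by the spectral projection onto the unique small eigenvalue, invoke the $\sqrt{r}$-sized gap of (\ref{eqn_hot_operatornorm_00}) to get $\|(I-P_0)\check\psi_{k,m}\|^2\le cr^{-3}\|\check\psi_{k,m}\|^2$ from (\ref{eqn_S2S1_almost_eigen_00}), check from (\ref{eqn_hot_approx_01}) that $\|\check\psi_{k,m}\|^2=1+O(r^{-1})$, and normalize. The paper states the norm estimate $\big|1-\int_{\check Y}|\check\psi_{k,m}|^2\big|\le c_7 r^{-1}$ without comment, while you correctly flag and supply the needed parity/orthogonality cancellations behind it; otherwise the two arguments coincide.
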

\begin{proof}
Let ${{\rm pr}_{\lambda_0}}$ be the $L^2$-orthogonal projection onto the eigenspace of $\lambda_0$.  Write $\check{\psi}_{k,m}$ as ${{\rm pr}_{\lambda_0}}(\check{\psi}_{k,m}) + (\check{\psi}_{k,m}-{{\rm pr}_{\lambda_0}}(\check{\psi}_{k,m}))$.  By (\ref{eqn_S2S1_almost_eigen_00}) and (\ref{eqn_hot_operatornorm_00}),
\begin{align*} \int_{\check{Y}} |\check{\psi}_{k,m}-{{\rm pr}_{\lambda_0}}(\check{\psi}_{k,m})|^2 &\leq 8c_6r^{-3}\int_{\check{Y}} |\check{\psi}_{k,m}|^2 ~. \end{align*}
From the expression (\ref{eqn_hot_approx_01}), there exists a constant $c_7$ such that
\begin{align*} \big| 1 - \int_{\check{Y}}|\check{\psi}_{k,m}|^2 \big|\leq c_7 r^{-1} ~. \end{align*}
After normalizing the $L^2$-norm of $\check{\psi}_{k,m} + ({{\rm pr}_{\lambda_0}}(\check{\psi}_{k,m}))-\check{\psi}_{k,m})$, we obtain the desired expression of $\check{\psi}^{\text{\rm eig}}_{k,m}$.  The estimate on $\lambda$ follows from (\ref{eqn_S2S1_almost_eigen_00}).
\end{proof}

There is a subtlety about the section $\check{\psi}_{k,m}$ given by (\ref{eqn_hot_approx_01}):  it depends on $r$, particularly the $\mathfrak{a}_2(x,r)$ term.  However, that term is small, and it implies that the difference between eigensections at different $r$ is small.

\begin{cor}\label{cor_eigensection_differ_r_00}
There exists a constant $c$ such that the following holds.  Suppose that $\check{D}_{r_1}$ and $\check{D}_{r_2}$ both have an eigenvalue whose magnitude is less than or equal to $1$ on $\mathcal{S}_{k,m}$, and $r_1\geq c$, $r_2\geq c$ and $|m|<(\frac{1}{32\epsilon^2}-1)k$.  Let $\check{\psi}_{k,m}^{\text{\rm eig}}(r_1)$ and $\check{\psi}_{k,m}^{\text{\rm eig}}(r_2)$ be the corresponding eigensections of unit $L^2$-norm.  Then
\begin{align*}
\big| \int_{\check{Y}}\langle \check{\eta}, \check{\psi}_{k,m}^{\text{\rm eig}}(r_1) \rangle \big| \leq c (\min\{r_1,r_2\})^{-\frac{3}{2}} \big(\int_{\check{Y}} |\check{\eta}|^2 \big)^\oh
\end{align*}
for any $\check{\eta}$ with $\int_\Sigma \langle\check{\eta},\check{\psi}_{k,m}^{\text{\rm eig}}(r_2)\rangle = 0$.
\end{cor}
\begin{proof}
By proposition \ref{prop_small_eigensection_00}, $|r_1-r_2|\leq 8$.  From the expression (\ref{eqn_hot_approx_01}),
\begin{align*} \int_{\check{Y}} \big| (\check{\psi}_{k,m}(r_1)-\check{\psi}_{k,m}(r_2)) \big|^2 &\leq c_8(r_2-r_1)^2 \int_\mathbb{R}x^4(\check{\gamma}_{k,m})^{\frac{1}{2}} \exp(-{\check{\gamma}_{k,m}}x^2) \\
&\leq c_9 (\min\{r_1,r_2\})^{-4}  \end{align*}
for some constant $c_8$ and $c_9$.  Since the terms $\mathfrak{a}_1(x)$ and $\mathfrak{b}_1(x)$ in (\ref{eqn_hot_approx_01}) do not depend on $r$, there exists a constant $c_{10}$ such that
\begin{align*}
\Big| 1 + ( \frac{\mathfrak{c}_1^2}{2\check{\gamma}_{k,m}} + \frac{\mathfrak{c}_1\mathfrak{r}_2}{2\check{\gamma}_{k,m}^2} + \frac{5\mathfrak{r}_2^2}{24\check{\gamma}_{k,m}^3} + \frac{\mathfrak{r}_1^2}{8\check{\gamma}_{k,m}^3} ) - \int_{\check{Y}}|\check{\psi}_{k,m}(r_j)|^2 \Big| \leq c_{10}r_j^{-2}
\end{align*}
for $j=1,2$.  Using this improved estimate in the proof of proposition \ref{prop_small_eigensection_00}, we find that
\begin{align*} \big|\check{\mathfrak{q}}_{k,m}(r_1) - \check{\mathfrak{q}}_{k,m}(r_2)\big|\leq c_{11} (\min\{r_1,r_2\})^{-2} \end{align*}
for some constant $c_{11}$.  The difference between these eigensections is
\begin{align*}
\check{\psi}_{k,m}^{\text{\rm eig}}(r_1) - \check{\psi}_{k,m}^{\text{\rm eig}}(r_2) &= \big(\check{\mathfrak{q}}_{k,m}(r_1)-\check{\mathfrak{q}}_{k,m}(r_2)\big)\check{\psi}_{k,m}(r_1) + \check{\psi}_{k,m}^{(3)}(r_1)\\
&\qquad + \check{\mathfrak{q}}_{k,m}(r_1)\big(\check{\psi}_{k,m}(r_1)-\check{\psi}_{k,m}(r_2)\big) - \check{\psi}_{k,m}^{(3)}(r_2).
\end{align*}
Take the inner product of $\check{\eta}$ with the above expression, and integrate over $\check{Y}$.  With proposition \ref{prop_small_eigensection_00} and some simple manipulations, it completes the proof of the corollary.
\end{proof}

Throughout the discussion for proposition \ref{prop_small_eigensection_00}, $r$ is fixed.  We can forget the assumption of proposition \ref{prop_small_eigensection_00}, and look at (\ref{eqn_hot_approx_01}) independently.  The inequality (\ref{eqn_S2S1_almost_eigen_00}) still holds as long as $r$ and $\gamma_{k,m}$ differ by $\mathcal{O}(1)$.  In other words, we can rephrase it as the existence of small eigenvalues.

\begin{lem}\label{lem_exist_sl_00}
There exist a constant $c$ which has the following significance.  For any $k$, $m$ and $r$ with $|m|< (\frac{1}{32\epsilon^2}-1)k$, $\check{\gamma}_{k,m}\geq c$ and $|r-\check{\gamma}_{k,m}+\frac{\mathfrak{r}_1}{\check{\gamma}_{k,m}}|\leq 1$, the section $\check{\psi}_{k,m}$ defined by (\ref{eqn_Fourier_def_rec_00}) and (\ref{eqn_hot_approx_01}) satisfies
\begin{align*}
\int_{\check{Y}} \big|\check{D}_r \check{\psi}_{k,m} - (\frac{r}{2}-\frac{\check{\gamma}_{k,m}}{2}+\frac{\mathfrak{r}_1}{2\check{\gamma}_{k,m}})\check{\psi}_{k,m} \big|^2 &\leq c r^{-2} \int_{\check{Y}} |\check{\psi}_{k,m}|^2 \\
\big| 1-\int|\check{\psi}_{k,m}|^2 \big| &\leq cr^{-1}
\end{align*}
with the same $\mathfrak{r}_1$ as that in proposition \ref{prop_small_eigensection_00}.  Therefore, there exists an eigenvalue $\lambda$ of $\check{D}_r$ on $\mathcal{S}_{k,m}$ with
$$ \big|\lambda - (\frac{r}{2}-\frac{\check{\gamma}_{k,m}}{2}+\frac{\mathfrak{r}_1}{2\check{\gamma}_{k,m}})\big|\leq cr^{-1} ~. $$ 
\end{lem}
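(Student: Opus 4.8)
The plan is to observe that this lemma is simply the remark preceding it made precise: the section $\check{\psi}_{k,m}$ of (\ref{eqn_Fourier_def_rec_00})--(\ref{eqn_hot_approx_01}) and the estimate (\ref{eqn_S2S1_almost_eigen_00}) were produced by an explicit iteration and a substitution that never referred to an \emph{actual} small eigenvalue of $\check{D}_r$; the working hypothesis of Proposition~\ref{prop_small_eigensection_00} entered the \emph{sizes} of the various terms only through the bound $|r-\check{\gamma}_{k,m}|=\mathcal{O}(1)$. Under the present hypotheses this bound still holds: in the regime $|m|<(\tfrac{1}{32\epsilon^2}-1)k$ one has $\check{\rho}_{k,m}\in(8\epsilon,2-8\epsilon)$, so in particular $k$ (hence $k+|m|$) is comparable to $\check{\gamma}_{k,m}$, and the Taylor coefficients of $\tfrac{1}{2\Delta}(kg'-mf')$ at $\check{\rho}_{k,m}$ are bounded by $c(k+|m|)\le c'\check{\gamma}_{k,m}$; thus $|\mathfrak{r}_1/\check{\gamma}_{k,m}|\le c'$, and $|r-\check{\gamma}_{k,m}+\mathfrak{r}_1/\check{\gamma}_{k,m}|\le1$ forces $|r-\check{\gamma}_{k,m}|\le 1+c'$. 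Together with $\check{\gamma}_{k,m}\ge c$ this also makes $r$ as large as desired, so every estimate used to derive (\ref{eqn_S2S1_almost_eigen_00}) is available with the same constants. First I would re-run that substitution verbatim: plugging (\ref{eqn_hot_approx_01}) into (\ref{eqn_hot_approx_00}) and using $|\mathfrak{c}_j|\le c$, $|\mathfrak{r}_j|\le c\,\check{\gamma}_{k,m}$, $|\mathfrak{R}^\alpha_j|\le c(x^2+rx^4)$, $|\mathfrak{R}^\beta_j|\le c(|x|+r|x|^3)$ together with the Gaussian moment bound $\int_{\mathbb{R}}|x^l\exp(-\tfrac{\check{\gamma}_{k,m}}{2}x^2)|^2\le((l+3)!)\,\check{\gamma}_{k,m}^{-(l+\oh)}$, the residual of the eigenvalue equation at $\lambda=\tfrac{r}{2}-\tfrac{\check{\gamma}_{k,m}}{2}+\tfrac{\mathfrak{r}_1}{2\check{\gamma}_{k,m}}$ has $L^2(\check{Y})$-norm of size $\mathcal{O}(r^{-1})\big(\int_{\check{Y}}|\check{\psi}_{k,m}|^2\big)^{\oh}$, the cutoff $\chi(\epsilon x)$ contributing only terms supported where $|x|\ge\tfrac{\epsilon}{2}$, which are exponentially small by the Gaussian factor. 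This is the first displayed inequality.

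For the normalization $|1-\int_{\check{Y}}|\check{\psi}_{k,m}|^2|\le cr^{-1}$, I would use that the prefactor $\Delta^{-\oh}(2\pi)^{-1}$ in (\ref{eqn_Fourier_def_rec_00}), against the volume form $\Delta\,\dd\rho\wedge\dd\phi\wedge\dd t$, turns $\int_{\check{Y}}|\check{\psi}_{k,m}|^2$ into $\int_{\mathbb{R}}(|\check{\alpha}_{k,m}|^2+|\check{\beta}_{k,m}|^2)\,\dd\rho$. The leading Gaussian integrates to $1$; the odd corrections $\mathfrak{a}_1,\mathfrak{b}_1$ give no cross term with the leading Gaussian and only an $\mathcal{O}(\check{\gamma}_{k,m}^{-1})$ square term (the term $|\mathfrak{r}_2|\le c\,\check{\gamma}_{k,m}$ being exactly borderline); and the even correction $\mathfrak{a}_2$ was built precisely so as to be orthogonal in $L^2\big(\mathbb{R},(\tfrac{\check{\gamma}_{k,m}}{\pi})^{\oh}e^{-\check{\gamma}_{k,m}x^2}\dd x\big)$ to the leading Gaussian, since each of $x^2-\tfrac{1}{2\check{\gamma}_{k,m}}$, $x^4-\tfrac{3}{4\check{\gamma}_{k,m}^2}$, $x^6-\tfrac{15}{8\check{\gamma}_{k,m}^3}$ has zero Gaussian mean, so $\mathfrak{a}_2$ contributes only at order $r^{-2}$. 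This is exactly the computation yielding the bound $|1-\int|\check{\psi}_{k,m}|^2|\le c_7r^{-1}$ inside the proof of Proposition~\ref{prop_small_eigensection_00}, so nothing genuinely new is needed here.

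Finally, to pass from this quasimode to a true eigenvalue I would invoke the standard self-adjoint principle. On the compact manifold $\check{Y}=S^2\times S^1$ the operator $\check{D}_r$ is self-adjoint with discrete spectrum, and since it commutes with the two global $S^1$-actions it preserves the closed subspace generated by $\mathcal{S}_{k,m}$; the section $\check{\psi}_{k,m}$ is a nonzero smooth element of $\mathcal{S}_{k,m}$ because its support $|\rho-\check{\rho}_{k,m}|\le\epsilon$ stays away from the coordinate singularities at $\rho=0,2$. Writing $\mu=\tfrac{r}{2}-\tfrac{\check{\gamma}_{k,m}}{2}+\tfrac{\mathfrak{r}_1}{2\check{\gamma}_{k,m}}$, the first inequality gives $\|(\check{D}_r-\mu)\check{\psi}_{k,m}\|_{L^2}\le c^{\oh}r^{-1}\|\check{\psi}_{k,m}\|_{L^2}$, whence by the spectral theorem $\mathrm{dist}\big(\mu,\operatorname{spec}(\check{D}_r|_{\mathcal{S}_{k,m}})\big)\le c^{\oh}r^{-1}$, and relabelling the constant finishes the proof. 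I do not anticipate a real obstacle; the only point deserving care is to confirm that the constant $c_6$ of (\ref{eqn_S2S1_almost_eigen_00}) really emerged from the explicit substitution and the Gaussian-moment estimates and at no stage used the existence of the eigenvalue $\lambda_0$ — i.e., that the whole argument is a re-reading of that computation under a weaker hypothesis — while keeping the scalings $|\mathfrak{c}_j|\le c$, $|\mathfrak{r}_j|\le c\,\check{\gamma}_{k,m}$, $r\asymp\check{\gamma}_{k,m}$ straight so that every error term lands at the advertised order.
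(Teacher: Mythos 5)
Your proposal is correct and takes essentially the same route as the paper's proof. The paper's argument for this lemma is a single short paragraph: it notes that the hypotheses $|m|<(\tfrac{1}{32\epsilon^2}-1)k$ and $\check{\gamma}_{k,m}\ge c$ force $k$ and $|m|$ to be bounded by multiples of $\check{\gamma}_{k,m}$, so the coefficient and remainder bounds in (\ref{eqn_hot_approx_00}) hold at the same orders as in the proof of Proposition~\ref{prop_small_eigensection_00}, and then the conclusion follows by re-reading that proposition's computation together with the standard quasimode argument; you have unpacked exactly this chain, identifying the key point that the only place the ``existence of a small eigenvalue'' entered was to secure $|r-\check{\gamma}_{k,m}|=\mathcal{O}(1)$, which your hypotheses supply directly.
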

\begin{proof}
We only need to check that the coefficients and remainder terms of equation (\ref{eqn_hot_approx_00}) remains the same order.  According to the equation of $\check{\gamma}_{k,m}$ in definition \ref{def_gamma_nm_00}, $k\leq c_{12} \check{\gamma}_{k,m}$ for some constant $c_{12}$, and $|m|< (\frac{1}{32\epsilon^2}-1) c_{12}\check{\gamma}_{k,m}$.  Hence, the bound on the coefficients and remainder terms of equation (\ref{eqn_hot_approx_00}) remains the same order.  By considering the Rayleigh quotient of $\check{D}_r - (\frac{r}{2}-\frac{\check{\gamma}_{k,m}}{2}+\frac{\mathfrak{r}_1}{2\check{\gamma}_{k,m}})$ on $\mathcal{S}_{k,m}$, we conclude the existence of such an eigenvalue $\lambda$.
\end{proof}

\subsubsection*{Case 2}  When $|m|\geq(\frac{1}{32\epsilon^2}-1)k$, the linearized equation can be solved completely.  Let us further assume that $m>0$.  The discussion for $m<0$ is completely parallel, and will be omitted.

The functions $\check{\xi}_{k,m}$ defined by (\ref{sln_Dirac_binding_00}) are the almost eigensections.  They are exponentially small when $\rho\geq9\epsilon$.
\begin{lem} \label{lem_estimate_binding_00}
There exist a constant $c$ such that the following holds.  For any integers $k$ and $m$ with $m\geq (\frac{1}{32\epsilon^2}-1)k\geq0$, the function $\check{\xi}_{k,m}$ defined by (\ref{sln_Dirac_binding_00}) satisfies
\begin{align*}
|\check{\xi}_{k,m}|^2 \leq c \exp(-\frac{\check{\gamma}_{k,m}}{c}|z|^2)
\end{align*}
for any $z$ with $|z|\geq 9\epsilon$.
\end{lem}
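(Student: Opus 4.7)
The plan is to compute $|\check{\xi}_{k,m}|^2$ explicitly and exploit the Gaussian-type radial decay away from its unique maximum. In the binding region $\rho\leq 10\epsilon$ we have $f(\rho)=\rho^2$ and $g(\rho)=2-\rho^2$, so a direct computation from definition~\ref{def_gamma_nm_00} yields $\check{\rho}_{k,m}^{\,2}=2k/(k+m)$ and $\check{\gamma}_{k,m}=k+m$. The hypothesis $m\geq(\tfrac{1}{32\epsilon^2}-1)k$ then forces $\check{\rho}_{k,m}\leq 8\epsilon$, so the peak of the radial profile lies strictly inside the complementary region $|z|<9\epsilon$.

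Assuming $k\geq 1$ and introducing the substitution $u=\check{\gamma}_{k,m}\rho^2/2$, the squared magnitude takes the clean Poisson form
\begin{align*}
|\check{\xi}_{k,m}|^2 \;=\; \frac{\check{\gamma}_{k,m}}{2\pi}\cdot\frac{u^{k}e^{-u}}{k!},
\end{align*}
which is maximized in $u$ at $u=k$, i.e.\ at $\rho=\check{\rho}_{k,m}$. Writing $s=u/k=(\rho/\check{\rho}_{k,m})^2$, the preceding paragraph yields $s\geq (9/8)^2=81/64$ throughout $|z|\geq 9\epsilon$. Stirling's lower bound $k!\geq\sqrt{2\pi k}(k/e)^k$ then produces
\begin{align*}
|\check{\xi}_{k,m}|^2 \;\leq\; \frac{\check{\gamma}_{k,m}}{2\pi\sqrt{2\pi k}}\,\exp\!\bigl(-k(s-1-\log s)\bigr).
\end{align*}

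I would choose the constant $c$ large enough that the auxiliary function $h(s):=s-1-\log s-2s/c$ stays above a positive number $h_0$ on $[81/64,\infty)$; this is possible because the strict inequality $\tfrac{17}{64}-\log\tfrac{81}{64}>0$ holds and $s-1-\log s$ is convex with minimum $0$ at $s=1$, so concretely $c\geq 100$ suffices. Splitting the exponent as $-kh(s)-2ks/c=-kh(s)-\check{\gamma}_{k,m}\rho^2/c$ and combining with the bound $\check{\gamma}_{k,m}\leq 2u/(81\epsilon^2)$ (valid for $\rho\geq 9\epsilon$), the remaining prefactor $u\,e^{-kh(s)}/\sqrt{k}$ is uniformly bounded because $kh(s)$ grows at least linearly in both $k$ and $s$ and dominates $\log u=\log k+\log s$. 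The case $k=0$, $m\geq 1$ is separate and immediate: there $|\check{\xi}_{0,m}|^2=(m/2\pi)\exp(-m\rho^2/2)$, and the inequality reduces to the elementary fact that $m\mapsto m\exp(-m\alpha)$ is bounded for $m\geq 1$ whenever $\alpha$ is bounded below, which is guaranteed by $\rho\geq 9\epsilon$.

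The main technical point is arranging the single constant $c$ uniformly in $k$ and $m$ given that $\check{\gamma}_{k,m}=k+m$ can be arbitrarily large. This is precisely what the substitution $u=\check{\gamma}_{k,m}\rho^2/2$ and the trade $\check{\gamma}_{k,m}\leq 2u/(81\epsilon^2)$ accomplish, at the cost of the explicit numerical check $\tfrac{17}{64}-\log\tfrac{81}{64}>2(81/64)/c$, which holds for all sufficiently large $c$.
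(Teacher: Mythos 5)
Your proof is correct and rests on the same core ingredients as the paper's: write $|\check{\xi}_{k,m}|^2$ in the Poisson/gamma form, apply Stirling's bound on $k!$, split off the desired factor $\exp(-\check{\gamma}_{k,m}|z|^2/c)$, and then show the remaining prefactor is uniformly bounded using the fact that $s\mapsto s-1-\log s$ is convex, vanishes at $s=1$, and is strictly positive at $s=81/64$. In fact, up to a change of variables, the auxiliary function you arrive at is literally the same one the paper produces. The one genuine structural difference is that the paper first observes that for fixed $\check{\gamma}_{k,m}$ and $|z|\geq 9\epsilon$, the profile $\tfrac{1}{k!}(\check{\gamma}_{k,m}/2)^{k+1}|z|^{2k}\exp(-\check{\gamma}_{k,m}|z|^2/2)$ is monotone increasing in $k$ on the allowed range $k\leq 32\epsilon^2\check{\gamma}_{k,m}$; this lets the author replace $k$ with its extremal value $32\epsilon^2\check{\gamma}_{k,m}$ (via the Gamma function) before invoking Stirling, effectively eliminating the integer parameter. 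You skip the monotonicity observation and instead carry both $k$ and $s=(\rho/\check{\rho}_{k,m})^2$ through the estimate, at the small cost of a $\sqrt{k}\,s$ prefactor rather than $\sqrt{a}$; since $kh(s)$ dominates $\log(\sqrt{k}\,s)$, this is harmless. Your direct route is slightly more elementary and arguably easier to make airtight; the paper's monotonicity trick is slicker bookkeeping but requires one extra verification (that the ratio $\check{\gamma}_{k,m}|z|^2/(2(k+1))\geq 1$, which does hold here). Your separate treatment of $k=0$ is also exactly right, since the $\check{\xi}_{0,m}$ profile is a pure Gaussian and the bound reduces to boundedness of $m\mapsto m\exp(-m\alpha)$ for $\alpha$ bounded below. (Note: the normalization constants you quote, e.g.\ $\check{\gamma}_{k,m}/2\pi$ vs.\ $\check{\gamma}_{k,m}/8\pi^2$, differ from the paper's formula $4\pi^2|\check{\xi}_{k,m}|^2 = \tfrac{1}{k!}(\check{\gamma}_{k,m}/2)^{k+1}|z|^{2k}\exp(-\check{\gamma}_{k,m}|z|^2/2)$, but this is a harmless constant and doesn't affect the argument.)
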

\begin{proof}
Remember that $\check{\gamma}_{k,m}$ is $k+m$ when $m\geq (\frac{1}{32\epsilon^2}-1)k\geq0$.  Within this proof, we simply denote it by $\gamma$, and we are going to think $\gamma$ as a variable with $\gamma\geq \frac{k}{32\epsilon^2}$.  From the expression
\begin{align*}
4\pi^2|\check{\xi}_{k,m}|^2 &= \frac{1}{k!}(\frac{\gamma}{2})^{k+1}|z|^{2k} \exp(-\frac{\gamma}{2}|z|^2),
\end{align*}
$|\check{\xi}_{k,m}|^2$ is monotone increasing in $k$, for any fixed $\gamma\geq \frac{k}{32\epsilon^2}$ and $|z|\geq9\epsilon$.  Thus,
\begin{align*}
&4\pi^2|\check{\xi}_{k,m}|^2 \\
\leq& \frac{1}{\Gamma(32\epsilon^2 \gamma+1)}(\frac{\gamma}{2})^{32\epsilon^2 \gamma +1}|z|^{64\epsilon^2 \gamma} \exp(-\frac{\gamma}{2}|z|^2) \\
=& \left( \frac{1}{\Gamma(32\epsilon^2 \gamma+1)}(\frac{\gamma}{2})^{32\epsilon^2 \gamma +1}|z|^{64\epsilon^2 \gamma} \exp\big(-(\frac{1}{2}-\frac{1}{c}) \gamma|z|^2\big) \right) \exp(-\frac{\gamma}{c} |z|^2)
\end{align*}
where $\Gamma(s) = \int_0^\infty x^{s-1}e^{-x}\dd x$ is the usual gamma function.
If $c$ is sufficiently large, Stirling's formula implies that the whole expression in front of $\exp(-\frac{\gamma}{c} |z|^2)$ is uniformly bounded for all $\gamma>0$ and $|z|\geq9\epsilon$.
\end{proof}

Recall that $\chi_B$ is the cut-off function depending only on $\rho = |z|$, with $\chi_B(\rho)=1$ when $\rho\leq9\epsilon$ and $\chi_B(\rho)=0$ when $\rho\geq 10\epsilon$.  Let $\check{\psi}_{k,m}$ be the section whose first component is
\begin{align}\label{eqn_hot_approx_02}
\check{\alpha}_{k,m} &= \chi_B \check{\xi}_{k,m} e^{imt} ~,
\end{align}
and whose second component is zero.  By lemma \ref{lem_estimate_binding_00} and (\ref{eqn_Dirac_binding_00}), there exists a constant $c_{13}$ such that
\begin{align}\label{eqn_S2S1_almost_eigen_01}
\int_{\check{Y}} \big| \check{D}_r\check{\psi}_{k,m} - (\frac{r}{2}-\frac{\check{\gamma}_{k,m}}{2}-1)\check{\psi}_{k,m} \big|^2 \leq c_{13} \exp(-\frac{r}{c_{13}}) ~.
\end{align}
With (\ref{eqn_hot_operatornorm_00}) and (\ref{eqn_hot_rbound_00}), we conclude that:

\begin{prop}\label{prop_small_eigensection_01}
There exists a constant $c$ which has the following significance.  For any $r\geq c$ and $m\geq(\frac{1}{32\epsilon^2}-1)k\geq0$, suppose that the Dirac operator $\check{D}_r$ on ${\mathcal{S}}_{k,m}$ has an eigenvalue $\lambda_0$ with $|\lambda_0|\leq1$.  Then the corresponding eigensection is
\begin{align*} \check{\psi}_{k,m}^{\text{\rm eig}} &= \check{\mathfrak{q}}_{k,m} \check{\psi}_{k,m} + \check{\psi}_{k,m}^{(3)} \end{align*}
with $\check{\psi}_{k,m}$ given by (\ref{eqn_hot_approx_02}), and
\begin{align*}
\int_{\check{Y}}|\check{\psi}_{k,m}^{\text{\rm eig}}|^2&=1 ~,
&\int_{\check{Y}}|\check{\psi}_{k,m}^{(3)}|^2 &\leq c \exp(-\frac{r}{c}) ~,
&\text{and} & &|\check{\mathfrak{q}}_{k,m}-1|&\leq c \exp(-\frac{r}{c}) ~. 
\end{align*}
Moreover,
\begin{align*} \Big|\lambda_0 - (\frac{r}{2} - \frac{\check{\gamma}_{k,m}}{2})\Big|\leq c \exp(-\frac{r}{c}) ~. \end{align*}
\end{prop}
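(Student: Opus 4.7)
The plan is to follow the same template used for Proposition \ref{prop_small_eigensection_00}, but with exponentially small error in place of power-of-$r$ error, which is available because the model section $\check{\psi}_{k,m}$ defined by (\ref{eqn_hot_approx_02}) is built from an exact zero mode $\check{\xi}_{k,m}$ of the linearized operator $\mathcal{L}_{k,m}$ on $\mathbb{C}$, cut off by $\chi_B$ in a region where $\check{\xi}_{k,m}$ itself is already exponentially small by Lemma \ref{lem_estimate_binding_00}.

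First I would establish that $\check{\psi}_{k,m}$ is nearly of unit $L^2$-norm: the function $\check{\xi}_{k,m}$ from (\ref{sln_Dirac_binding_00}) is normalized on $\mathbb{C}$, while the $\chi_B$-cutoff and the passage from $\mathbb{C}\times S^1$ to $\check{Y}$ only affect the mass in the region $|z|\geq 9\epsilon$, which by Lemma \ref{lem_estimate_binding_00} contributes at most $c\exp(-r/c)$ (here I use that $\check{\gamma}_{k,m}\geq r - c$ in this case). Combined with the almost-eigensection bound (\ref{eqn_S2S1_almost_eigen_01}) that comes from comparing (\ref{eqn_Dirac_binding_00}) with $\mathcal{L}_{k,m}\check{\xi}_{k,m}=0$ and from the derivative of $\chi_B$ landing inside $\{9\epsilon\leq|z|\leq 10\epsilon\}$, this gives both the unit-norm statement and an $L^2$ residual for $(\check{D}_r - \lambda_*)\check{\psi}_{k,m}$ of size $c\exp(-r/c)$, where $\lambda_* = r/2 - \check{\gamma}_{k,m}/2 - 1$.

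Next, I would project $\check{\psi}_{k,m}$ onto the eigenspace of $\lambda_0$: write $\check{\psi}_{k,m} = {\rm pr}_{\lambda_0}(\check{\psi}_{k,m}) + \check{\psi}_{k,m}^\perp$. By Lemma \ref{lem_S2S1_hot_00}, $\lambda_0$ is the only eigenvalue of $\check{D}_r$ on $\mathcal{S}_{k,m}$ of magnitude less than $\sqrt{r/2}$, so the spectral gap (\ref{eqn_hot_operatornorm_00}) gives $\|(\check{D}_r - \lambda_0)\check{\psi}_{k,m}^\perp\|_{L^2}\geq \sqrt{r/8}\,\|\check{\psi}_{k,m}^\perp\|_{L^2}$. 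Combined with (\ref{eqn_S2S1_almost_eigen_01}) and the obvious bound $|\lambda_0 - \lambda_*|\leq c$ (which already follows from (\ref{eqn_hot_rbound_00})), this forces $\int_{\check{Y}}|\check{\psi}_{k,m}^\perp|^2\leq c\exp(-r/c)$. Normalizing ${\rm pr}_{\lambda_0}(\check{\psi}_{k,m})$ to unit $L^2$-norm then produces the decomposition $\check{\psi}_{k,m}^{\text{eig}} = \check{\mathfrak{q}}_{k,m}\check{\psi}_{k,m} + \check{\psi}_{k,m}^{(3)}$ with both the coefficient bound $|\check{\mathfrak{q}}_{k,m}-1|\leq c\exp(-r/c)$ and the remainder bound $\int |\check{\psi}_{k,m}^{(3)}|^2\leq c\exp(-r/c)$.

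Finally, the eigenvalue estimate is immediate: pair $(\check{D}_r - \lambda_*)\check{\psi}_{k,m}^{\text{eig}} = (\lambda_0 - \lambda_*)\check{\psi}_{k,m}^{\text{eig}}$ in $L^2$ with $\check{\psi}_{k,m}^{\text{eig}}$, use the almost-eigensection bound together with the Cauchy--Schwarz inequality and the exponentially small remainder, to conclude $|\lambda_0 - (r/2 - \check{\gamma}_{k,m}/2 - 1)|\leq c\exp(-r/c)$. I do not foresee a serious obstacle: the only delicate point is that Lemma \ref{lem_S2S1_hot_00} is formulated as \emph{at most one} small eigenvalue, so before running the projection argument one needs to know the spectral gap is indeed $\sqrt{r/8}$, which is guaranteed by that lemma applied to any other eigenvalue of $\check{D}_r|_{\mathcal{S}_{k,m}}$.
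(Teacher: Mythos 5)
Your argument is exactly the paper's: you build the almost-eigensection $\check{\psi}_{k,m}$ from the cut-off harmonic-oscillator ground state $\chi_B\check{\xi}_{k,m}e^{imt}$, use Lemma~\ref{lem_estimate_binding_00} to make the $\chi_B'$-error exponentially small, cite~(\ref{eqn_S2S1_almost_eigen_01}) as the almost-eigensection bound, and then project onto the eigenspace of $\lambda_0$ using the spectral gap~(\ref{eqn_hot_operatornorm_00}) supplied by Lemma~\ref{lem_S2S1_hot_00}. That is the same chain of lemmas and the same mechanism the paper invokes, and the normalization and projection steps are carried out correctly.

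One small but genuine mismatch: your concluding display reads $|\lambda_0 - (r/2 - \check{\gamma}_{k,m}/2 - 1)|\leq c\exp(-r/c)$, whereas the proposition asserts $|\lambda_0 - (r/2 - \check{\gamma}_{k,m}/2)|\leq c\exp(-r/c)$, without the extra $-1$. You inherited this offset from~(\ref{eqn_S2S1_almost_eigen_01}), but a direct computation from~(\ref{eqn_Dirac_binding_00}) applied to $(\chi_B\check{\xi}_{k,m}e^{imt},0)$ gives, on $\rho\leq 9\epsilon$, a first component equal to $\tfrac{r}{2}\check{\alpha}_{k,m} + \tfrac{i}{2}(ik+im)\check{\alpha}_{k,m} = (\tfrac{r}{2}-\tfrac{k+m}{2})\check{\alpha}_{k,m} = (\tfrac{r}{2}-\tfrac{\check{\gamma}_{k,m}}{2})\check{\alpha}_{k,m}$ (no $-1$), while the second component $2\partial_{\bar z}\check{\alpha}_{k,m}+\tfrac{\check{\gamma}_{k,m}}{2}z\,\check{\alpha}_{k,m}$ vanishes identically for $\check{\xi}_{k,m}$ and leaves only exponentially small $\chi_B'$ contributions. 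So the correct approximate eigenvalue is $r/2-\check{\gamma}_{k,m}/2$, in agreement with the proposition, and the ``$-1$'' in~(\ref{eqn_S2S1_almost_eigen_01}) appears to be a slip in the source that you should not reproduce. With that correction your proof is complete.
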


In this case, $|m|\geq(\frac{1}{32\epsilon^2}-1)k$, the section $\check{\psi}_{k,m}$ does not depend on $r$, but $\check{\psi}_{k,m}^{\text{\rm eig}}$ does depend on $r$.  However, the dependence is small.  Similar to corollary \ref{cor_eigensection_differ_r_00}, we have:
\begin{cor}\label{cor_eigensection_differ_r_01}
There exists a constant $c$ such that the following holds.  Suppose that $\check{D}_{r_1}$ and $\check{D}_{r_2}$ both have an eigenvalue whose magnitude is less than or equal to $1$ on $\mathcal{S}_{k,m}$, and $r_1\geq c$, $r_2\geq c$ and $m\geq(\frac{1}{32\epsilon^2}-1)k\geq0$.  Let $\check{\psi}_{k,m}^{\text{\rm eig}}(r_1)$ and $\check{\psi}_{k,m}^{\text{\rm eig}}(r_2)$ be the corresponding eigensections of unit $L^2$-norm.  Then
\begin{align*}
\big| \int_{\check{Y}}\langle \check{\eta}, \check{\psi}_{k,m}^{\text{\rm eig}}(r_1) \rangle \big| \leq c\exp(-\frac{\min\{r_1,r_2\}}{c}) \big(\int_{\check{Y}} |\check{\eta}|^2 \big)^\oh
\end{align*}
for any $\check{\eta}$ with $\int_\Sigma \langle\check{\eta},\check{\psi}_{k,m}^{\text{\rm eig}}(r_2)\rangle = 0$.
\end{cor}

In this case, the sections $\check{\psi}_{k,m}$ also guarantees the existence of small eigenvalues.

\begin{lem}\label{lem_exist_sl_01}
There exist a constant $c$ which has the following significance:  For any $k$, $m$ and $r$ with $m \geq (\frac{1}{32\epsilon^2}-1)k\geq0$, $\check{\gamma}_{k,m}\geq c$ and $|r-\check{\gamma}_{k,m}|\leq1$, the section $\check{\psi}_{k,m}$ defined by (\ref{eqn_hot_approx_02}) satisfies
\begin{align*}
\int_{\check{Y}} \big|D_r \check{\psi}_{k,m} - (\frac{r}{2}-\frac{\check{\gamma}_{k,m}}{2})\check{\psi}_{k,m} \big|^2 &\leq c \exp(-\frac{r}{c}) \int_{\check{Y}} |\check{\psi}_{k,m}|^2
\end{align*}
and $\big| 1-\int_{\check{Y}}|\check{\psi}_{k,m}|^2 \big| \leq c \exp(-\frac{r}{c})$.  Therefore, there exists an eigenvalue $\lambda$ of $\check{D}_r$ on $\mathcal{S}_{k,m}$ with
$$ \big|\lambda - (\frac{r}{2}-\frac{\check{\gamma}_{k,m}}{2})\big|\leq c \exp(-\frac{r}{c}) ~. $$ 
\end{lem}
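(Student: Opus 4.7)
The plan is to treat $\check{\psi}_{k,m}$ as an almost-eigenvector of $\check{D}_r$ with approximate eigenvalue $\mu = \frac{r}{2}-\frac{\check{\gamma}_{k,m}}{2}$, and then pass to an actual eigenvalue on $\mathcal{S}_{k,m}$ using self-adjointness of $\check{D}_r$ restricted to this invariant subspace. The crucial simplification over Lemma \ref{lem_exist_sl_00} is that in the binding region $|z|\leq 10\epsilon$ the Dirac operator (\ref{eqn_Dirac_binding_00}) is \emph{exactly} the flat model, so $\check{\xi}_{k,m}$ from (\ref{sln_Dirac_binding_00}) is a genuine kernel element rather than merely a zeroth-order Taylor approximation. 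Only the cut-off $\chi_B$ contributes to the error, and that error is exponentially small by Lemma \ref{lem_estimate_binding_00}.

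First I would compute $\check{D}_r\check{\psi}_{k,m}$ directly from (\ref{eqn_Dirac_binding_00}). With $\check{\alpha}=\chi_B\check{\xi}_{k,m}e^{imt}$ and $\check{\beta}=0$, the angular frequencies of $\check{\alpha}$ in $\phi$ and $t$ are $k$ and $m$, so the first row of (\ref{eqn_Dirac_binding_00}) produces $\bigl(\frac{r}{2}-\frac{k+m}{2}\bigr)\check{\alpha}=\mu\,\check{\alpha}$, using $\check{\gamma}_{k,m}=k+m$ from Definition \ref{def_gamma_nm_00}. The second row reduces to $2(\pl_{\bar z}\chi_B)\check{\xi}_{k,m}e^{imt}+\chi_B\bigl(2\pl_{\bar z}\check{\xi}_{k,m}+\frac{(k+m)z}{2}\check{\xi}_{k,m}\bigr)e^{imt}$, and the parenthesized term vanishes identically because $\pl_{\bar z}\check{\xi}_{k,m}=-\frac{\check{\gamma}_{k,m}z}{4}\check{\xi}_{k,m}$, which is precisely the kernel equation for $\mathcal{L}_{k,m}$ on the frequency-$k$ subspace. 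Hence $\check{D}_r\check{\psi}_{k,m}-\mu\check{\psi}_{k,m}=2(\pl_{\bar z}\chi_B)\check{\xi}_{k,m}e^{imt}$, supported in the annulus $\{9\epsilon\leq|z|\leq 10\epsilon\}$.

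The two quantitative estimates are then immediate consequences of Lemma \ref{lem_estimate_binding_00}. In the cut-off annulus, $|\check{\xi}_{k,m}|^2\leq c\exp(-\check{\gamma}_{k,m}|z|^2/c)\leq c\exp(-81\epsilon^2\check{\gamma}_{k,m}/c)$, and since $|\pl_{\bar z}\chi_B|$ is uniformly bounded with bounded support, $\int_{\check{Y}}|\check{D}_r\check{\psi}_{k,m}-\mu\check{\psi}_{k,m}|^2\leq c\exp(-\check{\gamma}_{k,m}/c)$. The hypothesis $|r-\check{\gamma}_{k,m}|\leq 1$ lets this be rewritten as $c\exp(-r/c)$ after enlarging $c$. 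For the normalization, $\int_{\check{Y}}|\check{\psi}_{k,m}|^2=\int_{\check{Y}}|\check{\xi}_{k,m}e^{imt}|^2-\int_{|z|\geq 9\epsilon}(1-\chi_B^2)|\check{\xi}_{k,m}|^2$; the full integral equals $1$ by the explicit Gaussian normalization built into (\ref{sln_Dirac_binding_00}), while the tail is again exponentially small by Lemma \ref{lem_estimate_binding_00}, yielding $\bigl|1-\int_{\check{Y}}|\check{\psi}_{k,m}|^2\bigr|\leq c\exp(-r/c)$.

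To finish, since the two $S^1$-actions commute with $\check{D}_r$, the subspace $\mathcal{S}_{k,m}$ is $\check{D}_r$-invariant; applying the spectral theorem of the self-adjoint operator $\check{D}_r|_{\mathcal{S}_{k,m}}$ converts the almost-eigenvector estimate $\|(\check{D}_r-\mu)\check{\psi}_{k,m}\|_{L^2}\leq c^{1/2}\exp(-r/(2c))\|\check{\psi}_{k,m}\|_{L^2}$ into the existence of a genuine eigenvalue $\lambda\in\operatorname{spec}(\check{D}_r|_{\mathcal{S}_{k,m}})$ with $|\lambda-\mu|\leq c\exp(-r/c)$ after absorbing constants. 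There is no substantial obstacle here: because the linearized operator is identically correct on the entire region where $\check{\xi}_{k,m}$ concentrates, the argument reduces to bookkeeping on the cut-off support, and everything is controlled by the single exponential bound of Lemma \ref{lem_estimate_binding_00}.
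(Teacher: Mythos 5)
Your proposal is correct and follows the same route the paper takes implicitly: the estimate is precisely (\ref{eqn_S2S1_almost_eigen_01}), obtained by computing $\check{D}_r\check{\psi}_{k,m}$ from (\ref{eqn_Dirac_binding_00}), observing that the only nonvanishing error term is $2(\pl_{\bar z}\chi_B)\check{\xi}_{k,m}e^{imt}$ supported on $\{9\epsilon\leq|z|\leq 10\epsilon\}$, and bounding it exponentially via Lemma \ref{lem_estimate_binding_00}, after which the existence of a nearby eigenvalue is the standard self-adjoint approximation argument on the $\check{D}_r$-invariant subspace $\mathcal{S}_{k,m}$. You also correctly use the kernel equation $2\pl_{\bar z}\check{\xi}_{k,m}+\frac{\check{\gamma}_{k,m}}{2}z\,\check{\xi}_{k,m}=0$ and the approximate eigenvalue $\frac{r}{2}-\frac{\check{\gamma}_{k,m}}{2}$, which implicitly fixes the $z\leftrightarrow\bar z$ slip in the paper's displayed formula for $\mathcal{L}_{k,m}$ and the stray $-1$ in (\ref{eqn_S2S1_almost_eigen_01}).
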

Since the coefficient $\mathfrak{r}_1$ in lemma \ref{lem_exist_sl_00} is equal to $0$ for $m \geq (\frac{1}{32\epsilon^2}-1)k\geq0$, we can also formally put the $-\frac{\mathfrak{r}_1}{2\check{\gamma}_{k,m}}$ term in proposition \ref{prop_small_eigensection_00} and lemma \ref{lem_exist_sl_01}.

\subsection{Estimating the spectral flow function}
We now prove theorem \ref{thm_sf_main_00} for this model case.

\begin{thm}\label{thm_sf_S1S2_00}
For the associated contact form (\ref{eqn_S2S1_contact_00}) of the tubular neighborhood of the binding, there exists a constant $c$ such that
\begin{align*} \big| \check{{\rm sf}}_a(r) - \frac{r^2}{4}\int_0^2 \Delta\dd\rho \big| \leq cr \end{align*}
for all $r \geq c$.  The function $\Delta$ is defined by (\ref{eqn_S2S1_Delta_00}).
\end{thm}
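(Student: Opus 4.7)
The plan is to exploit the $S^1\times S^1$-symmetry to reduce the computation of $\check{\mathrm{sf}}_a(r)$ to a two-dimensional lattice point count.  Since both $\check{D}_s$ and the perturbation $\mathrm{cl}(-\tfrac{i}{2}a)$ respect the decomposition $L^2(\underline{\mathbb{C}}\oplus K^{-1}) = \bigoplus_{k,m}\mathcal{S}_{k,m}$, the total spectral flow splits as $\check{\mathrm{sf}}_a(r) = \sum_{k,m} N_{k,m}(r)$, where $N_{k,m}(r)$ is the algebraic count of zero crossings on $\mathcal{S}_{k,m}$ for $s\in[0,r]$.  Proposition \ref{prop_beta_estimate_00}(ii) makes every crossing positive once $r\geq c$, and proposition \ref{prop_S2S1_unique_00} then tells us that each $N_{k,m}(r)\in\{0,1\}$, that $N_{k,m}(r)\equiv 0$ unless $k\geq 0$ and $(k,m)\neq(0,0)$, and that any crossing that does occur lies in the window $[\check{\gamma}_{k,m}-c,\check{\gamma}_{k,m}+c]$.

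The next step is to pin down the actual location $r_{k,m}$ of the crossing to within $\mathcal{O}(1)$ of $\check{\gamma}_{k,m}$ on both sides.  The upper bound is built into proposition \ref{prop_S2S1_unique_00}; for the lower bound I would invoke lemmata \ref{lem_exist_sl_00} and \ref{lem_exist_sl_01}, which supply a section $\check{\psi}_{k,m}$ producing an eigenvalue of $\check{D}_r$ on $\mathcal{S}_{k,m}$ of magnitude $\mathcal{O}(r^{-1})$ whenever $r$ is within unit distance of $\check{\gamma}_{k,m}-\frac{\mathfrak{r}_1}{\check{\gamma}_{k,m}}$.  Feeding this small eigenvalue into corollary \ref{cor_sf_exist_00} pushes it through zero at $r = \check{\gamma}_{k,m} + \mathcal{O}(1)$.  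Combining these one obtains
\begin{align*}
\check{\mathrm{sf}}_a(r) = \#\bigl\{(k,m)\in\mathbb{Z}^2 : k\geq 0,\ (k,m)\neq (0,0),\ \check{\gamma}_{k,m}\leq r\bigr\} + E(r),
\end{align*}
where $|E(r)|$ is controlled by the number of lattice points in the band $|\check{\gamma}_{k,m}-r|\leq C$, plus a bounded contribution from the finitely many pairs with $\check{\gamma}_{k,m}$ too small for the existence lemmata to apply (finite because $\check{\gamma}_{k,m}$ is comparable to $|k|+|m|$).

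The remaining step is geometric.  By definition \ref{def_gamma_nm_00}, the locus $\{\check{\gamma}(x,y)=r\}$ in $\{x\geq 0\}$ is precisely the image of the curve $\sigma_r(\rho) = (rf(\rho)/2,\, rg(\rho)/2)$ for $\rho\in[0,2]$, which runs from $(0,r)$ to $(0,-r)$ through the right half-plane and, together with the segment of the $y$-axis between them, encloses the region $L_r = \{\check{\gamma}\leq r\}\cap\{x\geq 0\}$.  Using $f(0)=f(2)=0$ and a single integration by parts, Green's theorem gives
\begin{align*}
\mathrm{Area}(L_r) = \frac{r^2}{8}\int_0^2 \bigl(f'g - fg'\bigr)\,d\rho = \frac{r^2}{4}\int_0^2 \Delta(\rho)\,d\rho,
\end{align*}
while the arc length of $\sigma_r$ is $\frac{r}{2}\int_0^2\sqrt{(f')^2+(g')^2}\,d\rho = \mathcal{O}(r)$.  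The standard lattice point bound $\#(L_r\cap\mathbb{Z}^2) = \mathrm{Area}(L_r) + \mathcal{O}(\mathrm{perimeter})$ then yields the main term, and the same perimeter estimate controls the annular band appearing in $|E(r)|$ by $\mathcal{O}(r)$.  The main obstacle I expect is the bookkeeping hidden in the second step: the approximation scheme in subsection \ref{subsec_2nd_order_S2S1} splits into two cases according to whether $|m|<(\tfrac{1}{32\epsilon^2}-1)k$ or not, and one must check that the bound $|r_{k,m}-\check{\gamma}_{k,m}|=\mathcal{O}(1)$ is uniform across the interface; since only $\mathcal{O}(r)$ lattice points sit within bounded distance of that interface line, this ambiguity is absorbed into the final $\mathcal{O}(r)$ error.
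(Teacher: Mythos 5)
Your proposal is correct and follows essentially the same strategy as the paper: split the spectral flow by $(k,m)$, use Proposition \ref{prop_S2S1_unique_00} and Lemmata \ref{lem_exist_sl_00}, \ref{lem_exist_sl_01} (with Corollary \ref{cor_sf_exist_00}) to localize each crossing near $\check{\gamma}_{k,m}$, and reduce to a lattice point count with $\mathcal{O}(r)$ error. The only cosmetic difference is that you evaluate the area via Green's theorem on the boundary curve $\rho\mapsto(\tfrac{r}{2}f,\tfrac{r}{2}g)$, whereas the paper integrates the Jacobian directly in the reparametrized polar coordinate $(s,\rho)$; both give $\tfrac{r^2}{4}\int_0^2\Delta\,\dd\rho$ with an $\mathcal{O}(\text{perimeter})=\mathcal{O}(r)$ error.
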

\begin{proof}
Proposition \ref{prop_S2S1_unique_00} implies the following upper bound of the spectral flow function
\begin{align*}
\check{{\rm sf}}_a(r) &\leq \#\big\{\text{integers }k\geq 0 \text{ and } m ,\text{ with }\check{\gamma}_{k,m}\leq r+c_1\big\} + c_1 \\
&= \#\big\{\text{integers }k\geq 1 \text{ and } m ,\text{ with }\check{\gamma}_{k,m}\leq r+c_1\big\} + 2r + 3c_1
\end{align*}
for some constant $c_1$.  Thus, it suffices to count the total number of the lattice points $(k,m)$ with $\check{\gamma}_{k,m}\leq r+c_1$.

Consider the reparameterized polar coordinate $(s,\rho)$ on the right half-plane:
$ k = s \big(f^2(\rho)+g^2(\rho)\big)^{-\oh} f(\rho) $, $ m = s \big(f^2(\rho)+g^2(\rho)\big)^{-\oh} g(\rho) $.  Let
\begin{align*} \check{\gamma}(s,\rho) = 2s \big(f^2(\rho)+g^2(\rho)\big)^{-\oh} ~. \end{align*}
Then $\check{\gamma}(s,\rho)$ is equal to $\check{\gamma}_{k,m}$ at any lattice points $(k,m)$.  From the expression of $\check{\gamma}(s,\rho)$, it is not hard to see that there exists a constant $c_2>0$ such that the total number of lattice points with $\check{\gamma}_{k,m}\leq r$ is less than the area of where $\check{\gamma}\leq r+c_2$.  The area can be evaluated directly
\begin{align*}
\int_0^2 \int_0^{\frac{\sqrt{f^2+g^2}}{2}(r+c_{2})} \frac{2\Delta}{f^2+g^2} s \dd s\dd\rho &=  \frac{r^2 + 2c_{2}r + c_{2}^2}{4} \int_0^2 \Delta\dd\rho ~.
\end{align*}
This proves the assertion on the upper bound of the spectral flow function.

Lemma \ref{lem_exist_sl_00} and lemma \ref{lem_exist_sl_01} finds a constant $c_3>0$ such that $\check{D}_r$ on $\mathcal{S}_{k,m}$ has a zero eigenvalue within $[\check{\gamma}_{k,m}-c_3, \check{\gamma}_{k,m}+c_3]$, provided $\check{\gamma}_{k,m}\geq c_3$.  Therefore,
\begin{align*}
\check{\rm sf}_a(r) &\geq \#\{\text{integers }k\geq 0 \text{ and } m ,\text{ with } c_3\leq\check{\gamma}_{k,m}\leq r+c_3\} ~.
\end{align*}
The same area computation gives the desired lower bound on the spectral flow function.
\end{proof}

Theorem \ref{thm_sf_S1S2_00} can be used to find a sequence of numbers such that there are not too many zero crossings near these numbers.  Here is the precise statement:

\begin{lem}\label{lem_divide_interval_00}
For any $\delta_3>0$, there exist a constant $c$ and a sequence of numbers $\{s_n\}_{n\in\mathbb{N}}$ determined by the associated contact forms and with the following significance:
\begin{enumerate}
\item the total number of zero crossings of $\check{D}_r$ and $\tilde{D}_r$ happening between $s_n-\frac{\delta_3}{s_{n-1}}$ and $s_n+\frac{\delta_3}{s_{n-1}}$ is less than $c$ for all $n\in\mathbb{N}$;
\item for each $n\in\mathbb{N}$, $ |s_n - \gamma_n - \frac{1}{V}|\leq \frac{1}{4V} $ where $\gamma_n=\frac{2n}{V}$.
\end{enumerate}
\end{lem}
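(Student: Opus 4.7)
The plan is to define $s_n$ inductively, constraining each $s_n$ to lie in the window
$$
I_n := [\gamma_n + \tfrac{3}{4V},\, \gamma_n + \tfrac{5}{4V}],
$$
so that condition (ii) is automatic; the actual content of the proof will be to arrange condition (i) via a pigeonhole argument based on Theorem~\ref{thm_sf_S1S2_00}.

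The first step is to bound the number of zero crossings inside $I_n$. By Proposition~\ref{prop_beta_estimate_00}, for $r$ large only positive zero crossings contribute to $\check{{\rm sf}}_a$ and $\tilde{{\rm sf}}_a$, so the total number of zero crossings of $\check{D}_r$ and $\tilde{D}_r$ in a subinterval $[r_1, r_2] \subset [c_0, \infty)$ equals
$$
[\check{{\rm sf}}_a(r_2) - \check{{\rm sf}}_a(r_1)] + [\tilde{{\rm sf}}_a(r_2) - \tilde{{\rm sf}}_a(r_1)] = C(r_2^2 - r_1^2) + O(r_2)
$$
by Theorem~\ref{thm_sf_S1S2_00}, for a constant $C$ determined by the two associated contact forms. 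Specializing to $I_n$, whose length is $\tfrac{1}{2V}$, yields an upper bound of the form $Kn$ for the total number of zero crossings inside $I_n$ for $n$ sufficiently large.

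Next, assume inductively that $s_{n-1}$ has been chosen so that (ii) holds; then $s_{n-1} \geq \gamma_{n-1} + \tfrac{3}{4V} = \tfrac{2(n-1)}{V} + \tfrac{3}{4V}$. Partition $I_n$ into consecutive closed subintervals of length $\tfrac{2\delta_3}{s_{n-1}}$; there are at least of order $n/V^2$ such subintervals. Since they collectively contain at most $Kn$ zero crossings, by the pigeonhole principle at least one subinterval $J_n$ contains no more than a uniform constant $c$ of them, with $c$ depending only on $\delta_3$ and the two associated contact forms. Setting $s_n$ to be the midpoint of $J_n$ makes $[s_n - \tfrac{\delta_3}{s_{n-1}},\, s_n + \tfrac{\delta_3}{s_{n-1}}]$ coincide with $J_n$, establishing (i). The finitely many small values of $n$ before Theorem~\ref{thm_sf_S1S2_00} takes effect (including the handling of an undefined $s_0$) are absorbed by choosing $s_n \in I_n$ arbitrarily and enlarging $c$ accordingly.

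The main point to verify carefully is that the pigeonhole bound is uniform in $n$. Both the window width $\tfrac{\delta_3}{s_{n-1}}$ and its proportion of $I_n$ scale like $1/n$, while the crossing count in $I_n$ scales like $n$; the uniformity hinges on exactly this cancellation. What makes it work is that the remainder in Theorem~\ref{thm_sf_S1S2_00} is $O(r)$ rather than the weaker $O(r^{3/2+\delta})$ of Taubes's general estimate~(\ref{eqn_Taubes_estimate}); otherwise the average number of zero crossings per subinterval would diverge with $n$ and the argument would collapse.
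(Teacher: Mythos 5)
Your proposal is correct and follows essentially the same route as the paper: place $s_n$ inductively in $I_n=[\gamma_n+\tfrac{3}{4V},\gamma_n+\tfrac{5}{4V}]$, bound the zero crossings in $I_n$ by $O(n)$ via Theorem~\ref{thm_sf_S1S2_00} (together with Proposition~\ref{prop_beta_estimate_00} to identify spectral-flow increments with crossing counts), subdivide $I_n$ into intervals of length $\tfrac{2\delta_3}{s_{n-1}}$, and pigeonhole. Your closing observation about the cancellation between the $O(n)$ crossing count and the $O(1/n)$ subinterval length, and its reliance on the $O(r)$ remainder, correctly identifies the crux of the uniformity.
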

\begin{proof}
For $n\leq 8(\delta_3+1) V^2$, take $s_n$ to be $\gamma_n+\frac{1}{V}$.  For $n > 8(\delta_3+1) V^2$, we are going to define $s_n$ inductively.  By theorem \ref{thm_sf_S1S2_00}, there exists a constant $c_1>0$ such that the total number of zero crossings happening within the interval
$$ I_n = [\gamma_n+\frac{3}{4V},\gamma_n+\frac{5}{4V}]$$
is less than $c_1n \leq c_1 V s_{n-1}$.  Divide $I_n$ into sub-intervals whose lengths are $\frac{2\delta_3}{s_{n-1}}$.  It follows that the total number of sub-intervals is greater than $\frac{s_{n-1}}{8\delta_3 V}$.  There must be a sub-interval which contains less than $8c_1\delta_3 V^2$ zero crossings.  Let $s_n$ be the midpoint of that sub-interval.  By the construction, $\{s_n\}_{n\in\mathbb{N}}$ has the desired properties.
\end{proof}

\subsection{Higher order approximation on certain regions}\label{subsec_higher_order_S2S1}
This section is a remark on the approximation eigensections.  On the region where $1-5\epsilon\leq\rho\leq1+15\epsilon$ of $\check{Y}$, the function $f = V$ and $g = 2-\rho$, and the Dirac operator on $\mathcal{S}_{k,m}$ is already linear on this region, see (\ref{eqn_Dirac_neck_00}).

If $\frac{k}{V}(1-11\epsilon)\leq m\leq\frac{k}{V}$, $\check{\rho}_{k,m}$ lies between $1$ and $1+11\epsilon$, and $\check{\gamma}_{k,m}$ is equal to $\frac{2k}{V}$.  All the higher order terms in (\ref{eqn_hot_approx_00}) are equal to zero, and the correction terms in (\ref{eqn_hot_approx_01}) are also equal to zero.  Meanwhile, the almost solution (\ref{eqn_hot_approx_01}) only supports within the interval $[1-\epsilon,1+12\epsilon]$.  Therefore, these almost solutions solve the Dirac equation up to an exponentially small error term.  We have similar statements as proposition \ref{prop_small_eigensection_01} and lemma \ref{lem_exist_sl_01}.

\begin{lem}\label{lem_refined_r_neck_00}
There exists a constant $c$ which has the following significance:  For any $r\geq c$ and $\frac{k}{V}(1-11\epsilon)\leq m\leq\frac{k}{V}$, suppose that the Dirac operator $\check{D}_r$ on ${\mathcal{S}}_{k,m}$ has an eigenvalue $\lambda_0$ with $|\lambda_0|\leq1$, then
\begin{align*} \Big|\lambda_0 - (\frac{r}{2} - \frac{\check{\gamma}_{k,m}}{2})\Big|\leq c \exp(-\frac{r}{c}) ~. \end{align*}
On the other hand, if $\check{\gamma}_{k,m}\geq c$, $|r-\check{\gamma}_{k,m}|<1$, and $k$ and $m$ satisfy the same constraint, the Dirac operator $\check{D}_r$ on ${\mathcal{S}}_{k,m}$ does have an eigenvalue $\lambda_0$ satisfying the above estimate.
\end{lem}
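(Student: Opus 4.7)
My plan is to specialize the arguments for proposition \ref{prop_small_eigensection_01} and lemma \ref{lem_exist_sl_01} to this slab, exploiting the fact that on $\{1-5\epsilon\leq\rho\leq1+15\epsilon\}$ the functions $f\equiv V$ and $g=2-\rho$ make $\Delta\equiv V/2$ and collapse all higher Taylor coefficients of the Dirac operator on $\mathcal{S}_{k,m}$. A direct substitution into (\ref{eqn_Dirac_neck_00}) will show that on this slab the operator reduces \emph{exactly} to the harmonic-oscillator model: the off-diagonal pieces are $\pm(\pl_x+\check{\gamma}_{k,m}x)$ with $x=\rho-\check{\rho}_{k,m}$, and the diagonal pieces are $\frac{r-\check{\gamma}_{k,m}}{2}$ and $-(\frac{r-\check{\gamma}_{k,m}}{2}+1)$. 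In particular no remainders $\mathfrak{R}^\alpha,\mathfrak{R}^\beta$ and no corrections $\mathfrak{a}_1,\mathfrak{a}_2,\mathfrak{b}_1,\mathfrak{b}_2$ from (\ref{eqn_hot_approx_01}) appear, and the coefficient $\mathfrak{r}_1$ from proposition \ref{prop_small_eigensection_00} vanishes identically.

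Next, the hypothesis $\frac{k}{V}(1-11\epsilon)\leq m\leq\frac{k}{V}$ places $\check{\rho}_{k,m}\in[1,1+11\epsilon]$ at distance at least $\epsilon$ from the slab boundary, so I pick a cut-off $\chi_N$ supported in $[1-\epsilon,1+12\epsilon]$ and equal to $1$ on $[1-\epsilon/2,1+11\epsilon+\epsilon/2]$. I define $\check{\psi}_{k,m}$ with first component $\chi_N(\rho)\,\check{\xi}_{k,m}(x)\,e^{i(k\phi+mt)}\Delta^{-1/2}(2\pi)^{-1}$ and zero second component, where $\check{\xi}_{k,m}$ is the Gaussian from (\ref{sln_Dirac_neck_00}). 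Because $(\pl_x+\check{\gamma}_{k,m}x)\check{\xi}_{k,m}=0$, the only source of the discrepancy $\check{D}_r\check{\psi}_{k,m}-\frac{r-\check{\gamma}_{k,m}}{2}\check{\psi}_{k,m}$ is $\chi'_N$ hitting $\check{\xi}_{k,m}$; this is supported where $|x|\geq\epsilon/2$, and the Gaussian factor there is of size $\exp(-\check{\gamma}_{k,m}\epsilon^2/8)$. Combined with the bound $|r-\check{\gamma}_{k,m}|=O(1)$ (from the hypothesis in the existence direction, and from proposition \ref{prop_small_eigensection_00} in the spectral direction), this yields both $\|\check{D}_r\check{\psi}_{k,m}-\frac{r-\check{\gamma}_{k,m}}{2}\check{\psi}_{k,m}\|_{L^2}^2\leq c\exp(-r/c)\|\check{\psi}_{k,m}\|_{L^2}^2$ and $\bigl|1-\|\check{\psi}_{k,m}\|_{L^2}^2\bigr|\leq c\exp(-r/c)$.

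The existence half of the lemma then follows from the standard spectral-projection argument: by lemma \ref{lem_S2S1_hot_00}, every eigenvalue of $\check{D}_r$ on $\mathcal{S}_{k,m}$ apart from at most one small one lies at distance at least $\sqrt{r/8}$ from $0$, and decomposing $\check{\psi}_{k,m}$ in the eigenbasis with the exponentially small error above forces a genuine eigenvalue within $c\exp(-r/c)$ of $\frac{r-\check{\gamma}_{k,m}}{2}$. For the first half, given an eigenvalue $\lambda_0$ with $|\lambda_0|\leq 1$, proposition \ref{prop_small_eigensection_00} already delivers $|r-\check{\gamma}_{k,m}|\leq c$, so $\frac{r-\check{\gamma}_{k,m}}{2}$ is bounded; since lemma \ref{lem_S2S1_hot_00} tells us $\lambda_0$ is the unique small eigenvalue, writing $\check{\psi}_{k,m}=P_{\lambda_0}\check{\psi}_{k,m}+P^\perp\check{\psi}_{k,m}$ and using the spectral gap $\sqrt{r/8}$ gives $\|P^\perp\check{\psi}_{k,m}\|^2=O(r^{-1}\exp(-r/c))$, and then the inequality $(\lambda_0-\frac{r-\check{\gamma}_{k,m}}{2})^2\|P_{\lambda_0}\check{\psi}_{k,m}\|^2\leq c\exp(-r/c)$ delivers the claimed exponential bound on $|\lambda_0-\frac{r-\check{\gamma}_{k,m}}{2}|$.

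I do not expect a genuine obstacle: the whole argument is a sharpening of the earlier model-case analysis in which the polynomial decay of the remainder is replaced by the exact vanishing of all higher-order terms, so one only needs to track how the Gaussian cut-off error feeds through the spectral projection. The one place requiring care is to confirm that the Gaussian decay rate $\check{\gamma}_{k,m}$ at the cut-off really translates to decay in $r$, but the bound $|r-\check{\gamma}_{k,m}|=O(1)$ makes this automatic.
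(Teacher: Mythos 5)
Your proposal is correct and follows essentially the same route the paper takes in subsection \ref{subsec_higher_order_S2S1}: on the slab $\{1-5\epsilon\le\rho\le1+15\epsilon\}$ the pair $f\equiv V$, $g=2-\rho$ makes $\Delta\equiv V/2$, $\Delta'=0$ and kills the $f''g'-f'g''$ term, so (\ref{eqn_Dirac_neck_00}) \emph{is} the harmonic-oscillator model with no remainder, and a cut-off Gaussian centered at $\check{\rho}_{k,m}\in[1,1+11\epsilon]$ incurs only an exponentially small error through $\chi'$. Combining this with the one-small-eigenvalue dichotomy of lemma \ref{lem_S2S1_hot_00} and the spectral projection argument, as you do, is exactly how the paper transplants propositions \ref{prop_small_eigensection_01} and lemma \ref{lem_exist_sl_01} to this regime; the paper's text is terse (a remark rather than a displayed proof), and your write-up simply supplies the intermediate steps.
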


Similarly, for the associated contact form of the Dehn-twist region, the Dirac operator is already linear on the region where $-35\epsilon\leq\rho\leq-10\epsilon$ or $10\epsilon\leq\rho\leq35\epsilon$.
\begin{lem}\label{lem_refined_r_neck_01}
There exists a constant $c$ with the following significance.  For and $r\geq c$ and
\begin{align*}
&{\textstyle\frac{2(v-31\epsilon)}{V+2(v-31\epsilon)\sigma(20\epsilon)}k\leq m\leq\frac{2(v-20\epsilon)}{V+2(v-20\epsilon)\sigma(20\epsilon)}k} \quad \text{or}\\
&{\textstyle\frac{2(v+20\epsilon)}{V+2(v+20\epsilon)\sigma(-20\epsilon)}k\leq m\leq\frac{2(v+31\epsilon)}{V+2(v+31\epsilon)\sigma(-20\epsilon)}k} ~,
\end{align*}
suppose that the Dirac operator $\tilde{D}_r$ on ${\mathcal{S}}_{k,m}$ has an eigenvalue $\lambda_0$ with $|\lambda_0|\leq1$, then
\begin{align*} \Big|\lambda_0 - (\frac{r}{2} - \frac{\tilde{\gamma}_{k,m}}{2})\Big|\leq c \exp(-\frac{r}{c}) . \end{align*}
On the other hand, if $\tilde{\gamma}_{k,m}\geq c$, $|r-\tilde{\gamma}_{k,m}|<1$, and $k$ and $m$ satisfy the same constraint, the Dirac operator $\tilde{D}_r$ on ${\mathcal{S}}_{k,m}$ does have an eigenvalue $\lambda_0$ satisfying the above estimate.
\end{lem}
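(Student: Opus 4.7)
The plan is to carry out the argument of Lemma \ref{lem_refined_r_neck_00} verbatim, with $(\check Y,\check D_r,\check\rho_{k,m},\check\gamma_{k,m})$ replaced by $(\tilde Y,\tilde D_r,\tilde\rho_{k,m},\tilde\gamma_{k,m})$. The engine of that argument is the observation that whenever the operator in the Taylor expansion (\ref{eqn_hot_approx_00}) is linear in $x$ on the support of the gaussian ansatz, all the correction terms $\mathfrak{a}_j,\mathfrak{b}_j$ and higher Taylor coefficients $\mathfrak{r}_j$ in (\ref{eqn_hot_approx_01}) vanish identically, so the error in the Dirac equation becomes exponentially small. It therefore suffices to check that this linearity holds, for the $(k,m)$ specified in the hypotheses, on the associated $S^2\times S^1$ coming from the Dehn-twist region.

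On the two strips $\rho\in[1+5\epsilon,1+15\epsilon]$ and $\rho\in[-1-15\epsilon,-1-5\epsilon]$, the construction of $\tau_l$ gives $\tau(\rho)\equiv\pm 2\pi N$ and $\tau(\rho)\equiv 0$ respectively, so $\tau'\equiv 0$ on both strips. Hence $\tilde{f}=V-2(v-\rho)\tau$ and $\tilde{g}=2(v-\rho)$ are both affine in $\rho$, and a direct computation yields $\tilde\Delta\equiv V$ there. The reduced operator (\ref{eqn_Dirac_neck_00}) with $(\tilde{f},\tilde{g},\tilde\Delta)$ in place of $(f,g,\Delta)$ is therefore linear in $\rho$. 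The hypothesis on $(k,m)$ in the statement is precisely the requirement that $\tilde\rho_{k,m}$, determined by $\tilde g(\tilde\rho_{k,m})/\tilde f(\tilde\rho_{k,m})=m/k$, lies inside $[1,1+11\epsilon]$ or $[-1-11\epsilon,-1]$. For such $(k,m)$ with $\tilde\gamma_{k,m}\geq c$, the gaussian width $\tilde\gamma_{k,m}^{-1/2}$ is much smaller than $\epsilon$, so the ansatz $\tilde\psi_{k,m}$ given by (\ref{eqn_hot_approx_01}) with all correction terms set to zero is supported well inside the linear strip, and the error $\tilde D_r\tilde\psi_{k,m}-\tfrac{r-\tilde\gamma_{k,m}}{2}\tilde\psi_{k,m}$ comes only from derivatives of the cutoff, producing an $\exp(-r/c)$-sized tail.

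From here, the two halves of the lemma follow exactly as in Proposition \ref{prop_small_eigensection_01} and Lemma \ref{lem_exist_sl_01}. For existence, combining the exponentially small error with the spectral-gap estimate $|\lambda-\lambda_0|\geq\sqrt{r/8}$ (the analog of (\ref{eqn_hot_operatornorm_00}), obtained by reproducing Proposition \ref{prop_S2S1_unique_00} and Lemma \ref{lem_S2S1_hot_00} for $\tilde D_r$) produces an eigenvalue of $\tilde D_r$ on $\mathcal{S}_{k,m}$ within distance $c\exp(-r/c)$ of $\tfrac{r-\tilde\gamma_{k,m}}{2}$ whenever $|r-\tilde\gamma_{k,m}|<1$. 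For the approximation half, given an eigenvalue $\lambda_0$ with $|\lambda_0|\leq 1$, the projection argument of Proposition \ref{prop_small_eigensection_01} expresses its eigensection as $\tilde{\mathfrak{q}}_{k,m}\tilde\psi_{k,m}+\tilde\psi_{k,m}^{(3)}$ with remainder of size $\exp(-r/c)$, and pairing the eigenvalue equation against $\tilde\psi_{k,m}$ returns the stated bound on $\lambda_0$. The only step needing attention is the transcription of the spectral-gap bound from $\check D_r$ to $\tilde D_r$, but after the untwisting of Section \ref{subsec_Dehn_00} the two associated Dirac operators take the same form (\ref{eqn_Dirac_neck_01}) with $(\tilde f,\tilde g)$ replacing $(f,g)$, so the proofs of Proposition \ref{prop_S2S1_unique_00} and Lemma \ref{lem_S2S1_hot_00} transfer unchanged up to constants; there is no substantive new obstacle.
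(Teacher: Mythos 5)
Your proposal is correct and matches the (implicit) proof in the paper: subsection 5.4 treats the lemma as a direct transcription of the discussion preceding Lemma \ref{lem_refined_r_neck_00}, and you correctly identify that the substance is the vanishing of $\tau'$ and hence of all higher-order Taylor coefficients on the relevant strips, after which Proposition \ref{prop_small_eigensection_01} and Lemma \ref{lem_exist_sl_01} apply verbatim.

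Two small remarks. First, you quote the strips $[1+5\epsilon,1+15\epsilon]$ and $[-1-15\epsilon,-1-5\epsilon]$ from the paper, but the support of the ansatz $[\tilde\rho_{k,m}-\epsilon,\tilde\rho_{k,m}+\epsilon]$ with $\tilde\rho_{k,m}\in[1,1+11\epsilon]$ is $[1-\epsilon,1+12\epsilon]$, which is not contained in $[1+5\epsilon,1+15\epsilon]$; the linearity in fact persists on the wider strip $[1-5\epsilon,1+15\epsilon]$ (resp.\ $[-1-15\epsilon,-1+5\epsilon]$) since $\tau$ is constant for $\rho\geq 1-5\epsilon$ (resp.\ $\rho\leq -1+5\epsilon$), and one should cite these wider strips for the inclusion to go through --- this appears to be a typo in the paper that you carried over. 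Second, the spectral-gap bound $|\lambda-\lambda_0|\geq\sqrt{r/8}$ is not needed for the existence half (that only requires the exponentially small residual together with self-adjointness, as in Lemma \ref{lem_exist_sl_01}); the gap enters the approximation half, where one projects the true eigensection onto the span of $\tilde\psi_{k,m}$ as in Proposition \ref{prop_small_eigensection_01}. Neither point changes the validity of the argument.
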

Note that for $k$ and $m$ in the first range, $\tilde{\gamma}_{k,m} = \frac{2(k+m \sigma(20\epsilon))}{V}$.  For $k$ and $m$ in the second range, $\tilde{\gamma}_{k,m} = \frac{2(k+m \sigma(-20\epsilon))}{V}$.

\subsection{Contact forms with two $S^1$-symmetry} \label{subsec_other_contact_00}
The method in this section works for the contact forms that are invariant under two global $S^1$-actions.  For instance, one can use the same method to prove theorem \ref{thm_sf_main_00} for the overtwisted contact form in \cite{ref_Taubes_S1S2}, or some contact forms on $T^3$.  There are two main differences:
\begin{enumerate}
\item the frequency $k$ might be negative;
\item there might be more than one zero crossing on each $\mathcal{S}_{k,m}$.  But the number is decided by $f(\rho)$ and $g(\rho)$, and the zero modes peak at different region.
\end{enumerate}

With this understood, the condition (\ref{eqn_contact_Dehn_00}) is a shortcut for dealing the Dehn-twist region.  It ensures the positivity of $\tilde{f}$.  If $\tilde{f}$ is not always positive, we can still extend (the untwisting of) $a$ to a contact form of the type (\ref{eqn_S2S1_contact_00}) on $S^1\times S^2$.  The frequency $k$ can be negative, and it requires more work to discuss it.

Here is a remark from the viewpoint of contact topology.  With the terminology of Giroux correspondence \cite{ref_Giroux}, our associated contact form is supported by an annulus with the identity map, and thus Stein fillable.  If $\tilde{f}$ is not always positive, the extension ends up with an overtwisted contact form on $S^1\times S^2$.

\section{Lower bound of the spectral flow}\label{sec_lower_bound}
We are going to prove a stronger statement which implies the lower bound in theorem \ref{thm_sf_main_00}.
\begin{defn}\label{defn_index_part_00}
Let us introduce the following notions:
\begin{enumerate}
\item for the associated contact form of the tubular neighborhood of the binding, let $\check{I}(r',r)$ be the total number of zero crossings of $\check{D}_r$ happening within the interval $(r',r]$ and on $\mathcal{S}_{k,m}$ with $m\geq\frac{k}{V}$;
\item for the associated contact form of the Dehn-twist region, let $\tilde{I}(r',r)$ be the total number of zero crossings of $\tilde{D}_r$ happening within the interval $(r',r]$ and on $\mathcal{S}_{k,m}$ with
$${ \frac{2(v-20\epsilon)}{V-2(v-20\epsilon)\sigma(20\epsilon)} k\leq m\leq\frac{2(v+20\epsilon)}{V-2(v+20\epsilon)\sigma(-20\epsilon)} k} ~;$$
\item for any $n\in\mathbb{N}$, let $I_\Sigma(n)$ be the dimension of $\ker{\bar{\pl}_n}$.
\end{enumerate}
\end{defn}

If we fix $r'=0$, these functions obey the following estimates:
\begin{lem}\label{lem_bound_counting_00}
There exists a constant $c>0$ which has the following significance:
\begin{align*}
\Big| \check{I}(0,r) - \frac{r^2}{4}\int_0^1 \Delta\dd\rho \Big| &\leq c r ~, \\
\Big| \tilde{I}(0,r) - \frac{r^2}{4}\int_{-20\epsilon}^{20\epsilon} \tilde{\Delta}\dd\rho \Big| &\leq c r ~, \\
\Big| \sum_{n=1}^{[\frac{V}{2}r]}I_\Sigma(n) - \frac{V r^2}{8\pi} \int_\Sigma\dd\mu_\Sigma \Big| &\leq c r
\end{align*}
for all $r\geq c$.  The functions $\Delta$ and $\tilde{\Delta}$ are defined by (\ref{eqn_S2S1_Delta_00}).
\end{lem}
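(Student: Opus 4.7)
\medskip

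\textbf{Proof plan.} The three estimates share a common flavor: in each case the counting function is, up to an $\mathcal{O}(r)$ error, the number of integer lattice points inside a planar region whose area scales like $r^2$, and the leading term comes from a direct area computation, exactly as in the proof of Theorem \ref{thm_sf_S1S2_00}. I will treat them in turn.

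For $\check{I}(0,r)$, the plan is to mimic the proof of Theorem \ref{thm_sf_S1S2_00} but restrict attention to the half of the $(k,m)$ lattice with $m\geq k/V$. Under the correspondence $\check{\gamma}_{k,m}=2k/f(\check{\rho}_{k,m})=2m/g(\check{\rho}_{k,m})$ (Definition \ref{def_gamma_nm_00}), the inequality $m\geq k/V$ is equivalent to $\check{\rho}_{k,m}\leq 1$ since $g/f$ is monotone decreasing in $\rho$ and equals $1/V$ at $\rho=1$. Proposition \ref{prop_S2S1_unique_00} shows that each $\mathcal{S}_{k,m}$ with $k\geq 0$, $m\geq k/V$ contributes at most one zero crossing in $[0,r]$, and when it does, that crossing lies in $[\check{\gamma}_{k,m}-c,\check{\gamma}_{k,m}+c]$; Lemma \ref{lem_exist_sl_00} and Lemma \ref{lem_exist_sl_01} give the matching existence statement for $\check{\gamma}_{k,m}$ large. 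Hence $\check{I}(0,r)$ equals, up to an $\mathcal{O}(r)$ error absorbing the lattice points with $\check{\gamma}_{k,m}\in[r-c,r+c]$ or $\check{\gamma}_{k,m}\leq c$, the number of lattice points $(k,m)$ with $k\geq 0$, $\check{\rho}_{k,m}\leq 1$, $\check{\gamma}_{k,m}\leq r$. Using the same reparametrized polar coordinates $k=s(f^2+g^2)^{-1/2}f$, $m=s(f^2+g^2)^{-1/2}g$ as in Theorem \ref{thm_sf_S1S2_00}, this count is within $\mathcal{O}(r)$ of the area
\[
\int_0^1\!\!\int_0^{\frac{\sqrt{f^2+g^2}}{2}r}\frac{\Delta}{f^2+g^2}\,s\,\dd s\,\dd\rho=\frac{r^2}{4}\int_0^1\Delta\,\dd\rho,
\]
which gives the first inequality.

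For $\tilde{I}(0,r)$, the argument is identical after replacing $(f,g,\Delta)$ with $(\tilde{f},\tilde{g},\tilde{\Delta})$ and checking that the specified inequality on $m/k$ translates, via $m/k=\tilde{g}/\tilde{f}$, into the condition $-1\leq\tilde{\rho}_{k,m}\leq 1$, i.e.\ that the peak of the approximate eigensection lies inside the Dehn-twist region. The area integral then runs over $\rho\in[-1,1]$ and yields $\tfrac{r^2}{4}\int_{-1}^1\tilde{\Delta}\,\dd\rho$.

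For the third estimate I will simply sum the APS formula \eqref{eqn_APS_00}. With $N=[Vr/2]$,
\[
\sum_{n=1}^{N}I_\Sigma(n)=\Bigl(\tfrac{1}{V\pi}\int_\Sigma\dd\mu_\Sigma\Bigr)\sum_{n=1}^{N}n+\tfrac{N}{2}\chi(\Sigma)+\tfrac{1}{2}\sum_{n=1}^{N}(\eta_n+h_n)(\pl\Sigma),
\]
the first sum equals $N(N+1)/2=V^2r^2/8+\mathcal{O}(r)$, and by the uniform boundedness of $\eta_n(\pl\Sigma)$ and $h_n(\pl\Sigma)$ recorded just after \eqref{eqn_APS_00} the last two terms are $\mathcal{O}(r)$. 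The first term becomes $\tfrac{Vr^2}{8\pi}\int_\Sigma\dd\mu_\Sigma+\mathcal{O}(r)$, giving the claim. The formula \eqref{eqn_APS_00} is only available when $I_\Sigma(n)=\dim\ker\bar{\pl}_n$ is given by the index, which by Lemma \ref{lem_beta_vanish_00} holds for all $n\geq c$; the finitely many small-$n$ terms contribute $\mathcal{O}(1)$.

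The main technical obstacle is the lattice-point-to-area comparison for $\check{I}$ and $\tilde{I}$: one must verify that the boundary of the region $\{\check{\gamma}\leq r,\ \check{\rho}\leq 1,\ k\geq 0\}$ has length $\mathcal{O}(r)$ and bounded curvature uniformly in $r$ so that a standard Gauss-type lattice-count estimate applies, and separately account for the $\mathcal{O}(r)$ lattice points in the $c$-neighborhood of $\check{\gamma}_{k,m}=r$ where existence/uniqueness of a zero crossing inside $[0,r]$ is ambiguous. Both are handled by crude counting since each horizontal strip of fixed $k$ contributes $\mathcal{O}(1)$ ambiguous values of $m$ and $k$ itself ranges over at most $\mathcal{O}(r)$ integers.
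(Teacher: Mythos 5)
Your approach is correct and is essentially the same as the paper's. For the first two estimates the paper also reduces to the lattice-point count from Theorem \ref{thm_sf_S1S2_00}, observing (as you do) that the side condition on $m/k$ restricts $\rho$ to $[0,1]$ (resp.\ $[-1,1]$); the paper writes the upper bound as the area of $\{\rho\leq 1,\ \check{\gamma}(s,\rho)\leq r+c_1\}$ plus an $\mathcal{O}(r)$ term for the strip $\frac{k}{V}\leq m\leq\frac{k}{V}+1$, and the lower bound symmetrically. For the third estimate the paper simply cites \eqref{eqn_APS_00} and Lemma \ref{lem_beta_vanish_00}; your summation with the $N(N+1)/2$ asymptotics and the uniform bound on the boundary $\eta$- and $h$-terms is exactly the implied computation, spelled out. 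One small caveat worth noting (it also appears in the paper's own text): the Jacobian of the change of variables $(k,m)\mapsto(s,\rho)$ is $\frac{2s\Delta}{f^2+g^2}$, not $\frac{s\Delta}{f^2+g^2}$; the displayed integrand in Theorem \ref{thm_sf_S1S2_00} drops that factor of $2$ while the evaluated area retains it, and your proposal inherits the same slip, but the final constant $\tfrac{r^2}{4}\int\Delta\,\dd\rho$ is correct.
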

\begin{proof}
The proof for the first two inequalities is similar to that of theorem \ref{thm_sf_S1S2_00}.  We briefly explain it for $\check{I}(0,r)$, and use the same notation as that in the proof of theorem \ref{thm_sf_S1S2_00}.  There exists a constant $c_1$ such that $\check{I}(0,r)$ is less than
\begin{align*}
&\text{Area}\big(\{\rho\leq 1 \text{ and }\check{\gamma}(s,\rho)\leq r + c_1\}\big) + c_1r \\
&\quad + \text{Area}\big(\{\frac{k}{V}\leq m \leq \frac{k}{V}+1 \text{ and }\check{\gamma}(s,\rho)\leq r + c_1\}\big)
\end{align*}
for all $r\geq c_1$.  The first term is given by the integral of $\Delta$.  It is not hard to see that the third term is less than $c_2 r$ for some constant $c_2$.

The third inequality on $I_\Sigma(n)$ follows directly from the index formula (\ref{eqn_APS_00}) and lemma \ref{lem_beta_vanish_00}.
\end{proof}

Here comes the main theorem of this section.
\begin{thm}\label{thm_sf_lower_bound}
There exist a constant $c$ and a sequence of numbers $\{s_n\}_{n\in\mathbb{N}}$ which have the following significance:
\begin{enumerate}
\item for all $n\geq 2$,
\begin{align*} {\rm sf}_a(s_n) - {\rm sf}_a(s_{n-1}) \geq I_\Sigma(n) + \check{I}(s_{n-1},s_n) + \tilde{I}(s_{n-1},s_n) - c ~; \end{align*}
\item for each $n\in\mathbb{N}$, $ |s_n - \gamma_n -\frac{1}{V}|\leq \frac{1}{4V} $, where $\gamma_n = \frac{2n}{V}$ as in (\ref{defn_gamma_n}).
\end{enumerate}
\end{thm}

It is clear that the lower bound of the spectral flow function claimed in theorem \ref{thm_sf_main_00} follows from theorem \ref{thm_sf_lower_bound} and lemma \ref{lem_bound_counting_00}.

We will prove theorem \ref{thm_sf_lower_bound} by constructing almost eigensections.  The following lemma measures the difference between true eigenvalues and almost eigenvalues.  It is an issue of linear algebra, but we state it for a Dirac operator.
\begin{lem}\label{lem_true_eigenvalue_00}
Let $\mathcal{D}$ be a Dirac operator on the bundle $\mathbb{S}$.  If there exist a constant $\delta_4$, a finite number of smooth sections $\{\psi_l\}_{l=1}^L$ of $\mathbb{S}$, and real numbers $\{\mu_l\}_{l=1}^L$ with the properties:
\begin{enumerate}
\item $\{\psi_l\}_{l=1}^L$ is an orthonormal set with respect to the $L^2$-inner product;\smallskip
\item $\int | \mathcal{D}\psi_l - \mu_l\psi_l |^2 \leq \delta_4$ for all $l$;\smallskip
\item $\int \langle \mathcal{D}\psi_l, \psi_{l'} \rangle = 0$ for all $l\neq l'$;\smallskip
\item $\sum\limits_{\substack{1\leq l,l'\leq L \\ \text{and } l\neq l'}} |\int \langle \mathcal{D}\psi_l, \mathcal{D}\psi_{l'} \rangle| \leq \delta_4$.
\end{enumerate}
Then, there exist $L$ eigenvalues (counting multiplicity) $\{\lambda_l\}_{l=1}^L$ of $\mathcal{D}$ such that $|\lambda_l - \mu_l| \leq \sqrt{2 \delta_4}$ for all $l$.
\end{lem}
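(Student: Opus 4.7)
The plan is to organize the proof around three steps: a reduction to an almost-invariant subspace, a Gershgorin estimate on the residual vectors, and a final min-max extraction of $L$ matched eigenvalues.

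First I would introduce the residuals $\eta_l := \mathcal{D}\psi_l - \mu_l\psi_l$, for which hypothesis (ii) gives $\|\eta_l\|^2 \leq \delta_4$. Using hypotheses (i) and (iii) to expand $\langle \eta_l, \eta_{l'}\rangle$, all cross-terms involving $\mu_l$ or $\mu_{l'}$ cancel by (iii), leaving $\langle \eta_l, \eta_{l'}\rangle = \langle \mathcal{D}\psi_l, \mathcal{D}\psi_{l'}\rangle$ for $l \neq l'$. Combined with (iv), the $L\times L$ positive semidefinite Gram matrix $R_{ll'} = \langle \eta_l, \eta_{l'}\rangle$ has diagonal entries bounded by $\delta_4$ and off-diagonal row sums bounded by $\delta_4$, so Gershgorin's disk theorem yields $\|R\|_{op} \leq 2\delta_4$. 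This is the heart of the estimate.

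Next, let $V := \mathrm{span}\{\psi_l\}$, an orthonormal $L$-dimensional subspace, and let $M_0 : V \to V$ be the Hermitian operator defined by $M_0\psi_l = \mu_l\psi_l$; so the eigenvalues of $M_0$ are precisely $\mu_1,\dots,\mu_L$. For $v = \sum c_l\psi_l \in V$ the identity $\mathcal{D}v - M_0 v = \sum c_l\eta_l$ combined with the previous step gives
\[
\|(\mathcal{D}-M_0)v\|^2 = \langle R\vec{c},\vec{c}\rangle \leq 2\delta_4\|v\|^2,
\]
so $\mathcal{D}$ acts on $V$ as the Hermitian operator $M_0$ up to an operator-norm error of at most $\sqrt{2\delta_4}$. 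In particular, the Rayleigh quotient bound $|\langle \mathcal{D}v,v\rangle - \langle M_0 v,v\rangle| \leq \sqrt{2\delta_4}\|v\|^2$ holds throughout $V$.

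Finally, apply the min-max principle to extract the matched eigenvalues. Since all $\mu_l$'s lie in some bounded window $[a,b]$ and $\mathcal{D}$ is a Dirac operator on a compact manifold, its spectrum is discrete with finite multiplicities; working inside the spectral subspace of $\mathcal{D}$ associated with $[a-\sqrt{2\delta_4},b+\sqrt{2\delta_4}]$ reduces the argument to a finite-dimensional Hermitian problem. A standard quasi-mode argument shows that the restriction to this spectral subspace of the orthogonal projection from $V$ is injective, so the spectral subspace has dimension at least $L$; the Rayleigh quotient inequality then shows that, after sorting the eigenvalues of the restriction of $\mathcal{D}$ and of $M_0$ in increasing order, the $k$-th values differ by at most $\sqrt{2\delta_4}$, by testing with the span of the first $k$ eigenvectors of $M_0$ (and symmetrically for the reverse inequality). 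Relabelling produces the desired pairing $|\lambda_l - \mu_l| \leq \sqrt{2\delta_4}$.

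The main obstacle will be the last step: because $\mathcal{D}$ has spectrum unbounded in both directions, the classical min-max principle must be localized to a bounded spectral window containing every $\mu_l$, and one has to verify that the resulting matching correctly handles multiplicity (an eigenvalue of $\mathcal{D}$ of multiplicity $m$ should absorb up to $m$ of the $\mu_l$'s). Once this localization is in place, the Cauchy-interlacing/min-max step is routine.
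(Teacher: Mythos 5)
Your first two steps are correct, and the Gershgorin computation is a nice observation: hypotheses (ii)--(iv) are exactly what is needed to show the residual Gram matrix $R_{ll'}=\langle\eta_l,\eta_{l'}\rangle$ has diagonal entries $\leq\delta_4$ and row sums of off-diagonal absolute values $\leq\delta_4$, so $0\leq R\leq 2\delta_4$ and hence $\|(\mathcal{D}-M_0)v\|\leq\sqrt{2\delta_4}\,\|v\|$ on $V=\mathrm{span}\{\psi_l\}$. This cleanly packages the content of the hypotheses in a single operator-norm bound.

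The last step, however, has a genuine gap, and it is in exactly the place you flag as the ``main obstacle.'' You assert that ``a standard quasi-mode argument'' shows the orthogonal projection $P$ onto the spectral subspace $W$ for the window $[a-\sqrt{2\delta_4},\,b+\sqrt{2\delta_4}]$ is injective on $V$. The standard argument fails here. Suppose $v=\sum c_l\psi_l$ is a unit vector with $Pv=0$. Then for any $\nu\in[a,b]$, the lower bound from the spectral theorem is $\|(\mathcal{D}-\nu)v\|\geq\sqrt{2\delta_4}$ (the gap to the excluded spectrum), while the upper bound you actually have is $\|(\mathcal{D}-\nu)v\|\leq\|(\mathcal{D}-M_0)v\|+\|(M_0-\nu)v\|\leq\sqrt{2\delta_4}+\max_l|\mu_l-\nu|$. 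Since $\max_l|\mu_l-\nu|\geq\tfrac{b-a}{2}$ for every $\nu$, there is no contradiction unless $b-a$ is comparable to $\sqrt{2\delta_4}$, which is not assumed. The key difficulty is that $M_0v$ is not a scalar multiple of $v$; you only control $\mathcal{D}v-M_0v$, not $\mathcal{D}v-\nu v$ for a fixed $\nu$. A further issue with the final min-max comparison is that the eigenvectors of $M_0$ live in $V$, not in $W$, so ``testing with the span of the first $k$ eigenvectors of $M_0$'' does not produce an admissible trial subspace for $\mathcal{D}|_W$ without additional control on the angle between $V$ and $W$---which is precisely what was not established.

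The paper avoids all of this by a different route. It proves the lemma by induction on $L$. In the inductive step one removes a single pair $(\psi_l,\mu_l)$ and applies the $(L-1)$-case; if these applications collectively yield only $L-1$ distinct eigenvalues, the overlap of the various intervals forces $|\mu_l-\mu_{l+1}|\leq 2\sqrt{2\delta_4}$, so the $\mu_l$'s are tightly clustered. It then picks a unit vector $\sum\mathfrak{c}_l\psi_l\in V$ orthogonal to the $L-1$ eigensections already found and bounds $\|(\mathcal{D}-\nu)(\sum\mathfrak{c}_l\psi_l)\|$ directly from (ii)--(iv) with $\nu=\tfrac{\mu_1+\mu_L}{2}$; this produces an $L$-th eigenvalue inside $[\mu_1-\sqrt{2\delta_4},\,\mu_L+\sqrt{2\delta_4}]$, and the clustering guarantees it is within $\sqrt{2\delta_4}$ of some $\mu_l$. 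The inductive scheme sidesteps the need to localize to a spectral window up front, which is precisely where your approach gets stuck.
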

\begin{proof}
Clearly the lemma is true for $L=1$.  Suppose the lemma holds for $L-1$, we are going to show that it is true for $L$.  Without loss of generality, we may assume that $\{\mu_l\}_{l=1}^L$ is non-decreasing in $l$.

For each $l\in\{1,2,\cdots,L\}$, remove $\psi_l$ and $\mu_l$, and apply the lemma.  If there are $L$ eigenvalues (counting multiplicity), we are done.  If there are only $(L-1)$ eigenvalues, $\{\lambda_l\}_{l=1}^{L-1}$, they must satisfy
\begin{align*}
|\lambda_l - \mu_l| &\leq \sqrt{2 \delta_4} &&\text{and} &|\lambda_l - \mu_{l+1}| &\leq \sqrt{2 \delta_4}
\end{align*}
for all $l\in\{1,2,\cdots,L-1\}$.  The triangle inequalities implies that
$$|\mu_l - \mu_{l+1}| \leq 2\sqrt{2 \delta_4}$$
for all $l\in\{1,2,\cdots,L-1\}$.

Suppose that $\{e_l\}_{l=1}^{L-1}$ are the eigensections corresponding to $\{\lambda_l\}_{l=1}^{L-1}$.  There exist complex numbers $\{\mathfrak{c}_l\}_{l=1}^{L}$ such that $\sum_{l=1}^{L}|\mathfrak{c}_l|^2 = 1$, and $\sum_{l=1}^{L}\mathfrak{c}_l\psi_l$ is orthogonal to $e_l$ for all $l\in\{1,2,\cdots,L-1\}$.  For any real number $\nu$, the operator $\mathcal{D}-\nu$ on $\sum_{l=1}^{L}\mathfrak{c}_l\psi_l$ satisfies the estimate:
\begin{align*}
\int \big| (\mathcal{D}-\nu)(\sum_{l=1}^{L}\mathfrak{c}_l\psi_l) \big|^2 &\leq \sum_{l=1}^L |\mathfrak{c}_l|^2 \int \big| (\mathcal{D}-\nu)\psi_l \big|^2 + \sum_{\substack{1\leq l,l'\leq L \\ \text{and } l\neq l'}} \int \big|\langle \mathcal{D}\psi_l, \mathcal{D}\psi_{l'} \rangle\big| \\
&\leq \sum_{l=1}^L |\mathfrak{c}_l|^2 (\sqrt{\delta_4} + |\nu-\mu_l|)^2 + \delta_4 \\
&\leq (\sqrt{2 \delta_4} + \max_l |\nu-\mu_l|)^2 ~.
\end{align*}
Consider $\nu = \frac{\mu_1+\mu_L}{2}$, the above inequality produces another eigenvalue $\lambda_L$ with
\begin{align*}
\mu_1 - \sqrt{2 \delta_4} \leq \lambda_L \leq \mu_L + \sqrt{2 \delta_4} ~,
\end{align*}
and the eigensection associated to $\lambda_L$ is orthogonal to $\{e_l\}_{l=1}^{L-1}$.  Therefore, there exist some $l\in\{1,2,\cdots,L\}$ such that $|\lambda_L - \mu_l| \leq \sqrt{2 \delta_4}$.  After re-numbering the indices of $\{\lambda_l\}_{l=1}^L$, these $L$ eigenvalues satisfy the assertion of the lemma.
\end{proof}

The following proposition is the prototype of theorem \ref{thm_sf_lower_bound}.

\begin{prop}\label{prop_sf_lower_bound}
There exist a constant $c>0$ such that the following holds.  For any integer $n\geq c$ and any $\delta_5^-, \delta_5^+\in[\frac{1}{2V},\frac{3}{2V}]$, let $\gamma_n = \frac{2n}{V}$ as in (\ref{defn_gamma_n}), then
\begin{align*}
&{\rm sf}_a(\gamma_n+(\delta_5^++\frac{c}{\gamma_n})) - {\rm sf}_a(\gamma_n-(\delta_5^-+\frac{c}{\gamma_n})) \\
\geq\, & I_\Sigma(n) + \check{I}(\gamma_n-\delta_5^-,\gamma_n+\delta_5^+) + \tilde{I}(\gamma_n-\delta_5^-,\gamma_n+\delta_5^+)
\end{align*}
\end{prop}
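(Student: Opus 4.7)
I will construct, for each $r$ in the interval $I_n := [\gamma_n - \delta_5^-, \gamma_n + \delta_5^+]$, a family of $L := I_\Sigma(n) + \check{I}(\gamma_n - \delta_5^-, \gamma_n + \delta_5^+) + \tilde{I}(\gamma_n - \delta_5^-, \gamma_n + \delta_5^+)$ almost-eigensections of $D_r$ on $Y$, each carrying an almost-eigenvalue of the form $\tfrac{r - r_\bullet}{2} + O(1/\gamma_n)$ for some $r_\bullet \in I_n$. Lemma~\ref{lem_true_eigenvalue_00} will then produce $L$ true eigenvalues of $D_r$ within $O(1/\gamma_n)$ of the almost-eigenvalues. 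By Proposition~\ref{prop_beta_estimate_00}(ii) each such true eigenvalue depends on $r$ with derivative $\tfrac{1}{2} + O(1/r)$, hence crosses zero exactly once at a point within $O(1/\gamma_n)$ of $r_\bullet$, and Corollary~\ref{cor_sf_exist_00} identifies this with a $+1$ contribution to the spectral flow. These $L$ zero crossings all lie in $[\gamma_n - \delta_5^- - c/\gamma_n, \gamma_n + \delta_5^+ + c/\gamma_n]$, which is the claim.

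\textbf{The three families.}
(a) The $I_\Sigma(n)$ page sections $\psi_{n,l}$ from Proposition~\ref{prop_page_sln_00} satisfy $D_r\psi_{n,l} = \tfrac{r - \gamma_n}{2}\psi_{n,l} + O(e^{-n/c})$, are orthonormal, and have exponentially small off-diagonal $\int_Y \langle D_r\psi_{n,l}, D_r\psi_{n,l'}\rangle$; here $r_\bullet = \gamma_n$. (b) For every zero crossing of $\check{D}$ on some $\mathcal{S}_{k,m}$ with $m \geq k/V$ that is counted by $\check{I}$, Propositions~\ref{prop_small_eigensection_00} and~\ref{prop_small_eigensection_01} supply a unit-norm model eigensection $\check{\psi}_{k,m}^{\rm eig}$ on $\check{Y}$. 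Since $m \geq k/V$ forces $\check{\rho}_{k,m} \leq 1$ and this eigensection is Gaussian-concentrated at $\check{\rho}_{k,m}$ of width $O(\check{\gamma}_{k,m}^{-1/2})$ (case~1) or concentrated near the binding (case~2, by Lemma~\ref{lem_estimate_binding_00}), multiplication by a $\rho$-only cutoff supported in $\{\rho \leq 1 + 10\epsilon\}$ transfers it to $Y$ with exponentially small cutoff error. The resulting almost-eigenvalue is $\tfrac{r - \check{\gamma}_{k,m}}{2} + O(1/\gamma_n)$, whose zero $r_\bullet$ agrees with the given zero crossing of $\check{D}$ up to $O(1/\gamma_n)$ and thus sits inside $I_n$. (c) The $\tilde{I}$ Dehn-twist contributions are produced analogously from $\tilde{\psi}_{k,m}^{\rm eig}$, transferred via the locally defined untwisting~(\ref{def_untwisting}) and a cutoff in $\{|\rho| \leq 1 + 10\epsilon\}$.

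\textbf{Orthogonality, error control, and main obstacle.}
Each almost-eigensection in (a)--(c) lives in a definite joint $e^{i\phi}$- and $e^{it}$-Fourier subspace $\mathcal{S}_{k,m}$ (in (c) with respect to the untwisted $t$-coordinate), and by Definition~\ref{defn_index_part_00} combined with~(\ref{eqn_sln_extension_00}) the $(k,m)$-labels appearing across the three families are mutually disjoint: page sections fall only in $\mathcal{S}_{n,m'}$ with $m' < n/V$; binding sections occupy $\mathcal{S}_{k,m}$ with $m \geq k/V$; Dehn-twist sections fill the complementary range of Definition~\ref{defn_index_part_00}(2). Since $D_r$ commutes with the two local $S^1$-actions and the cutoffs depend only on $\rho$, hypotheses (iii) and (iv) of Lemma~\ref{lem_true_eigenvalue_00} vanish \emph{exactly} across different families. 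Within (a) they are exponentially small by Proposition~\ref{prop_page_sln_00}, and within (b) or (c) at most one almost-eigensection occurs per $\mathcal{S}_{k,m}$ by Lemma~\ref{lem_S2S1_hot_00}. The main obstacle will be the transition annuli $\{\rho \approx \pm 1\}$, where the Gaussian tails of the binding/Dehn-twist sections overlap the support of the page cutoff; I will handle this using Lemmas~\ref{lem_refined_r_neck_00} and~\ref{lem_refined_r_neck_01}, which identify the binding/Dehn-twist eigensections explicitly with the Gaussian summands of~(\ref{eqn_sln_extension_00}) throughout the linear region $\rho \in [1-5\epsilon, 1+15\epsilon]$, so that the overlapping Fourier modes match and no uncontrolled cross-term survives. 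With this secured, $\delta_4$ in Lemma~\ref{lem_true_eigenvalue_00} can be taken of order $r^{-2}$, placing the true eigenvalues within $O(1/r)$ of the constructed almost-eigenvalues and closing the argument.
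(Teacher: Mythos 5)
Your skeleton matches the paper's proof exactly: three families of almost-eigensections (page kernels, binding zero crossings, Dehn-twist zero crossings), Lemma~\ref{lem_true_eigenvalue_00} to harvest nearby true eigenvalues of the Dirac operator, and Corollary~\ref{cor_sf_exist_00} to convert each small eigenvalue into a positively-signed zero crossing in the slightly enlarged interval. The paper performs the construction once, at the single value $r=\gamma_n$, rather than ``for each $r\in I_n$'' --- your uniform-in-$r$ version adds work without gain. It also glues in the \emph{model} sections $\check\psi_{k,m}$ of Lemmas~\ref{lem_exist_sl_00} and \ref{lem_exist_sl_01} rather than the true eigensections $\check\psi_{k,m}^{\mathrm{eig}}$ of Propositions~\ref{prop_small_eigensection_00}, \ref{prop_small_eigensection_01}. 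Your variant still closes, but ``exponentially small cutoff error'' is an overstatement: $\check\psi_{k,m}^{\mathrm{eig}}=\check{\mathfrak q}_{k,m}\check\psi_{k,m}+\check\psi_{k,m}^{(3)}$, and the delocalized remainder $\check\psi_{k,m}^{(3)}$ of $L^2$-size $O(r^{-3/2})$ is all one controls in the transition annulus; this is still good enough for $\delta_4=O(r^{-2})$, but not exponential.

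The orthogonality discussion is where you go astray. The page sections $\psi_{n,l}$ do \emph{not} lie in a single $\mathcal{S}_{n,m'}$: the coordinate $e^{it}$ exists only in the collar of $\pl\Sigma$, and even there $\xi_{n,l}$ is by (\ref{eqn_sln_extension_00}) a \emph{superposition} of modes $m'<n/V$; away from the collar no $(k,m)$-label is defined at all. Nonetheless the cross-pairings with the $\check{Y}$ and $\tilde{Y}$ sections vanish exactly, because each pairing is supported entirely in the collar, and there the $e^{it}$-Fourier contents are disjoint: the boundary condition (\ref{eqn_APS_bdry_00}) confines $\xi_{n,l}$ to $m'<n/V$, while Definition~\ref{defn_index_part_00}(i) confines $\check{I}$ to $m\geq k/V$; the Dehn-twist case follows similarly from (\ref{eqn_APS_bdry_01})--(\ref{eqn_APS_bdry_02}) together with the frequency shift induced by the untwisting (\ref{def_untwisting}). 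This is precisely the paper's Step~2. Your proposed resolution of the ``main obstacle'' via Lemmas~\ref{lem_refined_r_neck_00} and~\ref{lem_refined_r_neck_01} is off target: those lemmas concern $m\in[\frac{k}{V}(1-11\epsilon),\frac{k}{V}]$, the range that Definition~\ref{defn_index_part_00} deliberately assigns to the page and \emph{excludes} from $\check{I}$ (they are tools for the upper-bound argument of Section 7, not this one). The point is not that overlapping Fourier modes ``match,'' but that they cannot: the splitting of $(k,m)$-space between the three families is engineered so that the boundary Fourier contents are complementary, and that disjointness is the whole content of the vanishing of hypotheses (iii) and (iv) of Lemma~\ref{lem_true_eigenvalue_00} across families.
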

\begin{proof}
The proof contains three steps.

\subsubsection*{Step 1}
This step constructs almost eigensections of $D_{\gamma_n}$ from those three terms on the right hand side.

\emph{From the page}.  By proposition \ref{prop_page_sln_00}, for any $n\geq c_2$, there exists a $L^2$-orthonormal set of sections $\{\psi_{n,l}\}$ where $l\in\{1,2,\cdots,I_\Sigma(n)\}$, with
\begin{align} \label{eqn_glue_sln_01} \int_Y|D_{\gamma_n}\psi_{n,l}|^2\leq c_2\exp(-\frac{\gamma_n}{c_2}) ~, \end{align}
$\int_Y\langle D_{\gamma_n}\psi_{n,l},\psi_{n,l'}\rangle = 0$ and $|\int_Y\langle D_{\gamma_n}\psi_{n,l},D_{\gamma_n}\psi_{n,l'}\rangle|\leq c_2\exp(-\frac{\gamma_n}{c_2})$ for any $l\neq l'$.

\emph{From the tubular neighborhood of the binding}.  If $\check{D}_r$ has a zero crossing at $\gamma\in(\gamma_n-\delta_5^-,\gamma_n+\delta_5^+]$ on $\mathcal{S}_{k,m}$ with $m\geq\frac{k}{V}$, proposition \ref{prop_small_eigensection_00} and \ref{prop_small_eigensection_01} for $r=\gamma$ imply that there exists a constant $c_3$ such that
\begin{align*} \big| \frac{\gamma}{2} - \frac{\check{\gamma}_{k,m}}{2} + \frac{\mathfrak{r}_1}{2\check{\gamma}_{k,m}} \big|\leq c_3\gamma^{-1}\leq {2c_3}{\gamma_n^{-1}} \end{align*}
provided $n\geq c_3$.  Then apply lemma \ref{lem_exist_sl_00} and lemma \ref{lem_exist_sl_01} for $r=\gamma_n$ to find a constant $c_4$ and a section $\check{\psi}_{k,m}$ such that
\begin{align*}
\int_{\check{Y}} \big| \check{D}_{\gamma_n} \check{\psi}_{k,m} - (\frac{\gamma_n}{2}-\frac{\check{\gamma}_{k,m}}{2}+\frac{\mathfrak{r}_1}{2\check{\gamma}_{k,m}})\check{\psi}_{k,m} \big|^2 &\leq c_4 \gamma_n^{-2} \int_{\check{Y}} |\check{\psi}_{k,m}|^2
\end{align*}
and $\big| 1 - \int_{\check{Y}} |\check{\psi}_{k,m}|^2 \big| \leq c_4 \gamma_n^{-1}$.  Using the triangle inequality, we have
\begin{align}\label{eqn_glue_sln_00}
\int_Y \big| D_{\gamma_n} \check{\psi}_{k,m} - (\frac{\gamma_n}{2}-\frac{\gamma}{2})\check{\psi}_{k,m} \big|^2 &\leq c_5 \gamma_n^{-2} \int_Y |\check{\psi}_{k,m}|^2
\end{align}
for some constant $c_5$.  The section $\check{\psi}_{k,m}$ can be regarded as being defined on $Y$.

\emph{From the Dehn-twist region}.  The same construction as the tubular neighborhood of the binding produces sections $\tilde{\psi}_{k,m}$.  After undoing the untwisting (\ref{def_untwisting}) on $\tilde{\psi}_{k,m}$, they can be regarded as sections on $Y$, and also obey (\ref{eqn_glue_sln_00}).

\subsubsection*{Step 2}  In order to apply lemma \ref{lem_true_eigenvalue_00} on $D_{\gamma_n}$ and the sections constructed in step 1, we need to check that they meet the conditions of lemma \ref{lem_true_eigenvalue_00}.

\emph{Condition} (i).  The orthogonality is clear between any two sections constructed from the same region, and between one section from $\check{Y}$ and another section from $\tilde{Y}$.

For a section $\psi_{n,l}$ from the page and another section $\check{\psi}_{k,m}$ from $\check{Y}$, their $L^2$-inner product can be nonzero only when $k=n$.  For the section $\check{\psi}_{n,m}$, $m$ is required to be greater than or equal to $\frac{n}{V}$.  It is complementary to the APS boundary condition for $\psi_{n,l}$, see (\ref{eqn_APS_bdry_00}) and (\ref{eqn_sln_extension_00}).  Therefore, $\int_Y\langle\psi_{n,l},\check{\psi}_{k,m}\rangle = 0$.

For a section from the page and another section from $\tilde{Y}$, the argument is basically the same.  At a first glance, the condition in definition \ref{defn_index_part_00} (ii) does not seem to match with the boundary conditions (\ref{eqn_APS_bdry_01}) and (\ref{eqn_APS_bdry_02}).  However, the untwisting operator (\ref{def_untwisting}) shifts the frequency.  A direct computation shows that these conditions are complementary to each other.

\emph{Condition} (ii).  By (\ref{eqn_glue_sln_01}), the almost eigenvalues of $\{\psi_{n,l}\}$ are all equal to $0$.  By (\ref{eqn_glue_sln_00}), the almost eigenvalues are equal to $\frac{1}{2}(\gamma_n-\gamma)$ for $\check{\psi}_{k,m}$ and $\tilde{\psi}_{k,m}$, and $\gamma$ is where the zero crossing happens of $\check{D}_r$ or $\tilde{D}_r$ on $\mathcal{S}_{k,m}$.  The error term $\delta_4$ is $c_6\gamma_n^{-2}$ for some constant $c_6$.

\emph{Condition} (iii).  For any two sections from the page, it is given by proposition \ref{prop_page_sln_00}.  The arguments for other situations are the same as that for requirement (i).

\emph{Condition} (iv). The $L^2$-inner product can only be nonzero between two sections from $\Sigma$.  With the help of proposition \ref{prop_page_sln_00}, the summation is less than $2I_\Sigma(n) c_2 \exp(-\frac{\gamma_n}{c_2})\leq c_6\gamma_n^{-2}$.

The $L^2$-norm of some sections are not equal to $1$, but almost.  It can be easily fixed by normalizing these almost eigensections.  The constant $c_6$ is replaced by $c_6(1+c_6'\gamma_n^{-1})$, which is still uniformly bounded.

\subsubsection*{Step 3}  With these almost eigensections, lemma \ref{lem_true_eigenvalue_00} gives the following eigenvalues for $D_{\gamma_n}$:
\begin{itemize}
\item there are $I_\Sigma(n)$ eigenvalues whose magnitude is less than $ \sqrt{2c_6}\gamma_n^{-1}$;
\item if $\check{I}(\gamma_n-\delta_5^-,\gamma_n+\delta_5^+)$ or $\tilde{I}(\gamma_n-\delta_5^-,\gamma_n+\delta_5^+)$ gets a spectral flow count at $\gamma$, there is an eigenvalue $\lambda$ associating to it, with
$$\big| \lambda - \frac{\gamma_n-\gamma}{2} \big| \leq \sqrt{2c_6}{\gamma_n^{-1}} ~.; $$
\item all the above eigenvalues are different.
\end{itemize}
Note that the magnitude of these eigenvalues are less than $\delta_5^+ + \delta_5^-\leq\frac{3}{C}$ provided $n\geq 10 c_5 V^2$.  Let $c_7$ be the constant given by corollary \ref{cor_sf_exist_00} for $\delta_1 = \frac{12}{V}$, then
\begin{align*}
&{\rm sf}_a(\gamma_n+(\delta_5^++\frac{\sqrt{2c_6}+c_7}{\gamma_n})) - {\rm sf}_a(\gamma_n-(\delta_5^-+\frac{\sqrt{2c_6}+c_7}{\gamma_n})) \\
\geq\, & I_\Sigma(n) + \check{I}(\gamma_n-\delta_5^-,\gamma_n+\delta_5^+) + \tilde{I}(\gamma_n-\delta_5^-,\gamma_n+\delta_5^+) ~.
\end{align*}
This completes the proof of proposition \ref{prop_sf_lower_bound}. 
\end{proof}

We now prove the main theorem of this section.
\begin{proof}[Proof of thoerem \ref{thm_sf_lower_bound}]
Let $c_8$ be the constant given by proposition \ref{prop_sf_lower_bound}.  Lemma \ref{lem_divide_interval_00} with $\delta_3 = c_8$ gives a constant $c_9$ and a sequence $\{s_n\}_{n\in\mathbb{N}}$ such that $|s_n-\gamma_n-\frac{1}{V}|\leq \frac{1}{4V}$ and
\begin{align}\begin{split}\label{eqn_pf_sf_lower_00}
\check{I}(s_n - \frac{c_8}{s_{n-1}}, s_n + \frac{c_8}{s_{n-1}}) \leq c_9 ~, \\
\tilde{I}(s_n - \frac{c_8}{s_{n-1}}, s_n + \frac{c_8}{s_{n-1}}) \leq c_9
\end{split}\end{align}
for all $n\in\mathbb{N}$.  If $n\geq10c_8 V^2$, $\delta_5^- = \gamma_n - s_{n-1} - \frac{c_8}{\gamma_n}$ and $\delta_5^+ = s_n - \gamma_n - \frac{c_8}{\gamma_n}$ meet the requirement of proposition \ref{prop_sf_lower_bound}.  Hence,
\begin{align*}
&{\rm sf}_a(s_n) - {\rm sf}_a(s_{n-1}) \\
\geq\,& I_\Sigma(n) + \check{I}(s_{n-1} + \frac{c_8}{\gamma_n}, s_n - \frac{c_8}{\gamma_n}) + \tilde{I}(s_{n-1} + \frac{c_8}{\gamma_n}, s_n - \frac{c_8}{\gamma_n}) \\
\geq\,& I_\Sigma(n) + \check{I}(s_{n-1},s_n) + \tilde{I}(s_{n-1},s_n) - 4c_9
\end{align*}
provided $n\geq10c_8 V^2$.  The last inequality follows from (\ref{eqn_pf_sf_lower_00}) and $\gamma_n>s_{n-1}>s_{n-2}$.  This completes the proof of theorem \ref{thm_sf_lower_bound}.
\end{proof}

\section{The Dirac operator of trivial monodromy}\label{sec_dirac_page_00}
In order to prove the upper bound on the spectral flow function, we need a further understanding of the Dirac operator on $\Sigma\times S^1$.  With the help of the $S^1$-action, the Dirac operator reduces to Cauchy--Riemann operators (\ref{eqn_Dirac_page_00}), which we denote by $\mathcal{D}_{r,n}$.  This section follows the same notations as that in section \ref{sec_page_id_00}.

We first establish two estimates on $(\hat{\alpha}_n,\hat{\beta}_n)\in\mathcal{C}^\infty(\underline{\mathbb{C}}\oplus K_\Sigma^{-1})$ in terms of $\mathcal{D}_{r,n}(\hat{\alpha}_n,\hat{\beta}_n)$.  Roughly speaking, the estimates imply that if $\mathcal{D}_{r,n}(\hat{\alpha}_n,\hat{\beta}_n)$ is small, then $r$ is close to $\gamma_n={2n}/{V}$, and $\check{\alpha}_n$ almost solves $\bar{\pl}_n$.

We first consider the case when $|r-\gamma_n|\geq r^\oh$.

\begin{lem}\label{lem_Drn_rn_large}
There exists a constant $c$ with the following property.  For any $r\geq c$ and integer $n$ with $|r-\gamma_n|\geq r^\oh$,
\begin{align*} \int_\Sigma|\hat{\alpha}_n|^2+|\hat{\beta}_n|^2 \leq {c}{r^{-1}} \int_\Sigma |\mathcal{D}_{r,n}(\hat{\alpha}_n,\hat{\beta}_n)|^2 \end{align*}
for any $\hat{\alpha}_n$ and $\hat{\beta}_n$ satisfying the APS boundary condition for $\bar{\pl}_n$ and $\bar{\pl}_n^*$, respectively.  $\gamma_n$ is defined by (\ref{defn_gamma_n}).
\end{lem}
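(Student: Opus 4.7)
The plan is to obtain a Weitzenb\"ock-style identity for $\|\mathcal{D}_{r,n}(\hat{\alpha}_n,\hat{\beta}_n)\|_{L^2(\Sigma)}^2$ and then exploit the algebraic gap $|r-\gamma_n|\geq r^{1/2}$ to get the pointwise lower bound. Abbreviate $\rho = \tfrac{r-\gamma_n}{2}$, so that the operator in (\ref{eqn_Dirac_page_00}) is the formally self-adjoint matrix $\bigl(\begin{smallmatrix} \rho & \bar{\pl}_n^*\\ \bar{\pl}_n & -\rho-1 \end{smallmatrix}\bigr)$. I would first expand
\begin{align*}
\|\mathcal{D}_{r,n}(\hat{\alpha}_n,\hat{\beta}_n)\|^2
&= \|\rho\hat{\alpha}_n + \bar{\pl}_n^*\hat{\beta}_n\|^2 + \|\bar{\pl}_n\hat{\alpha}_n - (\rho+1)\hat{\beta}_n\|^2 \\
&= \rho^2\|\hat{\alpha}_n\|^2 + (\rho+1)^2\|\hat{\beta}_n\|^2 + \|\bar{\pl}_n\hat{\alpha}_n\|^2 + \|\bar{\pl}_n^*\hat{\beta}_n\|^2 \\
&\quad + 2\rho\,\textup{Re}\langle \hat{\alpha}_n,\bar{\pl}_n^*\hat{\beta}_n\rangle - 2(\rho+1)\textup{Re}\langle \bar{\pl}_n\hat{\alpha}_n,\hat{\beta}_n\rangle .
\end{align*}

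The next step is integration by parts on the two cross terms: $\langle \hat{\alpha}_n,\bar{\pl}_n^*\hat{\beta}_n\rangle_{L^2(\Sigma)} = \langle \bar{\pl}_n\hat{\alpha}_n,\hat{\beta}_n\rangle_{L^2(\Sigma)}$ up to a boundary term. The key point is that the boundary term is a pointwise pairing of $\hat{\alpha}_n|_{\pl\Sigma}$ with $\hat{\beta}_n|_{\pl\Sigma}$, and under the APS conditions the former lives in the negative eigenspace of $\pl\hskip-2.2mm\slash_n$ while the latter lives in the non-negative eigenspace (see the description of (\ref{eqn_APS_bdry_00})--(\ref{eqn_APS_bdry_02})). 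These subspaces are $L^2$-orthogonal on $\pl\Sigma$, so the boundary contribution vanishes. The cross terms therefore collapse to $-2\textup{Re}\langle \bar{\pl}_n\hat{\alpha}_n,\hat{\beta}_n\rangle$, and an AM--GM step $|2\textup{Re}\langle \bar{\pl}_n\hat{\alpha}_n,\hat{\beta}_n\rangle|\leq \|\bar{\pl}_n\hat{\alpha}_n\|^2 + \|\hat{\beta}_n\|^2$ absorbs it into the non-negative $\|\bar{\pl}_n\hat{\alpha}_n\|^2$ term and a unit-coefficient correction to $\|\hat{\beta}_n\|^2$. This yields
\begin{align*}
\|\mathcal{D}_{r,n}(\hat{\alpha}_n,\hat{\beta}_n)\|^2 \;\geq\; \rho^2\|\hat{\alpha}_n\|^2 + \bigl((\rho+1)^2 - 1\bigr)\|\hat{\beta}_n\|^2 + \|\bar{\pl}_n^*\hat{\beta}_n\|^2.
\end{align*}

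Finally, I would use the hypothesis to bound both coefficients below by $r/c$. Since $|2\rho|=|r-\gamma_n|\geq r^{1/2}$, one clearly has $\rho^2\geq r/4$. For the other, $(\rho+1)^2-1 = \rho(\rho+2)$; a short case split on the sign of $\rho$ shows that when $\rho\geq r^{1/2}/2$ this is at least $r/4$, and when $\rho\leq -r^{1/2}/2$ it equals $|\rho|(|\rho|-2)\geq r/16$ once $r$ is large enough that $r^{1/2}\geq 8$. Combining these with the previous display gives $\|\mathcal{D}_{r,n}(\hat{\alpha}_n,\hat{\beta}_n)\|^2 \geq (r/c)(\|\hat{\alpha}_n\|^2+\|\hat{\beta}_n\|^2)$, which is exactly the claimed inequality.

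There is no serious obstacle here: the statement is essentially a spectral-gap estimate, and the only non-trivial input is the orthogonality of the APS boundary projections that makes the integration-by-parts boundary terms disappear. The residual $\|\bar{\pl}_n^*\hat{\beta}_n\|^2$ that survives the AM--GM step is discarded as a non-negative bonus; one could alternatively keep it to obtain a slightly stronger inequality, but it is not needed for the stated conclusion.
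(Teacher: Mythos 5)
Your proof is correct and follows the same route as the paper's: expand $\|\mathcal{D}_{r,n}(\hat\alpha_n,\hat\beta_n)\|^2$, integrate by parts using that $\bar{\pl}_n$ and $\bar{\pl}_n^*$ are $L^2$-adjoint under the APS boundary conditions, absorb the surviving cross term $-2\operatorname{Re}\langle\bar{\pl}_n\hat\alpha_n,\hat\beta_n\rangle$ by Cauchy--Schwarz, and exploit $|r-\gamma_n|\geq r^{1/2}$ to bound the diagonal coefficients from below by $r/c$. You have merely spelled out what the paper compresses into ``With the Cauchy--Schwarz inequality, we complete the proof.''
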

\begin{proof}
With the APS boundary condition, $\bar{\pl}_n$ and $\bar{\pl}_n^*$ are adjoint operators.  Performing integration by parts, we have
\begin{align}\begin{split}\label{eqn_Drn_by_parts}
\int_\Sigma |\mathcal{D}_{r,n}(\hat{\alpha}_n,\hat{\beta}_n)|^2 &= \int_\Sigma (\frac{r-\gamma_n}{2})^2 |\hat{\alpha}_n|^2 + |\bar{\pl}_n^*\hat{\beta}_n|^2 - \langle\hat{\bar}{\pl}_n^*\hat{\beta}_n,\hat{\alpha}_n\rangle\\
&\qquad + |\bar{\pl}_n\hat{\alpha}_n|^2+ (\frac{r-\gamma_n-2}{2})^2|\hat{\beta}_n|^2 - \langle\bar{\pl}_n\hat{\alpha}_n,\hat{\beta}_n\rangle ~.
\end{split}
\end{align}
With the Cauchy--Schwarz inequality, it completes the proof of this lemma.
\end{proof}

We then consider the case when $|r-\gamma_n|\leq r^\oh$.

\begin{prop}\label{prop_Drn_rn_small}
There exists a constant $c$ such that the following holds.  For any $r\geq c$ and integer $n$ with $|r-\gamma_n|\leq r^\oh$, suppose that $\hat{\alpha}_n$ and $\hat{\beta}_n$ vanish near $\pl\Sigma$.  Then
\begin{align*} \int_\Sigma \big|\hat{\alpha}_n-{\hat{\rm pr}_n}(\hat{\alpha}_n)\big|^2 + \big|\hat{\beta}_n\big|^2 \leq {c}{r^{-1}} \int_\Sigma \big|\mathcal{D}_{r,n}(\hat{\alpha}_n,\hat{\beta}_n)\big|^2 \end{align*}
where ${\hat{\rm pr}_n}$ is the $L^2$-orthogonal projection onto the kernel of $\bar{\pl}_n$.  Moreover, if $r\neq\gamma_n$,
\begin{align*} \int_\Sigma \big|{\hat{\rm pr}_n}(\hat{\alpha}_n) \big|^2 \leq {4}{(r-\gamma_n)^{-2}} \int_\Sigma \big|({{\rm pr}_1}\circ{\mathcal{D}_{r,n}})(\hat{\alpha}_n,\hat{\beta}_n)\big|^2 \end{align*}
where ${\rm pr}_1$ is the projection onto the first component.
\end{prop}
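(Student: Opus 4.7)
The plan is to mimic the integration-by-parts identity already used in the proof of Lemma \ref{lem_Drn_rn_large}, but to exploit the orthogonal decomposition $\hat{\alpha}_n = {\hat{\rm pr}_n}(\hat{\alpha}_n) + \hat{\alpha}_n^\perp$ and a spectral gap of $\bar{\pl}_n$ on $(\ker\bar{\pl}_n)^\perp$. The key simplification compared to Lemma \ref{lem_Drn_rn_large} is that $\hat{\alpha}_n$ and $\hat{\beta}_n$ are supported away from $\pl\Sigma$, so every integration by parts proceeds with no boundary contribution; in particular the Atiyah--Patodi--Singer boundary conditions never need to be touched in intermediate steps. The main obstacle will be Step 3 below, the transfer of the spectral gap from $\bar{\pl}_n\bar{\pl}_n^*$ (given by Lemma \ref{lem_beta_vanish_00}) to $\bar{\pl}_n$ restricted to $(\ker\bar{\pl}_n)^\perp$.

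First I would expand $|\mathcal{D}_{r,n}(\hat{\alpha}_n,\hat{\beta}_n)|^2$ and integrate by parts in the cross term $2\operatorname{Re}\langle\tfrac{r-\gamma_n}{2}\hat{\alpha}_n,\bar{\pl}_n^*\hat{\beta}_n\rangle$ so that it cancels against part of the second cross term. Since $\hat{\alpha}_n,\hat{\beta}_n$ vanish near $\pl\Sigma$ this produces, with no boundary term, exactly the identity \eqref{eqn_Drn_by_parts}. Then apply the AM-GM bound $-2\operatorname{Re}\langle\bar{\pl}_n\hat{\alpha}_n,\hat{\beta}_n\rangle \ge -\tfrac{1}{2}|\bar{\pl}_n\hat{\alpha}_n|^2 - 2|\hat{\beta}_n|^2$, drop the nonnegative $(\tfrac{r-\gamma_n}{2})^2|\hat{\alpha}_n|^2$ term, and observe that under $|r-\gamma_n|\le r^{1/2}$ the leftover coefficient $(\tfrac{r-\gamma_n-2}{2})^2-2$ of $|\hat{\beta}_n|^2$ is $O(r)$ and harmless.

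Next comes the spectral gap step. Lemma \ref{lem_beta_vanish_00} gives $\|\bar{\pl}_n^*\hat{\beta}_n\|_{L^2}^2\ge c^{-1}n\,\|\hat{\beta}_n\|_{L^2}^2$, and since $|r-\gamma_n|\le\sqrt r$ forces $n\gtrsim Vr$, this yields $\|\bar{\pl}_n^*\hat{\beta}_n\|^2\ge c'r\|\hat{\beta}_n\|^2$. For the $\bar{\pl}_n\hat{\alpha}_n$ term I will note that ${\hat{\rm pr}_n}(\hat{\alpha}_n)\in\ker\bar{\pl}_n$ automatically satisfies the APS boundary condition \eqref{eqn_APS_bdry_00}, and $\hat{\alpha}_n$ satisfies it trivially since it vanishes near the boundary; hence so does $\hat{\alpha}_n^\perp$. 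The intertwining $u\mapsto\bar{\pl}_n u$ identifies the nonzero spectrum of $\bar{\pl}_n^*\bar{\pl}_n$ (with APS) with that of $\bar{\pl}_n\bar{\pl}_n^*$ (with complementary APS), so the same lower bound $c'r$ governs $\bar{\pl}_n^*\bar{\pl}_n$ on $(\ker\bar{\pl}_n)^\perp$. Since $\bar{\pl}_n\hat{\alpha}_n = \bar{\pl}_n\hat{\alpha}_n^\perp$, this gives $\|\bar{\pl}_n\hat{\alpha}_n\|^2\ge c'r\|\hat{\alpha}_n^\perp\|^2$. Substituting into the identity from the previous step yields $\int_\Sigma|\mathcal{D}_{r,n}|^2 \ge c''r\bigl(\|\hat{\alpha}_n^\perp\|^2+\|\hat{\beta}_n\|^2\bigr)$ once $r$ is large, which is the first inequality.

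For the second estimate I will apply ${\hat{\rm pr}_n}$ to the first component $\tfrac{r-\gamma_n}{2}\hat{\alpha}_n+\bar{\pl}_n^*\hat{\beta}_n$ of $\mathcal{D}_{r,n}(\hat{\alpha}_n,\hat{\beta}_n)$. For any $u\in\ker\bar{\pl}_n$, integration by parts gives $\langle u,\bar{\pl}_n^*\hat{\beta}_n\rangle = \langle\bar{\pl}_n u,\hat{\beta}_n\rangle = 0$, the boundary contribution vanishing because $\hat{\beta}_n$ is supported away from $\pl\Sigma$. Therefore ${\hat{\rm pr}_n}(\bar{\pl}_n^*\hat{\beta}_n)=0$ and ${\hat{\rm pr}_n}\bigl({\rm pr}_1\circ\mathcal{D}_{r,n}(\hat{\alpha}_n,\hat{\beta}_n)\bigr) = \tfrac{r-\gamma_n}{2}{\hat{\rm pr}_n}(\hat{\alpha}_n)$. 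Since ${\hat{\rm pr}_n}$ is an orthogonal projection, $\|{\hat{\rm pr}_n}({\rm pr}_1\circ\mathcal{D}_{r,n})\|\le\|{\rm pr}_1\circ\mathcal{D}_{r,n}\|$, and dividing by $(\tfrac{r-\gamma_n}{2})^2$ gives exactly the second inequality.
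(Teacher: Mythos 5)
Your proposal is correct and follows essentially the same route as the paper: the integration-by-parts identity (\ref{eqn_Drn_by_parts}), Lemma \ref{lem_beta_vanish_00}, the coincidence of the nonzero spectra of $\bar{\pl}_n\bar{\pl}_n^*$ and $\bar{\pl}_n^*\bar{\pl}_n$, and for the second inequality the fact that $\bar{\pl}_n^*\hat{\beta}_n$ has no component in $\ker\bar{\pl}_n$ (the paper phrases this as $\mathcal{D}_{r,n}$ preserving the orthogonal decomposition, but it is the same observation). The one spot worth tightening in your write-up is the passing remark that $\bigl(\tfrac{r-\gamma_n-2}{2}\bigr)^2-2$ is ``$O(r)$ and harmless'': the quantity can actually be negative, but the correct reason it is harmless is that $\bigl(\tfrac{r-\gamma_n-2}{2}\bigr)^2\geq0$ may simply be dropped and the residual $-2$ is absorbed by the $\tfrac{r}{c_1}\|\hat{\beta}_n\|^2$ term once $r$ is large.
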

\begin{proof}
The condition $|r-\gamma_n|\leq r^\oh$ implies that $n\geq\frac{V}{4}r$, provided $r\geq 4$.  According to lemma \ref{lem_beta_vanish_00}, there exists a constant $c_1>0$ such that
\begin{align*} \int_\Sigma |\hat{\beta}_n|^2 \leq {c_1}{r^{-1}}\int_\Sigma |\bar{\pl}_n^*\hat{\beta}_n|^2 \end{align*}
for any $r\geq c_1$.  According to \cite[p.51 and p.56]{ref_APS}, $\bar{\pl}_n\bar{\pl}_n^*$ and $\bar{\pl}_n^*\bar{\pl}_n$ have the same non-zero eigenvalues.  Hence,
\begin{align}\begin{split}\label{eqn_dbarn_estimate_00} \int_\Sigma |\hat{\alpha}_n-{\hat{\rm pr}_n}(\hat{\alpha}_n)|^2 &\leq {c_1}{r^{-1}}\int_\Sigma \big|\bar{\pl}_n(\hat{\alpha}_n-{\hat{\rm pr}_n}(\hat{\alpha}_n))\big|^2 \\
&= {c_1}{r^{-1}}\int_\Sigma |\bar{\pl}_n\hat{\alpha}_n|^2 ~. \end{split}\end{align}
It follows from (\ref{eqn_Drn_by_parts}) and the above two inequalities that
\begin{align*}
\int_\Sigma |\mathcal{D}_{r,n}(\hat{\alpha}_n,\hat{\beta}_n)|^2 &\geq \int_\Sigma (\frac{r-\gamma_n}{2})^2 |\hat{\alpha}_n|^2 + \frac{r}{c_1}|\hat{\beta}_n|^2 - 2|\hat{\beta}_n|^2\\
&\qquad + \frac{r}{2c_1} |\hat{\alpha}_n-{\hat{\rm pr}_n}(\hat{\alpha}_n)|^2+ (\frac{r-\gamma_n-2}{2})^2|\hat{\beta}_n|^2 ~.
\end{align*}
This proves the first assertion of this lemma.

For the second assertion, note that
\begin{align*}
\mathcal{D}_{r,n}\big({\hat{\rm pr}_n}(\hat{\alpha}_n),0\big) &= \Big(\frac{r-\gamma_n}{2}{\hat{\rm pr}_n}(\hat{\alpha}_n),0\Big) ~, \\
\mathcal{D}_{r,n}\big(\hat{\alpha}_n-{\hat{\rm pr}_n}(\hat{\alpha}_n),\hat{\beta}_n\big) &= \Big(\frac{r-\gamma_n}{2}\big(\hat{\alpha}_n-{\hat{\rm pr}_n}(\hat{\alpha}_n)\big) + \bar{\pl}_n^*\check{\beta}_n,\cdots\Big) ~.
\end{align*}
Therefore, $\mathcal{D}_{r,n}$ preserves the $L^2$-orthogonality between $\big({\hat{\rm pr}_n}(\hat{\alpha}_n),0\big)$ and $\big(\hat{\alpha}_n-{\hat{\rm pr}_n}(\hat{\alpha}_n),\hat{\beta}_n\big)$.  The desired estimate on ${\hat{\rm pr}_n}(\hat{\alpha}_n)$ follows.
\end{proof}

In next section, we will study the zero modes on $Y$ through $\Sigma\times S^1$, $\check{Y}$ and $\tilde{Y}$.  The cut-off function causes some overlaps of these models.  To tackle this issue, we need to study $\ker\bar{\pl}_n$ carefully.

As discussed in section \ref{sec_page_id_00} and (\ref{eqn_sln_extension_00}), any solution of $\bar{\pl}_n$ on $\Sigma_{-11\epsilon}$ naturally extends to a solution on $\Sigma$, and still satisfies the corresponding APS boundary condition.  Let $\ker_{0}{\bar{\pl}_n}$ be the subspace of $\ker{\bar{\pl}_n}$ which are extended from $\Sigma_{-11\epsilon}$.  Consider the following sections which peak on $\Sigma\backslash\Sigma_{-11\epsilon}$.

\subsubsection*{Adjacent to the tubular neighborhood of the binding}
For any integers $n>0$ and $m$ with $\frac{n}{V}(1-11\epsilon)\leq m < \frac{n}{V}$, let
\begin{align}\label{eqn_almost_nm_page_00} \check{\zeta}_{n,m} = \chi\big(\epsilon(\rho-2+\frac{Vm}{n})\big) \big(\frac{2n}{V\pi^3}\big)^{\frac{1}{4}} \exp\big(-\frac{n}{V}(\rho-2+\frac{Vm}{n})^2\big)e^{imt} ~. \end{align}
\subsubsection*{Adjacent to the Dehn-twist region}
For any integers $n>0$ and $m$ with $\frac{2n}{V}(v+31\epsilon)\geq m > \frac{2n}{V}(v+20\epsilon)$ or $\frac{2n}{V}(v-31\epsilon)\leq m<\frac{2n}{V}(v-20\epsilon)$, let
\begin{align}\label{eqn_almost_nm_page_01} \tilde{\zeta}_{n,m} = \chi\big(\epsilon(\rho-v+\frac{Vm}{2n})\big) \big(\frac{n}{2V\pi^3}\big)^{\frac{1}{4}} \exp\big(-\frac{2n}{V}(\rho-v+\frac{Vm}{2n})^2\big)e^{imt} ~. \end{align}\smallskip

\begin{lem}\label{lem_decomp_sigma_00}
There exists a constant $c>0$ with the following significance.  For any $n\geq c$, the kernel of $\bar{\pl}_n$ has the orthonormal basis
\begin{align*}
\{\text{orthornomal basis of } \ker_{0}{\bar{\pl}_n}\} \oplus \{ \check{\mathfrak{p}}_{n,m}\check{\zeta}_{n,m} + \check{\zeta}_{n,m}^{\text{\rm rem}} \} \oplus \{ \tilde{\mathfrak{p}}_{n,m}\tilde{\zeta}_{n,m} + \tilde{\zeta}_{n,m}^{\text{\rm rem}} \}
\end{align*}
with respect to the $L^2$-inner product on $\Sigma$.  The range of $m$ for the second summand is $\{\frac{n}{V}(1-11\epsilon)\leq m < \frac{n}{V}\}$; the range of $m$ for the third summand is $\{\frac{2n}{V}(v+31\epsilon)\geq m > \frac{2n}{V}(v+20\epsilon)\}$ and $\{\frac{2n}{V}(v-31\epsilon)\leq m<\frac{2n}{V}(v-20\epsilon)\}$.  The elements in the decomposition have the following features:
\begin{enumerate}
\item $\check{\mathfrak{p}}_{n,m}$ is a constant between $\oh$ and $2$, and $\int_\Sigma |\check{\zeta}_{n,m}^{\text{\rm rem}}|^2 \leq c\exp(-\frac{n}{c})$;\smallskip
\item $\tilde{\mathfrak{p}}_{n,m}$ is a constant between $\oh$ and $2$, and $\int_\Sigma |\tilde{\zeta}_{n,m}^{\text{\rm rem}}|^2 \leq c\exp(-\frac{n}{c})$;\smallskip
\item for any $\alpha_n\in\ker_0\bar{\pl}_n$, $\int_{\Sigma\backslash\Sigma_{-2\epsilon}} |\alpha_n|^2 \leq c\exp(-\frac{n}{c})\int_\Sigma |\alpha_n|^2$.
\end{enumerate}
\end{lem}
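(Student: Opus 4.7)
My plan is to split $\ker\bar\pl_n$ into an interior piece equal to $\ker_0\bar\pl_n$ and a boundary piece spanned by (corrections of) the $\check\zeta_{n,m}$'s and $\tilde\zeta_{n,m}$'s, using the explicit collar form of solutions together with Fourier-in-$t$ orthogonality.

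First I would pin down $\ker_0\bar\pl_n$ via collar expansions. On the binding collar $\{1\le\rho\le 1+15\epsilon\}$ the operator $\bar\pl_n$ decouples the $t$-modes into first-order ODEs whose solutions are the Gaussians in (\ref{eqn_sln_extension_00}), and the APS condition (\ref{eqn_APS_bdry_00}) restricts the Fourier support to $m<n/V$. Running the same argument on $\Sigma_{-11\epsilon}$, whose binding-adjacent boundary sits at $\rho=1+11\epsilon$, the APS condition there is $c_{n,m}=0$ for $m\ge n(1-11\epsilon)/V$; since the Gaussian formula extends verbatim from the $\Sigma_{-11\epsilon}$-collar to the $\Sigma$-collar, $\ker_0\bar\pl_n$ is exactly the subspace of $\ker\bar\pl_n$ with $c_{n,m}=0$ for $m\in[n(1-11\epsilon)/V,\,n/V)$, and the analogous statement holds at the Dehn-twist collars. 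This already yields property (iii): for $\alpha_n\in\ker_0\bar\pl_n$, each surviving Gaussian mode has $\rho_m>1+11\epsilon$, so on $\{1\le\rho\le 1+2\epsilon\}$ it is pointwise bounded by $\exp(-81n\epsilon^2/V)$, while its $L^2$-mass on $\{1+11\epsilon\le\rho\le 1+15\epsilon\}\subset\Sigma$ dominates by a factor $\exp(+n/c)$; Fourier-in-$t$ orthogonality then gives $\int_{\Sigma\setminus\Sigma_{-2\epsilon}}|\alpha_n|^2\le c\exp(-n/c)\int_\Sigma|\alpha_n|^2$.

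Next I would build the boundary basis. For $m$ in the binding boundary range, $\check\zeta_{n,m}$ from (\ref{eqn_almost_nm_page_00}) is a unit-normalized Gaussian of width $\sqrt{V/(2n)}$ in mode $m$ peaked at $\rho_m\in[1,1+11\epsilon]$; its cutoff $\chi(\epsilon(\rho-\rho_m))$ is identically $1$ on the collar and only engages where the Gaussian is already $\le\exp(-n/c)$, so $\|\bar\pl_n\check\zeta_{n,m}\|_{L^2}\le c\exp(-n/c)$ and $\big|1-\int_\Sigma|\check\zeta_{n,m}|^2\big|\le c\exp(-n/c)$. Lemma \ref{lem_beta_vanish_00} supplies a spectral gap $\ge n/c$ for $\bar\pl_n^*\bar\pl_n$ on $(\ker\bar\pl_n)^\perp$, so the $L^2$-projection $\check\xi_{n,m}$ of $\check\zeta_{n,m}$ onto $\ker\bar\pl_n$ satisfies $\|\check\zeta_{n,m}-\check\xi_{n,m}\|_{L^2}\le c\exp(-n/c)$; decomposing $\check\xi_{n,m}=\check{\mathfrak p}_{n,m}\check\zeta_{n,m}+\check\zeta_{n,m}^{\rm rem}$ with remainder $L^2$-orthogonal to $\check\zeta_{n,m}$ gives $|\check{\mathfrak p}_{n,m}-1|\le c\exp(-n/c)$ and the claimed bound on $\check\zeta_{n,m}^{\rm rem}$. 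The $\tilde\xi_{n,m}$'s are constructed the same way from (\ref{eqn_almost_nm_page_01}).

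Finally I would check mutual orthogonality and completeness. Distinct $\check\zeta_{n,m}$'s are $L^2$-orthogonal by orthogonality of $\{e^{imt}\}$, and likewise among the $\tilde\zeta_{n,m}$'s, while $\check$--$\tilde$ pairs live on disjoint boundary components. Against $\alpha_n\in\ker_0\bar\pl_n$, the pairing with $\check\zeta_{n,m}$ reduces to the collar integral of the mode-$m$ component of $\alpha_n$, which vanishes by the identification above. Transferring these orthogonalities from the $\zeta$'s to the $\xi$'s costs $O(\exp(-n/c))$, which a single Gram--Schmidt sweep absorbs into the $\text{\rm rem}$-terms; since the dimensions add up to $\dim\ker\bar\pl_n$, the result is an orthonormal basis of $\ker\bar\pl_n$. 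The hard part will be that $\dim\ker\bar\pl_n$ grows linearly in $n$ while the individual error bounds are $\exp(-n/c)$, so the estimates must stay Fourier-mode-by-Fourier-mode (uniform in $m$) rather than being summed, to keep the Gram--Schmidt correction from picking up polynomial-in-$n$ factors that would swamp the claimed clean form of the decomposition.
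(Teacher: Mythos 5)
Your overall strategy---start from the orthogonal set $\{\text{basis of }\ker_0\bar{\pl}_n\}\cup\{\check\zeta_{n,m}\}\cup\{\tilde\zeta_{n,m}\}$, show each $\zeta$ satisfies the APS boundary condition and has $\|\bar{\pl}_n\zeta\|$ exponentially small, project onto $\ker\bar{\pl}_n$ using the spectral gap from Lemma \ref{lem_beta_vanish_00}, Gram--Schmidt, and close by dimension counting---is the same as the paper's, and most of your steps are sound.

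However, there is a genuine quantitative error in your normalization estimate, and it is precisely the reason the lemma only asserts $\oh\leq\check{\mathfrak{p}}_{n,m}\leq 2$ rather than $\check{\mathfrak{p}}_{n,m}$ exponentially close to $1$. You claim $\big|1-\int_\Sigma|\check\zeta_{n,m}|^2\big|\le c\exp(-n/c)$, attributing the mass deficit entirely to the cutoff $\chi(\epsilon(\rho-\rho_m))$. But the integral is over $\Sigma$, i.e.\ over $\rho\geq 1$, so the Gaussian is truncated at the \emph{boundary}, not just by the cutoff. Its peak $\rho_m = 2-\frac{Vm}{n}$ ranges over $(1,1+11\epsilon]$ and can sit as close as $O(V/n)$ to $\rho=1$, while the Gaussian width is $O((n/V)^{-1/2})$. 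For $m$ near $n/V$ the boundary therefore cuts the Gaussian within a small fraction of a width from its center, and one checks (after changing variables) that $\int_\Sigma|\check\zeta_{n,m}|^2\to\frac{1}{2}$ in that regime. This is an $O(1)$ deficit, not $\exp(-n/c)$. Consequently $|\check{\mathfrak{p}}_{n,m}-1|\le c\exp(-n/c)$ is false; the paper records only $\frac{1}{3}\leq\int_\Sigma|\check\zeta_{n,m}|^2\leq 1$, which is what forces the coarser bound $\oh\leq\check{\mathfrak{p}}_{n,m}\leq 2$ in the statement. The rest of the argument survives once you let $\check{\mathfrak{p}}_{n,m}$ absorb this $O(1)$ renormalization and keep only $\check{\zeta}_{n,m}^{\text{\rm rem}}$ exponentially small, which is exactly the form the lemma asks for; but as written your normalization step fails. (Your closing concern about Gram--Schmidt accumulating polynomial-in-$n$ factors is actually not a problem: since $\dim\ker\bar{\pl}_n = O(n)$ and each sweep contributes $\exp(-n/c)$, the accumulated error is $n\exp(-n/c)$, which is still of the form $\exp(-n/c')$ for large $n$; the paper's sequential Gram--Schmidt handles this without needing a mode-by-mode bookkeeping scheme.)
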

\begin{proof}
Consider the orthogonal set:
\begin{align}\label{eqn_number_kern}
\{\text{orthonormal basis of }\ker_0\bar{\pl}_n\}\oplus\{\check{\zeta}_{n,m}\}\oplus\{\tilde{\zeta}_{n,m}\}~.
\end{align}
By the APS index theorem \cite{ref_APS}, the total number of (\ref{eqn_number_kern}) is equal to the dimension of $\ker\bar{\pl}_n$:  The first summand form a basis for $\ker\bar{\pl}_n$ on $\Sigma_{-11\epsilon}$ with the corresponding boundary condition.  Thus, the total number of the first summand can be computed by the APS index formula (\ref{eqn_APS_00}) and lemma \ref{lem_beta_vanish_00}.  Similar to (\ref{eqn_sln_extension_00}), the last two summands (without the cut-off function) form a basis for $\ker\bar{\pl}_n$ on $\Sigma\backslash\Sigma_{-11\epsilon}$.  The total number can also be computed by the APS index formula.  If we sum up the index formulae, the boundary contribution from $\pl\Sigma_{-11\epsilon}$ cancels with each other, and it turns out to be the index formula of $\ker\bar{\pl}_n$ on $\Sigma$.

The elements in the last two summand have $L^2$-norm between $\frac{1}{\sqrt{3}}$ and $1$.  They are not annihilated by $\bar{\pl}_n$.  We modify them by the following procedure.

Start with an orthonormal basis of $\ker_0{\bar{\pl}_n}$, and take any $\check{\zeta}_{n,m}$.  A direct computation shows that there exists a constant $c_2$ such that
\begin{align*} \int_\Sigma|\bar{\pl}_n \check{\zeta}_{n,m}|^2 \leq c_2\exp(-\frac{n}{c_2}) ~. \end{align*}
Let ${\check{\rm pr}_n}(\check{\zeta}_{n,m})$ be the $L^2$-orthogonal projection of $\check{\zeta}_{n,m}$ onto $\ker\bar{\pl}_n$.  By (\ref{eqn_dbarn_estimate_00}), there exists a constant $c_3$ such that
\begin{align*} \int_\Sigma \big|(\check{\zeta}_{n,m} - {\check{\rm pr}_n}(\check{\zeta}_{n,m}))\big|^2\leq c_3\exp(-\frac{n}{c_3}) ~. \end{align*}
If we apply the Gram--Schmidt process on
$$ {\check{\rm pr}_n}(\check{\zeta}_{n,m}) = \check{\zeta}_{n,m} + ({\check{\rm pr}_n}(\check{\zeta}_{n,m})-\check{\zeta}_{n,m}) $$
with respect to the orthonormal basis of $\ker_0{\bar{\pl}_n}$, the output would be $\check{\mathfrak{p}}_{n,m}\check{\zeta}_{n,m} + \check{\zeta}_{n,m}^{\text{\rm rem}}$ satisfying property (i).

We can keep doing this projection and Gram--Schmidt process.  Since the total number of steps is less than $n$, the error term is always less than $c_4\exp(-\frac{n}{c_4})$ for some constant $c_4$.  It produces an orthonormal basis for $\ker\bar{\pl}_n$ with property (i) and (ii).

The proof of property (iii) is the same as that for proposition \ref{prop_page_sln_00}
\end{proof}

\section{Upper bound of the spectral flow}
What follows is the main theorem of this section.  With lemma \ref{lem_bound_counting_00}, it implies the upper bound of the spectral flow function asserted by theorem \ref{thm_sf_main_00}.
\begin{thm}\label{thm_sf_upper_bound}
There exist a constant $c$ and a sequence of numbers $\{s_n\}_{n\in\mathbb{N}}$ which have the following significance:
\begin{enumerate}
\item for all $n\geq 2$,
\begin{align*} {\rm sf}_a(s_n) - {\rm sf}_a(s_{n-1}) \leq I_\Sigma(n) + \check{I}(s_{n-1},s_n) + \tilde{I}(s_{n-1},s_n) + c ~; \end{align*}
\item for each $n\in\mathbb{N}$, $ |s_n - \gamma_n - \frac{1}{V}|\leq \frac{1}{4V} $, where $\gamma_n$ is defined by (\ref{defn_gamma_n}).
\end{enumerate}
\end{thm}

The strategy is to project true zero modes onto the vector space spanned by certain almost eigensections.  The following lemma is the technical tool to do the counting after the projection.  It is implied by the Welch bound (\cite{ref_Welch}).  We include its proof for completeness.

\begin{lem}\label{lem_Welch_dim_00}
For any $\delta_6>0$, there exists a constant $c$ with the following significance.  For any integer $L_o>2\delta_6$, suppose that $\{u_l\}_{l=1}^L$ is a set of unit vectors in $\mathbb{C}^{L_o}$ such that their inner product satisfying
\begin{align*} \big|\langle u_l,u_{l'}\rangle\big| &< \frac{\delta_6}{L_o} &\text{for all}\quad &l\neq l' ~. 
\end{align*}
Then $L\leq L_o + c$.
\end{lem}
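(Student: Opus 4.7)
The plan is to reduce the statement to the Welch inequality through a Gram matrix calculation, and then handle the small-$L_o$ regime by a compactness/packing remark. Form the $L\times L$ Gram matrix $G$ with entries $G_{l,l'}=\langle u_l,u_{l'}\rangle$. Because the $u_l$'s live in $\mathbb{C}^{L_o}$, $G$ is positive semi-definite of rank at most $L_o$, and its diagonal is identically $1$, so $\operatorname{tr}(G)=L$. Since $G$ has at most $L_o$ nonzero (non-negative) eigenvalues, the Cauchy--Schwarz inequality applied to the eigenvalues gives
\[
L^2=\bigl(\operatorname{tr}(G)\bigr)^2\le L_o\cdot\operatorname{tr}(G^2).
\]

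Next I would expand $\operatorname{tr}(G^2)=\sum_{l,l'}|G_{l,l'}|^2$ and use the hypothesis $|G_{l,l'}|<\delta_6/L_o$ for $l\ne l'$ to obtain
\[
\operatorname{tr}(G^2)\le L+L(L-1)\frac{\delta_6^2}{L_o^2}.
\]
Combining the two displayed inequalities and cancelling one factor of $L$ yields
\[
L\,L_o\le L_o^2+(L-1)\delta_6^2,\qquad\text{i.e.,}\qquad L\bigl(L_o-\delta_6^2\bigr)\le L_o^2-\delta_6^2.
\]
When $L_o\ge 2\delta_6^2$, the denominator $L_o-\delta_6^2\ge L_o/2>0$, and a short calculation rewrites the last inequality as
\[
L\le L_o+\delta_6^2\cdot\frac{L_o-1}{L_o-\delta_6^2}\le L_o+2\delta_6^2,
\]
which is the desired conclusion with $c=2\delta_6^2$.

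The remaining case is $2\delta_6<L_o<2\delta_6^2$, which is non-empty only when $\delta_6>2$ and forces $L_o$ into a bounded (finite) set of integers depending only on $\delta_6$. In that regime the inner-product bound still gives $|\langle u_l,u_{l'}\rangle|<\delta_6/L_o<\tfrac{1}{2}$, so $\{u_l\}$ is a set of unit vectors in $\mathbb{C}^{L_o}$ with pairwise coherence bounded uniformly away from $1$. A standard volumetric/packing argument (comparing disjoint small spherical caps around each $u_l$ on the unit sphere of $\mathbb{C}^{L_o}$) shows that any such set has cardinality at most some $M(L_o,\delta_6)<\infty$. Taking the final constant to be
\[
c\;=\;\max\!\Bigl(2\delta_6^2,\ \max_{2\delta_6<L_o\le 2\delta_6^2}\bigl(M(L_o,\delta_6)-L_o\bigr)\Bigr)
\]
covers both regimes. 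The main technical point is the Cauchy--Schwarz-plus-rank step producing the Welch-type bound; everything else is bookkeeping, and the small-$L_o$ case is only a nuisance because the Welch inequality becomes vacuous once $L_o\le\delta_6^2$.
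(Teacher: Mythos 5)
Your proof is correct and uses exactly the paper's strategy: form the Gram matrix, bound its rank by $L_o$, apply Cauchy--Schwarz to the eigenvalues to get $L^2 \le L_o\,\operatorname{trace}(G^2)$, and expand $\operatorname{trace}(G^2)$ via the coherence bound. The only substantive difference is at the end. The paper stops at the inequality $L^2 \le L_o\bigl(L + L(L-1)\delta_6^2/L_o^2\bigr)$ and simply asserts ``it implies $L \le L_o + c$,'' whereas you carry the algebra through and notice, correctly, that this Welch-type bound is vacuous once $L_o \le \delta_6^2$, so the hypothesis $L_o > 2\delta_6$ alone does not make the paper's last step go through. Your volumetric/packing argument for the bounded regime $2\delta_6 < L_o < 2\delta_6^2$ is a genuine gap-fill; the paper leaves this implicit, presumably because in its application $L_o$ grows like $s_n$ and the small-$L_o$ regime never arises. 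One small nitpick: that regime is non-empty as soon as $\delta_6 > 1$ (indeed, once $2\delta_6(\delta_6-1)\ge 1$ it contains an integer), not only when $\delta_6 > 2$, but this does not affect your argument since the packing bound covers whatever finitely many values of $L_o$ occur there.
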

\begin{proof}
We may assume $L > L_o$.  Let $U$ be the $L\times L_o$ matrix whose $l$-th column is the vector $u_l$.  Consider the matrix $H = U^* U$.  The matrix $H$ is Hermitian, and its kernel has dimension no less than $L-L_o$.  Suppose that the eigenvalues of $H$ are $\{\lambda_1,\cdots,\lambda_{L_o},0,\cdots,0\}$, then $\lambda_1+\cdots+\lambda_{L_o} = L $.  By the Cauchy--Schwarz inequality,
\begin{align*}
L^2 &= (\lambda_1+\cdots+\lambda_{L_o})^2 \\
&\leq L_o(\lambda_1^2+\cdots+\lambda_{L_o}^2) = L_o \,{\rm trace}(H^*H) \\
&= L_o \big( L + \sum_{l\neq l'} \big|\langle u_l,u_{l'}\rangle\big|^2\big)\\
&\leq L_o ( L + L(L-1)\frac{\delta_6^2}{L_o^2} ) ~.
\end{align*}
It follows that $L\leq L_o + c$.
\end{proof}

The remainder of this section is devoted to the proof of theorem \ref{thm_sf_upper_bound}.

\begin{proof}[Proof of theorem \ref{thm_sf_upper_bound}]
This proof contains ten steps.  Before getting into the details, we briefly outline the strategy.  Write $\psi$ as
$$\psi|_{\text{binding}} + \psi|_{\text{Dehn}} + \psi|_{\Sigma\times S^1} ~. $$
First, regard these sections as being defined on the associated $S^1\times S^2$ or $\Sigma\times S^1$, and project them onto the space spanned by small eigensections of the model manifolds.  Next, multiply the projections by suitable cut-off functions, and regard them as sections on the original $3$-manifold $Y$.  By doing the cut and paste carefully, their inner product is still small after the procedure.  It allows us to invoke lemma \ref{lem_Welch_dim_00} to obtain the upper bound on the spectral flow.

\subsection*{Step 1}  In this step, we construct the sequence $\{s_n\}_{ n\in\mathbb{N}}$.  Let $c_1$ be the constant given by corollary \ref{cor_sf_exist_00} for the associated contact forms with $\delta_1=1$.  It follows that for any $r\geq c_1$, if $\check{D}_r$ or $\tilde{D}_r$ has an eigenvalue $\lambda_0$ with $|\lambda_0| \leq r^{-1}$ on $\mathcal{S}_{k,m}$, there is a zero crossing on $\mathcal{S}_{k,m}$ happening somewhere in the interval
$$ [r-\frac{c_1+2}{r}, r+\frac{c_1+2}{r}] ~. $$
Proposition \ref{prop_S2S1_unique_00} says that it is the only zero crossing on $\mathcal{S}_{k,m}$ for $r\geq c_1$.

By lemma \ref{lem_divide_interval_00} with $\delta_3=c_1+2$, there exists a sequence $\{s_{n}\}_{n\in\mathbb{N}}$ and a constant $c_2>0$ such that
\begin{enumerate}
\item the total number of zero crossings of $\check{D}_r$ and $\tilde{D}_r$ happening between $s_{n}-\frac{c_1+2}{s_{n-1}}$ and $s_{n}+\frac{c_1+2}{s_{n-1}}$ is less than $c_2$ for any $n\in\mathbb{N}$;
\item for each $n\in\mathbb{N}$, $ |s_{n} - \gamma_{n} - \frac{1}{V}|\leq \frac{1}{4V} $ where $\gamma_{n} = \frac{2n}{V}$.
\end{enumerate}

\subsection*{Step 2}  In this step, we introduce the index sets of the vector space of almost eigensections.  For each $n\in\mathbb{N}$, consider the following condition on $k$ and $m$
\begin{align}\label{eqn_small_eig_condition_00}
\min_{r\in[s_{n-1}, s_{n}]}\min_{\substack{\lambda\in{\rm Spec}\check{D}_r \\ \text{on } \mathcal{S}_{k,m}}} |\lambda| \leq \frac{1}{s_{n}} ~.
\end{align}
In other words, the condition means that for some $r\in[s_{n-1},s_n]$, $\check{D}_r|_{\mathcal{S}_{k,m}}$ has an eigenvalue within $[-\frac{1}{s_n},\frac{1}{s_n}]$.  We define the following index sets:
\begin{enumerate}
\item $\check{E}^o_{n}$ is the set of the $(k,m)\in\mathbb{Z}^2$ satisfying $m\geq\frac{k}{V}$ and condition (\ref{eqn_small_eig_condition_00});\smallskip
\item $\check{E}_{n}$ is the set of the $(k,m)\in\mathbb{Z}^2$ satisfying $m\geq\frac{k}{V}(1-11\epsilon)$ and condition (\ref{eqn_small_eig_condition_00});\smallskip
\item $\tilde{E}^o_{n}$ is the set of the $(k,m)\in\mathbb{Z}^2$ satisfying $$\frac{2(v-20\epsilon)}{V-2(v-20\epsilon)\sigma(20\epsilon)} k \leq m \leq\frac{2(v+20\epsilon)}{V-2(v+20\epsilon)\sigma(-20\epsilon)} k$$
and condition (\ref{eqn_small_eig_condition_00}) for $\tilde{D}_r$;\smallskip
\item $\tilde{E}_{n}$ is the set of the $(k,m)\in\mathbb{Z}^2$ satisfying
$$\frac{2(v-31\epsilon)}{V-2(v-31\epsilon)\sigma(20\epsilon)} k\leq m\leq\frac{2(v+31\epsilon)}{V-2(v+31\epsilon)\sigma(-20\epsilon)} k$$
and condition (\ref{eqn_small_eig_condition_00}) for $\tilde{D}_r$.\smallskip
\end{enumerate}

From the construction in step 1,
\begin{align}\begin{split}\label{eqn_E_total_number_00}
&\check{I}(s_{n-1},s_n) \leq \#\check{E}^o_n \leq \check{I}(s_{n-1},s_n) + 2c_2 ~, \\
&\tilde{I}(s_{n-1},s_n) \leq \#\tilde{E}^o_n \leq \tilde{I}(s_{n-1},s_n) + 2c_2   \\
\end{split}\end{align}
for all $n\geq c'_2$.  See definition \ref{defn_index_part_00} for $\check{I}$ and $\tilde{I}$.  According to lemma \ref{lem_bound_counting_00},
\begin{align}\label{eqn_E_total_number_100}
\check{I}(s_{n-1},s_n)&\leq c_3s_n  &&\text{and}  & \tilde{I}(s_{n-1},s_n)&\leq c_3s_n ~.
\end{align}

With these estimates, theorem \ref{thm_sf_upper_bound} is equivalent to the following \emph{claim}:  there exists a constant $c_4$ such that
\begin{align}\label{claim_upper_bound_00}
{\rm sf}_a(s_n) - {\rm sf}_a(s_{n-1}) \leq I_\Sigma(n) + \#\check{E}^o_n + \#\tilde{E}^o_n + c_4
\end{align}
for any $n\geq c_4$.  The proof of this claim occupies step 3 to step 10.

The differences between the index sets $\check{E}_n\backslash\check{E}^o_n$ and $\tilde{E}_n\backslash\tilde{E}^o_n$ can be completely characterized by lemma \ref{lem_refined_r_neck_00} and lemma \ref{lem_refined_r_neck_01}:  there exists a constant $c_5 > 0$ such that
\begin{align}\begin{split}\label{eqn_E_Eo_description_00}
\check{E}_n\backslash\check{E}^o_n &= \big\{ (k,m) \big| k=n \text{, and } \frac{n}{V}(1-11\epsilon)\leq m<\frac{n}{V} \big\}\quad\text{, and} \\
\tilde{E}_n\backslash\tilde{E}^o_n &= \big\{ (k,m) \big| k+mh(20\epsilon)=n, \frac{2n}{V}(v-31\epsilon)\leq m<\frac{2n}{V}(v-20\epsilon) \big\} \\
&\quad\cup \big\{ (k,m) \big| k+mh(-20\epsilon)=n, \frac{2n}{V}(v+20\epsilon)<m\leq\frac{2n}{V}(v+31\epsilon) \big\}
\end{split}\end{align}
for any $n\geq c_5$.

\subsection*{Step 3}  In this step, six cut-off functions are defined.  Let $\check{\chi}^o$ and $\check{\chi}$ be the cut-off functions which are supported only on the tubular neighborhood of the binding, and which depend only on $\rho$ in terms of the coordinate in subsection \ref{subsec_binding_00}, with
\begin{align*}
\check{\chi}^o(\rho) = \begin{cases} 1 &\text{when }\rho\leq1+4\epsilon \\ 0 &\text{when }\rho\geq1+6\epsilon \end{cases}
&~, & \check{\chi}(\rho) = \begin{cases} 1 &\text{when }\rho\leq1+8\epsilon \\ 0 &\text{when }\rho\geq1+10\epsilon \end{cases} ~.
\end{align*}
Note that $\check{\chi}\circ\check{\chi}^o = \check{\chi}^o$.

Let $\tilde{\chi}^o$ and $\tilde{\chi}$ be the cut-off functions which are supported only on the Dehn-twist region, and depend only on $\rho$ in terms of the coordinate in subsection \ref{subsec_Dehn_00}, with
\begin{align*}
\tilde{\chi}^o(\rho) = \begin{cases} 1 &\text{when }|\rho|\leq24\epsilon \\ 0 &\text{when }|\rho|\geq26\epsilon \end{cases}
&~, & \tilde{\chi}(\rho) = \begin{cases} 1 &\text{when }|\rho|\leq28\epsilon \\ 0 &\text{when }|\rho|\geq30\epsilon \end{cases} ~.
\end{align*}
Note that $\tilde{\chi}\circ\tilde{\chi}^o = \tilde{\chi}^o$.

Let $\hat{\chi}^o$ be $1-\check{\chi}^o-\tilde{\chi}^o$.  In terms of the terminology introduced by definition \ref{defn_Sigma_ext_curt}, $\hat{\chi}^o = 1$ on $\Sigma_{-6\epsilon}\times S^1$, and $\hat{\chi}^o = 0$ on $Y\backslash(\Sigma_{-4\epsilon}\times S^1)$.  Let $\hat{\chi}$ be the similar cut-off function depending only on $\rho$ near $\pl\Sigma\times S^1$, with
\begin{align*}
\hat{\chi} = \begin{cases} 1 &\text{on }\Sigma_{-2\epsilon}\times S^1\\ 0 &\text{on }Y\backslash(\Sigma\times S^1) \end{cases} ~.
\end{align*}
Also, $\hat{\chi}\circ\hat{\chi}^o = \hat{\chi}^o$.

\subsection*{Step 4}  For any zero mode $\psi$ of $D_r$, we study $\check{\chi}\psi$ and $\tilde{\chi}\psi$ in terms of the eigensections on $\check{Y}$ and $\tilde{Y}$.

With the results in section \ref{subsec_2nd_order_S2S1}, there exists a constant $c_6$ with the following property.  Suppose that $n\geq c_6$ and $(k,m)\in\check{E}_n$.  Then for each $r\in(s_{n-1},s_n]$, $\check{D}_r$ has a unique eigenvalue $\lambda_0$ on $\mathcal{S}_{k,m}$ with $|\lambda_0|<1$.  To be more precise, uniqueness follows from proposition \ref{prop_small_eigensection_00} and \ref{prop_small_eigensection_01}.  Existence follows from lemma \ref{lem_exist_sl_00} and \ref{lem_exist_sl_01}.

With this understood, let ${\check{\rm pr}_r}$ be the $L^2$-orthogonal projection onto the eigenspaces of small eigenvalues arising from $\check{E}_n$.  Similarly, let ${\tilde{\rm pr}_r}$ be the $L^2$-orthogonal projection onto the eigenspaces of small eigenvalues arising from $\tilde{E}_n$.

\begin{lem}\label{lem_check_chi_psi_00}
There exists a constant $c$ such that the following holds.  For any $n\geq c$ and $r\in(s_{n-1},s_n]$, suppose that $\psi$ is a zero mode of $D_r$ with unit $L^2$-norm.  Regard $\check{\chi}\psi$ as being defined on $\check{Y}$, and let $\check{\psi}^{\text{\rm err}} = \check{\chi}\psi - {\check{\rm pr}_r}(\check{\chi}\psi)$.  Then
\begin{align*}
\int_{\check{Y}}|\check{\psi}^{\text{\rm err}}|^2&\leq c r^{-1} &\text{ and}&
&\int_{\check{Y}}\langle \check{\psi}^{\text{\rm err}}, \check{\eta} \rangle &=0
\end{align*}
for any $\check{\eta}$ in the image of $\check{\rm pr}_r$.  The assertion also holds for $\check{\chi}^o\psi$, and the untwisting (\ref{def_untwisting}) of $\tilde{\chi}\psi$ and $\tilde{\chi}^o\psi$ on $\tilde{Y}$ with the projection $\tilde{\rm pr}_r$.
\end{lem}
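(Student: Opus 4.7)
The orthogonality statement is immediate: $\check\psi^{\text{err}}=(1-\check{\rm pr}_r)(\check\chi\psi)$ lies by definition in the orthogonal complement of the range of $\check{\rm pr}_r$. So the content is the $L^2$ estimate. I plan to start from the commutator identity
\begin{align*}
\check D_r(\check\chi\psi)=\check\chi\,\check D_r\psi+{\rm cl}(d\check\chi)\psi={\rm cl}(d\check\chi)\psi,
\end{align*}
valid because $\check D_r$ coincides with $D_r$ on $\mathrm{supp}(\check\chi)$ (contact form, adapted metric, and Clifford data agree there by construction) and $D_r\psi=0$. Since $d\check\chi$ is bounded and $\|\psi\|_{L^2(Y)}=1$, the right-hand side has $L^2$-norm squared bounded by a constant. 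Because $\check{\rm pr}_r$ projects onto $\check D_r$-eigenspaces it commutes with $\check D_r$, so $\check D_r\check\psi^{\text{err}}=(1-\check{\rm pr}_r)({\rm cl}(d\check\chi)\psi)$.

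Next I expand $\check\psi^{\text{err}}$ in the $L^2$-orthonormal eigenbasis of $\check D_r$ and split the Parseval sum $\int|\check\psi^{\text{err}}|^2=\sum|\langle\check\chi\psi,e_\lambda\rangle|^2$, taken over $e_\lambda$ outside the range of $\check{\rm pr}_r$, according to whether $|\lambda|\geq\sqrt{r/2}$. In the high range, $\lambda\langle\check\chi\psi,e_\lambda\rangle=\langle{\rm cl}(d\check\chi)\psi,e_\lambda\rangle$ together with Parseval gives $\sum_{|\lambda|\geq\sqrt{r/2}}|\langle\check\chi\psi,e_\lambda\rangle|^2\leq(2/r)\|{\rm cl}(d\check\chi)\psi\|^2\leq c/r$. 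For low-eigenvalue $e_\lambda$ outside the range of $\check{\rm pr}_r$, lemma \ref{lem_S2S1_hot_00} allows at most one such $e_\lambda$ per $\mathcal{S}_{k,m}$, and the defining conditions of $\check E_n$ together with (\ref{eqn_E_Eo_description_00}) force $m<(k/V)(1-11\epsilon)$ (or the isolated case $k=0$, $m<0$). The monotonicity of $g/f$ in definition \ref{def_gamma_nm_00} then places $\check\rho_{k,m}\geq 1+11\epsilon$, at distance at least $\epsilon$ from $\mathrm{supp}(\check\chi)\subset\{\rho\leq 1+10\epsilon\}$; the Gaussian decay in proposition \ref{prop_small_eigensection_00} (and lemma \ref{lem_estimate_binding_00} for the binding case) yields $\|\check\chi\,e_\lambda\|_{L^2}^2\leq c\exp(-r/c)$, so $|\langle\check\chi\psi,e_\lambda\rangle|^2\leq c\exp(-r/c)$, and the polynomial-in-$r$ count from lemma \ref{lem_bound_counting_00} makes this part of the sum $o(r^{-1})$.

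The main obstacle I anticipate is the residual "moderate" range identified by lemma \ref{lem_refined_r_neck_00}: pairs $(k,m)$ with $m\in[(k/V)(1-11\epsilon),k/V]$ and $k$ near but distinct from $n$ carry a unique small eigenvalue of size $O(1/V)$ whose eigensection concentrates inside $\mathrm{supp}(\check\chi)$, so neither the Parseval nor the Gaussian-decay estimate applies. To handle this I plan to exploit that $\psi$ is a zero mode on $Y$: on $\Sigma\times S^1$ each $\phi$-Fourier component $\psi_k$ satisfies $\mathcal{D}_{r,k}\psi_k=0$ in the interior, so after cutting off by $\hat\chi$ proposition \ref{prop_Drn_rn_small} shows $\hat\chi\psi_k$ lies within $O(r^{-1/2})$ of $\ker\bar{\pl}_k$; matching the $\check\zeta_{k,m}$-summand in the decomposition of $\ker\bar{\pl}_k$ furnished by lemma \ref{lem_decomp_sigma_00} directly against the corresponding boundary-type eigensection on $\check Y$ will deliver $|\langle\check\chi\psi,e_\lambda\rangle|^2\leq c/r$ per mode, and step 1 (via lemma \ref{lem_divide_interval_00}) limits the number of relevant $k$-values so the total contribution is at most $c/r$.

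The companion claims for $\check\chi^o\psi$ and for $\tilde\chi\psi$, $\tilde\chi^o\psi$ follow the same blueprint. For $\check\chi^o\psi$ only the support of the cutoff changes; for the tilde variants I apply the local untwisting (\ref{def_untwisting}) on the Dehn-twist region, which is an $L^2$-isometry intertwining $D_r$ with $\tilde D_r$, and rerun the argument with lemma \ref{lem_refined_r_neck_01} and the second line of (\ref{eqn_E_Eo_description_00}) taking the place of their binding counterparts.
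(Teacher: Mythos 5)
The orthogonality, the identity $\check D_r(\check\chi\psi)={\rm cl}(d\check\chi)\psi$, and the Parseval argument for $|\lambda|\ge\sqrt{r/2}$ all match the paper's proof. The gap is in your treatment of the remaining eigenvalues.

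You assert that for any $e_\lambda$ with $|\lambda|<\sqrt{r/2}$ that lies outside the range of $\check{\rm pr}_r$, the index pair must satisfy $m<(k/V)(1-11\epsilon)$. That is not true. Membership in $\check E_n$ requires \emph{both} $m\ge(k/V)(1-11\epsilon)$ \emph{and} condition (\ref{eqn_small_eig_condition_00}); a pair with $m\ge(k/V)(1-11\epsilon)$ whose $\check\gamma_{k,m}$ sits, say, at distance $\sim\sqrt{r}/4$ from $[s_{n-1},s_n]$ produces (by lemma \ref{lem_S2S1_hot_00}) a single eigenvalue of size $\sim\sqrt{r}/8$ at the current $r$, never dips below $1/s_n$ on the interval, and hence is not in $\check E_n$. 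There are $\mathcal O(\sqrt r)$ values of $k$ of this type, each with $\mathcal O(r)$ admissible $m$'s in the range $[(k/V)(1-11\epsilon),k/V]$, and the corresponding $\check\rho_{k,m}$ lies inside $[1,1+11\epsilon]$, right on top of $\mathrm{supp}(\check\chi')$. Gaussian localization away from the cutoff therefore fails; in fact proposition \ref{prop_small_eigensection_00} is only stated for $|\lambda_0|\le1$, so for $1<|\lambda|<\sqrt{r/2}$ you do not even have the expansion you want to invoke. The decomposition via lemma \ref{lem_decomp_sigma_00} and proposition \ref{prop_Drn_rn_small} that you sketch as a fallback is aimed at the frequency $k=n$ (it is essentially the content of lemma \ref{lem_hat_chi_psi_00}); it does not control the pairing against eigensections with $k\ne n$ and medium eigenvalues, and lemma \ref{lem_divide_interval_00} controls zero crossings near $s_n$, not the count of $k$'s with medium $|\lambda|$.

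The missing ingredient is the paper's treatment of this ``medium'' band: write $\langle\check\chi\psi,e_\lambda\rangle=\lambda^{-1}\langle\check\chi'{\rm cl}(d\rho)\psi,e_\lambda\rangle$ and observe that ${\rm cl}(d\rho)$ \emph{switches the two spinor components}. So the pairing couples the $\alpha$-component of $\psi$ against the $\beta$-component of $e_\lambda$ and vice versa, and the two $\beta$-components are both $\mathcal O(r^{-1/2})$ in $L^2$ (proposition \ref{prop_beta_estimate_00} for $\psi$, lemma \ref{lem_beta_estimate_00} for $e_\lambda$). Combined with $|\lambda|\ge 1/(10V)$ and the orthogonality of the $\mathcal S_{k,m}$ decomposition, summing over $(k,m)$ gives the $\mathcal O(r^{-1})$ bound. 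Without this component-switching observation your argument does not close; the residual ``moderate range'' you flag is both mischaracterized (for $|\lambda|<1/(10V)$ lemma \ref{lem_refined_r_neck_00} actually forces $k=n$, so these pairs lie in $\check E_n$ and contribute nothing) and, in the actually problematic medium band, left untreated.
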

\begin{proof}[Proof of lemma \ref{lem_check_chi_psi_00}]
The orthogonality between $\check{\psi}^{\text{\rm err}}$ and $\check{\eta}$ follows directly from the construction.

With the spectral decomposition given by $\check{D}_r$, let
\begin{itemize}
\item $\check{\psi}^{\text{\rm err}}_{\text{\rm L}}$ be the $L^2$-orthogonal projection of $\check{\psi}^{\text{\rm err}}$ onto the subspace spanned by the eigensections whose eigenvalue $\lambda$ satisfies $|\lambda|\geq\sqrt{\frac{r}{2}}$;\smallskip
\item $\check{\psi}^{\text{\rm err}}_{\text{\rm M}}$ be the $L^2$-orthogonal projection of $\check{\psi}^{\text{\rm err}}$ onto the subspace spanned by the eigensections whose eigenvalue $\lambda$ satisfies $\frac{1}{10V}\leq|\lambda|<\sqrt{\frac{r}{2}}$, and does not arise from $\check{E}_n$;\smallskip
\item $\check{\psi}^{\text{\rm err}}_{\text{\rm S}}$ be the $L^2$-orthogonal projection of $\check{\psi}^{\text{\rm err}}$ onto the subspace spanned by the eigensections whose eigenvalue $\lambda$ satisfies $\frac{1}{s_{n}}\leq|\lambda|<\frac{1}{10V}$, and does not arise from $\check{E}_n$.\smallskip
\end{itemize}
It follows that $\check{\chi}\psi = {\check{\rm pr}_r}(\check{\chi}\psi) + \check{\psi}^{\text{\rm err}}_{\text{\rm L}} + \check{\psi}^{\text{\rm err}}_{\text{\rm M}} + \check{\psi}^{\text{\rm err}}_{\text{\rm S}}$.  It is an orthogonal decomposition with respect to the $L^2$-inner product, and $\check{D}_r$ preserves the orthogonality.  We are going to estimate the size of the three error-components.  Note that
\begin{align}\label{eqn_check_chi_psi_01}
\check{D}_r(\check{\chi}\psi) &= \check{\chi}'{{\rm cl}}(\dd\rho)\psi ~.
\end{align}
The function $\check{\chi}'$ is supported only on the region where $1+8\epsilon\leq\rho\leq1+10\epsilon$, and the Clifford action of $\dd\rho$ switches the two components of $\psi$, see (\ref{eqn_Dirac_neck_01}).

\subsubsection*{The component with large eigenvalue}  The estimate on $\check{\psi}^{\text{\rm err}}_{\text{\rm L}}$ is easy to come by.  By (\ref{eqn_check_chi_psi_01}), there exists a constant $c_8$ such that
\begin{align*}
\int_{\check{Y}} |\check{\psi}^{\text{\rm err}}_{\text{\rm L}}|^2 \leq 2r^{-1} \int_{\check{Y}} |\check{D}_r(\check{\psi}^{\text{\rm err}}_{\text{\rm L}})|^2 \leq 2r^{-1} \int_{\check{Y}} |\check{D}_r(\check{\chi}\psi)|^2 \leq c_8r^{-1} ~.
\end{align*}

\subsubsection*{The component with medium eigenvalue}  By lemma \ref{lem_S2S1_hot_00}, all the eigenvalues $\lambda$ with $|\lambda|<\sqrt{\frac{r}{2}}$ correspond to different $\mathcal{S}_{k,m}$'s.  Let $\check{\psi}^{\text{\rm eig}}_{k,m}$ be the corresponding eigensection with $\int_{\check{Y}} |\check{\psi}^{\text{\rm eig}}_{k,m}|^2 = 1$.  By (\ref{eqn_check_chi_psi_01}), there exists a constant $c_9$ so that
\begin{align*}
&\big| \int_{\check{Y}}\langle\check{\chi}\psi,\check{\psi}^{\text{\rm eig}}_{k,m}\rangle \big|^2 \\
=&\; \frac{1}{\lambda^2}\big| \int_{\check{Y}}\langle(\check{\chi})'{\rm cl}(\dd\rho)\psi,\check{\psi}^{\text{\rm eig}}_{k,m}\rangle \big|^2 \\
\leq&\; 200V^2 \big( \int_{\check{Y}} |(\check{\chi})'\beta_{k,m}|^2\int_{\check{Y}}|\check{\alpha}^{\text{\rm eig}}_{k,m}|^2 + \int_{\check{Y}} |(\check{\chi})'\alpha_{k,m}|^2\int_{\check{Y}}|\check{\beta}^{\text{\rm eig}}_{k,m}|^2 \big) \\
\leq&\; c_9 \big( \int_{\check{Y}} |(\check{\chi})'\beta_{k,m}|^2 + r^{-1} \int_{\check{Y}} |(\check{\chi})'\alpha_{k,m}|^2 \big)
\end{align*}
where $\big( (\check{\chi})'\alpha_{k,m},(\check{\chi})'\beta_{k,m}\big)$ is the $\mathcal{S}_{k,m}$-component of $(\check{\chi})'\psi$.  The first inequality follows from the fact that ${\rm cl}(\dd\rho)$ switches the components.  The second inequality follows from lemma \ref{lem_beta_estimate_00} on $\check{\psi}^{\text{\rm eig}}_{k,m}$.

After summing up the above inequality over all $(k,m)$ involving in $\check{\psi}^{\text{\rm err}}_{\text{\rm M}}$, we have
\begin{align*}
\int_{\check{Y}} |\check{\psi}^{\text{\rm err}}_{\text{\rm M}}|^2 \leq c_9 \big( \int_{\check{Y}} |(\check{\chi})'\beta|^2 + r^{-1} \int_{\check{Y}} |(\check{\chi})'\alpha|^2 \big) ~.
\end{align*}
With proposition \ref{prop_beta_estimate_00}, we obtain the estimate on $\check{\psi}^{\text{\rm err}}_{\text{\rm M}}$.

\subsubsection*{The component with small eigenvalue}  By the same token, all the eigenvalues $\lambda$ with $|\lambda|<\frac{1}{10V}$ correspond to different $\mathcal{S}_{k,m}$.  Proposition \ref{prop_small_eigensection_00} and proposition \ref{prop_small_eigensection_01} give the approximation of the corresponding eigensections
\begin{align*}
\check{\psi}^{\text{\rm eig}}_{k,m} &= \check{\mathfrak{q}}_{k,m}\check{\psi}_{k,m} + \check{\psi}_{k,m}^{(3)} ~.
\end{align*}
If $m>\frac{k}{V}$ or $m<\frac{k}{V}(1-11\epsilon)$, the support of $\check{\psi}_{k,m}$ and $(\check{\chi})'$ are disjoint.  By (\ref{eqn_check_chi_psi_01}), there exists a constant $c_8$ such that
\begin{align*}
\big| \int_{\check{Y}} \langle\check{\chi}\psi, \check{\psi}^{\text{eig}}_{k,m}\rangle \big|^2 &= \frac{1}{\lambda^2} \big| \int_{\check{Y}}\langle(\check{\chi})'{{\rm cl}}(\dd\rho)\psi, \check{\psi}^{(3)}_{k,m} \rangle \big|^2\\
&\leq c_{10} s_{n}^2 r^{-3} \int_{\check{Y}} |(\check{\chi})'\psi_{k,m}|^2 < 10c_{10} r^{-1} \int_{\check{Y}} |(\check{\chi})'\psi_{k,m}|^2
\end{align*}
for all $r\geq c_{10}$.

If $\frac{k}{V}(1-11\epsilon)\leq m\leq\frac{k}{V}$, lemma \ref{lem_refined_r_neck_00} with the condition that $|\lambda|<\frac{1}{10V}$ implies that $k = n$.  However, lemma \ref{lem_refined_r_neck_00} also implies that such $(k,m)$ belongs to $\check{E}$.  Therefore, these $(k,m)$ do not involve in $\check{\psi}^{\text{\rm err}}_{\text{\rm S}}$.\\

By summing up the above inequality over the $(k,m)$ involving in $\check{\psi}^{\text{\rm err}}_{\text{\rm S}}$, we obtain the estimate on $\check{\psi}^{\text{\rm err}}_{\text{\rm S}}$.

It is clear that the assertion also holds for $\check{\chi}^o\psi$.
The discussion for $\tilde{\chi}\psi$ and $\tilde{\chi}^o\psi$ are parallel to the above argument, and is omitted.
\end{proof}

From now on, we will implicitly apply the untwisting (\ref{def_untwisting}) when working with $\tilde{\chi}\psi$ on $\tilde{Y}$.  On the other hand, when we regard some section on $\tilde{Y}$ as being defined on $Y$, we will implicitly undo the untwisting.

\subsection*{Step 5}  For any zero mode $\psi$ of $D_r$, consider ${\hat{\rm pr}_n}(\hat{\chi}\psi)$ where $\hat{\rm pr}_n$ is the composition of the following three projection
\begin{align*}
\hat{\chi}\psi \longmapsto \hat{\chi}\alpha \longmapsto \hat{\chi}\alpha_ne^{in\phi}(2\pi V)^{-\oh} \longmapsto {\hat{\rm pr}_n}(\hat{\chi}\alpha_n) e^{in\phi}(2\pi V)^{-\oh} ~.
\end{align*}
The first map is the projection onto the first component of $\psi$.  The second map is the projection onto the frequency $n$ component with respect to the $S^1$-action in $e^{i\phi}$, and $(2\pi V)^{-\oh}$ is simply a normalizing constant.  The last map is the $L^2$-orthogonal projection onto the kernel of $\bar{\pl}_n$ as discussed in section \ref{sec_dirac_page_00}.  We use the same notation for the last projection and the composition of the three projections.
\begin{lem}\label{lem_hat_chi_psi_00}
There exists a constant $c$ such that the following holds.  For any $n\geq c$ and $r\in(s_{n-1},s_n]$, suppose that $\psi$ is a zero mode of $D_r$ with unit $L^2$-norm.  Regard $\hat{\chi}\psi$ as defined on $\Sigma\times S^1$, and let $\hat{\psi}^{\text{\rm err}} = \hat{\chi}\psi - {\hat{\rm pr}_n}(\hat{\chi}\psi)$.  Then
\begin{align*}
\int_{\Sigma\times S^1}|\hat{\psi}^{\text{\rm err}}|^2&\leq c r^{-1} ~, &\text{and}&
&\int_{\Sigma\times S^1}\langle \hat{\psi}^{\text{\rm err}}, \hat{\eta} \rangle &=0
\end{align*}
for any $\hat{\eta}$ in the image of $\hat{\rm pr}_n$.  The assertion also holds for $\hat{\chi}^o\psi$.
\end{lem}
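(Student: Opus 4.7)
The plan is to Fourier-decompose $\hat\chi\psi$ in the $e^{i\phi}$ direction on $\Sigma\times S^1$ and apply the frequency-wise estimates of section \ref{sec_dirac_page_00}. The orthogonality $\int_{\Sigma\times S^1}\langle\hat\psi^{\text{\rm err}},\hat\eta\rangle=0$ is immediate because $\hat{\rm pr}_n$ is a composition of $L^2$-orthogonal projections (onto the first component, onto the frequency-$n$ mode in $e^{i\phi}$, and onto $\ker\bar\pl_n$), so only the $L^2$-bound requires work.

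The second component is absorbed in the error at once: by Proposition \ref{prop_beta_estimate_00} applied to the zero mode $\psi$ on $Y$, $\int_Y|\beta|^2\le cr^{-1}$, hence $\int_{\Sigma\times S^1}|\hat\chi\beta|^2\le cr^{-1}$, and the second component of $\hat{\rm pr}_n(\hat\chi\psi)$ is zero. For the first component, expand $\hat\chi\alpha=\sum_{n'}(\hat\chi\alpha_{n'})e^{in'\phi}(2\pi V)^{-1/2}$ as in (\ref{eqn_Fourier_page_00}). Because $\hat\chi$ is a function of $\rho$ only and $D_r\psi=0$ on $\Sigma\times S^1$, the equation on each Fourier slice reads $\mathcal{D}_{r,n'}(\hat\chi\alpha_{n'},\hat\chi\beta_{n'})=(-\hat\chi'\beta_{n'},\,\hat\chi'\alpha_{n'})$, coming from the commutator of $\mathcal{D}_{r,n'}$ with multiplication by $\hat\chi$. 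By Parseval, $\sum_{n'}\int_\Sigma|\mathcal{D}_{r,n'}(\hat\chi\alpha_{n'},\hat\chi\beta_{n'})|^2=\int_{\Sigma\times S^1}|[D_r,\hat\chi]\psi|^2\le c$, while the first-component pieces sum to $\int_{\Sigma\times S^1}|\hat\chi'\beta|^2\le cr^{-1}$ again by Proposition \ref{prop_beta_estimate_00}. Since $\hat\chi$ vanishes near $\pl\Sigma$, the sections $\hat\chi\alpha_{n'}$ and $\hat\chi\beta_{n'}$ vanish near $\pl\Sigma$, so the hypothesis of Proposition \ref{prop_Drn_rn_small} is met.

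I then split the $n'$-sum into three ranges. (a)~For $|r-\gamma_{n'}|\ge r^{1/2}$, Lemma \ref{lem_Drn_rn_large} gives $\int_\Sigma|\hat\chi\alpha_{n'}|^2+|\hat\chi\beta_{n'}|^2\le cr^{-1}\int_\Sigma|\mathcal{D}_{r,n'}(\cdots)|^2$, which sums to at most $cr^{-1}$. (b)~For $|r-\gamma_{n'}|<r^{1/2}$ and $n'\neq n$, Proposition \ref{prop_Drn_rn_small} controls the $\ker\bar\pl_{n'}^{\perp}$ component by $cr^{-1}\int|\mathcal{D}_{r,n'}(\cdots)|^2$ and the $\ker\bar\pl_{n'}$ component by $4(r-\gamma_{n'})^{-2}\int|\hat\chi'\beta_{n'}|^2$. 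Because $r\in(s_{n-1},s_n]$ and each $s_m$ satisfies $|s_m-\gamma_m-1/V|\le 1/(4V)$, one has $|r-\gamma_{n'}|\ge 3/(4V)$ for every $n'\neq n$; summing the second estimate therefore gives at most $c_V\int|\hat\chi'\beta|^2\le cr^{-1}$. (c)~For $n'=n$, Proposition \ref{prop_Drn_rn_small} directly yields $\int|\hat\chi\alpha_n-\hat{\rm pr}_n(\hat\chi\alpha_n)|^2\le cr^{-1}$. Combining the four pieces gives $\int_{\Sigma\times S^1}|\hat\psi^{\text{\rm err}}|^2\le cr^{-1}$. The $\hat\chi^o$ version follows by the identical argument, since $\hat\chi^o$ is also a $\rho$-only cut-off vanishing near $\pl\Sigma$.

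The main technical point is the second estimate of Proposition \ref{prop_Drn_rn_small} in range~(b): the prefactor $(r-\gamma_{n'})^{-2}$ is only $O(1)$ in $r$, so the required $r^{-1}$ gain must come from the right-hand side. This works out precisely because the first component of the commutator $[D_r,\hat\chi]\psi$ is $-\hat\chi'\beta$, and Proposition \ref{prop_beta_estimate_00} bounds $\int|\beta|^2$ (and hence $\int|\hat\chi'\beta|^2$) by $cr^{-1}$; thus the $\beta$-decay of $\psi$ exactly compensates for the loss in the prefactor.
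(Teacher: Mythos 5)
Your proof is correct and follows essentially the same route as the paper's: Fourier-decompose $\hat\chi\psi$ in $\phi$, use the commutator identity $\mathcal{D}_{r,n'}(\hat\chi\alpha_{n'},\hat\chi\beta_{n'})=(-\hat\chi'\beta_{n'},\hat\chi'\alpha_{n'})$ slice by slice, and apply Lemma \ref{lem_Drn_rn_large} and both parts of Proposition \ref{prop_Drn_rn_small}, with Proposition \ref{prop_beta_estimate_00} supplying the $r^{-1}$ gain needed on the $\ker\bar\pl_{n'}$ components when $n'\neq n$. The only cosmetic differences are that you peel off the $\beta$-component of the error at the start and quote the slightly sharper gap $|r-\gamma_{n'}|\geq 3/(4V)$ where the paper uses $1/(10V)$.
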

\begin{proof}[Proof of lemma \ref{lem_hat_chi_psi_00}]
The orthogonality between $\hat{\psi}^{\text{\rm err}}$ and $\hat{\eta}$ follows directly from the construction.

To estimate the size of $\hat{\psi}^{\text{\rm err}}$, consider the Fourier expansion of $\hat{\chi}\psi$:
\begin{align*} \hat{\chi}\alpha &= \sum_{\mathfrak{n}\in\mathbb{Z}}\hat{\chi}\alpha_{\mathfrak{n}} e^{i\mathfrak{n}\phi}(2\pi V)^{-\oh} &\text{and}& &\hat{\chi}\beta &= \sum_{\mathfrak{n}\in\mathbb{Z}}\hat{\chi}\beta_{\mathfrak{n}} e^{i(\mathfrak{n}+1)\phi}(2\pi V)^{-\oh}
\end{align*}
where $\alpha_{\mathfrak{n}}$ are functions on $\Sigma$, and $\beta_{\mathfrak{n}}$ are sections of $K^{-1}_{\Sigma}$ over $\Sigma$.  It follows that
\begin{align*} \int_{\Sigma\times S^1} |\hat{\chi}\alpha|^2 &= \sum_{\mathfrak{n}\in\mathbb{Z}}\int_{\Sigma} |\hat{\chi}\alpha_{\mathfrak{n}}|^2 &\text{and}& &\int_{\Sigma\times S^1} |\hat{\chi}\beta|^2 &= \sum_{\mathfrak{n}\in\mathbb{Z}}\int_{\Sigma} |\hat{\chi}\beta_{\mathfrak{n}}|^2 ~. \end{align*}
Let $\mathcal{D}_{r,\mathfrak{n}}$ be the operator (\ref{eqn_Dirac_page_00}) with $n$ replaced by $\mathfrak{n}$.  Since $D_r(\hat{\chi}\psi) = {\rm cl}(\dd\hat{\chi})\psi$, $\mathcal{D}_{r,\mathfrak{n}}(\hat{\chi}\alpha_{\mathfrak{n}},\hat{\chi}\beta_{\mathfrak{n}})$ is supported only on $\Sigma\backslash\Sigma_{-6\epsilon}$, and
\begin{align}\label{eqn_D_rn_hat_chi_00}
\mathcal{D}_{r,\mathfrak{n}}(\hat{\chi}\alpha_{\mathfrak{n}},\hat{\chi}\beta_{\mathfrak{n}}) &= \big( -(\hat{\chi})'{\beta}_{\mathfrak{n}}, (\hat{\chi})'{\alpha}_{\mathfrak{n}}  \big) .
\end{align}
The main task is to estimate $\hat{\chi}\alpha_{\mathfrak{n}}$.  The argument is separated into three cases according to the value of $\mathfrak{n}$.  Remember that $\gamma_{\mathfrak{n}} = \frac{2\mathfrak{n}}{V}$.

\subsubsection*{Case 1: $|\gamma_{\mathfrak{n}}-r|\geq r^{\oh}$}  With lemma \ref{lem_Drn_rn_large} and (\ref{eqn_D_rn_hat_chi_00}), there exists a constant $c_9$ such that
\begin{align*} \int_\Sigma |\hat{\chi}\alpha_{\mathfrak{n}}|^2 + |\hat{\chi}\beta_{\mathfrak{n}}|^2 \leq c_{11} r^{-1}\big( \int_{\Sigma\backslash\Sigma_{-6\epsilon}} |\alpha_{\mathfrak{n}}|^2+|\beta_{\mathfrak{n}}|^2\big) \end{align*}
for all $r\geq c_{11}$.

\subsubsection*{Case 2: the $(\ker\bar{\pl}_{\mathfrak{n}})^{\perp}$-component of $\hat{\chi}\alpha_{\mathfrak{n}}$ when $|\gamma_{\mathfrak{n}}-r|< r^{\oh}$}  With the first inequality of proposition \ref{prop_Drn_rn_small} and (\ref{eqn_D_rn_hat_chi_00}), there exists a constant $c_{12}$ such that
\begin{align*} \int_\Sigma |\hat{\chi}\alpha_{\mathfrak{n}} - {\hat{\rm pr}_{\mathfrak{n}}}(\hat{\chi}\alpha_{\mathfrak{n}})|^2 + |\hat{\chi}\beta_{\mathfrak{n}}|^2 \leq c_{12} r^{-1} \big( \int_{\Sigma\backslash\Sigma_{-6\epsilon}} |\alpha_{\mathfrak{n}}|^2+|\beta_{\mathfrak{n}}|^2 \big) \end{align*}
for all $r\geq c_{12}$.

\subsubsection*{Case 3: the $\ker\bar{\pl}_{\mathfrak{n}}$-component of $\hat{\chi}\alpha_{\mathfrak{n}}$ when $\mathfrak{n}\neq n$ and $|\gamma_{\mathfrak{n}}-r|< r^{\oh}$}  For any $\mathfrak{n}\neq n$, $|r-\gamma_{\mathfrak{n}}|$ is no less than $\frac{1}{10V}$.  By the second inequality of proposition \ref{prop_Drn_rn_small} and (\ref{eqn_D_rn_hat_chi_00}), there exists a constant $c_{13}$ such that
\begin{align*} \int_\Sigma |{\hat{\rm pr}_{\mathfrak{n}}}(\hat{\chi}\alpha_{\mathfrak{n}})|^2\leq c_{13} \int_{\Sigma\backslash\Sigma_{-6\epsilon}} |\beta_{\mathfrak{n}}|^2 \end{align*}
for all $r\geq c_{13}$ and $\mathfrak{n}\neq n$.\\

After summing up the estimates of these three cases, we have
\begin{align*} \int_{\Sigma\times S^1} |\hat{\psi}^{\text{\rm err}}|^2 \leq \int_{(\Sigma\backslash\Sigma_{-6\epsilon})\times S^1} \big( (c_{11}+c_{12})r^{-1}|\psi|^2 + c_{13} |\beta|^2 \big) . \end{align*}
With proposition \ref{prop_beta_estimate_00}, it completes the proof of lemma \ref{lem_hat_chi_psi_00}.  The proof for $\hat{\chi}^o\psi$ is the same.
\end{proof}

\subsection*{Step 6}  In this step, we project the zero modes onto a vector space spanned by certain almost eigensections.  The projection will depend on $r$.  In other words, the zero modes are \emph{not} projected onto the \emph{same} vector space.

\begin{prop}\label{prop_proj_finite_00}
There exists a constant $c$ which has the following significance.  For any $n\geq c$ and $r\in(s_{n-1},s_n]$, suppose that $\psi$ is a zero mode of $D_r$ with unit $L^2$-norm.  Let
\begin{align*}
{\Pi_r}(\psi) = \hat{\chi}{\hat{\rm pr}_n}(\hat{\chi}^o\psi) + \check{\chi}{\check{\rm pr}_r}(\check{\chi}^o\psi) + \tilde{\chi}{\tilde{\rm pr}_r}(\tilde{\chi}^o\psi) ~,
\end{align*}
then
\begin{enumerate}
\item $\int_Y |{\Pi_r}(\psi)|^2\geq 1-c s_n^{-1}$;\smallskip
\item if there are two such zero modes $\psi_1$ and $\psi_2$ at $r_1$ and $r_2$ with $s_{n-1}<r_1<r_2\leq s_n$, then
\begin{align*}
\big|\int_Y\langle {\Pi_{r_1}}(\psi_1),{\Pi_{r_2}}(\psi_2) \rangle\big| \leq c s_n^{-1} ~;
\end{align*}
\item if there are two zero modes $\psi_1$ and $\psi_2$ at the same $r\in(s_{n-1},s_n]$, and $\int_Y\langle\psi_1,\psi_2\rangle=0$, then
\begin{align*}
\big|\int_Y\langle {\Pi_{r_1}}(\psi_1),{\Pi_{r_2}}(\psi_2) \rangle\big| \leq c s_n^{-1} ~.
\end{align*}
\end{enumerate}
\end{prop}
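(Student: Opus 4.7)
The starting observation is that because $\hat\chi\hat\chi^o=\hat\chi^o$, $\check\chi\check\chi^o=\check\chi^o$ and $\tilde\chi\tilde\chi^o=\tilde\chi^o$, and because $\hat\chi^o+\check\chi^o+\tilde\chi^o\equiv 1$, we can rewrite
\[
{\Pi_r}(\psi) \;=\; \psi \;-\; \bigl(\hat{\chi}\,\hat{\psi}^{\text{\rm err}}+\check{\chi}\,\check{\psi}^{\text{\rm err}}+\tilde{\chi}\,\tilde{\psi}^{\text{\rm err}}\bigr),
\]
where $\hat{\psi}^{\text{\rm err}}=\hat\chi^o\psi-{\hat{\rm pr}_n}(\hat\chi^o\psi)$ and similarly for $\check{\psi}^{\text{\rm err}},\tilde{\psi}^{\text{\rm err}}$.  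Denote the parenthesized correction by $E$.  By lemmas \ref{lem_check_chi_psi_00} and \ref{lem_hat_chi_psi_00} applied to $\hat\chi^o\psi$, $\check\chi^o\psi$, $\tilde\chi^o\psi$, each of the three error sections has $L^2$-norm bounded by $c s_n^{-1/2}$, so $\|E\|_{L^2}\le c s_n^{-1/2}$.  This alone is not enough; the key point for (i) is to improve this by a half power when pairing $E$ against $\psi$.

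For (i), expand $\int_Y |\Pi_r(\psi)|^2 = 1 - 2\,\mathrm{Re}\langle\psi,E\rangle + \|E\|^2$.  The hard term is the cross pairing, for instance
\[
\langle\psi,\hat\chi\hat\psi^{\text{\rm err}}\rangle \;=\; \langle\hat\chi\psi,\hat\psi^{\text{\rm err}}\rangle.
\]
Now apply lemma \ref{lem_hat_chi_psi_00} to $\hat\chi\psi$ itself to write $\hat\chi\psi={\hat{\rm pr}_n}(\hat\chi\psi)+\mathrm{err}$ with $\|\mathrm{err}\|\le c s_n^{-1/2}$.  By construction $\hat\psi^{\text{\rm err}}\perp\mathrm{image}({\hat{\rm pr}_n})$, so the ${\hat{\rm pr}_n}(\hat\chi\psi)$ part drops out and Cauchy--Schwarz yields $|\langle\psi,\hat\chi\hat\psi^{\text{\rm err}}\rangle|\le c s_n^{-1}$.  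The $\check{\chi},\tilde{\chi}$ terms are handled identically using lemma \ref{lem_check_chi_psi_00} (with ${\check{\rm pr}_r}$ resp.\ ${\tilde{\rm pr}_r}$, no $r$-mismatch here since everything is at the same $r$).  Combined with $\|E\|^2\le c s_n^{-1}$, this gives $\|\Pi_r(\psi)\|^2 \ge 1-c s_n^{-1}$.

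For (ii) and (iii), expand
\[
\langle\Pi_{r_1}(\psi_1),\Pi_{r_2}(\psi_2)\rangle = \langle\psi_1,\psi_2\rangle -\langle\psi_1,E_2\rangle-\langle E_1,\psi_2\rangle+\langle E_1,E_2\rangle.
\]
Proposition \ref{prop_small_ip_00} bounds $|\langle\psi_1,\psi_2\rangle|$ by $c(r_1 r_2)^{-1/2}$ (using $r_1\ne r_2$ for (ii), or using the orthogonality hypothesis for (iii)), and $|\langle E_1,E_2\rangle|\le c s_n^{-1}$ by Cauchy--Schwarz.  For $\langle\psi_1,E_2\rangle$ the $\hat{\chi}$-piece is treated exactly as in (i): ${\hat{\rm pr}_n}$ does not depend on $r$, so $\hat\psi_2^{\text{\rm err}}$ is orthogonal to the image of ${\hat{\rm pr}_n}$ applied to $\hat\chi\psi_1$, and one gains a factor $s_n^{-1/2}$.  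The main obstacle is the $\check{\chi}$-piece: ${\check{\rm pr}_{r_1}}\ne{\check{\rm pr}_{r_2}}$, so $\check\psi_2^{\text{\rm err}}$ is \emph{not} orthogonal to the image of ${\check{\rm pr}_{r_1}}$.  This is where corollaries \ref{cor_eigensection_differ_r_00} and \ref{cor_eigensection_differ_r_01} enter: for each $(k,m)\in\check{E}_n$ they bound $|\langle\check\psi_2^{\text{\rm err}},\check\psi_{k,m}^{\text{\rm eig}}(r_1)\rangle|$ by $c s_n^{-3/2}\|\check\psi_2^{\text{\rm err}}\|_{L^2}\le c s_n^{-2}$ (the exponential version in case 2 is even better).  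Summing the squares over the $\mathcal{O}(s_n)$ elements of $\check{E}_n$ gives $\|{\check{\rm pr}_{r_1}}(\check\psi_2^{\text{\rm err}})\|_{L^2}\le c s_n^{-3/2}$, so
\[
|\langle\check\chi\psi_1,\check\psi_2^{\text{\rm err}}\rangle|=|\langle{\check{\rm pr}_{r_1}}(\check\chi\psi_1),\check\psi_2^{\text{\rm err}}\rangle + \langle\mathrm{err}_1,\check\psi_2^{\text{\rm err}}\rangle|\le c s_n^{-1},
\]
and the Dehn-twist piece is identical.  In (iii), where $r_1=r_2$, the images of the $\check{}$ and $\tilde{}$ projections coincide, so the first term vanishes outright and only the $\mathrm{err}_1$ cross term survives.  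Putting everything together yields the claimed $cs_n^{-1}$ bound.  The main technical obstacle is the careful $r$-dependence bookkeeping in the $\check{\chi}$- and $\tilde{\chi}$-pieces for (ii), handled by the corollaries together with the $\mathcal{O}(s_n)$ bound on $\#\check{E}_n,\#\tilde{E}_n$ from (\ref{eqn_E_total_number_00}).
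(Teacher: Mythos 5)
Your proof is correct and takes essentially the same route as the paper: both rewrite $\Pi_r(\psi)=\psi-E$ using $\hat\chi^o+\check\chi^o+\tilde\chi^o\equiv 1$ and $\hat\chi\hat\chi^o=\hat\chi^o$ etc., expand the pairings, and kill the cross term by decomposing $\hat\chi\psi_j$ (resp.\ $\check\chi\psi_j$, $\tilde\chi\psi_j$) via Lemmas \ref{lem_check_chi_psi_00} and \ref{lem_hat_chi_psi_00}, with Corollaries \ref{cor_eigensection_differ_r_00} and \ref{cor_eigensection_differ_r_01} handling the $r$-mismatch in the $\check{}$- and $\tilde{}$-projections. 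The only cosmetic difference is that you sum the $\mathcal{O}(s_n)$ contributions via Parseval and Cauchy--Schwarz (getting the slightly sharper $s_n^{-3/2}$ for that piece) whereas the paper uses the triangle inequality and the bound $\#\check E_n\le c s_n$ directly; both give $\mathcal{O}(s_n^{-1})$.
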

\begin{proof}[Proof of proposition \ref{prop_proj_finite_00}]  We start with assertion (ii).  Suppose that there are two such zero modes $\{\psi_j\}_{j=1,2}$.  We apply lemma \ref{lem_hat_chi_psi_00} on $\hat{\chi}^o\psi_j$, and lemma \ref{lem_check_chi_psi_00} on $\check{\chi}^o\psi_j$ and $\tilde{\chi}^o\psi_j$.  After multiplying them by the cut-off functions with larger support, we have
\begin{align*}
\hat{\chi}^o\psi_j = \hat{\chi}\hat{\chi}^o\psi_j &= \hat{\chi}{\hat{\rm pr}_n}(\hat{\chi}^o\psi_j) + \hat{\chi}\hat{\psi}^{\text{\rm err}_o}_j ~, \\
\check{\chi}^o\psi_j = \check{\chi}\check{\chi}^o\psi_j &= \check{\chi}{\check{\rm pr}_r}(\check{\chi}^o\psi_j) + \check{\chi}\check{\psi}^{\text{\rm err}_o}_j ~, \\
\tilde{\chi}^o\psi_j = \tilde{\chi}\tilde{\chi}^o\psi_j &= \tilde{\chi}{\tilde{\rm pr}_r}(\tilde{\chi}^o\psi_j) + \tilde{\chi}\tilde{\psi}^{\text{\rm err}_o}_j ~.
\end{align*}
All terms can be regarded as being defined on $Y$.

We also apply lemma \ref{lem_hat_chi_psi_00} on $\hat{\chi}\psi_j$, and lemma \ref{lem_check_chi_psi_00} on $\check{\chi}\psi_j$ and $\tilde{\chi}\psi_j$.  We have $\hat{\chi}\psi_j = {\hat{\rm pr}_n}(\hat{\chi}\psi_j) + \hat{\psi}^{\text{\rm err}}_j$, $\check{\chi}\psi_j = {\check{\rm pr}_r}(\check{\chi}\psi_j) + \check{\psi}^{\text{\rm err}}_j$ and $\tilde{\chi}\psi_j = {\tilde{\rm pr}_r}(\tilde{\chi}\psi_j) + \tilde{\psi}^{\text{\rm err}}_j$, without the cut-off function on the error-term.  Not all these terms can be regarded as being defined on $Y$.  

The inner product between ${\Pi_{r_1}}(\psi_1)$ and ${\Pi_{r_2}}(\psi_2)$ is equal to
\begin{align*}
\int_{Y} \langle \psi_1-\hat{\chi}\hat{\psi}_1^{\text{\rm err}_o}-\check{\chi}\check{\psi}_1^{\text{\rm err}_o}-\hat{\chi}\hat{\psi}_1^{\text{\rm err}_o}, \psi_2-\hat{\chi}\hat{\psi}_2^{\text{\rm err}_o}-\check{\chi}\check{\psi}_2^{\text{\rm err}_o}-\hat{\chi}\hat{\psi}_2^{\text{\rm err}_o} \rangle ~.
\end{align*}
We would like to show that the magnitude of all these sixteen pairings is less than $c_{14}s_n^{-1}$ for some constant $c_{14}$.  There are four types of these pairings.

\subsubsection*{Type 1: $\int_Y\langle\psi_1,\psi_2\rangle$}  The estimate on this term is given by proposition \ref{prop_small_ip_00}.

\subsubsection*{Type 2: the pairings between $\psi_j$ and the error term on $\Sigma\times S^1$}  By lemma \ref{lem_hat_chi_psi_00},
\begin{align*}
\int_Y\langle\psi_1,\hat{\chi}\hat{\psi}_2^{\text{\rm err}_o}\rangle &= \int_{\Sigma\times S^1}\langle\hat{\chi}\psi_1,\hat{\psi}_2^{\text{\rm err}_o}\rangle = \int_{\Sigma\times S^1}\langle{\hat{\rm pr}_n}(\hat{\chi}\psi_1) + \hat{\psi}^{\text{\rm err}}_1,\hat{\psi}_2^{\text{\rm err}_o}\rangle \\
&= \int_{\Sigma\times S^1} \langle\hat{\psi}^{\text{err}}_1,\hat{\psi}_2^{\text{\rm err}_o}\rangle ~.
\end{align*}
We conclude that $\big| \int_Y\langle\psi_1,\hat{\chi}\hat{\psi}_2^{\text{\rm err}_o}\rangle \big| \leq c_{15}(r_1 r_2)^{-\oh} \leq 2 c_{15} s_n^{-1}$ for some constant $c_{15}$.

\subsubsection*{Type 3: the pairings between $\psi_j$ and the error term on $\check{Y}$ or $\tilde{Y}$}  Similarly,
\begin{align*}
\int_Y\langle\psi_1,\check{\chi}\check{\psi}_2^{\text{\rm err}_o}\rangle &= \int_{\check{Y}}\langle{\check{\rm pr}_{r_1}}(\check{\chi}\psi_1),\check{\psi}_2^{\text{\rm err}_o}\rangle + \langle\check{\psi}^{\text{\rm err}}_1,\check{\psi}_2^{\text{\rm err}_o}\rangle ~.
\end{align*}
Lemma \ref{lem_check_chi_psi_00} implies that the second term $\big| \int_{\check{Y}}\langle\check{\psi}^{\text{\rm err}}_1,\check{\psi}_2^{\text{\rm err}_o}\rangle \big|  \leq 2 c_{16} s_n^{-1}$ for some constant $c_{16}$.  Unlike type 2, the first pairing might not be zero.  However, corollary \ref{cor_eigensection_differ_r_00} and corollary \ref{cor_eigensection_differ_r_01} imply that there exists a constant $c_{17}$ such that
\begin{align*} \big| \int_{\check{Y}} \langle {\check{\rm pr}_{r_1}}(\check{\chi}\psi_1),\check{\psi}_2^{\text{\rm err}_o} \rangle \big| \leq c_{17} s_n^{-\frac{3}{2}} (\#\check{E}_n) \big(\int_{\check{Y}}|\check{\psi}_2^{\text{\rm err}_o}|^2\big)^\oh ~. \end{align*}
By (\ref{eqn_E_total_number_00}) and (\ref{eqn_E_total_number_100}), $\check{E}_n$ is less than some multiple of $s_n$, and $\check{\psi}_2^{\text{\rm err}_o}$ is estimated by lemma \ref{lem_check_chi_psi_00}.  Hence, there exists a constant $c_{18}$ such that
$$ \big| \int_{\check{Y}}\langle{\check{\rm pr}_{r_1}}(\check{\chi}\psi_1),\check{\psi}_2^{\text{\rm err}_o}\rangle \big|\leq c_{18}s_n^{-1} ~.  $$

\subsubsection*{Type 4: the pairings between two error terms}  Lemma \ref{lem_check_chi_psi_00}, lemma \ref{lem_hat_chi_psi_00} and the Cauchy--Schwarz inequality implies that these pairings are of $\mathcal{O}(s_n^{-1})$.\\

The proof for assertion (i) and (iii) is the same up to a minor modification for type 1.  This completes the proof of proposition \ref{prop_proj_finite_00}.
\end{proof}

\subsection*{Step 7}  This step is a digression on the $r$-dependence of the eigensection approximation constructed in section \ref{subsec_2nd_order_S2S1}.  We start with $\check{Y}$.  Suppose that $\check{D}_r$ on $\mathcal{S}_{k,m}$ has an eigenvalue $\lambda_0$ with $|\lambda_0|\leq 1$, then proposition \ref{prop_small_eigensection_00} and proposition \ref{prop_small_eigensection_01} apply.

\subsubsection*{Case 1}  When $|m|<(\frac{1}{32\epsilon^2}-1)k$, the main term $\check{\psi}_{k,m}$ in proposition \ref{prop_small_eigensection_00} consists of the zeroth, first and second order terms, see (\ref{eqn_hot_approx_01}).
Let $\check{\psi}^{(0)}_{k,m}$ be the section which consists of only the zeroth and first order term of $\check{\psi}_{k,m}$.  Namely, throw away the $\mathfrak{a}_2(x,r)$ and $\mathfrak{b}_2(x)$ terms in (\ref{eqn_hot_approx_01}).  The key feature of these sections $\check{\psi}_{k,m}^{(0)}$ is that they are independent of $r$.  Let $\check{\psi}_{k,m}^{(2)}$ be $ \check{\psi}_{k,m} - \check{\mathfrak{q}}_{k,m}\check{\psi}_{k,m}^{(0)}$.  Namely, $\check{\psi}_{k,m}^{(2)}$ is the sum of the second order term in $\check{\mathfrak{q}}_{k,m}\check{\psi}_{k,m}$ and $\check{\psi}_{k,m}^{(3)}$.  Under the assumption of proposition \ref{prop_small_eigensection_00}, it is easy to see that there exists a constant $c_{19}$ such that
\begin{align*} \int_{\check{Y}} |\check{\psi}_{k,m}^{(2)}|^2 &\leq c_{19}r^{-2} ~. \end{align*}
The coefficients $\check{\mathfrak{q}}_{k,m}$ in proposition \ref{prop_small_eigensection_00} also depends on $r$, but they are only scalars.  Note that $\check{\psi}_{k,m}^{(0)}$ can be regarded as a section on $Y$.

According to the discussion in section \ref{subsec_higher_order_S2S1}, when $\frac{k}{V}(1-11\epsilon)\leq m\leq\frac{k}{V}$, the first and second order term are zero, and $\check{\psi}_{k,m}^o = \check{\psi}_{k,m}$ and $\int_{\check{Y}}|\check{\psi}_{k,m}^{(2)}|^2 \leq c_{20}\exp(-\frac{r}{c_{20}})$.  Moreover, for $k=n$, the first component of $\check{\psi}_{k,m}^{(0)}$ is the same as $\check{\zeta}_{n,m}e^{in\phi}(2\pi V)^{-\oh}$ given by (\ref{eqn_almost_nm_page_00}), and second component of $\check{\psi}_{k,m}^{(0)}$ is zero.

\subsubsection*{Case 2}  When $|m|\geq(\frac{1}{32\epsilon^2}-1)k$, the main term $\check{\psi}_{k,m}$ in proposition \ref{prop_small_eigensection_01} is already independent of $r$.  To unify the notation, let $\check{\psi}_{k,m}^{(0)} = \check{\psi}_{k,m}$ and $\check{\psi}_{k,m}^{(2)} = \check{\psi}_{k,m}^{(3)}$.

\subsubsection*{On $\tilde{Y}$}  The discussion on the $r$-dependence is the same as that for $\check{Y}$.  We just state the result on the special regions discussed in section \ref{subsec_higher_order_S2S1}.  When
\begin{align*}\begin{cases}
k+mh(20\epsilon)=n \;\text{ and }\; \frac{2n}{V}(v-31\epsilon)\leq m\leq\frac{2n}{V}(v-20\epsilon) \text{~, or}&\\
k+mh(-20\epsilon)=n \;\text{ and }\; \frac{2n}{V}(v+20\epsilon)\leq m\leq\frac{2n}{V}(v+31\epsilon) \text{~,}&
\end{cases}\end{align*}
the first component of $\tilde{\psi}_{k,m}^{(0)}$ is the same as the untwisting  of $\tilde{\zeta}_{n,m}e^{in\phi}(2\pi V)^{-\oh}$ given by (\ref{eqn_almost_nm_page_01}), and second component of $\tilde{\psi}_{k,m}^{(0)}$ is zero.  The remainder term satisfies $\int_{\tilde{Y}} |\tilde{\psi}_{k,m}^{(2)}|^2 \leq c_{20}\exp(-\frac{r}{c_{20}})$.

\subsection*{Step 8}  In this step, we throw away the $r$-dependent part of $\check{\chi}{{\rm pr}_r}(\check{\chi}^o\psi)$ and $\tilde{\chi}{{\rm pr}_r}(\tilde{\chi}^o\psi)$.  In other words, the projection is modified to be $r$-independent.

\begin{lem}\label{lem_check_r_dep_00}
There exists a constant $c$ with the following property.  For any $n\geq c$ and $r\in(s_{n-1},s_n]$, suppose that $\psi$ is a zero mode of $D_r$ with unit $L^2$-norm.  Then there exists a section $\check{\psi}^{\text{\rm rem}}$ such that
\begin{enumerate}
\item the support of $\check{\psi}^{\text{\rm rem}}$ is contained in the support of $\check{\chi}$.  Thus, $\check{\psi}^{\text{\rm rem}}$ can be regarded as a section either on $Y$ or $\check{Y}$;\smallskip
\item $\int_{Y} |\check{\psi}^{\text{\rm rem}}|^2 \leq c s_n^{-2}$;\smallskip
\item $\check{\chi}{{\rm pr}_r}(\check{\chi}^o\psi) - \check{\psi}^{\text{\rm rem}}$ belongs to the vector space spanned by
\begin{align*} \big\{\check{\psi}_{k,m}^{(0)} \big| (k,m)\in \check{E}_n^o \big\} \oplus \big\{\check{\psi}_{k,m}^{(0)}\big|k=n\text{, and }\frac{n}{V}(1-7\epsilon)\leq m<\frac{n}{V} \big\} ~. \end{align*}
\end{enumerate}
\end{lem}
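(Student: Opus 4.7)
The plan is to expand $\check{\rm pr}_r(\check{\chi}^o\psi)$ in the orthonormal basis of eigensections $\{\check{\psi}_{k,m}^{\text{\rm eig}}\}_{(k,m)\in\check{E}_n}$ introduced in step 6, then apply the decomposition $\check{\psi}_{k,m}^{\text{\rm eig}}=\check{\mathfrak{q}}_{k,m}\check{\psi}_{k,m}^{(0)}+\check{\psi}_{k,m}^{(2)}$ of step 7 to each summand, and finally absorb all pieces except the $\check{\psi}_{k,m}^{(0)}$'s (for the allowed $(k,m)$) into $\check{\psi}^{\text{\rm rem}}$. Setting $a_{k,m}=\int_{\check{Y}}\langle\check{\chi}^o\psi,\check{\psi}_{k,m}^{\text{\rm eig}}\rangle$, the fact that distinct $\check{\psi}_{k,m}^{\text{\rm eig}}$'s lie in distinct $\mathcal{S}_{k,m}$'s gives $\sum_{(k,m)}|a_{k,m}|^2\le\|\check{\chi}^o\psi\|^2\le 1$; moreover, because the cut-off $\check{\chi}$ depends only on $\rho$ it preserves the $\mathcal{S}_{k,m}$-splitting, so the $L^2$-norm of any sum of the form $\sum_{(k,m)}b_{k,m}\check{\chi}\cdot(\text{something in }\mathcal{S}_{k,m})$ is controlled by Pythagoras.

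The key technical input is a geometric split of $\check{E}_n$ into three groups based on where $\check{\rho}_{k,m}$ lies. On the region $\rho\in[1-5\epsilon,1+15\epsilon]$ the contact form has $f=V$ and $g=2-\rho$, so the defining identity $kg(\check{\rho}_{k,m})=mf(\check{\rho}_{k,m})$ gives $\check{\rho}_{k,m}=2-mV/k$ there. This yields $\check{\rho}_{k,m}\le 1$ for $(k,m)\in A:=\check{E}_n^o$; by (\ref{eqn_E_Eo_description_00}), $\check{E}_n\setminus\check{E}_n^o$ has $k=n$, and splits into $B:=\{(n,m): \tfrac{n}{V}(1-7\epsilon)\le m<\tfrac{n}{V}\}$ (where $\check{\rho}_{n,m}\le 1+7\epsilon$) and $C:=\{(n,m): \tfrac{n}{V}(1-11\epsilon)\le m<\tfrac{n}{V}(1-7\epsilon)\}$ (where $\check{\rho}_{n,m}>1+7\epsilon$), with $\#C=O(n)$. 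Since $\check{\psi}_{k,m}^{(0)}$ is Gaussian-concentrated at $\check{\rho}_{k,m}$ with width $\sim r^{-1/2}$, in groups $A$ and $B$ the peak is separated from $\{\check{\chi}\ne 1\}=\{\rho>1+8\epsilon\}$ by at least $\epsilon$, giving $\int|(1-\check{\chi})\check{\psi}_{k,m}^{(0)}|^2\le c\exp(-r/c)$; in group $C$ the peak is separated from $\operatorname{supp}\check{\chi}^o\subset\{\rho\le 1+6\epsilon\}$ by at least $\epsilon$, and Cauchy--Schwarz with the Gaussian tail bound gives $|a_{n,m}|\le c\exp(-r/c)$.

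With these two exponential estimates in hand, I would set
\begin{align*}
\check{\psi}^{\text{\rm rem}} \;=\; \sum_{(k,m)\in A\cup B} a_{k,m}\bigl[\check{\mathfrak{q}}_{k,m}(\check{\chi}-1)\check{\psi}_{k,m}^{(0)}+\check{\chi}\,\check{\psi}_{k,m}^{(2)}\bigr] \;+\; \sum_{(k,m)\in C} a_{k,m}\,\check{\chi}\,\check{\psi}_{k,m}^{\text{\rm eig}},
\end{align*}
so that $\check{\chi}\,\check{\rm pr}_r(\check{\chi}^o\psi)-\check{\psi}^{\text{\rm rem}}=\sum_{A\cup B}a_{k,m}\check{\mathfrak{q}}_{k,m}\check{\psi}_{k,m}^{(0)}$. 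Property (i) of the lemma is then immediate, and property (iii) follows from this identity since $A\cup B$ is exactly the index set named in the lemma. For (ii), Pythagoras combined with $\sum|a_{k,m}|^2\le 1$, the step 7 bound $\int|\check{\psi}_{k,m}^{(2)}|^2\le cr^{-2}$, the two exponential estimates above, and $\#C=O(s_n)$ gives $\int_Y|\check{\psi}^{\text{\rm rem}}|^2\le cr^{-2}+s_n\cdot c\exp(-r/c)\le c's_n^{-2}$.

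The main obstacle is the Gaussian tail bookkeeping in the middle paragraph: the cut-off thresholds $4\epsilon,6\epsilon,8\epsilon,10\epsilon$ fixed in step 3 and the index threshold $\tfrac{n}{V}(1-7\epsilon)$ appearing in assertion (iii) are calibrated so that every $(k,m)\in\check{E}_n$ admits a buffer of at least $\epsilon$ between the peak of $\check{\psi}_{k,m}^{(0)}$ and the relevant transition region of either $\check{\chi}$ (for $A\cup B$) or $\check{\chi}^o$ (for $C$). Converting this $\epsilon$-buffer into an $\exp(-r/c)$ bound, while tracking which cut-off is responsible in each case and making sure the analogous split behaves well for the case-2 indices (near the binding, $|m|\ge(\tfrac{1}{32\epsilon^2}-1)k$, where $\check{\psi}_{k,m}^{(0)}$ is supported in $\rho\le 10\epsilon$ and the cut-off effect is trivially zero), is the one step where the specific form of $\check{\psi}_{k,m}^{(0)}$ from step 7 is fully used.
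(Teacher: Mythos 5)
Your plan is the same as the paper's: expand $\check{\rm pr}_r(\check{\chi}^o\psi)$ in the $\check{\psi}_{k,m}^{\text{\rm eig}}$ basis, split $\check{E}_n$ into $A=\check{E}_n^o$, $B$, $C$ exactly as the paper does (steps 2 and 8 of the paper's argument), peel off $\check{\mathfrak{q}}_{k,m}\check{\psi}_{k,m}^{(0)}$ for $(k,m)\in A\cup B$, and absorb the rest into $\check{\psi}^{\text{\rm rem}}$. The algebraic identity you write, the Parseval argument for $\sum|a_{k,m}|^2\leq 1$, and the estimate for the $C$ group (disjoint support of $\check{\psi}_{n,m}^{(0)}$ from $\operatorname{supp}\check{\chi}^o$, then the exponentially small $\check{\psi}_{n,m}^{(2)}$ from step 7 via lemma \ref{lem_refined_r_neck_00}) all match the paper's proof.

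There is one point you need to tighten, and it concerns assertion (i). Your $\check{\psi}^{\text{\rm rem}}$ contains the terms $\check{\mathfrak{q}}_{k,m}(\check{\chi}-1)\check{\psi}_{k,m}^{(0)}$, supported on $\{\check{\chi}\neq 1\}\cap\operatorname{supp}\check{\psi}_{k,m}^{(0)}$. The set $\{\check{\chi}\neq 1\}$ contains $\{\rho\geq 1+10\epsilon\}$, which lies \emph{outside} $\operatorname{supp}\check{\chi}$, so an exponentially small $L^2$-bound alone does not give assertion (i) --- you would need the term to actually vanish there. It does: the cut-off factor in (\ref{eqn_hot_approx_01}) (and $\chi_B$ in (\ref{eqn_hot_approx_02}) for the case-2 indices) makes $\check{\psi}_{k,m}^{(0)}$ compactly supported in $[\check{\rho}_{k,m}-\epsilon,\check{\rho}_{k,m}+\epsilon]\subset\{\rho\leq 1+8\epsilon\}=\{\check{\chi}=1\}$ for every $(k,m)\in A\cup B$, so $(\check{\chi}-1)\check{\psi}_{k,m}^{(0)}\equiv 0$. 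Your proposal only claims its $L^2$-norm is $\leq c\exp(-r/c)$, which both under-uses the structure and leaves (i) unjustified as written. The paper sidesteps this by defining every summand of $\check{\psi}^{\text{\rm rem}}$ with an explicit $\check{\chi}$ prefactor (so (i) is built in), and only afterwards observes $\check{\chi}\check{\psi}_{k,m}^{(0)}=\check{\psi}_{k,m}^{(0)}$ to get (iii). Once you note the identity $(\check{\chi}-1)\check{\psi}_{k,m}^{(0)}=0$, your $\check{\psi}^{\text{\rm rem}}$ collapses to the paper's, and the rest of your estimates go through as stated.
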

\begin{proof}[Proof of lemma \ref{lem_check_r_dep_00}]
For each $(k,m)\in\check{E}_n$, $\check{D}_r$ has a unique eigenvalue $\lambda_0$ on $\mathcal{S}_{k,m}$ with $|\lambda_0|\leq1$.  Let $\check{\psi}_{k,m}^{\text{\rm eig}}$ be the corresponding eigensection given by proposition \ref{prop_small_eigensection_00} and proposition \ref{prop_small_eigensection_01}.  If we apply the projection operator $\check{\rm pr}_r$ introduced in step 4, we have
\begin{align*} {\check{\rm pr}_r}(\check{\chi}^o\psi) &= \sum_{(k,m)\in\check{E}_n} \check{\mathfrak{c}}_{k,m}\check{\psi}_{k,m}^{\text{\rm eig}} \end{align*}
where $\check{\mathfrak{c}}_{k,m} = \int_{\check{Y}}\langle\check{\chi}^o\psi, \check{\psi}_{k,m}^{\text{\rm eig}}\rangle$.

Remember that $\check{E}_n\backslash\check{E}_n^o$ is characterized by (\ref{eqn_E_Eo_description_00}).  Let $\check{\psi}^{\text{\rm rem}}$ be the sum of the following terms:
\begin{itemize}
\item $\check{\chi}\check{\mathfrak{c}}_{k,m}\check{\psi}_{k,m}^{(2)}$ for $(k,m)\in\check{E}_n^o$;\smallskip
\item $\check{\chi}\check{\mathfrak{c}}_{k,m}\check{\psi}_{k,m}^{(2)}$ for $(k,m)\in\check{E}_n\backslash\check{E}_n^o$ and $m\geq\frac{n}{V}(1-7\epsilon)$;\smallskip
\item $\check{\chi}\check{\mathfrak{c}}_{k,m}\check{\psi}_{k,m}^{\text{\rm eig}}$ for $(k,m)\in\check{E}_n\backslash\check{E}_n^o$ and $m<\frac{n}{V}(1-7\epsilon)$.
\end{itemize}
We now estimate their $L^2$-norm.
\begin{itemize}
\item For the terms of the first and second kind, step 7 gives a constant $c_{21}$ such that
\begin{align*} \int_{\check{Y}}|\check{\chi}\check{\mathfrak{c}}_{k,m}\check{\psi}_{k,m}^{(2)}|^2 &\leq c_{21}r^{-2}\int_{\check{Y}} |(\check{\chi}^o\psi)_{k,m}|^2 \end{align*}
where $(\check{\chi}^o\psi)_{k,m}$ is the $\mathcal{S}_{k,m}$-component of $\check{\chi}^o\psi$.
\item For the terms of the third kind,
\begin{align*}
\int_{\check{Y}} |\check{\chi}\check{\mathfrak{c}}_{k,m}\check{\psi}_{k,m}^{\text{\rm eig}}|^2 &\leq |\check{\mathfrak{c}}_{k,m}|^2 = \big|\int_{\check{Y}}\langle\check{\chi}^o\psi, \check{\psi}_{k,m}^{\text{\rm eig}}\rangle\big|^2 \\
&= \big| \int_{\check{Y}}\langle(\check{\chi}^o\psi)_{k,m},\check{\mathfrak{q}}_{k,m}\check{\psi}_{k,m}^{(0)}+\check{\psi}_{k,m}^{(2)} \rangle \big|^2 \\
&\leq c_{20}\exp(-\frac{r}{c_{20}})\int_{\check{Y}}|(\check{\chi}^o\psi)_{k,m}|^2 ~.
\end{align*}
For the last inequality, note that the support of $\check{\chi}^o$ and $\check{\psi}_{k,m}$ are disjoint, and the $L^2$-norm of $\check{\psi}_{k,m}^{(2)}$ is exponentially small by step 7.
\end{itemize}
After summing up the above inequalities, we obtain the assertion (ii) of the lemma.  Assertion (i) follows from the construction of $\check{\psi}^{\text{\rm rem}}$.

For the last assertion, it is clear that $\check{\chi}{{\rm pr}_r}(\check{\chi}^o\psi) - \check{\psi}^{\text{\rm rem}}$ belongs to the vector space spanned by those elements in assertion (iii), multiplied by $\check{\chi}$.  However, the support of $\check{\chi}{{\rm pr}_r}(\check{\chi}^o\psi) - \check{\psi}^{\text{\rm rem}}$ is contained in the support of $\check{\chi}$.  Since $\check{\chi}$ is equal to $1$ on the support of those elements in assertion (iii), it completes the proof of lemma \ref{lem_check_r_dep_00}.
\end{proof}

The argument for the Dehn-twist region is the same.  We just state the result.
\begin{lem}\label{lem_tilde_r_dep_00}
There exists a constant $c$ with the following significance.  For any $n\geq c$ and $r\in(s_{n-1},s_n]$, suppose that $\psi$ is a zero mode of $D_r$ with unit $L^2$-norm.  Then there exists a section $\tilde{\psi}^{\text{\rm rem}}$ such that
\begin{enumerate}
\item the support of $\tilde{\psi}^{\text{\rm rem}}$ is contained in the support of $\tilde{\chi}$;  hence, up to the untwisting (\ref{def_untwisting}), $\tilde{\psi}^{\text{\rm rem}}$ can be regarded as defined either on $Y$ or $\tilde{Y}$;\smallskip
\item $\int_{\tilde{Y}} |\tilde{\psi}^{\text{\rm rem}}|^2 \leq c s_n^{-2}$;\smallskip
\item $\tilde{\chi}{{\rm pr}_r}(\tilde{\chi}^o\psi) - \tilde{\psi}^{\text{\rm rem}}$ belongs to the vector space spanned by
\begin{align*}
&\big\{\tilde{\psi}_{k,m}^{(0)} \big| (k,m)\in \tilde{E}_n^o \big\} \\
&\oplus \big\{\tilde{\psi}_{k,m}^{(0)}\big|k+mh(20\epsilon)=n\text{, and }{\textstyle \frac{2n}{V}(v-27\epsilon)\leq m<\frac{2n}{V}(v-20\epsilon) } \big\} \\
&\oplus \big\{\tilde{\psi}_{k,m}^{(0)}\big|k+mh(20\epsilon)=n\text{, and }{\textstyle \frac{2n}{V}(v+20\epsilon)<m\leq\frac{2n}{V}(v+27\epsilon) } \big\} ~.
\end{align*}
\end{enumerate}
\end{lem}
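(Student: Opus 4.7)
The plan is to mirror the proof of lemma \ref{lem_check_r_dep_00}, with the only substantive difference being the two-component structure of $\tilde{E}_n\setminus\tilde{E}_n^o$ described in (\ref{eqn_E_Eo_description_00}). For each $(k,m)\in\tilde{E}_n$, the operator $\tilde{D}_r$ has a unique eigenvalue on $\mathcal{S}_{k,m}$ of magnitude at most $1$ (proposition \ref{prop_small_eigensection_00} and \ref{prop_small_eigensection_01} applied to $\tilde{Y}$), so after undoing the untwisting (\ref{def_untwisting}) we may write
\begin{align*}
{\tilde{\rm pr}_r}(\tilde{\chi}^o\psi) &= \sum_{(k,m)\in\tilde{E}_n}\tilde{\mathfrak{c}}_{k,m}\tilde{\psi}_{k,m}^{\text{\rm eig}},
&\tilde{\mathfrak{c}}_{k,m} &= \int_{\tilde{Y}}\langle\tilde{\chi}^o\psi,\tilde{\psi}_{k,m}^{\text{\rm eig}}\rangle.
\end{align*}
Step 7 gives the decomposition $\tilde{\psi}_{k,m}^{\text{\rm eig}} = \tilde{\mathfrak{q}}_{k,m}\tilde{\psi}_{k,m}^{(0)} + \tilde{\psi}_{k,m}^{(2)}$, where $\tilde{\psi}_{k,m}^{(0)}$ is $r$-independent and the $L^2$-norm of $\tilde{\psi}_{k,m}^{(2)}$ is $\mathcal{O}(r^{-1})$ for $(k,m)\in\tilde{E}_n^o$ and exponentially small for $(k,m)\in\tilde{E}_n\setminus\tilde{E}_n^o$ (using the refined lemma \ref{lem_refined_r_neck_01} in the linear regime near the two boundaries $\rho=\pm1$).

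Next I would define $\tilde{\psi}^{\text{\rm rem}}$ as the sum of three types of terms, exactly as in the proof of lemma \ref{lem_check_r_dep_00} but now with the special set split into its two pieces:
\begin{itemize}
\item $\tilde{\chi}\tilde{\mathfrak{c}}_{k,m}\tilde{\psi}_{k,m}^{(2)}$ for $(k,m)\in\tilde{E}_n^o$;
\item $\tilde{\chi}\tilde{\mathfrak{c}}_{k,m}\tilde{\psi}_{k,m}^{(2)}$ for $(k,m)\in\tilde{E}_n\setminus\tilde{E}_n^o$ with $m$ in the $7\epsilon$-inner sub-range, that is $m\in[\frac{2n}{V}(v-1-7\epsilon),\frac{2n}{V}(v-1))$ or $m\in(\frac{2n}{V}(v+1),\frac{2n}{V}(v+1+7\epsilon)]$;
\item $\tilde{\chi}\tilde{\mathfrak{c}}_{k,m}\tilde{\psi}_{k,m}^{\text{\rm eig}}$ for $(k,m)\in\tilde{E}_n\setminus\tilde{E}_n^o$ with $m$ in the outer $7\epsilon$-to-$11\epsilon$ sub-range.
\end{itemize}
The bounds on $\tilde{\mathfrak{c}}_{k,m}$ and on $\int_{\tilde{Y}}|\tilde{\psi}_{k,m}^{(2)}|^2$ are identical in form to those in lemma \ref{lem_check_r_dep_00}: the first two types contribute $\mathcal{O}(r^{-2})\int_{\tilde{Y}}|(\tilde{\chi}^o\psi)_{k,m}|^2$ each, and for the third type the support of $\tilde{\chi}^o$ is disjoint from the support of $\tilde{\psi}_{k,m}^{(0)}$, so $\tilde{\mathfrak{c}}_{k,m}$ is exponentially small. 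Summing over all $(k,m)\in\tilde{E}_n$ and using Bessel's inequality to control $\sum_{k,m}\int_{\tilde{Y}}|(\tilde{\chi}^o\psi)_{k,m}|^2\leq\int_{\tilde{Y}}|\tilde{\chi}^o\psi|^2\leq1$ yields (ii).

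Assertion (i) is immediate from the fact that every summand of $\tilde{\psi}^{\text{\rm rem}}$ carries the cut-off $\tilde{\chi}$, so after undoing the untwisting (which is well-defined on $\operatorname{supp}\tilde{\chi}$ because the region is simply connected in the $\phi$-direction on this collar) the section extends to $Y$. For (iii), since $\tilde{\chi}\equiv 1$ on the supports of the surviving $\tilde{\psi}_{k,m}^{(0)}$'s, the difference $\tilde{\chi}{\tilde{\rm pr}_r}(\tilde{\chi}^o\psi)-\tilde{\psi}^{\text{\rm rem}}$ lies in the span of the three index families listed in the statement. The only genuine bookkeeping step, and the one I expect to require the most care, is checking that the untwisting operation correctly identifies the inner/outer sub-ranges of $m$ near $\rho=-1$ with those near $\rho=1$, so that the decomposition in (iii) matches definition \ref{defn_index_part_00} (ii); this uses the frequency-shifting behavior of (\ref{def_untwisting}) noted in step 2, together with lemma \ref{lem_refined_r_neck_01}.
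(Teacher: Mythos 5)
Your proposal is correct and takes the same route as the paper: the paper proves lemma \ref{lem_check_r_dep_00} in detail and then states that ``the argument for the Dehn-twist region is the same,'' which is precisely what you carry out, splitting the $11\epsilon$-collar into its two components near $\rho=\pm 1$ and tracking the untwisting as needed. The three-part decomposition of $\tilde{\psi}^{\text{\rm rem}}$, the $\mathcal{O}(r^{-2})$ bound for the inner ranges, the exponential smallness from support (near-)disjointness on the outer ranges, and the Bessel-type summation all mirror the paper's argument faithfully.
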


\subsection*{Step 9}  In this step, we apply lemma \ref{lem_decomp_sigma_00} to study $\hat{\chi}{{\rm pr}_n}(\hat{\chi}^o\psi)$.

\begin{lem}\label{lem_hat_r_dep_00}
There exists a constant $c$ such that the following holds.  For any $n\geq c$ and $r\in(s_{n-1},s_n]$, suppose that $\psi$ is a zero mode of $D_r$ with unit $L^2$-norm.  Then there exists a section $\hat{\psi}^{\text{\rm rem}}$ such that
\begin{enumerate}
\item the support of $\hat{\psi}^{\text{\rm rem}}$ is contained in the support of $\hat{\chi}$;\smallskip
\item $\int_Y |\hat{\psi}^{\text{\rm rem}}|^2\leq c s_n^{-2}$;\smallskip
\item the second component of $\hat{\chi}{{\rm pr}_n}(\hat{\chi}^o\psi) - \hat{\psi}^{\text{\rm rem}}$ is zero.  The first component of $\hat{\chi}{{\rm pr}_n}(\hat{\chi}^o\psi) - \hat{\psi}^{\text{\rm rem}}$ belongs to the vector space spanned by
\begin{align*}
&\Big\{ \hat{\chi}e^{in\phi}\cdot\ker_0\bar{\pl}_n \Big\} \oplus \Big\{ \check{\zeta}_{n,m}e^{in\phi} \big| {\textstyle \frac{n}{C}(1-11\epsilon)\leq m \leq \frac{n}{C}(1-3\epsilon) } \Big\} \\
&\quad\oplus \Big\{ \tilde{\zeta}_{n,m}e^{in\phi} \big| {\textstyle \frac{2n}{V}(v+23\epsilon)\leq m\leq\frac{2n}{V}(v+31\epsilon) } \text{, or } \\ 
&\qquad\qquad\qquad\quad {\textstyle \frac{2n}{V}(v-31\epsilon)\leq m\leq\frac{2n}{V}(v-23\epsilon) } \Big\} ~.
\end{align*}
\end{enumerate}
\end{lem}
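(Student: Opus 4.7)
The strategy is to expand $\hat{\chi}^o\alpha_n$ in the orthonormal basis of $\ker\bar{\pl}_n$ furnished by Lemma~\ref{lem_decomp_sigma_00}, then split the expansion into pieces that remain in the target span of (iii) and pieces that must be discarded, and verify that the discarded pieces are exponentially small in $n$. Since the definition of $\hat{{\rm pr}}_n$ in Step~5 already produces a section with vanishing second component, so does $\hat{\chi}\hat{{\rm pr}}_n(\hat{\chi}^o\psi)$; I will accordingly demand that the second component of $\hat{\psi}^{\rm rem}$ vanish as well, which immediately takes care of the second-component claim in (iii). For the first component, Lemma~\ref{lem_decomp_sigma_00} yields
\[
{\hat{{\rm pr}}_n}(\hat{\chi}^o\alpha_n) \;=\; \sum_{j}\mathfrak{c}^0_j\, e_j^0 \;+\; \sum_{m\in M}\mathfrak{c}^{\check{}}_m \bigl(\check{\mathfrak{p}}_{n,m}\check{\zeta}_{n,m}+\check{\zeta}_{n,m}^{\rm rem}\bigr) \;+\; \sum_{m\in\widetilde{M}}\mathfrak{c}^{\tilde{}}_m\bigl(\tilde{\mathfrak{p}}_{n,m}\tilde{\zeta}_{n,m}+\tilde{\zeta}_{n,m}^{\rm rem}\bigr),
\]
where $\{e_j^0\}$ is an orthonormal basis of $\ker_0\bar{\pl}_n$ and $M,\widetilde{M}$ are the $m$-ranges in Lemma~\ref{lem_decomp_sigma_00}. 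Split each $m$-range into a \emph{good} part and a \emph{bad} part: $m$ is good when the Gaussian peak $\rho_\ast=2-Vm/n$ (respectively $\rho_\ast=v-Vm/(2n)$) sits at distance at least $3\epsilon$ from the pertinent boundary circle, and bad otherwise.

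Two Gaussian-tail estimates drive the argument. For bad $m$, $\rho_\ast$ lies within $3\epsilon$ of the boundary while $\hat{\chi}^o$ vanishes on the $4\epsilon$-collar, so $|\rho-\rho_\ast|\ge\epsilon$ throughout the support of $\hat{\chi}^o$; a direct computation from the Gaussian formulae (\ref{eqn_almost_nm_page_00})--(\ref{eqn_almost_nm_page_01}) together with Lemma~\ref{lem_decomp_sigma_00}(i)--(ii) gives
\[
\bigl\|\hat{\chi}^o\bigl(\check{\mathfrak{p}}_{n,m}\check{\zeta}_{n,m}+\check{\zeta}_{n,m}^{\rm rem}\bigr)\bigr\|_{L^2(\Sigma)}\;\leq\; c\,e^{-n/c},
\]
and likewise for $\tilde{\zeta}_{n,m}$. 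Cauchy--Schwarz applied against $\hat{\chi}^o\psi$ then yields $|\mathfrak{c}^{\check{}}_m|+|\mathfrak{c}^{\tilde{}}_m|\leq c\,e^{-n/c}$ for every bad $m$. Conversely, for good $m$ we have $\rho_\ast\geq 1+3\epsilon$ (or the analogous condition) while $\hat{\chi}\equiv 1$ on the $2\epsilon$-collar, so $(\hat{\chi}-1)\check{\zeta}_{n,m}$ is supported where $|\rho-\rho_\ast|\geq\epsilon$ and is therefore exponentially small in $L^2$, as is $\hat{\chi}\check{\zeta}_{n,m}^{\rm rem}$; the same holds for $\tilde{\zeta}$. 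For the $\ker_0\bar{\pl}_n$ basis, Lemma~\ref{lem_decomp_sigma_00}(iii) shows that each $e_j^0$ is exponentially small on $\Sigma\setminus\Sigma_{-2\epsilon}$, which is precisely where $\hat{\chi}$ differs from $1$, so $\hat{\chi}e_j^0$ equals $e_j^0$ up to exponentially small error.

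I will assemble $\hat{\psi}^{\rm rem}$ by collecting, multiplied by $e^{in\phi}(2\pi V)^{-1/2}$ in the first slot and with vanishing second slot, all of the bad-index contributions $\hat{\chi}\mathfrak{c}^{\check{}}_m\bigl(\check{\mathfrak{p}}_{n,m}\check{\zeta}_{n,m}+\check{\zeta}_{n,m}^{\rm rem}\bigr)$ (and their $\tilde{}$-counterparts), together with the leakage terms $\mathfrak{c}^{\check{}}_m\check{\mathfrak{p}}_{n,m}(\hat{\chi}-1)\check{\zeta}_{n,m}$ and $\mathfrak{c}^{\check{}}_m\hat{\chi}\check{\zeta}_{n,m}^{\rm rem}$ from good indices (and analogously for $\tilde{\zeta}_{n,m}$ and for the $e_j^0$). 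By construction its support sits inside ${\rm supp}(\hat{\chi})$ and the complementary piece $\hat{\chi}\hat{{\rm pr}}_n(\hat{\chi}^o\psi)-\hat{\psi}^{\rm rem}$ lies in the target span of (iii). The $L^2$ bound then follows from Bessel's inequality $\sum|\mathfrak{c}|^2\leq\|\hat{\chi}^o\psi\|^2\leq 1$ combined with Cauchy--Schwarz over the $\mathcal{O}(n)$ summands in play, producing $\|\hat{\psi}^{\rm rem}\|_{L^2(Y)}\leq c\,e^{-n/c}\leq c\,s_n^{-2}$ for $n$ large, which is more than enough for (ii). The only delicate point is the Gaussian-tail bookkeeping across the two boundary sides and both layers of cut-offs; once that is set up the remaining assembly is routine linear algebra.
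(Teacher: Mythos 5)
Your proposal follows the same strategy as the paper: expand ${\rm pr}_n(\hat{\chi}^o\psi)$ in the basis of $\ker\bar{\pl}_n$ furnished by Lemma~\ref{lem_decomp_sigma_00}, split the Gaussian indices $m$ into ``good'' and ``bad'' according to whether the peak $\rho_\ast$ lies far enough from $\pl\Sigma$ relative to where $\hat{\chi}^o$ vanishes, and use the Gaussian tail together with Lemma~\ref{lem_decomp_sigma_00}(i)--(ii) to show the discarded pieces are $\mathcal{O}(e^{-n/c})\leq c\,s_n^{-2}$. The paper's $\hat{\psi}^{\rm rem}$ consists of the bad-index pieces plus $\hat{\chi}\mathfrak{c}_{n,m}\check{\zeta}_{n,m}^{\rm rem}$, $\hat{\chi}\mathfrak{c}_{n,m}\tilde{\zeta}_{n,m}^{\rm rem}$ from good indices, relying on the observation that $\hat{\chi}\equiv 1$ on the supports of the good-index $\check{\zeta}_{n,m}$, $\tilde{\zeta}_{n,m}$; you instead keep explicit leakage terms $\mathfrak{c}_{n,m}\check{\mathfrak{p}}_{n,m}(\hat{\chi}-1)\check{\zeta}_{n,m}$, which is a valid alternative bookkeeping and arrives at the same exponential bound.

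One concrete slip: you treat the $\ker_0\bar{\pl}_n$ summand symmetrically with the Gaussians and put the leakage $\mathfrak{c}^0_j(\hat{\chi}-1)e_j^0$ into $\hat{\psi}^{\rm rem}$ as well. But the target span in (iii) reads $\hat{\chi}e^{in\phi}\cdot\ker_0\bar{\pl}_n$ --- the $\hat{\chi}$ factor is already built in --- precisely so that the term $\hat{\chi}\mathfrak{c}^0_j e_j^0$ coming from $\hat{\chi}{\rm pr}_n(\hat{\chi}^o\psi)$ lies in the span with no subtraction at all. If you subtract $(\hat{\chi}-1)\mathfrak{c}^0_j e_j^0$, the residual becomes $\mathfrak{c}^0_j e_j^0$, which is not in general an element of $\hat{\chi}\cdot\ker_0\bar{\pl}_n$ (since $\hat{\chi}\not\equiv 1$), so property (iii) would fail for that summand. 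The fix is simply to omit the $e_j^0$-leakage from $\hat{\psi}^{\rm rem}$; then Lemma~\ref{lem_decomp_sigma_00}(iii) is not even invoked at this step.
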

\begin{proof}[Proof of lemma \ref{lem_hat_r_dep_00}]
According to the decomposition given by lemma \ref{lem_decomp_sigma_00}, the first component of ${{\rm pr}_n}(\hat{\chi}^o\psi)$ can be expressed as
\begin{align*}
\zeta_0 + \sum \check{\mathfrak{c}}_{n,m} (\check{\mathfrak{p}}_{n,m}\check{\zeta}_{n,m} + \check{\zeta}_{n,m}^{\text{\rm rem}}) + \sum \tilde{\mathfrak{c}}_{n,m} (\tilde{\mathfrak{p}}_{n,m}\tilde{\zeta}_{n,m} + \tilde{\zeta}_{n,m}^{\text{\rm rem}})
\end{align*}
where $\zeta_0\in\ker_0\bar{\pl}_n$.  Let $\hat{\psi}^{\text{\rm rem}}$ be the sum of the following terms:
\begin{itemize}
\item $\hat{\chi}\check{\mathfrak{c}}_{n,m} (\check{\mathfrak{p}}_{n,m}\check{\zeta}_{n,m} + \check{\zeta}_{n,m}^{\text{\rm rem}})$ for $\frac{n}{V}(1-3\epsilon)<m<\frac{n}{V}$;\smallskip
\item $\hat{\chi}\check{\mathfrak{c}}_{n,m} \check{\zeta}_{n,m}^{\text{\rm rem}}$ for $\frac{n}{V}(1-11\epsilon)\leq m\leq\frac{n}{V}(1-3\epsilon)$;\smallskip
\item $\hat{\chi}\tilde{\mathfrak{c}}_{n,m} (\tilde{\mathfrak{p}}_{n,m}\tilde{\zeta}_{n,m} + \tilde{\zeta}_{n,m}^{\text{\rm rem}})$ for $\frac{2n}{V}(v+20\epsilon)< m <\frac{2n}{V}(v+23\epsilon)$ or $\frac{2n}{V}(v-23\epsilon)< m<\frac{2n}{V}(v-20\epsilon)$;\smallskip
\item $\hat{\chi}\check{\mathfrak{c}}_{n,m} \tilde{\zeta}_{n,m}^{\text{\rm rem}}$ for $\frac{2n}{V}(v+23\epsilon)\leq m\leq\frac{2n}{V}(v+31\epsilon)$ or $\frac{2n}{V}(v-31\epsilon)\leq m\leq\frac{2n}{V}(v-23\epsilon)$.
\end{itemize}
For the terms of the first and third kind, lemma \ref{lem_decomp_sigma_00} gives a constant $c_{22}$ such that their $L^2$-norm is less than $c_{22}\exp(-\frac{n}{c_{22}})$.  For the terms of the second and fourth kind, the supports of $\check{\zeta}_{n,m}$ and $\tilde{\zeta}_{n,m}$ are disjoint from the support of $\hat{\chi}^o$.  A similar argument as in the proof of lemma \ref{lem_check_r_dep_00} shows that their $L^2$-norm is less than $c_{22}\exp(-\frac{n}{c_{22}})$.  With the triangle inequality,
\begin{align*} \int_{Y} |\hat{\psi}^{\text{\rm rem}}|^2 \leq \big( \frac{110\epsilon}{V}n \,c_{22}\exp(-\frac{n}{c_{22}})\big)^2 \leq c_{23}s_n^{-2} \end{align*}
for some constant $c_{23}$.  This proves assertion (i) and (ii) of the lemma.

For the last assertion, it is clear that $\hat{\chi}{{\rm pr}_n}(\hat{\chi}^o\psi) - \hat{\psi}^{\text{rem}}$ belongs to the vector space spanned by those elements in assertion (iii), multiplied by $\hat{\chi}$.  However, for those $\check{\zeta}_{n,m}$ and $\tilde{\zeta}_{n,m}$ in the last two summand of (iii), $\hat{\chi}$ is equal to $1$ on their supports.  This completes the proof of lemma \ref{lem_hat_r_dep_00}.
\end{proof}

\subsection*{Step 10}  In this step, we combine all the results to prove the claim (\ref{claim_upper_bound_00}).

\begin{prop}\label{prop_final_proj_00}
There exists a constant $c$ with the following significance.  For any $n\geq c$ and $r\in(s_{n-1},s_n]$, suppose that $\psi$ is a zero mode of $D_r$ of unit $L^2$-norm.  With proposition \ref{prop_proj_finite_00} and lemma \ref{lem_check_r_dep_00}, \ref{lem_tilde_r_dep_00} and \ref{lem_hat_r_dep_00}, let 
\begin{align*}
{\Pi}(\psi) = {\Pi_r}(\psi) - \hat{\psi}^{\text{\rm rem}} - \check{\psi}^{\text{\rm rem}} - \tilde{\psi}^{\text{\rm rem}} ~,
\end{align*}
then
\begin{enumerate}
\item $\int_Y |{\Pi}(\psi)|^2\geq 1-cs_n^{-1}$;\smallskip
\item ${\Pi}(\psi)$ belongs to a vector space whose dimension is $$I_\Sigma(n)+\#\check{E}_n^o+\#\tilde{E}_n^o ~;$$
\item if there are two such zero modes $\psi_1$ and $\psi_2$ at $r_1$ and $r_2$ with $s_{n-1}<r_1<r_2\leq s_n$, then
\begin{align*}
\big|\int_Y\langle {\Pi}(\psi_1),{\Pi}(\psi_2) \rangle\big| \leq c s_n^{-1} ~;
\end{align*}
\item if there are two such zero modes $\psi_1$ and $\psi_2$ both at $r\in(s_{n-1},s_n]$, and $\int_Y\langle\psi_1,\psi_2\rangle=0$, then
\begin{align*}
\big|\int_Y\langle {\Pi}(\psi_1),{\Pi}(\psi_2) \rangle\big| \leq c s_n^{-1} ~.
\end{align*}
\end{enumerate}
\end{prop}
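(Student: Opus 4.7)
Write $\Pi(\psi) = \Pi_r(\psi) - \psi^{\mathrm{rem}}$ where $\psi^{\mathrm{rem}} := \hat{\psi}^{\mathrm{rem}} + \check{\psi}^{\mathrm{rem}} + \tilde{\psi}^{\mathrm{rem}}$. By Lemmas \ref{lem_check_r_dep_00}--\ref{lem_hat_r_dep_00}, $\int_Y |\psi^{\mathrm{rem}}|^2 \leq c\, s_n^{-2}$.

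For assertion (i), expand
\begin{align*}
\int_Y |\Pi(\psi)|^2 = \int_Y |\Pi_r(\psi)|^2 - 2\re\int_Y \langle \Pi_r(\psi),\psi^{\mathrm{rem}}\rangle + \int_Y |\psi^{\mathrm{rem}}|^2.
\end{align*}
The first term is bounded below by $1 - c s_n^{-1}$ via Proposition \ref{prop_proj_finite_00}(i); Cauchy--Schwarz bounds the cross term by $c s_n^{-1}$, yielding (i). For (iii)--(iv), expand $\int_Y \langle \Pi(\psi_1),\Pi(\psi_2)\rangle$ into four pairings: the $\Pi_{r_j}$--$\Pi_{r_j}$ piece is handled by Proposition \ref{prop_proj_finite_00}(ii) or (iii), while each pairing involving a $\psi^{\mathrm{rem}}$ is $O(s_n^{-1})$ by Cauchy--Schwarz and the bound $\int_Y |\psi^{\mathrm{rem}}|^2 = O(s_n^{-2})$.

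The heart of the proof is (ii). By Lemmas \ref{lem_check_r_dep_00}--\ref{lem_hat_r_dep_00}, the three summands of $\Pi(\psi)$ lie in explicit finite-dimensional spans. The naive sum of their dimensions exceeds the claim because the boundary $m$-ranges overlap: Lemma \ref{lem_hat_r_dep_00} contributes $\hat{\chi}\check{\zeta}_{n,m}e^{in\phi}$ for $m \in [\tfrac{n}{V}(1-11\epsilon),\tfrac{n}{V}(1-3\epsilon)]$, while Lemma \ref{lem_check_r_dep_00} contributes $\check{\chi}\check{\psi}_{n,m}^{(0)}$ for $m \in [\tfrac{n}{V}(1-7\epsilon),\tfrac{n}{V})$. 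The key observation is: for $m$ in the overlap $[\tfrac{n}{V}(1-7\epsilon),\tfrac{n}{V}(1-3\epsilon)]$, the Gaussian peak of $\check{\zeta}_{n,m}$ sits in $\rho \in [1+3\epsilon,1+7\epsilon]$, where both $\hat{\chi}$ and $\check{\chi}$ equal $1$. Using the Step 7 identification that the first component of $\check{\psi}_{n,m}^{(0)}$ equals $\check{\zeta}_{n,m}e^{in\phi}/\sqrt{2\pi V}$, the elements $\hat{\chi}\check{\zeta}_{n,m}e^{in\phi}$ and $\sqrt{2\pi V}\,\check{\chi}\check{\psi}_{n,m}^{(0)}$ coincide up to a section exponentially small in $n$ (the difference is $(\hat{\chi}-\check{\chi})\check{\zeta}_{n,m}e^{in\phi}$, supported on $\rho \in [1,1+2\epsilon]\cup[1+8\epsilon,1+10\epsilon]$, where the Gaussian tail is $O(\exp(-n/c))$). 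Absorbing these exponentially small discrepancies into an enlarged $\psi^{\mathrm{rem}}$ (whose $L^2$-norm remains $O(s_n^{-2})$), the two families collapse to a single one indexed by $m \in [\tfrac{n}{V}(1-11\epsilon),\tfrac{n}{V})$, matching $\check{E}_n \setminus \check{E}_n^o$. The analogous collapse holds on the Dehn-twist side. By Lemma \ref{lem_decomp_sigma_00}, the effective total dimension is
\begin{align*}
\dim\ker_0\bar{\pl}_n + \#(\check{E}_n\setminus\check{E}_n^o) + \#(\tilde{E}_n\setminus\tilde{E}_n^o) + \#\check{E}_n^o + \#\tilde{E}_n^o = I_\Sigma(n) + \#\check{E}_n^o + \#\tilde{E}_n^o.
\end{align*}

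The main obstacle is (ii): verifying the overlap identification precisely for both the binding and the Dehn-twist ranges, and confirming that the cumulative exponential corrections absorbed into $\psi^{\mathrm{rem}}$ do not spoil its $O(s_n^{-2})$ bound, so that the resulting vector space realizes exactly the claimed dimension rather than just providing an upper bound.
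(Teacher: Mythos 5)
Your proof is correct and takes essentially the same route as the paper's: (i), (iii), (iv) follow from expanding the inner products and invoking Proposition \ref{prop_proj_finite_00} together with the $O(s_n^{-2})$ bound on $\psi^{\text{\rm rem}}$, and (ii) rests on the Step 7 identification of $\check{\psi}_{n,m}^{(0)}$ with $(2\pi V)^{-1/2}\check{\zeta}_{n,m}e^{in\phi}$ (and its Dehn-twist analogue) so that the overlapping $m$-ranges in Lemmas \ref{lem_check_r_dep_00} and \ref{lem_hat_r_dep_00} collapse, after which Lemma \ref{lem_decomp_sigma_00} gives $I_\Sigma(n)$ for the combined boundary-plus-interior block. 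Your one addition is the explicit absorption of the cutoff mismatch $(\hat{\chi}-\check{\chi})\check{\zeta}_{n,m}e^{in\phi}$ into an enlarged remainder; the paper instead asserts that $\check{\chi}$ and $\hat{\chi}$ equal $1$ on the supports of the relevant Gaussians and treats the identification as exact, so your more conservative handling is harmless and in fact safer given the ambiguous scale of the cut-off factor $\chi(\epsilon x)$ in (\ref{eqn_hot_approx_01}) and (\ref{eqn_almost_nm_page_00}).
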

\begin{proof}[Proof of proposition \ref{prop_final_proj_00}]
Assertion (i), (iii) and (iv) follows from proposition \ref{prop_proj_finite_00} and lemma \ref{lem_check_r_dep_00}, \ref{lem_tilde_r_dep_00} and \ref{lem_hat_r_dep_00}.

With the observation in step 7, ${\Pi}(\psi)$ belongs to the vector space spanned by
\begin{align}\begin{split}\label{eqn_space_decomp_00}
&\Big\{ \hat{\chi}e^{in\phi}\cdot\ker_0\bar{\pl}_n \Big\} \oplus \Big\{ \check{\zeta}_{n,m}e^{in\phi} \big| {\textstyle \frac{n}{V}(1-11\epsilon)\leq m<\frac{n}{V}} \Big\} \\
&\oplus \Big\{ \tilde{\zeta}_{n,m}e^{in\phi} \big| {\textstyle \frac{2n}{V}(v+20\epsilon)<m\leq\frac{2n}{V}(v+1+31\epsilon)} \text{, or }\\
&\qquad\qquad\qquad {\textstyle \frac{2n}{V}(v-31\epsilon)\leq m<\frac{2n}{V}(v-20\epsilon)} \Big\} \\
&\quad \oplus \Big\{ \check{\psi}_{k,m}^{(0)} \big| (k,m)\in\check{E}_n^o \Big\} \oplus \Big\{ \tilde{\psi}_{k,m}^{(0)} \big| (k,m)\in\tilde{E}_n^o \Big\} ~.
\end{split}\end{align}
To be more precise, the first three summands consist of sections whose first component is given by those elements and the second component is zero.  By dimension counting, the dimension of the subspace spanned by the first three summands is $I_{\Sigma}(n)$.  This completes the proof of proposition \ref{prop_final_proj_00}.
\end{proof}

According to lemma \ref{lem_beta_vanish_00}, (\ref{eqn_APS_00}), (\ref{eqn_E_total_number_00}) and (\ref{eqn_E_total_number_100}), there exists a constant $c_{25}$ such that
\begin{align*} I_\Sigma(n) + \#\check{E}_n^o + \#\tilde{E}_n^o \leq c_{25}s_n \end{align*}
for all $n\geq c_{25}$.

Let $L_n$ be the index set $\big\{1,2,\cdots,{\rm sf}_a(s_n) - {\rm sf}_a(s_{n-1})\big\}$.  We may assume that there are only positive zero crossings.  For each zero crossing happening between $(s_{n-1},s_n]$, choose a zero eigensection with unit $L^2$-norm.  If there are more than one zero crossings happening at some $r\in(s_{n-1},s_n]$, choose $L^2$-orthonormal zero eigensections.  Let $\{\psi_l\}_{l\in L_n}$ be the set of these zero modes.

According to proposition \ref{prop_final_proj_00}, there exists a constant $c_{26}$ such that
\begin{enumerate}
\item $\int_Y |{\Pi}(\psi_l)|^2 \geq 1 - c_{26}s_n^{-1}$;\smallskip
\item $\big| \int_Y\langle{\Pi}(\psi_l),{\Pi}(\psi_{l'})\rangle \big| \leq c_{26}s_n^{-1}$ for any $l\neq l'$;\smallskip
\item ${\Pi}(\psi_l)$ belongs to a vector space (\ref{eqn_space_decomp_00}), whose dimension is
\begin{align*} I_\Sigma(n) + \#\check{E}_n^o + \#\tilde{E}_n^o \leq c_{25}s_n \end{align*}
\end{enumerate}
for all $n\geq c_{26}$.  After normalizing the $L^2$-norm of ${\Pi}(\psi_l)$, lemma \ref{lem_Welch_dim_00} applies.  This proves claim (\ref{claim_upper_bound_00}), and completes the proof of theorem \ref{thm_sf_upper_bound}.
\end{proof}


\end{document}